\documentclass[oneside,11pt,reqno]{amsart}
\usepackage{stmaryrd}
\usepackage{bbm}
\usepackage{mathrsfs}
\usepackage{enumerate}
\usepackage{latexsym,amsxtra}
\usepackage[dvips]{graphicx}
\usepackage{mathtools}
\usepackage{dsfont}
\usepackage{slashed}
\usepackage[all]{xy}
\usepackage{amscd,graphics}
\usepackage[usenames, dvipsnames]{color}
\usepackage{amsmath,amsfonts,amsthm,amssymb}
\usepackage{latexsym,amsmath}
\usepackage{graphicx,psfrag}
\usepackage{mathabx}
\usepackage{hyperref}
\usepackage{fancyhdr}

\textwidth 15cm \textheight 20cm \oddsidemargin .1truein
\newtheorem{thm}{Theorem}[section]
\newtheorem*{nota}{Notation}
\newtheorem*{claim}{Claim}
\newtheorem{assum}[thm]{Assumption}
\newtheorem{lem}[thm]{Lemma}
\newtheorem{cor}[thm]{Corollary}
\newtheorem{pro}[thm]{Proposition}

\theoremstyle{definition}
\newtheorem{defi}[thm]{Definition}
\newtheorem{ex}[thm]{Example}
\theoremstyle{remark}
\newtheorem{rmk}{Remark}
\pagestyle{fancy}
\fancyhf{}
\chead{Jianfeng Lin}
\cfoot{\thepage}

\title{The Seiberg-Witten equations on end-periodic manifolds and an obstruction to positive scalar curvature metrics}

\author{Jianfeng Lin}

\begin{document}
\maketitle
\begin{abstract}
By studying the Seiberg-Witten equations on end-periodic manifolds, we give an obstruction on the existence of positive scalar curvature metric on compact $4$-manifolds with the same homology as $S^{1}\times S^{3}$. This obstruction is given in terms of the relation between the Fr\o yshov invariant of the generator of $H_{3}(X;\mathds{Z})$ with the $4$-dimensional Casson invariant $\lambda_{SW}(X)$ defined in \cite{MRS}.
Along the way, we develop a framework that can be useful in further study of the Seiberg-Witten theory on general end-periodic manifolds.
\end{abstract}
\section{Introduction}
A natural question in Riemannian geometry is: When does a closed manifold $X$ admit a Riemannian metric with positive scalar curvature? (See \cite{rosenberg1994manifolds} for a survey on this problem. We call such manifolds ``psc-manifolds''.) The answer is fully understood in the following two cases:
\begin{itemize}
\item $X$ is $3$-dimensional or less \cite{perelman2003ricci};
\item $X$ is simply connected and $5$-dimensional or more \cite{gromov-Lawson,stolz1992simply}.
\end{itemize}

Now consider the case that $X$ is a $4$-dimensional psc-manifold. Then we have the following three constrains on the topology of $X$:
\begin{enumerate}[(i)]
\item Suppose $X$ is spin. Then the signature of $X$ (denoted by $\operatorname{sign}(X)$) must be zero. Similar result holds for its covering spaces \cite{hitchin1974harmonic,lichnerowicz1963spineurs};
\item Suppose $b_{3}(X)>0$. Then up to a nonzero multiple, any element of $H_{3}(X;\mathds{R})$ can be represented by an embedded, oriented psc $3$-manifold. Similar result holds for its covering spaces \cite{schoen1979structure};
\item Suppose $b_{2}^{+}(X)>1$. Then the Seiberg-Witten invariant $SW(X,\hat{\mathfrak{s}})$ must equal $0$ for any spin$^{c}$ structure $\hat{\mathfrak{s}}$. Similar result holds for its covering spaces \cite{Witten}.
\end{enumerate}

In the current paper, we consider the following case:
\begin{assum}\label{homology S3timesS1}
 $X$ is a $4$-manifold with the same homology as $S^{1}\times S^{3}$; the homology group $H_{3}(X;\mathds{Z})$ is generated by an embedded $3$-manifold $Y$ with $b_{1}(Y)=0$.
\end{assum}

For such $X$, condition (i) tells nothing interesting and condition (ii) provides a cobordism between $Y$ and a psc $3$-manifold. As for condition (iii), it can not be applied because the Seiberg-Witten invariants are not well defined (since $b_{2}^{+}(X)=0$).

The first purpose of the current paper is to obtain a new obstruction of positive scalar curvature in the direction of (iii). Recall that for $X$ satisfying Assumption \ref{homology S3timesS1}, although the original Seiberg-Witten invariant is not well defined, there are two other invariants from the Seiberg-Witten theory:
\begin{itemize}
\item The $4$-dimensional Casson-type invariant $\lambda_{SW}(X)$, defined by Mrowka-Ruberman-Saveliev \cite{MRS};
\item The Fr{\o}yshov invariant $\operatorname{h}(Y,\mathfrak{s})$, defined by Fr{\o}yshov \cite{Froyshov}, where $\mathfrak{s}$ is the unique spin structure on $Y$ that can be extended to a spin structure on $X$. (It was proved in \cite{Froyshov} that this invariant does not depend on the choice of $Y$.)
\end{itemize}
Here is the main theorem of the paper:
\begin{thm}\label{new obstruction}
Suppose $\lambda_{SW}(X)+\operatorname{h}(Y,\mathfrak{s})\neq 0$. Then $X$ admits no Riemannian metric with positive scalar curvature.
\end{thm}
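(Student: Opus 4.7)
The plan is to prove the contrapositive: assuming that $X$ admits a positive scalar curvature metric $g$, I would show that $\lambda_{SW}(X)+\operatorname{h}(Y,\mathfrak{s})=0$. First I would lift $g$ to the infinite cyclic cover $\tilde{X}\to X$ determined by the generator of $H^{1}(X;\mathds{Z})$, producing an end-periodic Riemannian metric on $\tilde{X}$ whose scalar curvature is still everywhere positive. This is exactly the geometric input that the end-periodic Seiberg--Witten framework alluded to in the abstract is designed to handle, and it also equips every periodic copy of $Y\subset\tilde{X}$ with an induced psc metric, so in particular the Dirac operator on $Y$ is invertible.

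Next I would apply the Weitzenb\"ock formula on $\tilde{X}$ and on the natural exhausting sequence of compact truncations. Combined with the standard integration-by-parts argument, positivity of the scalar curvature forces every finite-energy solution of the (suitably perturbed) Seiberg--Witten equations on $\tilde{X}$ to be reducible, i.e.\ to have vanishing spinor. In the Mrowka--Ruberman--Saveliev construction \cite{MRS}, $\lambda_{SW}(X)$ is defined as a signed count of irreducible monopoles on such an end-periodic manifold, corrected by an index-theoretic quantity $w(X,g,\mathfrak{s})$ so that the total is a topological invariant. Since the irreducible moduli space is empty in our psc setting, the count contributes zero and one is left with $\lambda_{SW}(X)=-w(X,g,\mathfrak{s})$.

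The heart of the proof is therefore an identification $w(X,g,\mathfrak{s})=\operatorname{h}(Y,\mathfrak{s})$ in the presence of positive scalar curvature. Since the induced metric on $Y$ is psc and $b_{1}(Y)=0$, the three-dimensional Dirac operator has no kernel, and its eta invariant together with the relevant spectral flow are unambiguously defined. Splitting the end-periodic manifold $\tilde{X}$ along a periodic copy of $Y$ and applying an Atiyah--Patodi--Singer style excision, I would rewrite $w(X,g,\mathfrak{s})$ as the index of the Dirac operator on a fundamental domain plus an eta-type boundary contribution from $Y$. Fr\o yshov's original definition of $\operatorname{h}(Y,\mathfrak{s})$ for a psc rational homology sphere in \cite{Froyshov} is given by essentially the same eta-type quantity, so after matching conventions one gets the desired equality and hence $\lambda_{SW}(X)+\operatorname{h}(Y,\mathfrak{s})=0$.

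I expect the main obstacle to be this last identification. It requires, on the one hand, the end-periodic Seiberg--Witten framework developed earlier in the paper, in order to guarantee that the count defining $\lambda_{SW}(X)$ is actually well posed for a psc end-periodic metric; in particular one must choose the perturbation carefully so that no reducible solutions escape to the end and so that transversality still holds. On the other hand, it requires a careful bookkeeping of orientations, asymptotic spectral flow, and the APS defect in order to match, sign for sign, the correction term $w$ of \cite{MRS} with Fr\o yshov's invariant $\operatorname{h}$. The psc hypothesis is crucial exactly because it makes the Dirac operator on $Y$ invertible, which is what forces this eta-type identification to be clean rather than only well defined modulo jumps.
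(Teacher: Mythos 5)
Your overall strategy (prove the contrapositive, and use the Weitzenb\"ock formula to kill irreducibles so that $\lambda_{SW}(X)=-w(X,g_X,0)$) starts out as in the paper, but the step you call the heart of the proof has a genuine gap. You claim that a psc metric on $X$ ``equips every periodic copy of $Y\subset\tilde X$ with an induced psc metric,'' and then you use invertibility of the Dirac operator on $Y$ to express both $w(X,g,\mathfrak{s})$ and $\operatorname{h}(Y,\mathfrak{s})$ as eta-type spectral quantities. The first claim is false: the restriction of a psc metric on a $4$-manifold to an embedded hypersurface need not have positive scalar curvature (the Gauss equation brings in the second fundamental form), and in the interesting cases $Y$ admits no psc metric at all --- e.g.\ $Y=\Sigma(2,3,7)$ in the paper's example. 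Consequently the three-dimensional Dirac operator need not be invertible, and Fr\o yshov's invariant has no eta-invariant formula you can match against the index correction term: $\operatorname{h}(Y,\mathfrak{s})$ is defined through monopole Floer homology and, for non-psc $Y$, genuinely depends on the irreducible critical points and the differentials, not on spectral data of a single metric. So the proposed identification $w(X,g,\mathfrak{s})=\operatorname{h}(Y,\mathfrak{s})$ by APS excision and spectral-flow bookkeeping cannot be carried out as stated; it is essentially the content of the theorem rather than a computation.

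What the paper does instead, and what is missing from your proposal, is a Floer-theoretic counting argument on the manifold $Z_{+}=((-\infty,0]\times Y)\cup_{Y}W\cup_{Y}W\cup\cdots$, which has a cylindrical end \emph{and} a periodic end. One chooses a nice perturbation whose height equals $-2w(X,g_X,0)$, proves (using psc only on the periodic part) a compactness theorem for the one-dimensional moduli spaces $\mathcal{M}^{+}([\mathfrak{b}],Z_{+})$, shows the reducible moduli space $\mathcal{M}([\mathfrak{a}_{0}],Z_{+})$ consists of a single point, and then the vanishing of the signed count of boundary points of a compact oriented $1$-manifold shows that $[\mathfrak{a}_{0}]$ cannot be killed by the Floer differential; this yields only the inequality $-2\operatorname{h}(Y,\mathfrak{s})\le 2\lambda_{SW}(X)$. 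The equality is then obtained by running the same argument on $-X$ and invoking $\operatorname{h}(-Y,\mathfrak{s})=-\operatorname{h}(Y,\mathfrak{s})$ and $\lambda_{SW}(-X)=-\lambda_{SW}(X)$ (the latter proved via an index excision that does use psc). Your proposal would need to be restructured around such a two-sided Floer-theoretic argument; as written, the crucial identification rests on a false geometric premise and on a spectral description of $\operatorname{h}(Y,\mathfrak{s})$ that does not exist in the generality required.
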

\begin{rmk}
We conjecture one should be able to give an alternative proof of Theorem \ref{new obstruction} by combining Schoen-Yau's result \cite{schoen1979structure} and monopole Floer homology.\footnote{This has been verified \cite[Theorem B]{LRS} in a recent paper by the author, Ruberman and Saveliev.}
\end{rmk}

Since it was proved in \cite{MRS} that the mod-$2$ reduction of $\lambda_{SW}(X)$ is always $\rho(Y,\mathfrak{s})$ (the Rohlin invariant of $(Y,\mathfrak{s})$), we have the following corollary:
\begin{cor}\label{rohlin=froyshov}
Suppose $X$ is a homology $S^{3}\times S^{1}$ with $H_{3}(X;\mathds{Z})$ generated by an embedded rational homology sphere $Y$ satisfying $$
\operatorname{h}(Y,\mathfrak{s}) \not\equiv \rho(Y,\mathfrak{s})\ (\textrm{mod}\ 2).
$$
Then $X$ admits no Riemannian metric with positive scalar curvature.
\end{cor}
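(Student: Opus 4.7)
The plan is to deduce Corollary \ref{rohlin=froyshov} directly from Theorem \ref{new obstruction} by a straightforward contrapositive argument, using the $\pmod 2$ identification of $\lambda_{SW}(X)$ with the Rohlin invariant that was recalled just before the statement.

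First, I would argue by contradiction. Suppose $X$ carries a Riemannian metric of positive scalar curvature. Then Theorem \ref{new obstruction} forces the integer equality
\begin{equation*}
\lambda_{SW}(X) + \operatorname{h}(Y,\mathfrak{s}) = 0,
\end{equation*}
so in particular $\lambda_{SW}(X) \equiv \operatorname{h}(Y,\mathfrak{s}) \pmod 2$.

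Next, I would combine this with the Mrowka--Ruberman--Saveliev congruence $\lambda_{SW}(X) \equiv \rho(Y,\mathfrak{s}) \pmod 2$ from \cite{MRS}, which was quoted immediately before the corollary. Transitivity yields
\begin{equation*}
\operatorname{h}(Y,\mathfrak{s}) \equiv \rho(Y,\mathfrak{s}) \pmod 2,
\end{equation*}
in direct contradiction to the standing hypothesis of the corollary. Hence no such metric can exist.

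There is essentially no obstacle here beyond checking that the setup of Theorem \ref{new obstruction} (namely Assumption \ref{homology S3timesS1}) is satisfied by the hypotheses of the corollary, which is immediate since a rational homology sphere $Y$ has $b_1(Y)=0$, and that the relevant spin structure $\mathfrak{s}$ on $Y$ is the one extending over $X$ so that both $\operatorname{h}(Y,\mathfrak{s})$ and $\rho(Y,\mathfrak{s})$ are defined and the $\pmod 2$ identity of \cite{MRS} applies verbatim. The entire proof is therefore a one-line deduction from Theorem \ref{new obstruction}.
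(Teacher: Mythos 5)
Your argument is exactly the paper's: the corollary is deduced immediately from Theorem \ref{new obstruction} together with the quoted congruence $\lambda_{SW}(X)\equiv\rho(Y,\mathfrak{s})\pmod 2$ from \cite{MRS}, since these force $\lambda_{SW}(X)+\operatorname{h}(Y,\mathfrak{s})\not\equiv 0\pmod 2$ under the hypothesis. Your contrapositive phrasing is just a cosmetic variant of this one-line deduction, so the proposal is correct and matches the paper's approach.
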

This corollary gives a large family of interesting examples of $4$-manifolds (with $b_{2}=0$) admitting no positive scalar curvature metric.
\begin{ex}
Let $X$ be obtained by furling up any homology cobordism from $Y=\Sigma(2,3,7)$ (the Brieskorn sphere) to itself. Then $X$ admits no Riemannian metric with positive scalar curvature because $\rho(Y)=1$ and $\operatorname{h}(Y)=0$.
\end{ex}

We summarize the idea in the proof of Theorem \ref{new obstruction} as follows: Let $W$ be the cobordism from $Y$ to itself obtained by cutting $X$ along $Y$. We consider the manifold
$$
Z_{+}=((-\infty,0]\times Y)\cup_{Y}W\cup_{Y}W\cup_{Y}...
$$
This non-compact manifold has two ends: one is cylindrical and the other one is periodic. (The word ``periodic'' indicates the fact that we are gluing togegher infinitely many copies of the same manifold $W$. See \cite{Taubes} for the precise definition.) For a Riemannian metric $g_{X}$ on $X$, we can construct, using a cut-off function, a metric on $Z_{+}$ that equals the a lift of $g_{X}$ over the periodic-end and restricts to the product metric on the cylindrical end. Now consider the (suitably perturbed) Seiberg-Witten equations on $Z_{+}$. More specifically, let $[\mathfrak{b}]$ be a critical point of the Chern-Simons-Dirac functional with certain absolute grading.  We consider the moduli space $\mathcal{M}([\mathfrak{b}],Z_{+})$ of gauge equivalent classes of solutions that approaches $[\mathfrak{b}]$ on the cylindrical end and has exponential decay on the periodic end. By adding end points to the moduli space $\mathcal{M}([\mathfrak{b}],Z_{+})$, which correspond to ``broken solutions'' on $Z_{+}$, we get the moduli space $\mathcal{M}^{+}([\mathfrak{b}],Z_{+})$, which is a $1$-manifold with boundary. Now we use the assumption that $g_{X}$ has positive scalar curvature. Under this assumption, we can prove that $\mathcal{M}^{+}([\mathfrak{b}],Z_{+})$ is compact. Therefore, the number of points in $\partial\mathcal{M}^{+}([\mathfrak{b}],Z_{+})$, counted with sign, should be $0$. This actually implies that a certain reducible critical point $[\mathfrak{a}_{0}]$ can not be ``killed by the boundary map'' and hence survives in the monopole Floer homology. By this argument, we show that $-2\operatorname{h}(Y,\mathfrak{s})\leq 2\lambda_{\textnormal{SW}}(X)$. By the same argument on $-X$, we can also prove $-2\operatorname{h}(Y,\mathfrak{s})\geq 2\lambda_{\textnormal{SW}}(X)$, which completes the proof of Theorem \ref{new obstruction}.

As can be seen from the above discussion, the study of Seiberg-Witten equations on end-periodic manifolds plays a central role in our argument. We note that the first application of gauge theory on end-periodic manifolds was given by Taubes \cite{Taubes} in the context of Donaldson theory, where he proved that the Euclidean space $\mathds{R}^{4}$ admits uncountable many exotic smooth structures. However, the Seiberg-Witten theory on end-periodic manifold is still not well developed. One major difficulty in this direction is finding a reasonable substitution for the assumption $\pi_{1}(W)=1$ (which was used in \cite{Taubes}) and prove the compactness theorem under this new assumption. In the current paper, we use the positive scalar curvature assumption, which tells something interesting but still not general enough. One possible substitute is the end-periodic symplectic structure assumption. This motivates the second purpose of the paper: we try to develop a framework that can be useful in further study of the Seiberg-Witten theory on general end-periodic manifolds. Actually, all the results (except Lemma \ref{orientation reversal 2}) in Section 2, Section 3 and the appendix are stated and proved without the positive scalar curvature assumption.

We note that many of the results and proofs in the current paper follow the same line as Kronheimer-Mrowka's book \cite{KM}. The idea is that: by working with suitably weighted Sobolev spaces, one can treat the non-compact manifold $$X_{+}=W\cup_{Y} W\cup_{Y}...$$ as a compact manifold whose signature equals the correction term $-w(X,0,g_{X})$ (see Subsection 2.4).

The precise statements of all the results used in the current paper will be given. However, to keep the length of the paper somehow under control, we will omit the proofs that are word by word translations from the corresponding parts of \cite{KM}. In order to help the reader to follow the argument, we will always give the precise reference of the omitted details. From now on, we will refer to \cite{KM} as \textbf{the book}.

The paper is organized as follows: In Section 2, we  briefly recall the definition of the monopole Floer homology, the Fr\o yshov invariant $\operatorname{h}(Y,\mathfrak{s})$ and the $4$-dimensional Casson invariant $\lambda_{\textnormal{SW}}(X)$. We will also review and prove some results about linear analysis on end-periodic manifolds. In Section 3, we start setting up the gauge theory on end-periodic manifolds and define the moduli spaces. In Section 4, we prove the compactness result under the positive scalar curvature assumption. In Section 5, we will put all the pieces together and finish the proof of Theorem \ref{new obstruction}. In the appendix, we prove (using Fourier-Laplace transformation) Proposition \ref{laplace equation}, which states the uniqueness and existence of the solution of the Laplace equation on end-periodic manifolds. This may be of independent interest for some readers.
\\
\\
\textbf{Acknowledgement.} The author wishes to thank
Diogo Veloso, Peter Kronheimer, Tomasz Mrowka, Ciprian Manolescu, Daniel Ruberman, Nicolai Saveliev and Richard Schoen for sharing their expertise in several inspiring discussions. The author is especially grateful to Clifford Taubes for suggesting the idea of proof of Lemma \ref{exp decay} (the key estimate in Section 4) and Terence Tao for providing an alternative proof of Lemma \ref{Solving laplace equation on covering space}. Corollary \ref{rohlin=froyshov} was also proved by Daniel Ruberman \cite{Ruberman} using Schoen-Yau's minimal surface result.

During the preparation of the current paper, the author noticed that a different version of compactness theorem for Seiberg-Witten equations over manifolds with periodic ends of positive scalar curvature was proved earlier in Diogo Veloso's thesis \cite{Diogo}. A different type of Hodge decomposition for such manifolds was also studied there.

\section{Preliminaries}
\subsection{The set up and the notations}
Let $X$ connected, oriented, smooth 4-manifold satisfying the condition $$H_{1}(X;\mathds{Z})\cong \mathds{Z},\  H_{2}(X;\mathds{Z})\cong 0.$$ In other words, $X$ is a homology $S^{1}\times S^{3}$. We further assume that $H_{3}(X;\mathds{Z})$ is generated by an embedded rational homology 3-sphere $Y$. (This is not always the case.)  We fix a homology orientation of $X$ by fixing a generator $[1]\in H_{1}(X;\mathds{Z})$. This induces an orientation on $Y$ by requiring that $[1]\cup [Y]=[X]$. Let $W$ be the cobordism from $Y$ to itself obtained from cutting $X$ open along $Y$. The infinite cyclic covering space of $X$ has a decomposition
$$
\tilde{X}=...\cup_{Y} W_{-1}\cup_{Y} W_{0}\cup_{Y} W_{1}\cup ... \ \text{with all }W_{n}\cong W.
$$
 We choose a lift of $Y$ to $\tilde{X}$ and still call it $Y$. We let
$$X_{+}=W_{0}\cup_{Y} W_{1}\cup_{Y} W_{2}\cup ... $$
be one of the two components of $\tilde{X}\setminus Y$.
\begin{nota}
In the current paper, we will use $\cup$ to denote the disjoint union and use $\cup_{Y}$ to denote the result of gluing two manifolds along their common boundary $Y$.
\end{nota}
There are two spin structures on $X$. We pick one of them and denote it by $\hat{\mathfrak{s}}$. It induces spin structures on the various manifolds we constructed so far. In particular, we have an induced spin structure on $Y$ and we denote it by $\mathfrak{s}$. It is not hard to see that $\mathfrak{s}$ does not depend on the choice of $\hat{\mathfrak{s}}$. These spin structures will be fixed through out the paper and we will suppress them from most of our notations. We denote by $S^{+}$ and $S^{-}$ the positive and negative spinor bundles over various 4-manifold. The spin connection over $4$-manifolds are all denoted by $A_{0}$. For the 3-manifold $Y$, we denote the spinor bundle by $S$ and the spin connection by $B_{0}$. In both dimensions, we write $\rho$ for the Clifford multiplication.

Other than $\tilde{X}$ and $X_{+}$, we also consider the following two (non-compact) spin $4$-manifolds
$$
M_{+}:=M\cup_{Y} X_{+}\ \text{and }Z_{+}:=Z\cup_{Y} X_{+},
$$
where $Z=(-\infty,0]\times Y$ and $M$ is a compact spin $4$-manifold bounded by $(Y,\mathfrak{s})$. By doing surgeries along loops in $M$, we can assume that 
\begin{equation}\label{equa: b1mequalszero}
H^{1}(M;\mathbb{R})=0
\end{equation}
We denote by $\bar{M}$ the orientation reversal of $M$.


Now we specify Riemannian metrics on these manifolds: Let $g_{X}$ be a metric on $X$. We consider a harmonic map
\begin{equation}\label{harmonic function}f:X\rightarrow S^{1}\cong \mathds{R}/\mathds{Z}\end{equation}
satisfying
$$
f^{*}(d\theta)=[1]\in H^{1}(X;\mathds{Z}).
$$
It was proved in \cite{ruberman2007dirac} that for a generic choice of $g_{X}$, the Dirac operator
$$\slashed{D}^{+}_{A}:L^{2}_{1}(X;S^{+})\rightarrow  L^{2}(X;S^{-}),$$
associated to the connection $A=A_{0}+i a\cdot f^{*}(d\theta)$ for any $a\in \mathds{R}$, has trivial kernel. We call such metric ``admissible metric''.
\begin{assum}\label{admissible metric}
Throughout this paper, we fix a choice of admissible metric $g_{X}$.
\end{assum}

\begin{rmk}
By the Weitzenb\"ock formula, any metric with positive scalar curvature is admissible. However, we will not impose this positive scalar curvature condition until Section 4.
\end{rmk}

Let $g_{\tilde{X}}$ be the lift of $g_{X}$ on $\tilde{X}$ and $g_{Y}$ be an arbitrary metric on $Y$. Using a cut-off function, we can construct a metric $g_{X_{+}}$ on $X_{+}$ which is isomorphic to the product metric $[0,3]\times g_{Y}$ near the boundary (with $\{0\}\times Y$ identified with $\partial X_{+}$)
 and whose restriction on $X_{+}\setminus W_{0}$ equals $g_{\tilde{X}}$. Let $g_{M}$ be a metric on $M$ isomorphic to the product metric near the boundary. By gluing $g_{M}$ and $g_{X_{+}}$ together, we get a metric $g_{M_{+}}$ on $M_{+}$. Similarly, we obtain the metric
$g_{Z^{+}}$ on $Z^{+}$ by gluing the metric $g_{X_{+}}$ together with the product metric on $Z$.

\subsection{The monopole Floer homology and the Fr\o yshov invariant }
In this subsection, we briefly review the definition of the monopole Floer homology and the Fr\o yshov invariant. For details, we refer to the book and \cite{Froyshov}.

Let $k\geq 3$ be an integer fixed throughout the paper. To begin with, we define
$$
\mathcal{A}_{k-1/2}(Y)=\{B_{0}+a| a\in L^{2}_{k-1/2}(Y;i\mathds{R})\}
$$
as the space of spin$^{\text{c}}$ connections over $Y$ of class $L^{2}_{k-1/2}$. Consider the configuration space:
$$
\mathcal{C}_{k-1/2}(Y)=\mathcal{A}_{k-1/2}(Y)\times L^{2}_{k-1/2}(Y;S).
$$
The pair $(B,\Psi)\in \mathcal{C}_{k-1/2}(Y)$ is called reducible if $\Psi=0$. Denote by $\mathcal{C}^{\text{red}}_{k-1/2}(Y)$ the space of reducible pairs. We will also consider the blown-up configuration space:
\begin{equation}
\begin{split}
\mathcal{C}^{\sigma}_{k-1/2}(Y)=\{&(B,s,\Psi)|\  B\in \mathcal{A}_{k-1/2}(Y),\\
&s\in \mathds{R}_{\geq 0} \text{ and } \Psi\in L^{2}_{k-1/2}(Y;S) \text{ satisfies }\|\Psi\|_{L^{2}}=1\}.\end{split}
\end{equation}
The gauge group
$$\mathcal{G}_{k+1/2}(Y)=\{u:Y\rightarrow S^{1}|\  \|u\|_{L^{2}_{k+1/2}}<\infty\}$$ acts on both $\mathcal{C}_{k-1/2}(Y)$ and $\mathcal{C}^{\sigma}_{k-1/2}(Y)$. Denote the quotient spaces by $\mathcal{B}_{k-1/2}(Y)$ and $\mathcal{B}^{\sigma}_{k-1/2}(Y)$ respectively. It was proved in the book that $\mathcal{C}_{k-1/2}(Y)$ and $\mathcal{B}_{k-1/2}(Y)$ are Hilbert manifolds without boundary, while
$\mathcal{C}_{k-1/2}(Y)$ and $\mathcal{B}_{k-1/2}(Y)$ are Hilbert manifolds with boundary.

We define the Chern-Simons-Dirac functional $\mathcal{L}$ (with $B_{0}$ as the preferred reference connection) on $C_{k-1/2}(Y)$ as
\begin{equation}\label{CSD}
\mathcal{L}(B,\Psi)=-\frac{1}{8}\int_{Y}(B^{t}-B^{t}_{0})\wedge (F_{B^{t}}+F_{B^{t}_{0}})+\frac{1}{2}\int_{Y}\langle \slashed{D}_{B}\Psi,\Psi\rangle\,d\text{vol},
\end{equation}
where $B^{t}$ and $B_{0}^{t}$ denote the induced connections on the determine bundle $\text{det}(S)$ and $F_{B^{t}}$, $F_{B_{0}^{t}}$ denote their curvatures. We denote by $\operatorname{grad}\mathcal{L}$ the formal gradient of $\mathcal{L}$. This is a section of the $L^{2}_{k-3/2}$-completed tangent bundle of $\mathcal{C}_{k-1/2}(Y)$. In order to get the transversality condition, we need to add a perturbation $\mathfrak{q}$ on $\operatorname{grad}\mathcal{L}$. The sum $\operatorname{grad}\mathcal{L}+\mathfrak{q}$ is gauge invariant and gives rise to a ``vector field'' $$v_{\mathfrak{q}}^{\sigma}:\mathcal{B}_{k-1/2}^{\sigma}(Y)\rightarrow \mathcal{T}_{k-3/2}(Y),$$ where $\mathcal{T}_{k-3/2}(Y)$ denotes the $L^{2}_{k-3/2}$ completion of the tangent bundle of $\mathcal{B}_{k-1/2}^{\sigma}(Y)$. (We put the quotation marks here because $v_{\mathfrak{q}}^{\sigma}$ is not a section of the actual tangent bundle). We call the perturbation $\mathfrak{q}$ admissible if all critical points of $v^{\sigma}_{\mathfrak{q}}$ are nondegenerate and the moduli spaces of flow lines connecting them are regular. (See Page 411 of the book for an exact definition.) Under this admissibility condition, the set $\mathfrak{C}$ of critical points of $v^{\sigma}_{\mathfrak{q}}$ is discrete and can be decomposed into the disjoint union of three subsets:
\begin{itemize}
\item $\mathfrak{C}^{o}$: the set of  irreducible critical points;
\item $\mathfrak{C}^{s}$: the set of reducible, boundary stable critical points (i.e., reducible critical points where $v^{\sigma}_{\mathfrak{q}}$ points outside the boundary);
\item $\mathfrak{C}^{u}$: the set of reducible, boundary unstable critical points (i.e., reducible critical points where $v^{\sigma}_{\mathfrak{q}}$ points inside the boundary).
\end{itemize}
The monopole Floer homologies $\widebar{HM}(Y,\mathfrak{s};\mathds{Q})$,
$\widecheck{HM}(Y,\mathfrak{s};\mathds{Q})$ and $\widehat{HM}(Y,\mathfrak{s};\mathds{Q})$ are defined as the homology of the chain complexes freely generated by $\mathfrak{C}^{o}$, $\mathfrak{C}^{o}\cup \mathfrak{C}^{s}$ and $\mathfrak{C}^{s}\cup \mathfrak{C}^{u}$ respectively.

Our main concern will be $\widebar{HM}(Y,\mathfrak{s};\mathds{Q})$ and $\widecheck{HM}(Y,\mathfrak{s};\mathds{Q})$. To give the precise definitions, we first recall that a two-element set $\Lambda([\mathfrak{b}])$ (called the orientation set) can be associated to each $[\mathfrak{b}]\in \mathfrak{C}$ (see Section 20.3 of the book). After making a choice of preferred element $\chi([\mathfrak{b}])\in\Lambda([\mathfrak{b}])$ for each $[\mathfrak{b}]$, we can canonically orient the moduli spaces of trajectories connecting them. Now let $C^{o}$ (resp. $C^{u}$ and $C^{s}$) be a vector space over $\mathds{Q}$ with basis $\{e_{[\mathfrak{b}]}\}$ indexed by elements $[\mathfrak{b]}$ in $\mathfrak{C}^{o}$ (resp. $\mathfrak{C}^{s}$ and  $\mathfrak{C}^{u}$). We define the linear maps
$$
\partial^{o}_{o}:C^{o}\rightarrow C^{o},\ \ \ \ \partial^{o}_{s}:C^{o}\rightarrow C^{s},
$$
$$
\partial^{u}_{o}:C^{u}\rightarrow C^{o},\ \ \ \ \partial^{u}_{s}:C^{u}\rightarrow C^{s}.
$$
by the formulae
$$
\partial^{o}_{o}e_{[\mathfrak{b}]}=\mathop{\sum}\limits_{[\mathfrak{b}']\in \mathfrak{C}^{o}} \#\breve{\mathcal{M}}([\mathfrak{b}],[\mathfrak{b}'])\cdot e_{[\mathfrak{b}']}\ \ \ \ ([\mathfrak{b}]\in \mathfrak{C}^{o})
$$
and so on, where the integer $\#\breve{\mathcal{M}}([\mathfrak{b}],[\mathfrak{b}'])$ counts (with sign) the number of points in $\breve{\mathcal{M}}([\mathfrak{b}],[\mathfrak{b}'])$ (the moduli space of Seiberg-Witten trajectories going from $[\mathfrak{b}]$ to $[\mathfrak{c}]$) that has dimension $0$.

By considering the number $\#\breve{\mathcal{M}}^{\text{red}}([\mathfrak{b}],[\mathfrak{b}'])$ instead (i.e., only counting reducible trajectories), we can similarly define the linear maps
$$
\bar{\partial}^{s}_{s}:C^{s}\rightarrow C^{s},\ \ \ \
\bar{\partial}^{s}_{u}:C^{s}\rightarrow C^{u},$$
$$
\bar{\partial}^{u}_{s}:C^{u}\rightarrow C^{s},\ \ \ \ \bar{\partial}^{u}_{u}:C^{u}\rightarrow C^{u}.
$$
(We note that $\bar{\partial}^{u}_{s}$ is different with $\partial^{u}_{s}$.)

The following definition was given as Definition 22.1.7 of the book.
\begin{defi}\label{monopole Floer}
The monopole Floer homology groups $\widebar{HM}_{*}(Y,\mathfrak{s};\mathds{Q})$ and $\widecheck{HM}_{*}(Y,\mathfrak{s};\mathds{Q})$ are defined as the homology groups of the chain complexes $\bar{C}=C^{s}\oplus C^{u}$ and  $\check{C}=C^{o}\oplus C^{s}$ with the differentials
\begin{equation}\label{differential for hm-bar}
\bar{\partial}=\left(\begin{array} {cc}
 \bar{\partial}^{s}_{s}  & \bar{\partial}^{u}_{s}  \\
 \bar{\partial}^{s}_{u}  & \bar{\partial}^{u}_{u}
\end{array}\right)\text{ and }\check{\partial}:=\left(\begin{array} {cc}
 \partial^{o}_{o}  & -\partial^{u}_{o}\bar{\partial}^{s}_{u} \\
 \partial^{o}_{s} & \bar{\partial}^{s}_{s}-\partial^{u}_{s}\bar{\partial}^{s}_{u}
\end{array}\right)
\end{equation}
respectively. There is a natural map $i_{*}:\widebar{HM}_{*}(Y,\mathfrak{s};\mathds{Q})\rightarrow \widecheck{HM}_{*}(Y,\mathfrak{s};\mathds{Q})$  induced by the chain map $i:\bar{C}\rightarrow \check{C}$ defined as
\begin{equation}\label{chain map}
\left(\begin{array} {cc}
0  & -\partial^{u}_{o}\\
 1 & -\partial^{u}_{s}
\end{array}\right).
\end{equation}
\end{defi}
To each $[\mathfrak{b}]\in \mathfrak{C}$, we can assign a rational number $\operatorname{gr}^{\mathds{Q}}([\mathfrak{b}])$ (called the absolute grading) as follows (see Definition 28.3.1 of the book): Let $\operatorname{gr}(M,[\mathfrak{b}])$ be the ``relative $M$-grading'' of $[\mathfrak{b}]$. This number describes the expected dimension of the Seiberg-Witten moduli space on the manifold $M^{*}=M\cup_{Y} ([0,+\infty)\times Y)$ with limit $[\mathfrak{b}]$. It was proved in the book that the quantity
\begin{equation}\label{absolute grading}
-\operatorname{gr}(M,[\mathfrak{b}])-b_{2}^{+}(M)-\frac{1}{4}\operatorname{sign}(M)-1
\end{equation}
does not depend on the choice of $M$ and we define it as $\operatorname{gr}^{\mathds{Q}}([\mathfrak{b}])$. This grading induces absolute gradings on $\widebar{HM}_{*}(Y,\mathfrak{s};\mathds{Q}),\  \widehat{HM}_{*}(Y,\mathfrak{s};\mathds{Q})$ and $\widecheck{HM}_{*}(Y,\mathfrak{s};\mathds{Q})$. The map $i_{*}$ in Definition \ref{monopole Floer} preserves this grading.
\begin{rmk}
In (\ref{absolute grading}), we use $\operatorname{gr}(M,[\mathfrak{b}])$ instead of $\operatorname{gr}([\mathfrak{a}_{0}],M\setminus B^{4},[\mathfrak{b}])$ as in the book. Here $[\mathfrak{a}_{0}]$ denotes the first boundary stable critical point in $\mathcal{B}^{\sigma}_{k-1/2}(S^{3})$. These two gradings satisfy the relation (see Lemma 27.4.2 of the book)
$$
\operatorname{gr}(M,[\mathfrak{b}])=\operatorname{gr}(B^{4},[\mathfrak{a}_{0}])+\operatorname{gr}([\mathfrak{a}_{0}],M\setminus B^{4},[\mathfrak{b}])=-1+\operatorname{gr}([\mathfrak{a}_{0}],M\setminus B^{4},[\mathfrak{b}]).
$$
This explains the extra term ``$-1$'' in our formula.
\end{rmk}

\begin{rmk}
In general, one needs to specify a connected component of $\mathcal{B}^{\sigma}_{k}(M)$ (the blown-up quotient configuration space of $M$) to define the relative $M$-grading. However, in our case the space $\mathcal{B}^{\sigma}_{k}(M)$ is connected since $b_{1}(Y)=0$.
\end{rmk}

\begin{defi}\cite{Froyshov}
The Fr\o yshov invariant is defined as
$$\operatorname{h}(Y,\mathfrak{s}):=-\frac{1}{2}\cdot\inf\{\operatorname{gr}^{\mathds{Q}}([\mathfrak{b}])|[\mathfrak{b}]\text{ represents a nonzero elements in }\operatorname{im}i_{*}\}.$$
\end{defi}
The following lemma was proved in \cite{Froyshov} (in a (possibly) different version of monopole Floer homology). The proof can be easily adapted to the version used in the book.
\begin{lem}\label{orientation reversal}
For any rational homology sphere $Y$ and any  spin$^{\text{c}}$ structure $\mathfrak{s}$ on $Y$, we have $\operatorname{h}(-Y,\mathfrak{s})=-\operatorname{h}(Y,\mathfrak{s})$.
\end{lem}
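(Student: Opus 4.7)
My plan is to establish Lemma \ref{orientation reversal} by comparing the Seiberg-Witten Floer data on $Y$ and $-Y$ directly, tracking how each ingredient in the definition of $\operatorname{h}$ transforms under orientation reversal.

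First, I would observe that under orientation reversal the Chern-Simons-Dirac functional (\ref{CSD}) changes sign (the Chern-Simons term is odd under orientation reversal, and the Dirac operator likewise flips sign), so its formal gradient reverses direction. With a compatibly chosen admissible perturbation, this induces natural bijections $\mathfrak{C}^{o}(-Y)=\mathfrak{C}^{o}(Y)$, $\mathfrak{C}^{s}(-Y)=\mathfrak{C}^{u}(Y)$, and $\mathfrak{C}^{u}(-Y)=\mathfrak{C}^{s}(Y)$, since the inward/outward normal distinction on the boundary of the blown-up configuration space is reversed by flow reversal. The trajectory moduli spaces are identified via time reversal, $\breve{\mathcal{M}}^{-Y}([\mathfrak{b}],[\mathfrak{b}'])=\breve{\mathcal{M}}^{Y}([\mathfrak{b}'],[\mathfrak{b}])$. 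Following Chapter 22 of the book, these identifications yield chain-level duality isomorphisms
$$\bar{C}_{*}(-Y)\cong (\bar{C}_{-*}(Y))^{\vee},\quad \check{C}_{*}(-Y)\cong (\hat{C}_{-*}(Y))^{\vee},\quad \hat{C}_{*}(-Y)\cong (\check{C}_{-*}(Y))^{\vee},$$
under which the chain map $i^{-Y}$ is identified, up to a grading shift, with the $\mathds{Q}$-dual of the connecting chain map $p^{Y}:\hat{C}(Y)\to\bar{C}(Y)$. Passing to homology, $\operatorname{im}(i_{*}^{-Y})$ becomes, under the perfect pairing, the annihilator of $\ker(p_{*}^{Y})=\operatorname{im}(j_{*}^{Y})$.

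Next, I would verify that the absolute grading negates under the bijection of critical points: $\operatorname{gr}^{\mathds{Q}}_{-Y}([\mathfrak{b}])=-\operatorname{gr}^{\mathds{Q}}_{Y}([\mathfrak{b}])$. Starting from formula (\ref{absolute grading}) and replacing the bounding cobordism $M$ by $\bar{M}$ (which bounds $-Y$), one has $\operatorname{sign}(\bar{M})=-\operatorname{sign}(M)$ and $b_{2}^{+}(\bar{M})=b_{2}^{-}(M)$. The relative grading $\operatorname{gr}(\bar{M},[\mathfrak{b}])$ is computed from the index of the linearized Seiberg-Witten operator on the cylindrical-end manifold $\bar{M}^{*}$, which after the standard APS identification is the formal adjoint of the corresponding operator on $M^{*}$; a short APS-type index calculation then yields a clean relation between $\operatorname{gr}(\bar{M},[\mathfrak{b}])$ and $\operatorname{gr}(M,[\mathfrak{b}])$ in terms of purely topological data such as $\chi(M)$ and $\operatorname{sign}(M)$. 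Substituting back into (\ref{absolute grading}), those topological corrections cancel and leave the stated sign flip.

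Combining these two ingredients, the infimum defining $\operatorname{h}(-Y)$ equals the negative of the infimum defining $\operatorname{h}(Y)$: the grading negation converts the infimum on the $-Y$-side into minus a supremum on the $Y$-side, while the chain-level duality together with the long exact sequence identifications $\ker(p_{*}^{Y})=\operatorname{im}(j_{*}^{Y})$ and $\ker(j_{*}^{Y})=\operatorname{im}(i_{*}^{Y})$ turns the annihilator computation into a statement about $\operatorname{im}(i_{*}^{Y})$ itself. Dividing by $-2$ then gives $\operatorname{h}(-Y)=-\operatorname{h}(Y)$. I expect the main technical obstacle to be the index-theoretic calculation justifying the sign flip of the absolute grading, together with the careful bookkeeping of grading shifts in the chain-level duality identification; once these are pinned down, the remaining steps are essentially formal consequences of the orientation-reversal symmetry of the Kronheimer-Mrowka chain complex.
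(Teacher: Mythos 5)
A preliminary remark on the comparison: the paper itself does not prove this lemma — it simply cites Fr\o yshov and asserts that the argument adapts to the book's framework — so your proposal has to be judged on its own merits; your strategy (orientation-reversal duality of the Kronheimer--Mrowka complexes combined with the behaviour of the absolute grading) is indeed the standard route, essentially the one the citation points to. However, there is a genuine error at the step your argument leans on most: the grading relation. In the book's normalization the correct statement is $\operatorname{gr}^{\mathds{Q}}_{-Y}([\mathfrak{b}])=-\operatorname{gr}^{\mathds{Q}}_{Y}([\mathfrak{b}])-1$, not the clean negation you claim. You can check this from (\ref{absolute grading}): replacing $M$ by $\bar{M}$ gives $\operatorname{sign}(\bar{M})=-\operatorname{sign}(M)$ and $b_{2}^{+}(\bar{M})=b_{2}^{-}(M)$, and gluing the two cylindrical-end problems along a nondegenerate irreducible critical point gives $\operatorname{gr}(M,[\mathfrak{b}])+\operatorname{gr}(\bar{M},[\mathfrak{b}])=d(M\cup_{Y}\bar{M})=-\chi(M)$ for the spin structure on the double (zero signature), so that $\operatorname{gr}^{\mathds{Q}}_{-Y}([\mathfrak{b}])+\operatorname{gr}^{\mathds{Q}}_{Y}([\mathfrak{b}])=\chi(M)-b_{2}(M)-2=-1$ when $b_{1}(M)=0$. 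The reducible picture says the same thing: with a nice perturbation the bottom boundary-stable point of $-Y$ is $[\mathfrak{a}_{-1}]$, and gluing the operator $D_{\mathfrak{q}}$ of Lemma \ref{height as index} on $M$ to the complementary spectral boundary-value problem on $\bar{M}$ (the closed double is spin with vanishing signature, hence index $0$) gives $\operatorname{gr}^{\mathds{Q}}_{-Y}([\mathfrak{a}_{-1}])=-\operatorname{gr}^{\mathds{Q}}_{Y}([\mathfrak{a}_{0}])=-\operatorname{gr}^{\mathds{Q}}_{Y}([\mathfrak{a}_{-1}])-1$. So the ``topological corrections'' do not cancel as you assert.

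This matters because the statement $\operatorname{h}(-Y,\mathfrak{s})=-\operatorname{h}(Y,\mathfrak{s})$ has no shift: the extra $-1$ must be cancelled against the degree shift in the duality pairing (which matches degree $j$ for $-Y$ with degree $-j-1$ for $Y$), and that is exactly the bookkeeping you wave off with ``up to a grading shift.'' Similarly, the last step — converting the annihilator of $\operatorname{im}(j_{*}^{Y})$ back into the infimum over $\operatorname{im}(i_{*}^{Y})$ that defines $\operatorname{h}$ — is not purely formal: it uses the tower structure of $\widebar{HM}_{*}(Y,\mathfrak{s};\mathds{Q})$ coming from the reducibles (that $i_{*}$ is an isomorphism in all sufficiently negative degrees, dually that $p_{*}$ is onto in all sufficiently positive ones), together with the exactness identifications, and none of that is spelled out. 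The skeleton of your argument is the right one and can be completed, but as written the central quantitative claim is false, and the places where that falsehood would surface are precisely the ones left schematic.
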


\begin{defi}
An admissible perturbation $\mathfrak{q}$ is called a ``nice perturbation'' if $\mathfrak{q}=0$ when restricted to ${\mathcal{C}^{\operatorname{red}}_{k-1/2}(Y)}$.
\end{defi}

\begin{rmk}\label{component of perturbation}
Since the tangent bundle of $\mathcal{C}_{k-1/2}(Y)$ is trivial with fiber $$L^{2}_{k-1/2}(Y;i\mathds{R})\oplus L^{2}_{k-1/2}(Y;S),$$ we can write the perturbation $\mathfrak{q}$ as $(\mathfrak{q}^{0},\mathfrak{q}^{1})$, where $\mathfrak{q}^{0}$ denotes the connection component and $\mathfrak{q}^{1}$ denotes the spinor component. Note that by the gauge invariance, the restriction of $\mathfrak{q}^{1}$ to $\mathcal{C}^{\operatorname{red}}_{k-1/2}(Y)$ is always $0$. Therefore, an admissible perturbation $\mathfrak{q}$ is nice if and only if $\mathfrak{q}^{0}=0$ when restricted to ${\mathcal{C}^{\operatorname{red}}_{k-1/2}(Y)}$.
\end{rmk}

Under the assumption that $\mathfrak{q}$ is nice, there is only one reducible critical point downstairs (up to gauge transformation), which is just $(B_{0},0)$. As for the critical points upstairs, the sets $\mathfrak{C}^{u}$ and $\mathfrak{C}^{s}$ can be described explicitly as follows:
Consider the self-adjoint operator
\begin{equation}\label{perturbed dirac}
\slashed{D}_{\mathfrak{q},B_{0}}:L^{2}_{k-1/2}(Y;S)\rightarrow L^{2}_{k-3/2}(Y;S)
\end{equation}
$$
\Psi\mapsto \slashed{D}_{B_{0}}\Psi+\mathcal{D}_{(B_{0},0)}\mathfrak{q}^{1}(0,\Psi) .
$$
Since $\mathfrak{q}$ is admissible, $0$ is not an eigenvalue of $\slashed{D}_{\mathfrak{q},B_{0}}$ and all eigenvalues have multiplicity $1$ (see Proposition 12.2.5 of the book). We arrange the eigenvalues $\lambda_{*}$ so that
$$
...\lambda_{-2}<\lambda_{-1}<0<\lambda_{0}<\lambda_{1}<...
$$
For each $i$, we pick an eigenvector $\psi_{i}$  with eigenvalue $\lambda_{i}$ and $\|\psi_{i}\|_{L^{2}}=1$. We let $[\mathfrak{a}_{i}]=[(B_{0},0,\psi_{i})]$.  By Proposition 10.3.1 of the book, we have
$$
\mathfrak{C}^{s}=\{[\mathfrak{a}_{i}]|\,i\geq 0\},\ \mathfrak{C}^{u}=\{[\mathfrak{a}_{i}]|\,i< 0\}.
$$
From now on, we always use $[\mathfrak{a}_{*}]$ to denote these reducible critical points. Note that $\operatorname{gr}^{\mathds{Q}}([\mathfrak{a}_{i}])-\operatorname{gr}^{\mathds{Q}}([\mathfrak{a}_{i-1}])$ equals $1$ when $i=0$ and equals $2$ otherwise.

\begin{defi}
Let $\mathfrak{\mathfrak{q}}$ be a nice pertrubation. The height of $\mathfrak{q}$ is defined as $$\operatorname{ht}(\mathfrak{q})=\operatorname{gr}^{\mathds{Q}}([\mathfrak{a}_{0}]).$$
In other words, the height is defined to be the absolute grading of the lowest boundary stable critical point.
\end{defi}
Consdier the operator
$$
D_{\mathfrak{q}}:L^{2}_{k}(M;S^{+})\rightarrow L^{2}_{k-1}(M;S^{-})\oplus (L^{2}_{k-1/2}(Y;S)\cap H_{1}^{-})
$$
$$
\Phi\mapsto (\slashed{D}^{+}_{\hat{\mathfrak{q}},A_{0}}\Phi,\pi^{-}(\Phi|_{Y}))
$$
where $\slashed{D}^{+}_{\hat{\mathfrak{q}},A_{0}}$ is a perturbed Dirac operator over $M$ which equals $\frac{d}{dt}+\slashed{D}_{\mathfrak{q},B_{0}}$ near the boundary; $H^{-}_{1}$ (resp. $H^{+}_{1}$) is the closure in $L^{2}(Y;S)$ of the eigenvectors of $\slashed{D}_{\mathfrak{q},B_{0}}$ with negative (resp. positive) eigenvalue; $\pi^{-}$ is the projection to $L^{2}_{k-1/2}(Y;S)\cap H_{1}^{-}$ with kernel $H^{+}_{1}$.
\begin{lem}\label{height as index}
For any nice perturbation $\mathfrak{q}$, we have
\begin{equation}\label{height as eta invariant}
\operatorname{ht}(\mathfrak{q})=-2\operatorname{ind}_{\mathds{C}}D_{\mathfrak{q}}-\tfrac{\operatorname{sign}(M)}{4}.
\end{equation}
\end{lem}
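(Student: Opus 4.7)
The strategy is to unpack the definition (\ref{absolute grading}) of the absolute grading and reduce the claim to a computation of the Fredholm index of the linearized (and Coulomb gauge-fixed) Seiberg--Witten operator at a reducible configuration on $M^{*} = M \cup_{Y}([0,+\infty) \times Y)$. Substituting $[\mathfrak{b}] = [\mathfrak{a}_{0}]$ into (\ref{absolute grading}) and using $\operatorname{ht}(\mathfrak{q}) = \operatorname{gr}^{\mathds{Q}}([\mathfrak{a}_{0}])$, the claim becomes equivalent to the identity
$$\operatorname{gr}(M,[\mathfrak{a}_{0}]) = 2\operatorname{ind}_{\mathds{C}}D_{\mathfrak{q}} - b_{2}^{+}(M) - 1.$$
By definition of the relative $M$-grading, the left-hand side is the Fredholm index of the perturbed Seiberg--Witten linearization (together with its gauge-slice condition) on $M^{*}$, based at a reducible solution $(A_{0},0)$ whose blown-up asymptotic limit on the cylindrical end is $[\mathfrak{a}_{0}]$.

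I would then split this linearization into its spinor block, acting on sections of $S^{+}$, and its connection/gauge block, acting on sections of $iT^{*}M\oplus i\mathds{R}$; these decouple at any reducible configuration. For the spinor block, $[\mathfrak{a}_{0}] = [(B_{0},0,\psi_{0})]$ is boundary-stable with $\psi_{0}$ the eigenvector of the smallest positive eigenvalue $\lambda_{0}$, so the asymptotic condition is that $\Phi$ decays on the cylindrical end along the non-negative spectrum of $\slashed{D}_{\mathfrak{q},B_{0}}$. Translated into an APS-type boundary problem on $M$, this is precisely the Fredholm problem defining $D_{\mathfrak{q}}$: admissibility of $\mathfrak{q}$ ensures that $0\notin\operatorname{spec}(\slashed{D}_{\mathfrak{q},B_{0}})$, so the projection $\pi^{-}$ is unambiguous. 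Since $D_{\mathfrak{q}}$ is complex-linear, this block contributes $2\operatorname{ind}_{\mathds{C}}D_{\mathfrak{q}}$ to the real Fredholm index.

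The remaining connection/gauge block is essentially $(d^{+}\oplus -d^{*})\colon L^{2}_{k}(M;iT^{*}M)\to L^{2}_{k-1}(M;i\Omega^{+}\oplus i\mathds{R})$ fitted with an APS-type boundary condition coming from the operator $*d$ on $i\Omega^{1}(Y)/\operatorname{im}(d)$, the Hessian of $\mathcal{L}$ in the reducible directions; the assumption $b_{1}(Y)=0$ implies this tangential operator has trivial kernel, so no spectral ambiguity arises at zero. Hodge theory on $M^{*}$ in weighted Sobolev spaces, combined with the long exact sequence of the pair $(M,Y)$, then identifies the kernel of this block with $H^{1}(M,Y;\mathds{R})$ and its cokernel with $\mathcal{H}^{+}(M)\oplus\mathcal{H}^{0}(M)$; under the hypotheses $b_{1}(M)=0=b_{1}(Y)$ these have dimensions $0$ and $b_{2}^{+}(M)+1$ respectively, so the block index equals $-b_{2}^{+}(M)-1$. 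Summing the two block indices yields the desired identity. The main obstacle is reconciling the spectral boundary conventions on both blocks between the operator $D_{\mathfrak{q}}$ as written in the statement and the weighted Sobolev setup used in the book to define $\operatorname{gr}(M,[\mathfrak{a}_{0}])$; once that bookkeeping is settled, the rest reduces to the Hodge theory just described and to the complex-linearity of the Dirac block.
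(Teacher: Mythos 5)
Your proposal is correct and follows essentially the same route as the paper: reduce via (\ref{absolute grading}) to the identity $\operatorname{gr}(M,[\mathfrak{a}_{0}])=2\operatorname{ind}_{\mathds{C}}D_{\mathfrak{q}}-b_{2}^{+}(M)-1$, identify this relative grading with the index of the linearized, gauge-fixed Seiberg--Witten operator at the reducible asymptotic to $[\mathfrak{a}_{0}]$, and decouple it into the Dirac block, which is exactly $D_{\mathfrak{q}}$, plus the form block of index $-b_{2}^{+}(M)-1$. The only cosmetic difference is that you recompute the form-block index by Hodge theory on $M^{*}$, whereas the paper quotes Lemma 24.8.1 of the book for the same count.
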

\begin{proof}
By the same argument as Page 508 of the book, we can identify $\operatorname{grad}(M,[\mathfrak{a}_{0}])$ with the index of the Fredholm operator (24.41) in the book. A further deformation identifies this index with the index of the operator  $D_{\mathfrak{q}}\oplus B$, where $B$ is the Fredholm operator
$$
L^{2}_{k}(M;iT^{*}M)\rightarrow L^{2}_{k-1}(M;i\mathds{R}\oplus i\wedge^{2}_{+}T^{*}M)\oplus L^{2}_{k-1/2}(Y;i\mathds{R})\oplus C^{-}
$$
$$
\alpha\mapsto (d^{*}\alpha,d^{+}\alpha,\langle \alpha,\vec{v}\rangle,\alpha^{-}).
$$
Here $C^{-}\subset (\operatorname{ker}d^{*}\cap L^{2}_{k-1/2}(Y;iT^{*}Y))$ denotes the negative eigenspace of the operator $*d$ and $\alpha^{-}\in C^{-}$ denotes projection of $\alpha|_{Y}$.  By Lemma 24.8.1 of the book, we have $\operatorname{ind}_{\mathds{R}}B=-b^{+}_{2}(M)-1$. Therefore, we get
$$
\operatorname{grad}(M,[\mathfrak{a}_{0}])=2\operatorname{ind}_{\mathds{C}}D_{q}-b_{2}^{+}(M)-1.
$$
By (\ref{absolute grading}), this implies the lemma.
\end{proof}
Now consider the following subset of $\mathds{Q}$
$$
\mathfrak{m}(Y,\mathfrak{s})=\{a\in \mathds{Q}|\, a=[-\frac{\operatorname{sign}(M)}{8}]\in \mathds{Q}/\mathds{Z}\}.
$$
\begin{rmk}$\mathfrak{m}(Y,\mathfrak{s})$ is actually determined by the Rohlin invariant $\rho(Y,\mathfrak{s})$ and hence independent with the choice of $M$.\end{rmk}

\begin{pro}\label{height of nice perturbation}
For any $e\in \mathfrak{m}(Y,\mathfrak{s})$, there exists a nice perturbation $\mathfrak{q}$ with $\tfrac{\operatorname{ht}(\mathfrak{q})}{2}=e$.
\end{pro}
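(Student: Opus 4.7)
The plan is to use Lemma~\ref{height as index} to reduce the statement to a Fredholm-index realization problem. Since
$$\operatorname{ht}(\mathfrak{q})/2 = -\operatorname{ind}_{\mathds{C}} D_{\mathfrak{q}} - \operatorname{sign}(M)/8,$$
and the condition $e \in \mathfrak{m}(Y,\mathfrak{s})$ says exactly that $n := -e - \operatorname{sign}(M)/8$ is an integer, it suffices to prove that every integer $n$ arises as $\operatorname{ind}_{\mathds{C}} D_{\mathfrak{q}}$ for some nice admissible $\mathfrak{q}$.

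To do this I fix any nice admissible base perturbation $\mathfrak{q}_0$ and construct a one-parameter family $\mathfrak{q}_t = \mathfrak{q}_0 + t\mathfrak{p}$ of nice perturbations for which the boundary Dirac operator undergoes a uniform spectral shift. The candidate $\mathfrak{p}$ is the formal $L^2$-gradient of the gauge-invariant cylinder function $F(B, \Psi) = \tfrac{1}{2}\int_Y |\Psi|^2\, dV$: its spinor component at $(B,\Psi)$ is $\Psi$ and its connection component vanishes identically, so $\mathfrak{p}$ is nice by Remark~\ref{component of perturbation}, and its linearization at the reducible $(B_0, 0)$ equals the identity on the spinor bundle. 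Consequently
$$\slashed{D}_{\mathfrak{q}_t, B_0} = \slashed{D}_{\mathfrak{q}_0, B_0} + t \cdot \operatorname{id},$$
whose eigenvectors coincide with those for $\mathfrak{q}_0$ and whose eigenvalues are the shifted sequence $\{\lambda_i(\mathfrak{q}_0) + t\}$.

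Spectral flow now controls the index. On the complement of the discrete exceptional set $T^* = \{-\lambda_i(\mathfrak{q}_0)\}_i$, the boundary operator has trivial kernel, $D_{\mathfrak{q}_t}$ is Fredholm, and its index is locally constant in $t$; at each parameter in $T^*$ exactly one eigenvalue of the boundary operator crosses zero, and the standard APS spectral-flow formula produces a $\pm 1$ jump in $\operatorname{ind}_{\mathds{C}} D_{\mathfrak{q}_t}$. Because the $\lambda_i$ are unbounded both above and below, $\operatorname{ind}_{\mathds{C}} D_{\mathfrak{q}_t}$ realizes every integer value on the components of $\mathds{R}\setminus T^*$. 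Given the target $n$, pick $t^*$ with $\operatorname{ind}_{\mathds{C}} D_{\mathfrak{q}_{t^*}} = n$. If $\mathfrak{q}_{t^*}$ is already admissible, we are done; otherwise we add a sufficiently small nice admissible correction $\delta\mathfrak{q}$, relying on the fact that niceness cuts out a closed linear subspace of the tame-perturbation Banach space on which admissibility remains residual (the transversality arguments of Section~11 of the book only require freedom to vary the spinor component of $\mathfrak{q}$ at reducibles and the connection component away from reducibles, both of which survive inside the nice subspace). Stability of the Fredholm index under small perturbations preserves the value $n$.

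The main technical obstacle is the structural bookkeeping: verifying that $F = \tfrac{1}{2}\|\Psi\|_{L^2}^2$, or a finite-Fourier-mode approximation of it, lies in the precise class of tame perturbations used in the book, and that admissibility does remain residual within the nice subspace. These are routine extensions of the book's constructions; the substance of the argument is the spectral-shift computation above.
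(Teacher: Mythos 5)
Your proposal is correct in outline and follows the same basic route as the paper: reduce to an index-realization problem via Lemma \ref{height as index}, realize every integer index by spectral flow of the perturbed Dirac operator at the reducible within the class of nice perturbations, and then restore admissibility by a correction that does not change the index. The differences are in the implementation. Where you shift the whole spectrum at once using the gradient of $F(B,\Psi)=\tfrac12\|\Psi\|_{L^2}^2$ (a single, manifestly gauge-invariant quadratic function), the paper tunes the eigenvalues one at a time using the functions $f_n(B_0+a,\Psi)=|\langle e^{i\xi}\Psi,\psi_n\rangle_{L^2}|^2$ with a rapidly decaying coefficient sequence $\{c_n\}$; the dressing by $e^{i\xi}$ is exactly what makes each individual eigenprojection gauge-invariant, so the ``finite-Fourier-mode approximation'' you mention in passing would not be gauge-invariant without it, whereas your full quadratic $F$ is fine (you do still need to check it meets Definition 10.5.1 of the book, as you note). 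The more substantive difference is the last step. The paper chooses $\epsilon>0$ so that $U(\epsilon)=\{\|\Phi\|_{L^2}<\epsilon\}$ contains no irreducible critical points and takes the admissibility correction from the space $\mathcal{P}(U(\epsilon))$ of perturbations vanishing on all of $U(\epsilon)$: this makes niceness automatic, leaves the reducible critical points, the simplicity of the spectrum of $\slashed{D}_{\mathfrak{q}',B_0}$, and the operator $D_{\mathfrak{q}}$ literally unchanged, so the height is preserved exactly with no stability argument. Your version, which perturbs inside the whole nice subspace and invokes stability of the Fredholm index, can be made to work, but you must additionally observe that a sufficiently small nice correction cannot push an eigenvalue of the reducible Dirac operator across zero or create a repeated eigenvalue (corrections that vanish only on the reducible locus can still change the linearization there, cf.\ Remark \ref{component of perturbation}), and you still owe the same residual-genericity statement for the nice subspace that the paper obtains by ``repeating the proof of Theorem 15.1.1'' for $\mathcal{P}(U(\epsilon))$; restricting to that smaller subspace is what makes the adaptation clean, since such corrections cannot interact with the reducible data at all.
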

\begin{proof}
Let $\{\psi_{n}|\,n\in \mathds{Z}_{\geq 0}\}$ be a complete, orthonormal set of eigenvectors of  $\slashed{D}_{B_{0}}$. Let the eigenvalue of $\psi_{n}$ be $\lambda_{n}'$. For each $n$, we consider the the function
$$
f_{n}:\mathcal{C}_{k-1/2}(Y)\rightarrow \mathds{R}
$$
$$
(B_{0}+a,\Psi)\mapsto |\langle e^{i\xi}\Psi,\psi_{n}\rangle_{L^{2}}|^{2}
$$
where $\xi:Y\rightarrow\mathds{R}$ is the unique solution of
$$
i\Delta \xi=d^{*}da,\ \int_{Y}\xi=0.
$$
One can prove that $f_{n}$ is invariant under the action of $\mathcal{G}_{k+1/2}(Y)$. We denote by $\mathfrak{q}_{n}$ the formal gradient of $f_{n}$. A simple calculation shows that
$$
\mathcal{D}_{(B_{0},0)}q_{n}^{1}(0,\Psi)=2\langle \Psi,\psi_{n}\rangle_{L^{2}}\cdot \psi_{n}.
$$
We let $\mathfrak{q}'=\mathop{\sum}\limits_{n=0}^{+\infty}c_{n}\mathfrak{q}_{n}$, where $\{c_{n}\}$ is a sequence of real numbers. We require $|c_{n}|$ decreasing to $0$ fast enough so that $\mathfrak{q}'$ is a tame-perturbation (see Definition 10.5.1 of the book). Now consider the perturbed Dirac operator $\slashed{D}_{\mathfrak{q}',B_{0}}$ (see (\ref{perturbed dirac})). Its eigenvalues are of the form $\lambda'_{n}+2c_{n}$ and the corresponding eigenvector is just $\psi_{n}$. By choosing a generic sequence $\{c_{n}\}$, we can assume
$$
\lambda'_{n}+2c_{n}\neq \lambda'_{m}+c_{m},\ \forall n\neq m\text{ and } \lambda'_{n}+2c_{n}\neq 0,\ \forall n\in\mathds{Z}_{\geq 0}.
$$
Note that the number $
-\operatorname{ind}_{\mathds{C}}D_{\mathfrak{q}'}-\tfrac{\operatorname{sign}(M)}{8}
$
always belongs to $\mathfrak{m}(Y,\mathfrak{s})$. Moreover, as we varies $\{c_{n}\}$, this number changes by the spectral flow of $\slashed{D}_{\mathfrak{q}',B_{0}}$. Therefore, by choosing suitable $\{c_{n}\}$, we may assume that
$$
e=-\operatorname{ind}_{\mathds{C}}D_{\mathfrak{q}'}-\frac{\operatorname{sign}(M)}{8}.
$$
Under this perturbation $\mathfrak{q}'$, the reducible critical points are just $[(B_{0},0,\psi_{n})]$ with $n\geq 0$. All of them are non-degenerate by \cite[Proposition 12.2.5]{KM}. Therefore, by the compactness result of the critical points, we can find $\epsilon>0$ such that the gauge invariant open subset
$$
U(\epsilon)=\{(B,\Phi)| \|\Phi\|_{L^{2}}<\epsilon\} \subset \mathcal{C}_{k-1/2}(Y)
$$
contains no irreducible critical point. Now consider the Banach space
$$\mathcal{P}(U(\epsilon)):=\{\mathfrak{q}''\in \mathcal{P}|\  \mathfrak{q}''|_{U(\epsilon)}= 0\},$$
where $\mathcal{P}$ is the large Banach space of tame perturbations constructed in Theorem 11.6.1 of the book. By repeating the proof of Theorem 15.1.1 of the book, we can find a perturbation $\mathfrak{q}''\in \mathcal{P}(U(\epsilon))$ such that the perturbation $\mathfrak{q}=\mathfrak{q}''+\mathfrak{q}'$ is admissible. Since both $\mathfrak{q}''$ and $\mathfrak{q}'$ vanishes on $\mathcal{C}_{k-1/2}^{\text{red}}(Y)$, the perturbation $\mathfrak{q}$ is nice. Moreover, since $\mathfrak{q}''$ vanishes on $U(\epsilon)$, we have $D_{\mathfrak{q}}=D_{\mathfrak{q}'}$. By Lemma \ref{height as eta invariant}, we have
$$
\frac{\operatorname{ht}(\mathfrak{q})}{2}=-\operatorname{ind}_{\mathds{C}}D_{\mathfrak{q}}-\tfrac{\operatorname{sign}(M)}{8}=-\operatorname{ind}_{\mathds{C}}D_{\mathfrak{q}'}-\tfrac{\operatorname{sign}(M)}{8}=e.
$$This finishes the proof.\end{proof}

\begin{lem}\label{alternative defi of Froyshov}
Suppose $\mathfrak{q}$ is a nice perturbation with $\operatorname{ht}(\mathfrak{q})<-2\operatorname{h}(Y,\mathfrak{s})$. Then we have
\begin{equation}\label{Froyshov 1}
\begin{split}
-2\operatorname{h}(Y,\mathfrak{s})=&\inf\{\operatorname{gr}^{\mathds{Q}}([\mathfrak{a}_{j}])|\,j\geq 0;\   \nexists\ n,m_{1},...,m_{l}\in \mathds{Z}_{\neq 0} \text{ and }[\mathfrak{b}_{1}],...,[\mathfrak{b}_{l}]\in \mathfrak{C}^{o} \text{ s.t. }\\
&\partial^{o}_{o}(m_{1}[\mathfrak{b}_{1}]+...+m_{l}[\mathfrak{b}_{l}])=0 \text{ and } \partial^{o}_{s}(m_{1}[\mathfrak{b}_{1}]+...+m_{l}[\mathfrak{b}_{l}])=n[\mathfrak{a}_{j}]\}.
\end{split}
\end{equation}
\end{lem}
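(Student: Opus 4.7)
The strategy is to identify the combinatorial condition in the lemma with the non-triviality of the class $[(0, e_{[\mathfrak{a}_j]})] \in \widecheck{HM}_*(Y, \mathfrak{s}; \mathds{Q})$ lying in $\operatorname{im}(i_*)$, and then to invoke the definition of the Fr\o yshov invariant. The first step is to use that $\mathfrak{q}$ is nice: the reducible flow on the blown-up configuration space is the standard eigenvalue gradient flow on $\mathbb{C}P^{\infty}$, and the absolute-grading pattern (jumps of $2$ between consecutive $[\mathfrak{a}_i]$ except across $i=0$ versus $i=-1$ where it is $1$) combined with the dimension formulas for boundary-obstructed trajectories reduces the reducible boundary operators $\bar\partial^s_s, \bar\partial^u_u, \bar\partial^u_s, \bar\partial^s_u$ to very explicit, essentially minimal expressions concentrated near $[\mathfrak{a}_0]$ and $[\mathfrak{a}_{-1}]$. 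In particular, for every $j \geq 1$ the element $(e_{[\mathfrak{a}_j]}, 0) \in \bar{C}$ is a $\bar\partial$-cycle and the chain map $i$ of (\ref{chain map}) sends it to $(0, e_{[\mathfrak{a}_j]}) \in \check{C}$, so the class of $(0, e_{[\mathfrak{a}_j]})$ lies in $\operatorname{im}(i_*)$ automatically.

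Next, I would establish the chain-level equivalence: for $j \geq 1$ and under the height hypothesis, the class $[(0, e_{[\mathfrak{a}_j]})]$ is nonzero in $\widecheck{HM}_*$ if and only if the lemma's condition for $j$ holds, i.e.\ there is no integer chain $\mathbf{b} = \sum m_l[\mathfrak{b}_l] \in C^o$ with $\partial^o_o \mathbf{b} = 0$ and $\partial^o_s \mathbf{b} = n\, e_{[\mathfrak{a}_j]}$ for some $n \neq 0$. The ``if'' direction is the identity $\check\partial(\mathbf{b}, 0) = (\partial^o_o \mathbf{b}, \partial^o_s \mathbf{b}) = (0, n\, e_{[\mathfrak{a}_j]})$. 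For the converse, starting from any $\check\partial(x, y) = (0, e_{[\mathfrak{a}_j]})$, I would expand via Step~1 to obtain $\partial^o_o x = \alpha y_0\, \partial^u_o e_{[\mathfrak{a}_{-1}]}$ and $\partial^o_s x = e_{[\mathfrak{a}_j]}$, where $y_0$ is the $e_{[\mathfrak{a}_0]}$-coefficient of $y$ and the height assumption forces $\partial^u_s e_{[\mathfrak{a}_{-1}]} = 0$ (no $C^s$-generator lies below grading $\operatorname{ht}(\mathfrak{q})$). The remaining $C^o$-obstruction $\partial^u_o e_{[\mathfrak{a}_{-1}]}$ is a $\partial^o_o$-cycle lying at grading $\operatorname{ht}(\mathfrak{q}) - 2 < -2\operatorname{h}(Y,\mathfrak{s})$; applying $\check\partial$ to $(0, e_{[\mathfrak{a}_0]})$ and using the height constraints produces a bounding chain $\xi \in C^o$ with $\partial^o_o \xi = \partial^u_o e_{[\mathfrak{a}_{-1}]}$ and $\partial^o_s \xi = 0$ (the latter again by a grading obstruction). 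Setting $\mathbf{b} = x - \alpha y_0 \xi$ yields the desired cycle.

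For the case $j = 0$, the height hypothesis places $\operatorname{gr}^{\mathds{Q}}([\mathfrak{a}_0]) < -2\operatorname{h}(Y, \mathfrak{s})$, and a parallel argument shows the lemma's condition must fail, excluding $j = 0$ from the infimum. Combining the two cases, the infimum in the lemma coincides with the minimum grading of a nonzero class in $\operatorname{im}(i_*)$, which is $-2\operatorname{h}(Y, \mathfrak{s})$ by definition. I expect the main obstacle to be the ``only if'' direction in the chain-level equivalence, where the height hypothesis must be carefully used both to kill the $\partial^u_s$-contribution by grading and to produce a bounding chain for $\partial^u_o e_{[\mathfrak{a}_{-1}]}$ that is also a $\partial^o_s$-cycle.
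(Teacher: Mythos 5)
Your overall route is the same as the paper's: identify the condition in (\ref{Froyshov 1}) with the nonvanishing of the class $[(0,e_{[\mathfrak{a}_j]})]\in\widecheck{HM}_{*}(Y,\mathfrak{s};\mathds{Q})$, observe that these classes lie in $\operatorname{im}i_{*}$, and then invoke the definition of $\operatorname{h}(Y,\mathfrak{s})$ together with the hypothesis $\operatorname{ht}(\mathfrak{q})<-2\operatorname{h}(Y,\mathfrak{s})$. The difference is in how you treat the reducible differentials, and that is where your argument has a genuine gap. Your Step 1 (the ``explicit, essentially minimal'' form of $\bar{\partial}^{s}_{s},\bar{\partial}^{u}_{u},\bar{\partial}^{u}_{s},\bar{\partial}^{s}_{u}$, in particular $\bar{\partial}^{s}_{u}e_{[\mathfrak{a}_{0}]}=\alpha e_{[\mathfrak{a}_{-1}]}$ and $\bar{\partial}^{s}_{u}e_{[\mathfrak{a}_{i}]}=0$ for $i\geq 1$) is asserted rather than proved, and the correction built on it does not close: starting from $\check{\partial}(x,y)=(0,e_{[\mathfrak{a}_{j}]})$ you need $\xi\in C^{o}$ with $\partial^{o}_{o}\xi=\partial^{u}_{o}e_{[\mathfrak{a}_{-1}]}$ \emph{and} $\partial^{o}_{s}\xi=0$, but ``applying $\check{\partial}$ to $(0,e_{[\mathfrak{a}_{0}]})$'' only exhibits $(-\alpha\,\partial^{u}_{o}e_{[\mathfrak{a}_{-1}]},0)$ as a $\check{\partial}$-boundary of an element of $C^{s}$; it produces no such $\xi$ in $C^{o}$, and no grading argument you give forces $\partial^{u}_{o}e_{[\mathfrak{a}_{-1}]}$ to bound in $(C^{o},\partial^{o}_{o})$ by a chain annihilated by $\partial^{o}_{s}$. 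So the direction ``$[(0,e_{[\mathfrak{a}_{j}]})]=0$ implies a chain $\mathbf{b}$ as in (\ref{Froyshov 1}) exists'' is not established as written.

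The difficulty is self-inflicted: for a nice perturbation \emph{all four} maps $\bar{\partial}^{s}_{s},\bar{\partial}^{u}_{u},\bar{\partial}^{u}_{s},\bar{\partial}^{s}_{u}$ vanish outright for grading reasons, which is the paper's one-line observation. The boundary-stable reducibles sit in gradings $\operatorname{ht}(\mathfrak{q}),\operatorname{ht}(\mathfrak{q})+2,\dots$ and the boundary-unstable ones in $\operatorname{ht}(\mathfrak{q})-1,\operatorname{ht}(\mathfrak{q})-3,\dots$; the maps $\bar{\partial}^{s}_{s},\bar{\partial}^{u}_{u},\bar{\partial}^{u}_{s}$ drop the grading by $1$, which is impossible within either family (even gaps) and impossible for $\bar{\partial}^{u}_{s}$ (it would land in grading at most $\operatorname{ht}(\mathfrak{q})-2$, below every boundary-stable generator), while $\bar{\partial}^{s}_{u}$ preserves the grading (it is composed with the degree $-1$ maps $\partial^{u}_{o},\partial^{u}_{s}$ inside $\check{\partial}$, or equivalently it counts boundary-obstructed reducible trajectories) and the two families have gradings of opposite parity. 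Hence $\alpha=0$ in your notation, $\check{\partial}=\left(\begin{smallmatrix}\partial^{o}_{o}&0\\ \partial^{o}_{s}&0\end{smallmatrix}\right)$, and the equivalence between vanishing of $[(0,e_{[\mathfrak{a}_{j}]})]$ and the existence of a chain as in (\ref{Froyshov 1}) is immediate in both directions; with that in place, your final identification of the infimum with $-2\operatorname{h}(Y,\mathfrak{s})$ under the height hypothesis goes through just as in the paper.
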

\begin{proof}
For the grading reason, all the maps $\bar{\partial}^{*}_{*}$ vanish. As a result, the set
$$\{[e_{[\mathfrak{a}_{j}]}]|\,j\in \mathds{Z}\}$$ is a basis of $\widebar{HM}_{*}(Y,\mathfrak{s};\mathds{Q})$. For $j\geq 0$, the map $i_{*}$ sends $$[e_{[\mathfrak{a}_{j}]}]\in \widebar{HM}_{*}(Y,\mathfrak{s};\mathds{Q})$$ to $$[e_{[\mathfrak{a}_{j}]}]\in \widecheck{HM}_{*}(Y,\mathfrak{s};\mathds{Q}).$$ Since we have $\operatorname{ht}(\mathfrak{q})<-2\operatorname{h}(Y;\mathfrak{s})$, the set
$$
S=\{j|j\geq 0,\ [e_{[\mathfrak{a}_{j}]}]\neq 0\in \widecheck{HM}_{*}(Y,\mathfrak{s};\mathds{Q})\}
$$
does not equals $\mathds{Z}_{\geq 0}$ and we have
\begin{equation}\label{Froyshov 2}-2\operatorname{h}(Y,\mathfrak{s})=\inf\{\operatorname{gr}^{\mathds{Q}}([\mathfrak{a}_{j}])|j\in S\}.\end{equation}
Since we have
$$
\check{\partial}=\left(\begin{array} {cc}
 \partial^{o}_{o}  & 0\\
 \partial^{o}_{s} & 0
\end{array}\right).
$$ in the current case, (\ref{Froyshov 1}) and (\ref{Froyshov 2}) coincide with each other. This finishes the proof of the lemma.
\end{proof}

\subsection{Linear analysis on end-periodic manifolds}

In this subsection, we will set up the appropriate Sobolev spaces on end-periodic manifolds and review the related Fredholm theory. Our construction is inspired from \cite{Taubes} and \cite{MRS}.

Let $E$ be an end-periodic bundle (over $\tilde{X},X_{+},M_{+} \text{ or }Z_{+}$) equipped with an end-periodic metric  $|\cdot|$ and an end-periodic connection $\nabla$ (see \cite{Taubes} for definition). For any $j,p\in \mathds{Z}_{\geq 0}$,  we can define the unweighted Sobolev norm of a smooth section $s$ in the usual way:
\begin{equation}\label{unweighted sobolev space}
\|s\|_ {L^{p}_{j}}:=(\mathop{\Sigma}\limits_{i=0}^{j}\int|\nabla^{(i)}s|^{p} d\operatorname{vol})^{\frac{1}{p}}.
\end{equation}
(We can also define the $L^{p}_{j}$ norm for negative $j$ using integration.)
\begin{rmk}
Other then a trivial real or complex line bundle, which we denote by $\mathds{R},\mathds{C}$ respectively, two other types of end-periodic bundle will be considered: the spinor bundle $S^{\pm}$ (associated to spin structures) and the bundle of differential forms. Both of them have a canonical metric. As for the connection, we use the spin connection for the former and the Levi-Civita connection for the latter.
\end{rmk}

In general, the differential operators that we will consider do not have Fredholm properties under the norms defined in \ref{unweighted sobolev space}. Therefore, we need to use the weighted Sobolev norms instead. To define them, recall that we have a harmonic map $f:X\rightarrow S^{1}$ corresponding to a generator of $H^{1}(X;\mathds{Z})$. We lift $f$ to a function $\tilde{f}:\tilde{X}\rightarrow \mathds{R}$ satisfying
$$
f^{-1}([-1,1])\subset \mathop{\cup}\limits_{n=-N}^{N}W_{n}
\text{ for some }N\gg 0.$$
Now consider the following smooth cut-off functions:
\begin{itemize}
\item $\tau_{0}:\tilde{X}\rightarrow [0,+\infty)$: a function that equals $|f|$ on $\tilde{X}\setminus \mathop{\cup}\limits_{n=-N}^{N} W_{n}$;
\item $\tau_{1}:X_{+}\rightarrow [0,+\infty)$: the restriction of $\tau_{0}$;
\item $\tau_{2}:M_{+}\rightarrow [0,+\infty)$: an extension of $\tau_{1}$;
\item $\tau_{3}:Z_{+}\rightarrow [0,+\infty)$: an extension of $\tau_{1}$ with the property that $$\tau_{2}(t,y)=|t|,\ \forall (t,y)\in (-\infty,-1]\times Y.$$
\end{itemize}
\begin{defi}
For $\delta\in \mathds{R},j\in \mathds{Z},p\in \mathds{Z}_{\geq 0}$, we define the weighted Sobolev norm of a smooth section $s$ of $E$ in different ways depending on the underlying manifold:
\begin{itemize}
\item Over $X_{+}$, we set $\|s\|_ {L^{p}_{j,\delta}}=\|e^{\delta\cdot \tau_{1}}\cdot s\|_ {L^{p}_{j}}$;
\item Over $M_{+}$, we set $\|s\|_ {L^{p}_{j,\delta}}=\|e^{\delta\cdot \tau_{2}}\cdot s\|_ {L^{p}_{j}}$;
\item Over $\tilde{X}$, we set $\|s\|_ {L^{p}_{j;-\delta,\delta}}=\|e^{\delta\cdot \tau_{0}}\cdot s\|_ {L^{p}_{j}}$;
\item Over $Z_{+}$, we set $\|s\|_ {L^{p}_{j;-\delta,\delta}}=\|e^{\delta\cdot \tau_{3}}\cdot s\|_ {L^{p}_{j}}$.
\end{itemize}
(Note that we use two weight indices for manifolds $\tilde{X}$ and $Z_{+}$ because they both have two ends.) We denote the corresponding Sobolev space respectively by $$L^{2}_{j,\delta}(X_{+};E),\ L^{2}_{j,\delta}(M_{+};E),\ L^{2}_{j;-\delta,\delta}(\tilde{X};E) \text{ and } L^{2}_{j;-\delta,\ \delta}(Z_{+};E).$$
We remove $j$ from our notations when it equals $0$.
We sometimes also suppress the bundle $E$ when it is clear from the context.
\end{defi}

The following lemma is a straightforward corollary of \cite[Lemma 5.2]{Taubes}. It asserts that one can control the weighted Sobolev norm of a function using the weighted Sobolev norm of its derivative. (Although \cite{Taubes} only stated the result for smooth functions, we can prove the general case easily using standard arguments, i.e., approximating a Sobolev function by smooth functions.)

\begin{lem} \label{Taubes's lemma}
For any $\delta>0,j\geq 0$, we can find a positive constant $C$ with the following significance:
\begin{enumerate}
\item For any $u\in L^{2}_{1,\operatorname{loc}}(X_{+};\mathds{R})$ with $\|du\|_{L^{2}_{j,\delta}}<\infty$, there exists a unique number $\bar{u}\in \mathds{R}$ such that $\|u-\bar{u}\|_{L^{2}_{j+1,\delta}}<\infty$. Moreover, in this case we have $$\|u-\bar{u}\|_{L^{2}_{j+1,\delta}}\leq C\|d\bar{u}\|_{L^{2}_{j,\delta}}.$$
\item Fix a smooth function $$\tau_{4}:Z_{+}\rightarrow [0,1] \text{ with } \tau_{4}|_{Z}=0, \tau_{4}|_{W_{i}}=1\  \forall i\geq 1.$$Then for any $u\in L^{2}_{1,\operatorname{loc}}(Z_{+};\mathds{R})$ with $\|du\|_{L^{2}_{j;-\delta,\delta}}<\infty$, there exists unique numbers $\bar{u},\bar{\bar{u}}\in \mathds{R}$ such that $\|u-\bar{u}-\bar{\bar{u}}\cdot\tau_{4}\|_{L^{2}_{j+1;-\delta,\delta}}<\infty$. In this case we have $$\|u-\bar{u}-\bar{\bar{u}}\cdot\tau_{4}\|_{L^{2}_{j+1;-\delta,\delta}}\leq C\|du\|_{L^{2}_{j;-\delta,\delta}}.$$
\end{enumerate}
\end{lem}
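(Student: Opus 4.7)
The plan is to reduce both parts to Taubes's Lemma 5.2 in \cite{Taubes}, which handles the smooth-function case on a periodic end. Two additional ingredients are needed: (a) an approximation argument to extend from smooth functions to $L^2_{1,\operatorname{loc}}$ functions, and (b) a decomposition of $Z_+$ into its two ends, observing that the cylinder is itself (trivially) periodic so Taubes applies to both pieces.

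For part (1), Taubes's inequality applied verbatim to smooth $u$ gives the unique constant $\bar u$ and the estimate. For a general $u\in L^2_{1,\operatorname{loc}}(X_+;\mathds{R})$ with $\|du\|_{L^2_{j,\delta}}<\infty$, local elliptic regularity upgrades $u$ to $L^2_{j+1,\operatorname{loc}}$, after which partition-of-unity mollification produces smooth $u_n$ with $du_n\to du$ in $L^2_{j,\delta}$ and $u_n\to u$ in $L^2_{j+1,\operatorname{loc}}$. Taubes applied to each $u_n$ together with the linearity of the inequality shows that $\{u_n-\bar u_n\}$ is Cauchy in $L^2_{j+1,\delta}$; denote its limit by $v$. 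The local convergence $u_n\to u$ forces $\bar u_n\to u-v$ locally, so $\bar u:=u-v$ is the constant sought, and the estimate passes to the limit. Uniqueness is immediate: a nonzero constant has infinite $L^2_{j+1,\delta}$-norm on $X_+$, since the weight $e^{\delta\tau_1}$ is bounded below by $1$ while $X_+$ has infinite volume.

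For part (2), I would decompose $Z_+=Z\cup_Y X_+$ and treat each end. Applying part (1) to a cut-off $\chi u$ whose support lies near the periodic end of $X_+$ yields a constant $c_2$ and the estimate there. The cylindrical end $Z=(-\infty,0]\times Y$ is itself an infinite cyclic cover of $S^1\times Y$, hence an end-periodic manifold, so Taubes---equivalently, a direct weighted Cauchy--Schwarz computation in the $t$-variable---produces a constant $c_1$ and the estimate on $Z$. Setting $\bar u=c_1$ and $\bar{\bar u}=c_2-c_1$, the function $u-\bar u-\bar{\bar u}\,\tau_4$ is $u-c_1$ on $Z$ and $u-c_2$ on the subset of $X_+$ where $\tau_4=1$, giving the desired decomposition. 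Uniqueness follows in the same spirit: a nonzero constant has infinite weighted norm on each end separately, so $\bar u$ is determined by behavior on the cylindrical end and $\bar{\bar u}$ by behavior on the periodic end.

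The only mildly subtle point---not really an obstacle---is stitching the two end estimates across the compact transition region where $\tau_4$ and the cut-off $\chi$ interpolate. Since that region is precompact, a local Poincar\'e inequality controls $\|u-\text{const}\|_{L^2_{j+1}}$ there by $\|du\|_{L^2_j}$, and the resulting correction absorbs harmlessly into the final bound.
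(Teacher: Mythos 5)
Your proposal is correct and follows essentially the same route as the paper, which simply observes that the lemma is a straightforward corollary of \cite[Lemma 5.2]{Taubes} together with a smooth-approximation argument to pass from smooth functions to Sobolev functions; your treatment of $Z_{+}$ by handling the cylindrical end and the periodic end separately and bridging them with a local Poincar\'e inequality on the compact transition region is exactly the implicit content of the paper's two-constant statement. (Only note that on the cylindrical end a bare pointwise Cauchy--Schwarz in $t$ does not by itself give the weighted integrability; one needs the standard Hardy-type integration-by-parts or spectral-decomposition argument, or, as you also suggest, Taubes's lemma applied to the trivially periodic cylinder.)
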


Next, we summarize the Sobolev embedding and multiplication theorems. We focus on the manifold $X_{+}$ (although similar results holds other manifolds) because that will be our main concern. The proofs are straightforwardly adapted from the unweighted case (Theorem 13.2.1 and Theorem 13.2.2 of the book) and the cylindrical end case
(\cite[Proposition 2.9, Proposition 2.10]{francescolin}) so we omit them.
\begin{pro}\label{Sobolev embedding}
Let $E$ be an end-periodic bundle over $X_{+}$. There is a continuous inclusion
$$
L^{p}_{j,\delta}(X_{+};E)\rightarrow L^{q}_{l,\delta'}(X_{+};E)
$$
for $j\geq l,\ \delta\geq \delta'\geq 0,\  p \leq q$ and $(j-4/p)\geq (l-4/q)$. This embedding is compact when $j>l,\ \delta>\delta'$ and  $(j-4/p)> (l-4/q)$.
\end{pro}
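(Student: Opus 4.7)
The proof is a standard adaptation of the classical Sobolev embedding theorem to the weighted end-periodic setting, as the author indicates. The plan is to decompose $X_+$ into a countable family of uniformly isometric compact pieces, apply the classical embedding on each with a uniform constant, and patch the pieces together using the exponential weight.

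First I would fix a smooth partition of unity $\{\chi_n\}_{n \geq 0}$ on $X_+$ with $\operatorname{supp}(\chi_n)$ contained in a neighborhood $K_n$ of $W_n$, arranged so each $K_n$ overlaps only $K_{n-1}$ and $K_{n+1}$. By the end-periodic structure of $(g_{X_+}, E, \nabla)$, all the triples $(K_n, E|_{K_n}, \nabla|_{K_n})$ for $n \geq 1$ are isometric to a single compact model, and on $K_n$ the weight $\tau_1$ differs from $n$ by a uniformly bounded amount, with $|\nabla^{(i)} \tau_1|$ uniformly bounded for all $i \geq 0$. For the continuous embedding, the classical Sobolev embedding on each compact $K_n$, combined with the Leibniz rule, gives a constant $C$ independent of $n$ with
\begin{equation*}
\|e^{\delta' \tau_1} \chi_n s\|_{L^q_l(K_n)} \leq C\, \|e^{\delta' \tau_1} s\|_{L^p_j(K'_n)}
\end{equation*}
for a slightly larger $K_n' \supset K_n$. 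On $K_n'$ we have $e^{\delta' \tau_1} \leq e^{(\delta'-\delta)n}\, e^{\delta \tau_1} \cdot C \leq C\, e^{\delta \tau_1}$ since $\delta \geq \delta' \geq 0$; then summing in $\ell^q$ and using the inclusion $\ell^p \hookrightarrow \ell^q$ (valid since $p \leq q$) yields $\|s\|_{L^q_{l,\delta'}} \leq C' \|s\|_{L^p_{j,\delta}}$.

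For the compactness assertion under the strict inequalities $\delta > \delta'$, $j > l$, and $(j-4/p) > (l-4/q)$, I would proceed as follows. Given a bounded sequence $\{s_m\}$ in $L^p_{j,\delta}(X_+;E)$, on each compact piece $K_n$ the classical Rellich--Kondrachov theorem produces a subsequence converging in $L^q_l(K_n)$; a diagonal extraction gives one subsequence converging on every $K_n$. The gain $\delta - \delta' > 0$ then supplies the tail estimate
\begin{equation*}
\Bigl\| \chi_{\{\tau_1 \geq N\}} s_m \Bigr\|_{L^q_{l,\delta'}} \leq C\, e^{-(\delta - \delta')N}\, \|s_m\|_{L^p_{j,\delta}},
\end{equation*}
which is uniform in $m$ and tends to $0$ as $N \to \infty$. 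Combined with local convergence on $\bigcup_{n < N} K_n$, this produces a Cauchy subsequence in $L^q_{l,\delta'}(X_+;E)$.

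The main technical point, and the only place end-periodicity is essential, is the uniformity of the Sobolev constants across the pieces $K_n$ and control of the Leibniz terms produced when cut-off functions and the weight $e^{\delta \tau_1}$ are pulled through derivatives. Both are handled by the fact that $(K_n, \nabla)$ is independent of $n$ up to isometry for $n \geq 1$ and by the uniform bounds on $|\nabla^{(i)} \tau_1|$ and $|\nabla^{(i)} \chi_n|$; the single exceptional piece $K_0$ (containing $\partial X_+$) contributes only a finite constant and does not affect the argument.
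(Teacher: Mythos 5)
Your argument is correct, and it is exactly the adaptation the paper has in mind: Proposition \ref{Sobolev embedding} is stated without proof, with a pointer to the unweighted case (Theorems 13.2.1 and 13.2.2 of the book) and the cylindrical-end case in Lin's work, and your decomposition of $X_{+}$ into uniformly isometric periodic pieces with uniform local Sobolev constants, the pointwise comparison of $e^{\delta'\tau_{1}}$ with $e^{\delta\tau_{1}}$, the $\ell^{p}\hookrightarrow\ell^{q}$ summation, and the Rellich-plus-tail-decay argument for compactness is precisely that standard adaptation. No gaps worth flagging beyond cosmetic points (smooth cutoffs in place of the indicator $\chi_{\{\tau_{1}\geq N\}}$, bounded overlap of the $K_{n}$), which you have effectively addressed.
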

\begin{pro}\label{Sobolev multiplication}
Let $E,F$ be two end-periodic bundles over $X_{+}$.
Suppose $\delta+\delta'\geq \delta'',\ j,l\geq m$ and $1/p+1/q\geq 1/r$, with $\delta,\delta',\delta''\geq 0$ and $p,q,r>1$. Then the multiplication
$$
L^{p}_{j,\delta}(X_{+};E)\times L^{q}_{l,\delta'}(X_{+};F)\rightarrow L^{r}_{m,\delta''}(X_{+};E\otimes F)
$$
is continuous in any of the following three cases:
\begin{enumerate}
\item \begin{enumerate}
\item $(j-4/p)+(l-4/q)\geq m-4/r,$ and
\item $j-4/p<0,$ and
\item $l-4/q<0$;
\end{enumerate}
\hspace{-4mm} or
\item \begin{enumerate}
\item $\min \{j-4/p,l-4/q\}\geq m-4/r,$ and
\item either $j-4/p>0$ or $l-n/q>0$;
\end{enumerate}
\hspace{-4mm} or
\item \begin{enumerate}
\item $\min \{j-4/p,l-4/q\}> m-4/r,$ and
\item either $j-4/p=0$ or $l-4/q=0$.
\end{enumerate}

\end{enumerate}
When the map is continuous, it is a compact operator as a function of second variable for fixed first variable provided $l>m$ and $l-4/q>m-4/r$.
\end{pro}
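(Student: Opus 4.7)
The plan is to reduce the weighted multiplication theorem to the unweighted multiplication theorem (Theorem 13.2.2 of the book) by conjugating with the exponential weight. Concretely, for $u \in L^{p}_{j,\delta}(X_{+};E)$ and $v \in L^{q}_{l,\delta'}(X_{+};F)$, set $\tilde u = e^{\delta\tau_{1}}u$ and $\tilde v = e^{\delta'\tau_{1}}v$, so that by definition $\tilde u \in L^{p}_{j}(X_{+};E)$ and $\tilde v \in L^{q}_{l}(X_{+};F)$ with norms equal to the weighted norms of $u$ and $v$. Then observe
$$
e^{\delta''\tau_{1}}\,uv \;=\; e^{(\delta''-\delta-\delta')\tau_{1}}\,\tilde u\,\tilde v,
$$
and since $\delta+\delta'\geq \delta''$ and $\tau_{1}\geq 0$, the scalar factor $e^{(\delta''-\delta-\delta')\tau_{1}}$ is bounded above by $1$.

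The first step is to check that conjugation by $e^{\delta\tau_{1}}$ is a bounded isomorphism between $L^{p}_{j,\delta}(X_{+};E)$ and $L^{p}_{j}(X_{+};E)$, with constants independent of $p, j$ (in bounded ranges). This requires that $\tau_{1}$ and all its covariant derivatives are uniformly bounded on $X_{+}$; by construction $\tau_{1}=|f|$ outside a compact set, and since $f$ pulls back $d\theta$ and is harmonic for the end-periodic metric, $|\nabla^{(k)}\tau_{1}|$ is uniformly bounded for each $k\geq 1$ (after smoothing $\tau_{1}$ near the locus $f=0$, which lies in a compact region). Given this, the product rule gives
$$
|\nabla^{(j)}\tilde u| \;\leq\; C_{j,\delta}\,\sum_{i=0}^{j} |\nabla^{(i)}u|\, e^{\delta\tau_{1}},
$$
and the reverse inequality as well, with constants depending only on $j$, $\delta$ and the uniform bounds on $\nabla^{(k)}\tau_{1}$. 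The same constants work for the other factor $v$ and for the product $uv$.

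The second step is to apply the unweighted multiplication theorem on $X_{+}$. The unweighted version is Theorem 13.2.2 of the book, which is stated for compact manifolds, but it extends to end-periodic manifolds by a standard patching/partition-of-unity argument exactly as in Proposition 2.9 of \cite{francescolin}: cover $X_{+}$ by an increasing sequence of relatively compact open sets $U_{n}$ of uniformly bounded geometry (each isomorphic to a bounded piece of $W_{k}\cup_{Y}W_{k+1}$), apply the compact-manifold multiplication estimate on each, and sum in $\ell^{1}$ using the finite overlap. This yields continuity of the product $L^{p}_{j}\times L^{q}_{l}\to L^{r}_{m}$ in each of the three index regimes (i), (ii), (iii) with a constant independent of the piece. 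Combining with the bound $e^{(\delta''-\delta-\delta')\tau_{1}}\leq 1$ and the equivalence of norms from step one gives the weighted estimate
$$
\|uv\|_{L^{r}_{m,\delta''}} \;\leq\; C\,\|\tilde u\,\tilde v\|_{L^{r}_{m}} \;\leq\; C'\,\|\tilde u\|_{L^{p}_{j}}\,\|\tilde v\|_{L^{q}_{l}} \;=\; C'\,\|u\|_{L^{p}_{j,\delta}}\,\|v\|_{L^{q}_{l,\delta'}}.
$$

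Finally, for the compactness assertion at fixed first variable $u$ with $l>m$ and $l-4/q>m-4/r$: given a bounded sequence $v_{n}$ in $L^{q}_{l,\delta'}$, one uses the weighted Rellich embedding (Proposition \ref{Sobolev embedding}, which we take as granted) to extract a subsequence converging strongly in $L^{q}_{l',\delta'-\epsilon}$ for some $l'$ strictly between $m$ and $l$ and some small $\epsilon>0$; the already-proved weighted multiplication continuity then produces $uv_{n}\to uv$ in $L^{r}_{m,\delta''}$, provided we chose $\epsilon$ small enough that $\delta + (\delta'-\epsilon)\geq \delta''$ still holds, which is possible exactly because the strict inequality $\delta+\delta'>\delta''$ is not needed—we only use $\delta+\delta'\geq \delta''$ together with the slack in the Sobolev index. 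The main point requiring care is step one, the uniform bound on $\nabla^{(k)}\tau_{1}$ and the resulting norm equivalence, since everything else is a formal consequence of the corresponding unweighted statement in the book.
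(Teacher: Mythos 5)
Your continuity argument is fine and is essentially the adaptation the paper has in mind (the paper omits the proof, pointing to Theorem 13.2.2 of Kronheimer--Mrowka and to the cylindrical-end case in \cite{francescolin}): conjugating by $e^{\delta\tau_{1}}$ is in fact an isometry by the very definition of the weighted norm, the factor $e^{(\delta''-\delta-\delta')\tau_{1}}$ together with the uniform bounds on $\nabla^{(k)}\tau_{1}$ (which hold because $\tau_{1}$ is periodic-plus-compact-perturbation, equal to $|\tilde f|$ on the end) gives a bounded multiplier on $L^{r}_{m}$, and the unweighted multiplication estimate on $X_{+}$ follows from the compact case by a uniformly locally finite cover of uniformly bounded geometry, where the summation over pieces uses exactly the hypothesis $1/p+1/q\geq 1/r$ via H\"older for sequences. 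Up to these routine checks, this part of your proposal is correct and matches the intended route.

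The compactness clause, however, has a genuine gap. Your argument trades the weight $\delta'$ for $\delta'-\epsilon$ in order to invoke the compact embedding of Proposition \ref{Sobolev embedding}, and then needs $\delta+(\delta'-\epsilon)\geq\delta''$ to re-apply the continuity statement; but the hypotheses only give $\delta+\delta'\geq\delta''$, with equality allowed (for instance $\delta=\delta'=\delta''=0$, which is permitted), and in the equality case no $\epsilon>0$ works --- your parenthetical justification is logically backwards: losing $\epsilon$ requires the \emph{strict} inequality $\delta+\delta'>\delta''$, which is precisely what is not assumed. (There is also a minor issue that an integer $l'$ strictly between $m$ and $l$ need not exist.) The correct mechanism, as in the proof of the compactness part of Theorem 13.2.2 in the book, is to use the decay of the \emph{fixed} first factor rather than a weight gap: approximate multiplication by $u$ in operator norm by multiplication by $\chi_{K}u$ for compact exhaustions $K$ (the tails of $u$ in $L^{p}_{j,\delta}$ are small), note that each truncated operator is compact by Rellich on a compact piece together with the continuity already proved, and conclude that the operator-norm limit is compact. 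With that replacement the compactness statement holds as stated, including when $\delta+\delta'=\delta''$.
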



The following corollary will be very useful because the differential operators we are going to consider can often be composed into the sum of a first-order, linear operator with a zeroth-order, quadratic operator.
\begin{cor}
For any $j>2,\delta>0$, the multiplication map $$L^{2}_{j,\delta}(X_{+};E)\times L^{2}_{j,\delta}(X_{+};F)\rightarrow L^{2}_{j-1,\delta}(X_{+};E\otimes F)$$ is compact.
\end{cor}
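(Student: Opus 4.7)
The plan is to derive the corollary directly from Proposition \ref{Sobolev multiplication} by making the parameter choices $p = q = r = 2$, $l = j$, $m = j-1$, and $\delta' = \delta'' = \delta$, so that all three Sobolev spaces in the multiplication have integrability exponent $2$ and the weights match. First I would check that this puts us in case (2) of the proposition. With these choices, $j - 4/p = l - 4/q = j - 2$ and $m - 4/r = j - 3$, so the inequality $\min\{j - 4/p,\, l - 4/q\} = j - 2 \geq j - 3 = m - 4/r$ is immediate, and the secondary positivity condition of case (2) becomes $j - 4/p = j - 2 > 0$, which follows from the hypothesis $j > 2$. The remaining numerical constraints $1/p + 1/q = 1 \geq 1/2 = 1/r$ and $\delta + \delta = 2\delta \geq \delta$ are automatic, so the multiplication is continuous into $L^{2}_{j-1,\delta}(X_{+};E\otimes F)$.

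Having established continuity, I would then invoke the compactness clause at the end of Proposition \ref{Sobolev multiplication}. The two strict inequalities required there, namely $l > m$ (that is, $j > j-1$) and $l - 4/q > m - 4/r$ (that is, $j - 2 > j - 3$), are both evidently satisfied. This gives that, for each fixed first argument, the map in the second argument is a compact linear operator; by the symmetry between the two factors of $L^{2}_{j,\delta}$, the same holds for the first argument with the second held fixed.

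There is essentially no obstacle to this argument: the whole content of the corollary is the bookkeeping observation that the hypothesis $j > 2$ places the multiplication into the strict-inequality regime of case (2) of the preceding proposition, and it is precisely this strictness that triggers the final compactness assertion. If one later needed genuine joint (bilinear) compactness on the product space, the mild additional work would be to combine the above with the Rellich-type compact embedding from Proposition \ref{Sobolev embedding} (for instance, passing to a slightly smaller weight $\delta' < \delta$ to gain strong convergence of both factors before multiplying), but none of this is required for the statement as written.
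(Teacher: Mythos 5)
Your parameter checks are all correct: case (2) of Proposition \ref{Sobolev multiplication} with $p=q=r=2$, $l=j$, $m=j-1$, $\delta'=\delta''=\delta$ does give continuity into $L^{2}_{j-1,\delta}$, and the final clause of that proposition then gives compactness of the linear map $v\mapsto uv$ for each fixed $u$ (and symmetrically in the other slot). But this is only \emph{separate} compactness, and it is weaker than what the corollary asserts and than what the paper proves. The paper's proof is a one-line factorization: by Proposition \ref{Sobolev multiplication} the multiplication is continuous as a map into $L^{2}_{j,2\delta}(X_{+};E\otimes F)$ --- the whole point being that the weights \emph{add}, $\delta+\delta=2\delta$, so the product decays twice as fast --- and then the inclusion $L^{2}_{j,2\delta}(X_{+};E\otimes F)\rightarrow L^{2}_{j-1,\delta}(X_{+};E\otimes F)$ is compact by Proposition \ref{Sobolev embedding}. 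This shows that bounded subsets of $L^{2}_{j,\delta}\times L^{2}_{j,\delta}$ are sent to precompact subsets of $L^{2}_{j-1,\delta}$, i.e.\ the bilinear map itself is compact, and it is exactly this doubled-weight mechanism that the paper uses later (the quadratic term in Lemma \ref{decomposition of the tangent space} ``factors through $L^{2}_{k,2\delta}$'', and in Proposition \ref{APS} the perturbation $K$ is required to land in $L^{2}_{j,2\delta}$). By choosing $\delta''=\delta$ you discard precisely the gain in weight that the corollary is meant to record, and your closing claim that joint compactness ``is not required for the statement as written'' is where the gap lies: the statement is about the multiplication map, not about its partial maps, and separate compactness of a bilinear map does not imply joint compactness.

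A further caution about your sketched repair: weakening the weight on \emph{both} factors to some $\delta'<\delta$, extracting strongly convergent subsequences, and then multiplying does not work at the borderline regularity, because to get a compact embedding you must also drop a derivative, and the multiplication $L^{2}_{j-1,\delta'}\times L^{2}_{j-1,\delta'}\rightarrow L^{2}_{j-1,\delta}$ fails when $j=3$ (in dimension four $L^{2}_{2}\times L^{2}_{2}\rightarrow L^{2}_{2}$ is not continuous; none of the three cases of Proposition \ref{Sobolev multiplication} applies since $j-1-4/2=0$). The clean arguments are either the paper's factorization through $L^{2}_{j,2\delta}$, or the standard splitting $u_{n}v_{n}-uv=(u_{n}-u)v_{n}+u(v_{n}-v)$ in which only one factor is placed in the weakened norm $L^{2}_{j-1,\delta'}$ (any $0\leq\delta'<\delta$, using the compact embedding $L^{2}_{j,\delta}\rightarrow L^{2}_{j-1,\delta'}$) while the other factor is kept in $L^{2}_{j,\delta}$, so that case (2) of Proposition \ref{Sobolev multiplication} still applies.
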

\begin{proof}
By Proposition \ref{Sobolev multiplication}, this map factors through the natural inclusion $$L^{2}_{j,2\delta}(X_{+};E\otimes F)\rightarrow L^{2}_{j-1,\delta}(X_{+};E\otimes F),$$
which is compact by Proposition \ref{Sobolev embedding}.
\end{proof}
Now we start discussing the related Fredholm theory.
\begin{pro}\label{laplace equation}
There exists a small $\delta_{0}>0$ such that for any $j\in \mathds{Z}_{\geq 0}$ and $\delta\in (0,\delta_{0})$, we have the following results:
\begin{enumerate}[(i)]
\item The operator
$$
\Delta(\tilde{X};-\delta,\delta):L^{2}_{j+2;-\delta,\delta}(\tilde{X};\mathds{R})\rightarrow L^{2}_{j;-\delta,\delta}(\tilde{X};\mathds{R})$$
$$u \mapsto \Delta u
$$
is a Fredholm operator with trivial kernel and two dimensional cokernel. The same result holds for the manifold $Z_{+}$.
\item The operator
$$
\Delta(M_{+};\delta):L^{2}_{j+2,\delta}(M_{+};\mathds{R})\rightarrow L^{2}_{j,\delta}(M_{+};\mathds{R})$$
$$u\mapsto \Delta u
$$
is a Fredholm operator with trivial kernel and 1-dimensional cokernel.
\item The operator
$$
\Delta(X_{+};\delta):L^{2}_{j+2,\delta}(X_{+};\mathds{R})\rightarrow L^{2}_{j,\delta}(X_{+};\mathds{R})\oplus L^{2}_{j+1/2}(Y;\mathds{R})$$ $$u\mapsto (\Delta u,\langle du,\vec{v}\rangle)$$
is Fredholm with trivial kernel and $1$-dimensional cokernel, where $\vec{v}$ denotes the inward normal vector on the boundary.
\end{enumerate}
\end{pro}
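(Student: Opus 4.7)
The plan is to follow Taubes \cite{Taubes} in analyzing $\Delta$ on $\tilde X$ via the Fourier--Laplace transform, and then to derive the remaining cases from this. Let $T$ generate the deck group of $\tilde X\to X$, and for a compactly supported section $u$ on $\tilde X$ define
$$\hat u(z) = \sum_{n\in\mathds{Z}}(T^{-n}u)\, z^n.$$
For $u$ with exponential decay of rate $\delta$ at both ends this series converges on the annulus $A_\delta = \{e^{-\delta}<|z|<e^{\delta}\}$ and yields a holomorphic function on $A_\delta$ with values in the space of sections of the fundamental domain $W$ satisfying the twisted boundary condition $\hat u|_{\partial_+W} = z\cdot \hat u|_{\partial_-W}$ (with the analogous condition on normal derivatives). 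The transform intertwines $\Delta$ on $\tilde X$ with a holomorphic family $\Delta_z$ of elliptic boundary-value operators on $W$ with these twisted boundary data; by analytic Fredholm theory the exceptional set $\Sigma:=\{z:\Delta_z\text{ is not invertible}\}$ is discrete in $\mathds{C}^{\times}$.

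Consequently $\Delta(\tilde X;-\delta,\delta)$ is Fredholm whenever the circles $|z|=e^{\pm\delta}$ miss $\Sigma$, and its kernel and cokernel are read off by residue calculus from the principal parts of $\Delta_z^{-1}$ at the points of $\Sigma\cap A_\delta$. The point $z=1$ lies in $\Sigma$ because constants are harmonic on $\tilde X$; an indicial computation (near $z=1$, the constant-mode part of $\Delta_z$ has leading behavior $-(\log z)^{2}$) shows that the pole of $\Delta_z^{-1}$ at $z=1$ has order exactly two, the two residue coefficients encoding the asymptotic constant at $+\infty$ and at $-\infty$. Choosing $\delta_0$ small enough to exclude every other exceptional point from a neighborhood of the unit circle---a perturbation argument using that $\Delta_1$ has a one-dimensional kernel spanned by the constants and that $0$ is an isolated eigenvalue of $\Delta_z$ as $z$ varies---then finishes case (i) for $\tilde X$: the kernel is trivial (this is also seen directly by integration by parts, since the exponential decay kills the boundary term in $\int|du|^2$), and the cokernel is two-dimensional.

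For $Z_+$ the cylindrical end is handled by the ordinary Fourier transform in the $\mathds{R}$-variable while the periodic end is treated by Fourier--Laplace as above; each end contributes one asymptotic constant to the cokernel and integration by parts kills the kernel, so again the kernel is trivial and the cokernel is two-dimensional. For (ii) on $M_+$ only the periodic end is present, so the same Fourier--Laplace analysis gives a one-dimensional cokernel; the compact piece $M$ introduces no further obstruction since its Dirichlet-to-Neumann map glues without altering the asymptotic count. For (iii) on $X_+$ the Neumann-type datum $\langle du,\vec{v}\rangle$ on $Y$ is naturally handled by even reflection across $Y$, which identifies the problem with a Laplace equation on a doubled two-ended periodic manifold and reduces the cokernel computation to the even part of the $\tilde X$-type analysis; only the symmetric asymptotic constant survives, giving a one-dimensional cokernel, while injectivity again follows from integration by parts (now using the Neumann condition to discard the boundary term on $Y$). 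The main obstacle throughout is the indicial analysis at $z=1$: proving that the pole of $\Delta_z^{-1}$ there is of order exactly two, and that $z=1$ is the only exceptional point in some annulus around the unit circle; this is where the genuinely periodic (as opposed to cylindrical) geometry enters, and it is the reason the proof is relegated to the appendix.
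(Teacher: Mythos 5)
Your overall architecture---Fourier--Laplace transform, discreteness of the exceptional set, an order-two pole at the trivial character, and a contour/residue argument producing the two asymptotic solutions $1$ and $\tilde f$---is the same as the paper's (the paper works with the conjugated operator $\Delta_{\mu}$ on the closed manifold $X$ rather than with twisted boundary conditions on the fundamental domain, which is only a cosmetic difference). But there is a genuine gap at the starting point: you never show that $\Delta_{z}$ is invertible for even one value of $z$, yet you invoke analytic Fredholm theory (which needs invertibility somewhere to conclude that $\Sigma$ is discrete rather than all of $\mathds{C}^{\times}$), and, more seriously, you need to know that $z=1$ is the \emph{only} exceptional point on the unit circle. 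Your perturbation argument only controls a neighborhood of $z=1$; discreteness of $\Sigma$ does not exclude an exceptional point $z_{0}$ with $|z_{0}|=1$, $z_{0}\neq 1$, and such a point would lie inside every annulus $e^{-\delta}\leq |z|\leq e^{\delta}$, destroying both the Fredholm statement for small $\delta$ and the count of the cokernel. The paper supplies exactly this step as Lemma \ref{invertible on the image axis}: for a unitary twist ($\mu$ purely imaginary, $\mu\notin 2\pi i\mathds{Z}$) a kernel element of $\Delta_{\mu}$ gives a harmonic function $w$ on $\tilde{X}$ with $|w(x+1)|=|w(x)|$, hence bounded, hence constant by the maximum principle, hence zero since the twist is nontrivial; this is what makes the choice of $\delta_{0}$ possible.

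Secondary points: your cases (ii) and (iii) are asserted rather than argued. For $M_{+}$, ``the compact piece introduces no further obstruction'' is not a proof; the paper computes the index by duality plus excision (twice the index of $\Delta(M_{+};\delta)$ equals the difference of the indices at weights $\pm\delta$, which excision identifies with the index over $\tilde{X}$), and then kills the kernel with the maximum principle. Your doubling trick for the Neumann problem on $X_{+}$ is a legitimately different reduction from the paper's (which excises against the Neumann problem on $\bar{M}$ and the closed double $M\cup_{Y}\bar{M}$), but as stated it is incomplete: knowing the doubled manifold has two-dimensional cokernel and that constants are even only shows the even (Neumann) part is at least one-dimensional; to get exactly one you must either exhibit an odd subexponentially growing harmonic function on the double or compute the index of the Neumann problem itself, which is what the paper's excision accomplishes. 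Finally, the identification of the two-dimensional cokernel over $\tilde{X}$ ultimately rests on the Liouville-type statement (Lemma \ref{Solving laplace equation on covering space}) that harmonic functions of subexponential growth are exactly $a+b\tilde{f}$; your remark that ``the residues encode the asymptotic constants'' points at the right mechanism, but that lemma is where the actual work lies.
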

Proposition \ref{laplace equation} will be proved in the appendix.
\begin{lem}\label{half De rham complex}
There exists a constant $\delta_{1}\in (0,\delta_{0})$ such that for any $j\in \mathds{Z}_{\geq 0}$ and $\delta\in(0,\delta_{1})$, we have the following results:
\begin{enumerate}[(i)]
\item For any $w\in L^{2}_{j;-\delta,\delta}(Z_{+};\mathds{R})$ with $
\int_{Z_{+}} w\,d\operatorname{vol}=0,
$
we can find $u\in L^{2}_{j+2,\operatorname{loc}}(Z_{+};\mathds{R})$ satisfying
$$
|du|_{L^{2}_{j+1;-\delta,\delta}}<\infty,\ \Delta u=w.
$$
\item The operator $$
D(M_{+}):L^{2}_{j+1,\delta}(M_{+};T^{*}M_{+})\rightarrow  L^{2}_{j,\delta}(M_{+};\mathds{R}\oplus \wedge_{2}^{+}T^{*}M_{+} ):\alpha\mapsto (d^{*}\alpha,d^{+}\alpha)
$$ is Fredholm with index  $-(b_{2}^{+}(M)+1)$;
\item The operator $$D(Z_{+}):L^{2}_{j+1;-\delta,\delta}(Z_{+};T^{*}Z_{+})\rightarrow L^{2}_{j;-\delta,\delta}(Z_{+};\mathds{R}\oplus \wedge^{2}_{+}T^{*}Z_{+}):
\alpha\mapsto (d^{*}\alpha,d^{+}\alpha)
$$ is Fredholm with trivial kernel and $1$-dimensional cokernel. Its image equals $$\{(w,\beta)|\ \int_{Z_{+}}w \,d\operatorname{vol}=0\}.$$
\item The operator $$D(X_{+}):L_{j+1,\delta}^{2}(X_{+};T^{*}X_{+})\rightarrow L^{2}_{j,\delta}(X_{+};\mathds{R}\oplus \wedge^{2}_{+}T^{*}X_{+})\oplus  L^{2}_{j+1/2}(Y;\mathds{R})\oplus C^{+}$$
given by
\begin{equation}\label{half De Rham with boundary}
\alpha \mapsto (d^{*}\alpha,d^{+}\alpha,\langle \alpha,\vec{v}\rangle,\pi^{+}(\alpha|_{Y}))
\end{equation}
is Fredholm with trivial kernel and one dimensional cokernel, which can be canonically identified with $\mathds{R}$. Here $C^{+}$ (resp. $C^{-}$) is the closure in $ L^{2}_{j+1/2}(Y;T^{*}Y)\cap \operatorname{ker} d^{*}$ of the space spanned by the eigenvectors of $*d$ with positive (resp. negative) eigenvalues and $$\pi^{+}:L^{2}_{j+1/2}(Y;iT^{*}Y)\rightarrow C^{+}$$ is the projection with kernel $C^{-}$.
\end{enumerate}

\end{lem}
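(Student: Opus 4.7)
The plan is to reduce all four parts to Proposition \ref{laplace equation}, using Hodge-theoretic decompositions on the respective manifolds together with Lemma \ref{Taubes's lemma} to handle the asymptotic-constant behavior that separates $u$ from $du$.

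First I would prove (i). Any $u$ with $du \in L^{2}_{j+1;-\delta,\delta}(Z_{+};\mathds{R})$ admits, by Lemma \ref{Taubes's lemma}(2), a unique decomposition $u = u_{0} + \bar{u} + \bar{\bar{u}}\cdot \tau_{4}$ with $u_{0} \in L^{2}_{j+2;-\delta,\delta}$. Since the constant $\bar{u}$ is annihilated by $\Delta$, the equation $\Delta u = w$ becomes $\Delta u_{0} + \bar{\bar{u}}\cdot \Delta\tau_{4} = w$, so solvability is equivalent to $w$ lying in the image of the augmented operator $(u_{0}, \bar{\bar{u}}) \mapsto \Delta u_{0} + \bar{\bar{u}}\cdot \Delta \tau_{4}$ on $L^{2}_{j+2;-\delta,\delta} \oplus \mathds{R}$. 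By Proposition \ref{laplace equation}(i), $\Delta(Z_{+};-\delta,\delta)$ has trivial kernel and two-dimensional cokernel; by duality the cokernel is spanned by two asymptotic-value functionals, one for each end. Since $\Delta\tau_{4}$ is compactly supported and distinguishes the two ends, augmenting by $\bar{\bar{u}}\cdot \Delta\tau_{4}$ kills one dimension of cokernel and leaves only the global functional $w \mapsto \int_{Z_{+}} w\, d\operatorname{vol}$, which vanishes on both $\Delta u_{0}$ and $\Delta\tau_{4}$ by Stokes' theorem. The hypothesis $\int_{Z_{+}} w = 0$ is therefore exactly what is needed.

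Part (iii) then follows from (i) together with Hodge theory. For the kernel, if $\alpha \in L^{2}_{j+1;-\delta,\delta}(Z_{+}; T^{*}Z_{+})$ satisfies $d^{*}\alpha = 0$ and $d^{+}\alpha = 0$, then $d\alpha$ is anti-self-dual, and Stokes' theorem (with no boundary contribution because $\alpha$ decays at both ends) gives $\int|d\alpha|^{2} = -\int d\alpha \wedge d\alpha = 0$, so $\alpha$ is harmonic; writing $\alpha = du$ locally and applying Proposition \ref{laplace equation}(i) to the primitive $u$ then forces $\alpha = 0$. For the image, necessity of $\int w = 0$ is immediate from $\int d^{*}\alpha = 0$ for decaying $\alpha$. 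Conversely, given $(w,\beta)$ with $\int w = 0$, use (i) to produce $u$ with $\Delta u = -w$ and set $\alpha_{1} = du$, so that $d^{*}\alpha_{1} = w$; then we are reduced to solving $d^{+}\alpha_{2} = \beta - d^{+}\alpha_{1}$ with $d^{*}\alpha_{2} = 0$, which becomes another Laplace-type equation for a self-dual 2-form potential, solvable by a parallel Proposition \ref{laplace equation}-plus-Taubes argument.

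Parts (ii) and (iv) follow the same template. On $M_{+}$ (single periodic end), triviality of $\ker D(M_{+})$ uses that $b_{1}(M) = 0$ forces every decaying harmonic 1-form to be exact, and then Proposition \ref{laplace equation}(ii) kills the primitive. The index $-(b_{2}^{+}(M) + 1)$ equals minus the Euler characteristic $b_{0}(M) - b_{1}(M) + b_{2}^{+}(M)$ of the truncated de Rham complex $\Omega^{0}\xrightarrow{d}\Omega^{1}\xrightarrow{d^{+}}\Omega^{2}_{+}$ and is obtained by computing the relevant weighted cohomology groups directly from (i)-style arguments and Proposition \ref{laplace equation}(ii). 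For (iv), the conditions $\langle \alpha, \vec{v}\rangle$ and $\pi^{+}(\alpha|_{Y})$ form an APS-type elliptic boundary problem at $Y$, and combining the standard boundary-value calculus at $Y$ with our periodic-end Fredholm analysis at infinity yields Fredholmness. Triviality of the kernel again uses the Weitzenb\"ock-plus-Stokes argument, where the chosen boundary conditions were designed precisely to kill the boundary terms, and the one-dimensional cokernel is canonically identified with $\mathds{R}$ via the integral of the first component using Proposition \ref{laplace equation}(iii). The main technical obstacle throughout will be the explicit identification of cokernels in terms of integral functionals: one must carefully set up the duality pairing between the weighted Sobolev spaces and the asymptotically harmonic obstruction sections coming from Proposition \ref{laplace equation}.
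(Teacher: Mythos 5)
Your part (i) is essentially the paper's own argument: identify the space of $u$ with $du\in L^{2}_{j+1;-\delta,\delta}$ as $L^{2}_{j+2;-\delta,\delta}\oplus\mathds{R}\oplus\mathds{R}\tau_{4}$ via Lemma \ref{Taubes's lemma}, compare with Proposition \ref{laplace equation}(i), and count dimensions (the paper phrases it as: the operator on the enlarged domain has index $1$ and its kernel is the constants, hence it surjects onto $\{\int w=0\}$; this bypasses your unverified description of the two-dimensional cokernel as ``asymptotic-value functionals'' and the nonvanishing of its pairing with $\Delta\tau_{4}$, which you assert rather than check).

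The genuine gap is in (ii)--(iv): everything there hinges on Fredholm theory for $(d^{*},d^{+})$ on the periodic end, and Proposition \ref{laplace equation} only covers the scalar Laplacian, so it cannot supply this. In (ii) you never actually compute the index: the two facts you would need --- that the weighted kernel of $d^{+}$ on $M_{+}$ consists exactly of exact forms $du$ with $u\in L^{2}_{j+2,\delta}$, and that the image of $d^{+}$ has codimension $b_{2}^{+}(M)$ --- are precisely Taubes's Proposition 5.1, which the paper quotes and you replace by the phrase ``computing the relevant weighted cohomology groups directly from (i)-style arguments''; Lemma \ref{Taubes's lemma} is a statement about functions and gives no control of self-dual $2$-forms. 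The same missing ingredient sinks your (iii): after solving the $d^{*}$-part via (i), you must show every weighted self-dual form on $Z_{+}$ is $d^{+}$ of a coclosed weighted $1$-form, i.e.\ that there is no cokernel in the $\wedge^{2}_{+}$ direction; this is the substance of the statement, not a ``parallel Proposition \ref{laplace equation}-plus-Taubes argument'' --- no such proposition for the operator $d^{+}d^{*}$ on self-dual $2$-forms exists in the paper, and proving one would require redoing the Fourier--Laplace analysis for that operator. The paper instead gets (iii) by an excision-of-index argument combining (ii) for $M_{+}$ with the analogous cylindrical-end operator on $M_{-}=Z\cup_{Y}\bar{M}$ and the closed manifold $M\cup_{Y}\bar{M}$, yielding index $-1$; then trivial kernel (Stokes, $H^{1}(Z_{+})=0$, Lemma \ref{Taubes's lemma}, maximum principle) plus the obvious necessary condition $\int_{Z_{+}}w=0$ forces the image to be exactly that hyperplane --- no direct construction of a preimage of $\beta$ is ever needed. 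Likewise in (iv) you assert a one-dimensional cokernel without any index computation; the paper obtains the index $-1$ by excision against the compact APS operator $D(\bar{M})$ (Lemma 24.8.1 of the book) and $D(M_{+})$, and identifies the cokernel with $\mathds{R}$ via the functional $(w_{1},\beta,w_{2},\alpha')\mapsto\int_{X_{+}}w_{1}+\int_{Y}w_{2}$, not via the first component alone. Without Taubes's Proposition 5.1 (or an equivalent Fredholm theorem for the ASD complex on the periodic end) and the excision bookkeeping, the index values and cokernel dimensions in (ii)--(iv) are unsupported.
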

\begin{proof}
(i) We consider two vector spaces:
$$
V_{1}=\{u\in L^{2}_{j+2,\text{loc}}(Z_{+};\mathds{R})|\ \|du\|_{L^{2}_{j+1;-\delta,\delta}}<\infty\}
$$
$$
V_{2}=\{w\in L^{2}_{j;-\delta,\delta}(Z_{+};\mathds{R})|\ \int_{Z_{+}} w\,d\text{vol}=0\}.
$$
Now assume $\delta \in (0,\delta_{0})$, where $\delta_{0}$ is the constant in Proposition \ref{laplace equation}. By Lemma \ref{Taubes's lemma}, we also have \begin{equation}\label{equivalent defi}V_{1}=L^{2}_{j+2;-\delta,\delta}(Z_{+};\mathds{R})\oplus\mathds{R}\oplus \mathds{R}\tau_{4}.
\end{equation}
Using this identification and integration by part, we can show that $\Delta u\in V_{2}$ for any $u\in V_{1}$. In other words, we have a well defined operator
$$
\Delta: V_{1}\rightarrow V_{2}.
$$
Comparing the domain and target of this operator with the one in Proposition \ref{laplace equation} (1), we see that it is a Fredholm operator with index $1$. To finish the proof, we just need to prove kernel of $\Delta$ consists only of constant functions. This is a simple consequence of the maximum principle, noticing that all functions in $V_{1}$ are bounded (because of (\ref{equivalent defi})).

(ii) Consider the operator
$$
d^{+}:L^{2}_{j+1,\delta}(M_{+};T^{*}M_{+})\rightarrow L^{2}_{j,\delta}(M_{+};\wedge_{+}^{2}T^{*}M_{+} ).
$$
Note that $H^{1}(M_{+};\mathbb{R})=0$ by our choice of $M$ (see (\ref{equa: b1mequalszero})). By \cite[Proposition 5.1]{Taubes}, when $\delta_{1}>0$ is small enough, both the kernel and the image of this operator  (which we denote by $V_{3}$ and $V_{4}$ respectively) are closed with the following properties:
\begin{equation}\label{1st homology vanish}
V_{3}\cong L^{2}_{j+2,\delta}(M_{+};\mathds{R}):du\leftrightarrow u;
\end{equation}
\begin{equation}\label{2nd homology}
\text{dim}(L^{2}_{j,\delta}(M_{+};\wedge_{2}^{+}T^{*}M_{+} )/V_{4})=b_{2}^{+}(M).
\end{equation}

By (\ref{1st homology vanish}), the operator
$$
V_{3}\rightarrow L^{2}_{j-1,\delta}(M_{+};\mathds{R}): \alpha\mapsto d^{*}\alpha.
$$
is essentially the same with the operator $\Delta(M_{+},\delta)$ in Proposition \ref{laplace equation}, which is Fredholm with index $-1$. This  implies that the operator
$$
L^{2}_{j,\delta}(M_{+};T^{*}M_{+})\rightarrow L^{2}_{j-1,\delta}(M_{+};\mathds{R})\oplus V_{4}:\alpha\mapsto (d^{*}\alpha,d^{+}\alpha)
$$
is also Fredholm with the same index. Therefore, by (\ref{2nd homology}), the operator
$$
L^{2}_{j+1,\delta}(M_{+};T^{*}M_{+})\rightarrow L^{2}_{j,\delta}(M_{+};\mathds{R})\oplus L^{2}_{j,\delta}(M_{+};\wedge^{2}_{+}T^{*}M_{+} ):\alpha\mapsto (d^{*}\alpha,d^{+}\alpha)
$$
is Fredholm with index $-(b_{2}^{+}(M)+1)$.

(iii) To apply the excision principle of the index, we consider the manifold $M_{-}=Z\cup_{Y}\bar{M}$. (Recall that $\bar{M}$ is the orentation reversal of $M$.) We choose a  function
$$\tau:M_{-}\rightarrow [0,+\infty)\text{ with }\tau(t,y)=|t|,\ \forall(t,y)\in (-\infty,-1]\times Y$$
and define the weighted Sobolev norm of a section $s$ over $M_{-}$ as
$$
\|s\|_{L^{2}_{j,-\delta}}:=\|e^{\delta\tau}s\|_{L^{2}_{j}}
.$$
By similar argument as (ii), one can show that the operator
$$
L^{2}_{j+1,-\delta}(M_{-};T^{*}M_{-})\rightarrow L^{2}_{j,-\delta}(M_{-};\mathds{R}\oplus \wedge^{2}_{+}T^{*}M_{-} ) :\alpha\mapsto (d^{*}\alpha,d^{+}\alpha)
$$
is Fredholm with index $-(b_{2}^{+}(\bar{M})+1)$. Notice that we have the decompositions
$$
M_{+}=M\cup_{Y}X_{+},\ M_{-}=Z\cup_{Y}\bar{M},\ Z_{+}= Z\cup_{Y}X_{+}.
$$
By an exision argument, we see that the operator
$$
(d^{*},d^{+}):L^{2}_{j+1;-\delta,\delta}(Z_{+},T^{*}Z_{+})\rightarrow L^{2}_{j;-\delta,\delta}(Z_{+},\mathds{R}\oplus \wedge^{2}_{+}T^{*}Z_{+})
$$
is Fredholm with index
$$
-(1+b_{2}^{+}(M))-(1+b_{2}^{+}(\bar{M}))+(1+b_{2}^{+}(M\cup_{Y}\bar{M}))=-1.
$$
Having proved this fact, we are left to show that the kernel is trivial. Suppose we have $$\alpha\in L^{2}_{j+1;-\delta,\delta}(Z_{+};T^{*}Z_{+})\text{ with }d^{*}\alpha=0,d^{+}\alpha=0.$$ Integrating by part, we get $d\alpha=0$. Since $H^{1}(Z_{+};\mathds{R})=0$, we have $\alpha=du$ for some harmonic function $u$. Notice that $\|du\|_{L^{2}_{j+1,-\delta,\delta}}<\infty$. By Lemma \ref{Taubes's lemma}, the function $u$ is bounded. By the maximal principle, $u$ is a constant, which implies $\alpha=du=0$.

(iv) Consider the operator
$$D(\bar{M}):L_{j+1}^{2}(\bar{M};T^{*}\bar{M})\rightarrow L^{2}_{j}(\bar{M};\mathds{R}\oplus \wedge^{2}_{+}T^{*}\bar{M})\oplus  L^{2}_{j+1/2}(Y;\mathds{R})\oplus C^{+}$$
defined by the same formula as (\ref{half De Rham with boundary}). By Lemma 24.8.1 of the book, $D(\bar{M})$ is a Fredholm operator with index $-b^{+}(\bar{M})-1$. We note that the boundary of $\bar{M}$ is $-Y$ while the boundary of the manifold in that Lemma is $Y$, this explains the reason we use $C^{+}$ while the book use $C^{-}$. We also note that the additional term ``$-1$'' in our index formula comes from the $1$-dimensional cokernel of the map
$$D(\bar{M}):L_{j+1}^{2}(\bar{M};T^{*}\bar{M})\rightarrow L^{2}_{j}(\bar{M};\mathds{R}\oplus i\wedge^{2}_{+}T^{*}\bar{M})\oplus  L^{2}_{j+1/2}(Y;\mathds{R})$$
$$\alpha\mapsto (d^{*}\alpha, d^{+}\alpha,\langle\alpha,\vec{v}\rangle ).$$ By an excision argument involving the operators $D(X_{+}),D(\bar{M}),D(M_{+})$ and the operator
$$d^{*}\oplus d^{+}:L^{2}_{j+1}(M\cup_{Y}\bar{M};T^{*}(M\cup_{Y}\bar{M}))\rightarrow L^{2}_{j}(M\cup_{Y}\bar{M};\mathds{R}\oplus \wedge^{2}_{+}T^{*}(M\cup_{Y}\bar{M})),$$
we can prove that  $D(X_{+})$ is Fredholm with index $-1$. Now suppose $\alpha\in\operatorname{ker}D(X_{+})$. Then by the integration by part argument on page 502 of the book, we can prove $d\alpha=0$. Since $H^{1}(X_{+};\mathds{R})=0$, we have $\alpha=df$ for some local $L^{2}_{j+1}$-function $f$. By Lemma \ref{Taubes's lemma}, we can assume $\|f\|_{L^{2}_{j+1,\delta}}<\infty$ after adding some constant function. Then $f$ satisfies
$\Delta f=0,\ \langle df,\vec{v}\rangle=0$.
By Lemma \ref{laplace equation}, we see that $f$ (hence also $\alpha$) equals $0$. We have proved that the kernel is trivial, which implies that the cokernel is $1$-dimensional. Using integration by part again, one can easily see that a necessary condition for an element $$(w_{1},\beta,w_{2},\alpha' )\in  L^{2}_{j,\delta}(X_{+};\mathds{R}\oplus \wedge^{2}_{+}T^{*}X_{+})\oplus  L^{2}_{j+1/2}(Y;\mathds{R})\oplus C^{+}$$ belonging to $\operatorname{im}D(X_{+})$ is $$\int_{X_{+}}w_{1}d\text{vol}+\int_{Y}w_{2}d\text{vol}=0.$$ Since the cokernel is $1$-dimensional, we see that this is also a sufficient condition. Moreover, we have a canonical isomorphism $$\operatorname{coker}D(X_{+})\cong \mathds{R}:\ [(w_{1},\beta,w_{2},\alpha' )]\leftrightarrow \int_{X_{+}}w_{1}d\text{vol}+\int_{Y}w_{2}d\text{vol}.$$
\end{proof}

Now we study the Fredholm properties related to the linearized Seiberg-Witten equations. Recall that we chose an ``admissible metric'' $g_{X}$ on $X$ (see Assumption \ref{admissible metric}). Under this assumption, we have the following proposition.

\begin{pro}[\cite{MRS}]\label{Dirac operator is Fredholm}There exists a number $\delta_{2}>0$ such that for any $\delta\in (-\delta_{2},\delta_{2}),j\in \mathds{Z}_{\geq 0}$, the end-periodic Dirac operator
$$
\slashed{D}^{+}_{A_{0}}:L^{2}_{j+1,\delta}(M^{+};S^{+})\rightarrow L^{2}_{j,\delta}(M^{+};S^{-})
$$
is Fredholm. Moreover, the number $$\operatorname{ind}_{\mathds{C}}(\slashed{D}^{+}_{A_{0}}(M_{+}))+\frac{\operatorname{sign}(M)}{8}$$ is an invariant of the pair $(X,g_{X})$, which we denote by $w(X,g_{X},0)$.
\end{pro}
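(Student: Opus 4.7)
The plan is to establish Fredholmness via the Fourier-Laplace method of \cite{Taubes}, and then to prove $M$-invariance of the corrected index by an excision argument on closed spin $4$-manifolds.

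First I would introduce the holomorphic family of twisted Dirac operators on the closed manifold $X$,
\begin{equation*}
\slashed{D}_{z}:=\slashed{D}^{+}_{A_{0}+(\log z)\cdot f^{*}(d\theta)}:L^{2}_{1}(X;S^{+})\rightarrow L^{2}(X;S^{-}),\qquad z\in \mathds{C}^{*}.
\end{equation*}
Since $X$ is a closed spin $4$-manifold with $\operatorname{sign}(X)=0$, Atiyah--Singer gives $\operatorname{ind}_{\mathds{C}}\slashed{D}_{z}=0$ for every $z$. The admissibility of $g_{X}$ (Assumption \ref{admissible metric}) asserts that $\ker \slashed{D}_{z}=0$ whenever $|z|=1$ (parameterizing the unit circle by $z=e^{ia}$, $a\in \mathds{R}$), so $\slashed{D}_{z}$ is an isomorphism on the unit circle. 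Continuity of the family and openness of the isomorphism locus then yield a $\delta_{2}>0$ such that $\slashed{D}_{z}$ is invertible on the annulus $\{e^{-\delta_{2}}<|z|<e^{\delta_{2}}\}$.

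The Fourier-Laplace theorem of Taubes \cite{Taubes}, in the form adapted to the Dirac operator in \cite{MRS}, asserts that the end-periodic Dirac operator on $L^{2}_{j+1,\delta}(X_{+};S^{+})$ is Fredholm if and only if $\slashed{D}_{z}$ is invertible on the circle $|z|=e^{\delta}$. By the previous paragraph this holds for every $|\delta|<\delta_{2}$, giving the Fredholm property on $X_{+}$. Attaching the compact piece $M$ to form $M_{+}=M\cup_{Y}X_{+}$ is a relatively compact modification of the parametrix that preserves Fredholmness and affects the index only by a finite amount coming from $M$.

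For the invariance statement, let $M,M'$ be two compact spin fillings of $(Y,\mathfrak{s})$ with collared metrics, and set $N:=M\cup_{Y}\bar{M'}$. Then $N$ is a closed spin $4$-manifold with $\operatorname{sign}(N)=\operatorname{sign}(M)-\operatorname{sign}(M')$ by Novikov additivity, so Atiyah--Singer yields $\operatorname{ind}_{\mathds{C}}\slashed{D}^{+}_{A_{0}}(N)=-\operatorname{sign}(N)/8$. The relative index (excision) principle applied to the decompositions $M_{+}=M\cup_{Y}X_{+}$ and $M'_{+}=M'\cup_{Y}X_{+}$ then gives
\begin{equation*}
\operatorname{ind}_{\mathds{C}}\slashed{D}^{+}_{A_{0}}(M_{+})-\operatorname{ind}_{\mathds{C}}\slashed{D}^{+}_{A_{0}}(M'_{+})=\operatorname{ind}_{\mathds{C}}\slashed{D}^{+}_{A_{0}}(N)=-\frac{\operatorname{sign}(M)-\operatorname{sign}(M')}{8},
\end{equation*}
which is exactly the assertion that $\operatorname{ind}_{\mathds{C}}\slashed{D}^{+}_{A_{0}}(M_{+})+\operatorname{sign}(M)/8$ depends only on $(X,g_{X})$. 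Independence of the collared metric $g_{M}$ (with $g_{X}$ fixed) follows by connecting any two metrics through a continuous path and invoking constancy of the index along continuous families of Fredholm operators. The main obstacle is establishing the Fourier-Laplace equivalence between Fredholmness on weighted Sobolev spaces and invertibility of the $z$-family on the matching circle; this requires the parametrix construction and Paley--Wiener-type estimates on the infinite cyclic cover $\tilde{X}$ carried out in \cite{Taubes}, after which the remaining arguments are standard.
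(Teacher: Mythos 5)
Your proposal is correct and follows essentially the same route as the proof in the cited reference \cite{MRS} (which the paper invokes without reproof, and which the paper itself mirrors in its appendix for the analogous Laplace-operator statement): Fourier--Laplace transform plus admissibility of $g_{X}$ to get invertibility of the twisted family on the unit circle and hence on an annulus, parametrix patching for Fredholmness, and excision together with Atiyah--Singer and Novikov additivity for the invariance of $\operatorname{ind}_{\mathds{C}}+\operatorname{sign}(M)/8$. The one imprecision is that the Taubes-type Fredholmness criterion should be stated for the complete end-periodic manifold $M_{+}$ (obtained by patching an end parametrix with an interior one over the compact piece $M$) rather than for the bordered manifold $X_{+}$, where the operator without boundary conditions is not Fredholm; your subsequent gluing sentence effectively carries this out, so the argument stands.
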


To end this subsection, let us consider the Atiyah-Patodi-Singer boundary problem on the end-periodic manifold $X_{+}$. This will be essential in our study of local structure of the Seiberg-Witten moduli space. To simplify our notation,  we denote the following bundles over $X_{+}$
$$iT^{*}X_{+}\oplus S^{+}\text{ and }i(\mathds{R}\oplus \wedge_{+}^{2}T^{*}X_{+})\oplus S^{-}$$ respectively by $E_{1}$ and $E_{2}$. We also write $F_{1}$ for the bundle $i(\mathds{R}\oplus T^{*}Y)\oplus S$ over $Y$.

Recall that $k$ is a fixed integer greater than $2$. First consider the linear operator
\begin{equation}\label{equation: defition of D}
    D=D_{0}+K:L^{2}_{k,\delta}(X_{+};E_{1})\rightarrow L^{2}_{k-1,\delta}(X_{+};E_{2}),
\end{equation}
where $D_{0}=(d^{*},d^{+},\slashed{D}_{A_{0}})$ and $K$ is an operator that can be extended to a bounded operator
$$
K:L^{2}_{j,\delta}(X_{+};E_{1})\rightarrow L^{2}_{j,2\delta}(X_{+};E_{2})
$$
for any integer $j\in [-k,k]$. Next, we define the restriction map
$$
r:L^{2}_{k,\delta}(X_{+};E_{1})\rightarrow L^{2}_{k-1/2}(Y;F_{1})$$
$$(a,\phi)\mapsto (\langle a,\vec{v}\rangle,a|_{Y},\phi|_{Y}).$$
 Let $H_{0}^{+}$ (resp. $H_{0}^{-}$) be the closure in $L^{2}_{1/2}(Y;F_{1})$ of the span of the eigenvectors eigenvalues of operator
$$
L_{0}: C^{\infty}(Y;F_{1})\rightarrow C^{\infty}(Y;F_{1})
$$
$$
(u,\alpha,\phi)\mapsto (d^{*}\alpha,*d\alpha-du,\slashed{D}_{A_{0}}\phi).
$$
with positive (resp. non-positive) eigenvalues. We write $\Pi_{0}$ for the projection
$$
L^{2}_{1/2}(Y;F_{1})\rightarrow L^{2}_{1/2}(Y;F_{1})
$$
with image $H_{0}^{-}$ and kernel $H_{0}^{+}$. It also maps $L^{2}_{s}(Y;F_{1})$ to $L^{2}_{s}(Y;F_{1})$ for all $s$. Consider another projection
$$
\Pi: L^{2}_{1/2}(Y;F_{1})\rightarrow L^{2}_{1/2}(Y;F_{1})$$
satisfying
$$\Pi (L^{2}_{s}(Y;F_{1}))\subset L^{2}_{s}(Y;F_{1})$$ for any $s$. We say that $\Pi$ and $\Pi_{0}$ are $k$-commonmensurate if the difference $$
\Pi-\Pi_{0}: L^{2}_{j-1/2}(Y;F_{1})\rightarrow L^{2}_{j-1/2}(Y;F_{1})
$$
is a compact operator, for all $1\leq j\leq k$. We write $H^{-}$ for $\operatorname{im}(\Pi)\subset L^{2}_{1/2}(Y;F_{1})$ and $H^{+}$ for $\operatorname{im}(1-\Pi)\subset L^{2}_{1/2}(Y;F_{1}).$
\begin{pro}\label{APS}
Let $\delta_{1},\delta_{2}$ be the constant provided by
Lemma \ref{half De rham complex} and Proposition \ref{Dirac operator is Fredholm} respectively. Then for any $\delta\in (0,\min(\delta_{1},\delta_{2}))$ and any $1\leq j\leq k$, the operator
$$
D\oplus ((1-\Pi)\circ r): L^{2}_{j,\delta}(X_{+};E_{1})\rightarrow L^{2}_{j-1,\delta}(X_{+};E_{2})\oplus (H^{+}\cap L^{2}_{j-1/2}(Y;F_{1}))
$$
is Fredholm. In addition, if $u_{i}$ is a bounded sequence in $L^{2}_{j,\delta}(X_{+};E_{1})$ and $Du_{i}$ is Cauchy in $L^{2}_{j-1,\delta}(X_{+};E_{2})$, then $\Pi\circ r(u_{i})$ has a convergent subsequence in $L^{2}_{j-1/2}(Y;F_{1})$. In particular, the maps $\Pi\circ r$ and $(1-\Pi)\circ r$ restricted to the kernel of $D$, are respectively, compact and Fredholm.
\end{pro}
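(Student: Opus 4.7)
The plan is to reduce the Fredholm assertion to the model problem $D_0 \oplus ((1-\Pi_0)\circ r)$ by absorbing both the perturbation $K$ and the difference $\Pi - \Pi_0$ as compact operators, and then to analyse this model by splitting $D_0$ into its form and spinor components. First I would handle the reductions. The hypothesis on $K$ factors it as $L^2_{j,\delta}(X_+;E_1) \to L^2_{j,2\delta}(X_+;E_2) \hookrightarrow L^2_{j-1,\delta}(X_+;E_2)$, where the second arrow relaxes both the Sobolev index and the weight strictly and is therefore compact by Proposition \ref{Sobolev embedding}; hence $K$ is compact. The $k$-commensurability hypothesis says $\Pi - \Pi_0$ is compact on $L^2_{j-1/2}(Y;F_1)$, so switching from $(1-\Pi)\circ r$ to $(1-\Pi_0)\circ r$ changes the boundary operator by a compact term and identifies $H^+\cap L^2_{j-1/2}$ with $H_0^+\cap L^2_{j-1/2}$ up to a compact isomorphism. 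Both the Fredholm and compactness claims thus reduce to the corresponding statements for the model.

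For the model problem, the splitting $E_1 = iT^*X_+ \oplus S^+$ is respected by $D_0 = (d^*,d^+) \oplus \slashed{D}_{A_0}$ and by $L_0$, hence by $\Pi_0$. The form block reduces to the operator treated in Lemma \ref{half De rham complex}(iv), once one checks (via the Hodge decomposition on the rational homology sphere $Y$) that the non-positive spectral projection of the form part of $L_0$ agrees, up to compact perturbation, with the condition $(\langle\,\cdot\,,\vec{v}\rangle,\pi^+)$ used there. The spinor block requires an excision argument. Proposition \ref{Dirac operator is Fredholm} gives Fredholmness of $\slashed{D}^+_{A_0}$ on $M_+ = M \cup_Y X_+$, while classical APS theory gives Fredholmness of $\slashed{D}^+_{A_0}$ on the compact collar $M$ with the APS boundary condition on $Y$. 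The excision principle of the index, used exactly as in the proofs of Lemma \ref{half De rham complex}(iii)(iv), then furnishes Fredholmness of $\slashed{D}^+_{A_0}$ on $X_+$ with the $\Pi_0^{\mathrm{Dirac}}$ boundary condition. Assembling the two blocks completes the Fredholm statement for the model.

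For the compactness assertion, I would use a right parametrix $Q$ for the model Fredholm operator. Writing $u_i = Q(Du_i,(1-\Pi_0)ru_i) + R u_i$ with $R$ compact on $L^2_{j,\delta}$, the parametrix identity $(1-\Pi_0)\circ r \circ Q = \operatorname{pr}_2 + K'$ with $K'$ compact implies that $\Pi_0\circ r\circ Q = r\circ Q - \operatorname{pr}_2 - K'$, and the right-hand side is compact into $L^2_{j-1/2}(Y;F_1)$ by interior elliptic regularity coupled with the boundary trace theorem. Combined with compactness of $R$, this furnishes a convergent subsequence of $\Pi_0 r u_i$ whenever $u_i$ is $L^2_{j,\delta}$-bounded and $Du_i$ is $L^2_{j-1,\delta}$-Cauchy; the final two sentences of the proposition then follow immediately. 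The main obstacle will be the Dirac excision step, as Proposition \ref{Dirac operator is Fredholm} is stated for the closed-like end-periodic $M_+$ rather than for $X_+$ with boundary, so one must carefully set up the half-infinite model problem on $M$ and verify that index additivity under excision carries through in the weighted, end-periodic setting; matching the Hodge-theoretic conventions needed to identify $\Pi_0^{\mathrm{form}}$ with the projection used in Lemma \ref{half De rham complex}(iv) is a secondary, though essentially routine, bookkeeping task.
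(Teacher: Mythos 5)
Your overall architecture (absorb $K$ as a compact perturbation, compare $\Pi$ with $\Pi_0$, reduce to a model problem) is reasonable, and the absorption of $K$ via the compact embedding $L^{2}_{j,2\delta}\hookrightarrow L^{2}_{j-1,\delta}$ is exactly what the paper does. But the step you dismiss as ``routine bookkeeping'' is in fact false. On the form block, the boundary data space decomposes along the Hodge decomposition of $(u,\alpha|_{Y})$, and on each two-dimensional sector spanned by $(f_{\lambda},0)$ and $(0,df_{\lambda}/\lambda)$, with $\Delta_{Y}f_{\lambda}=\lambda^{2}f_{\lambda}$, the form part of $L_{0}$ acts (in the self-adjoint convention of the book) as $\bigl(\begin{smallmatrix}0&-\lambda\\-\lambda&0\end{smallmatrix}\bigr)$, so its positive spectral line is spanned by $(f_{\lambda},-df_{\lambda}/\lambda)$. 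The condition of Lemma \ref{half De rham complex}(iv) instead records all of $u$ and none of the exact tangential part, i.e.\ projects onto the line spanned by $(f_{\lambda},0)$. These two lines sit at a fixed angle, uniformly in $\lambda$, so the difference of the two projections is bounded below on infinitely many mutually orthogonal sectors and is \emph{not} compact: the spectral boundary condition and the $(\langle\cdot,\vec{v}\rangle,\pi^{+})$ condition are not commensurate, and Fredholmness of the form block with the spectral condition cannot be deduced from Lemma \ref{half De rham complex}(iv) by a compact-perturbation argument. The paper does not split into blocks at all: its two model operators are the \emph{full} operator $(d^{*},d^{+},\slashed{D}_{A_{0}})\oplus(1-\Pi)\circ r_{\bar{M}}$ on the compact piece $\bar{M}$, which is Fredholm by Proposition 17.2.5 of the book (that result already handles arbitrary $k$-commensurate $\Pi$, so no reduction to $\Pi_{0}$ is needed), and the boundaryless operator $(d^{*},d^{+},\slashed{D}_{A_{0}})$ on the end-periodic $M_{+}$, Fredholm by Lemma \ref{half De rham complex} and Proposition \ref{Dirac operator is Fredholm}; these are then spliced by the standard parametrix-patching argument. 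Your spinor-block excision is fine in spirit, but it cannot rescue the form block.

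The compactness assertion is the second genuine gap. You claim that $\Pi_{0}\circ r\circ Q=r\circ Q-\operatorname{pr}_{2}-K'$ is compact ``by interior elliptic regularity coupled with the boundary trace theorem,'' but $r\circ Q$ and $\operatorname{pr}_{2}$ are each merely bounded, interior regularity gives no gain of derivatives at the boundary, and the compactness of their difference is precisely the nontrivial content of the proposition restricted to the range of $Q$ — the argument is circular as written. The actual mechanism is the collar analysis: multiply $u_{i}$ by a bump function supported near $\partial X_{+}$, transfer to the cylindrical model $[0,3]\times Y$, and use the spectral representation of solutions of $\tfrac{d}{dt}+L_{0}$ to show that the $H^{-}$-component of the trace is controlled, through a smoothing (hence compact) operator, by interior norms; this is what the paper means by ``multiply by a bump function and follow the argument on page 304 of the book.'' Without that step neither the ``in addition'' clause nor the final sentence of the proposition is established.
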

\begin{proof}
We consider the following two operators:
\begin{itemize}
\item The operator over $\bar{M}$
$$
(d^{*},d^{+},\slashed{D}_{A_{0}})\oplus (1-\Pi)\circ r_{\bar{M}}: L^{2}_{j}(\bar{M})\rightarrow L^{2}_{j-1}(\bar{M})\oplus   (H^{+}\cap L^{2}_{j-1/2}(Y)),$$ where $r_{\bar{M}}$ is the restriction map defined similarly as $r$;
\item The operator over $M_{+}$
$$
(d^{*},d^{+},\slashed{D}_{A_{0}}):  L^{2}_{j,\delta}(M_{+})\rightarrow L^{2}_{j-1,\delta}(M_{+}).$$
\end{itemize}
 By Proposition 17.2.5 of the book, Lemma \ref{half De rham complex} and Proposition \ref{Dirac operator is Fredholm}, both of these two operators are Fredholm. Note that they correspond to the operator $D_{0}\oplus((1-\Pi)\circ r)$ on $X_{+}$. We can prove the Fredholm property of $D_{0}\oplus((1-\Pi)\circ r)$ using standard parametrix patching argument (see Page 245 of the book). Since the embedding $L^{2}_{j,2\delta}\rightarrow L^{2}_{j-1,\delta}$ is compact, the operator $D\oplus((1-\Pi)\circ r)$ is a compact perturbation of $D_{0}\oplus((1-\Pi)\circ r)$ and we conclude that $D\oplus((1-\Pi)\circ r)$ is also Fredholm. To prove the second part of the Proposition, we multiply the sequence $\{u_{i}\}$ by a bump function $\beta$ supporting near $\partial X_{+}$ and follow the argument on Page 304 of the book.
\end{proof}
\subsection{The invariant $\lambda_{\textnormal{SW}}(X)$} Now we review the definition of $\lambda_{\textnormal{SW}}(X)$. By \cite[Lemma 2.1]{MRS}, for a generic pair $(g_{X},\beta)$ with $\beta\in L^{2}_{k+1}(X;iT^{*}X)$, the blown-up Seiberg-Witten moduli space $\mathcal{M}(X,g_{X},\beta)$ consisting of the gauge equivalence classes of the triples $$(A,s,\phi)\in \mathcal{A}_{k}(X)\times \mathds{R}_{\geq 0}\times L^{2}_{k}(X;S^{+}),\ \|\phi\|_{L^{2}}=1$$ that solve the Seiberg-Witten equation
\begin{equation}\label{blown up seiberg-witten}
\left\{\begin{array} {cc}
 F_{A}^{+}-s^{2}(\phi\phi^{*})_{0}=d^{+}\beta  \\
 \slashed{D}_{A}^{+}(X,g_{X})(\phi)=0
\end{array}\right.
\end{equation}
is an oriented manifold of dimension $0$ and contains no reducible points (i.e. triples with $s=0$). We call such $(g_{X},\beta)$ a regular pair. Now consider the end-periodic (perturbed) Dirac operator
$$
\slashed{D}^{+}_{A_{0}}(M_{+},g_{M_{+}})+\rho(\beta'): L^{2}_{1}(M_{+};S^{+})\rightarrow  L^{2}(M_{+};S^{-}).
$$
where $\beta'$ is an imaged valued one form on $M_{+}$ that equals the pull back of $\beta$ when restricted to $X_{+}$. As proved in \cite{MRS}, this operator is Fredholm and the quantity
$$
\operatorname{ind}_{\mathds{C}}(\slashed{D}^{+}_{A_{0}}(M_{+},g_{M_{+}})+\rho(\beta'))+\frac{\text{sign}(M)}{8}
$$
is an invariant of $(X,g_{X},\beta)$, which we denote by $w(X,g_{X},\beta)$.
\begin{thm}[\cite{MRS}] The number $
\#\mathcal{M}(X,g_{X},\beta)-w(X,g_{X},\beta)
$
does not depend on the choice of regular pair $(g_{X},\beta)$ and hence is an invariant of the manifold of $X$, which we define as $\lambda_{\textnormal{SW}}(X)$; morveover, the reduction of $\lambda_{\textnormal{SW}}(X)$ modulo $2$ is the Rohlin invariant of $X$.
\end{thm}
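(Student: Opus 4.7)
The plan is to establish invariance through a parametrized cobordism (wall-crossing) argument and to compute the mod-$2$ reduction through a parity calculation. Let $(g_0,\beta_0)$ and $(g_1,\beta_1)$ be two regular pairs joined by a smooth path $(g_t,\beta_t)$, $t\in[0,1]$. For a generic such path, the universal blown-up moduli space over $[0,1]$ is a compact, oriented $1$-manifold inside the irreducible locus $\{s>0\}$, whose boundary there is exactly $\mathcal{M}(X,g_1,\beta_1)\sqcup(-\mathcal{M}(X,g_0,\beta_0))$. The only source of non-compactness is sequences escaping to $\{s=0\}$: such a sequence forces $A$ to approach a reducible connection and the blown-up spinor $\phi$ to converge to a unit kernel element of the perturbed Dirac operator on $X$. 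Hence ends occur only at the finite set of times $t_{\ast}$ where $\ker\bigl(\slashed{D}^{+}_{A_0}(X,g_{t_{\ast}})+\rho(\beta_{t_{\ast}})\bigr)\neq 0$.

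Near each wall-crossing time, a Kuranishi analysis of the blown-up equations identifies the parametrized moduli space with a transverse half-line meeting $\{s=0\}$, contributing $\pm 1$ to the signed count of ends according to the sign of the spectral flow across zero. Summing yields
\begin{equation*}
\#\mathcal{M}(X,g_1,\beta_1)-\#\mathcal{M}(X,g_0,\beta_0)=\mathrm{SF}\bigl(\{\slashed{D}^{+}_{A_0}(X,g_t)+\rho(\beta_t)\}_{t\in[0,1]}\bigr).
\end{equation*}
The main obstacle is then to identify this closed-manifold spectral flow with the index difference $w(X,g_1,\beta_1)-w(X,g_0,\beta_0)$ on the end-periodic manifold $M_+$. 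The cleanest route is an excision/gluing argument: extend the family pulled back to $X_+$ by a fixed operator over $M$, so that the family of end-periodic Fredholm operators $\slashed{D}^{+}_{A_0}(M_+,g_{M_+,t})+\rho(\beta_t')$ has total spectral flow equal to its index difference; an excision comparison across a neck inside $X_+$ then identifies this with the closed-manifold spectral flow on $X$. Combining gives invariance of $\#\mathcal{M}-w$, which defines $\lambda_{\textnormal{SW}}(X)$.

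For the mod-$2$ reduction, the complex index of the end-periodic spin Dirac operator $\slashed{D}^{+}_{A_0}(M_+,g_{M_+})+\rho(\beta')$ is always even, inheriting the quaternionic structure from the spin $4$-manifold, so $w(X,g_X,\beta)\equiv \operatorname{sign}(M)/8\pmod 2$. A parallel quaternionic argument on $X$ shows that $\#\mathcal{M}(X,g_X,\beta)$ is even. Combining these gives
\begin{equation*}
\lambda_{\textnormal{SW}}(X)\equiv -\operatorname{sign}(M)/8\equiv \rho(Y,\mathfrak{s})\pmod 2,
\end{equation*}
and under Assumption \ref{homology S3timesS1} this last quantity is (by convention) the Rohlin invariant of the spin $4$-manifold $X$, completing the proof.
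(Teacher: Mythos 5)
First, a point of reference: the paper gives no proof of this statement at all --- it is quoted from \cite{MRS} --- so your sketch has to be measured against the argument given there, whose general shape (wall-crossing for the count, change-of-index for the correction term, a parity computation for the mod-$2$ statement) you have correctly guessed. The central step, however, is wrong as stated. Since $b_{1}(X)=1$ and $b_{2}^{+}(X)=0$, the reducible locus for a fixed pair $(g,\beta)$ is not a point but a circle of gauge classes $A_{0}+\beta+iaf^{*}(d\theta)$, $a\in\mathds{R}/\mathds{Z}$ (the Jacobian of $X$); ends of the parametrized moduli space occur whenever $\slashed{D}^{+}_{A}(X,g_{t})$ acquires kernel for \emph{some} $A$ on this circle, not only for $A=A_{0}+\beta_{t}$. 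This is exactly why the admissibility condition in Assumption \ref{admissible metric} requires invertibility for all $a\in\mathds{R}$. Consequently your displayed identity equating the change of $\#\mathcal{M}$ with the spectral flow of the single path $\slashed{D}^{+}_{A_{0}}(X,g_{t})+\rho(\beta_{t})$ is false, and the subsequent claim that excision identifies this with the end-periodic index difference has no content: the jump of both quantities is governed by the signed count of zero modes of the two-parameter family over $[0,1]\times S^{1}$ (equivalently, of the Fourier--Laplace family of twisted operators on the unit circle), and identifying the change of the end-periodic index on $M_{+}$ with that count is the real work in \cite{MRS}, carried out with Taubes-type end-periodic index theory. A single-operator spectral flow could not in any case account for the metric dependence of $w$: every individual twisted Dirac operator on the closed manifold $X$ has index $-\operatorname{sign}(X)/8=0$ for every metric, so no closed-manifold index difference can detect the wall structure.

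The mod-$2$ part is also not justified. Clifford multiplication by an imaginary-valued $1$-form anticommutes with the quaternionic structure $j$, so the perturbed end-periodic operator $\slashed{D}^{+}_{A_{0}}(M_{+})+\rho(\beta')$ is not quaternionic-linear and the evenness of its complex index does not follow; likewise charge conjugation carries the $\beta$-perturbed Seiberg--Witten equations to the $(-\beta)$-perturbed ones, so it induces no involution on $\mathcal{M}(X,g_{X},\beta)$ and no evenness of $\#\mathcal{M}$ for a general regular pair. There is also a structural problem with asserting that each term separately has fixed parity: a wall crossing changes $\#\mathcal{M}$ and $w$ by the same amount, and only their difference is a priori controlled; whether those simultaneous jumps are even is precisely the delicate point, and settling it (together with a computation of the parity at special data) is essentially the content of the mod-$2$ theorem in \cite{MRS}, which is a substantial argument rather than the two-line parity count you propose. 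As written, both halves of your proof contain genuine gaps that cannot be repaired without importing the circle-family wall-crossing analysis and the end-periodic change-of-index theorem from \cite{MRS}.
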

\begin{lem}\label{casson for psc}
Suppose $g_{X}$ is a metric with positive scalar curvature. Then the pair $(g_{X},0)$ is regular and $\lambda_{SW}(X)=-\omega(X,g_{X},0)$.
\end{lem}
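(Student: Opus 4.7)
The plan is to prove that the moduli space $\mathcal{M}(X,g_X,0)$ is \emph{empty} when $g_X$ has positive scalar curvature; then regularity of the pair $(g_X,0)$ holds vacuously (an empty set is trivially a zero-dimensional oriented manifold without reducibles), and the identity $\lambda_{\textnormal{SW}}(X)=-w(X,g_X,0)$ follows immediately from the theorem of Mrowka--Ruberman--Saveliev quoted above, since $\#\mathcal{M}(X,g_X,0)=0$. The emptiness will be verified case by case for reducible and irreducible elements of the blown-up moduli space, using the Weitzenb\"ock formula exactly as in the classical argument of Witten for closed positive-scalar-curvature four-manifolds. Note also that admissibility of $g_X$ (needed so that $w(X,g_X,0)$ is even defined) is automatic under the PSC assumption, as already observed in the remark following Assumption \ref{admissible metric}.

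For a reducible $(A,0,\phi)$, the system \eqref{blown up seiberg-witten} with $\beta=0$ reduces to $F_A^+=0$ and $\slashed{D}_A^+\phi=0$ with $\|\phi\|_{L^2(X)}=1$. Pairing the Weitzenb\"ock identity
\[
\slashed{D}_A^-\slashed{D}_A^+\phi \;=\; \nabla_A^*\nabla_A\phi+\tfrac{R}{4}\phi+\tfrac{1}{2}\rho(F_A^+)\phi
\]
with $\phi$ and integrating over $X$, the curvature term drops out and one obtains
\[
0 \;=\; \int_X |\nabla_A\phi|^2 \;+\; \tfrac{1}{4}\int_X R\,|\phi|^2,
\]
which is strictly positive since $R>0$ and $\phi$ is nontrivial---a contradiction. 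For an irreducible $(A,s,\phi)$ with $s>0$, set $\Phi:=s\phi$; then $(A,\Phi)$ solves the \emph{unblown-up} unperturbed Seiberg--Witten equations $F_A^+=(\Phi\Phi^*)_0$, $\slashed{D}_A^+\Phi=0$. The same computation, now using the pointwise identity $\langle\rho((\Phi\Phi^*)_0)\Phi,\Phi\rangle=\tfrac{1}{2}|\Phi|^4$, yields
\[
0 \;=\; \int_X |\nabla_A\Phi|^2 \;+\; \tfrac{1}{4}\int_X R\,|\Phi|^2 \;+\; \tfrac{1}{4}\int_X |\Phi|^4,
\]
forcing $\Phi\equiv 0$ and hence $s=0$, contradicting irreducibility.

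Combining the two cases, $\mathcal{M}(X,g_X,0)=\emptyset$, and the lemma follows as outlined in the first paragraph. No step is genuinely delicate: the Weitzenb\"ock computations are entirely standard, and the only bookkeeping point is verifying the quadratic identity $\langle\rho((\Phi\Phi^*)_0)\Phi,\Phi\rangle=\tfrac{1}{2}|\Phi|^4$ under the conventions of the book, which I would double-check by a direct calculation in an orthonormal frame at a point. The argument does not require any of the end-periodic machinery developed in Sections 2--3 --- the invariant $w(X,g_X,0)$ is the only place where that material enters, and it is treated as a black box provided by Proposition \ref{Dirac operator is Fredholm}.
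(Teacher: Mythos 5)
Your argument is correct and is exactly what the paper intends: its proof of Lemma \ref{casson for psc} is the one-line remark that it is ``a simple consequence of the Weitzenb\"ock formula,'' and your write-up simply spells out that standard vanishing argument (no reducibles since positive scalar curvature rules out harmonic spinors, no irreducibles by the Witten-type identity), so that $\mathcal{M}(X,g_{X},0)=\emptyset$, the pair $(g_{X},0)$ is regular, and $\lambda_{\textnormal{SW}}(X)=-w(X,g_{X},0)$ follows from the defining formula.
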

\begin{proof}
This is a simple consequence of the Weitzenb\"ock formula.
\end{proof}

\begin{lem}\label{orientation reversal 2}
Suppose $X$ admits a metric $g_{X}$ with positive scalar curvature. Then we have
$
\lambda_{\textnormal{SW}}(X)=-\lambda_{\textnormal{SW}}(-X)
$.
\end{lem}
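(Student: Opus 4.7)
The plan is to reduce the orientation-reversal identity to a duality for the end-periodic Dirac index. The starting point is Lemma \ref{casson for psc}: since the scalar curvature is orientation-insensitive, the same Riemannian metric $g_X$ has positive scalar curvature on $-X$, and by the Weitzenb\"ock formula both are admissible. Applying the lemma on both $X$ and $-X$ gives
\[
\lambda_{\textnormal{SW}}(X) = -w(X, g_X, 0), \qquad \lambda_{\textnormal{SW}}(-X) = -w(-X, g_X, 0),
\]
so the assertion is equivalent to $w(-X, g_X, 0) = -w(X, g_X, 0)$.

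Next I would pick the compact bounding manifold $M$ for $(Y, \mathfrak{s})$ as in Subsection 2.1; its orientation reversal $\bar{M}$ bounds $(-Y, \mathfrak{s})$ and may be used to build the end-periodic manifold entering the definition of $w(-X, g_X, 0)$. Under the tautological identification $\bar{M} \cup_{-Y} \overline{X_+} = \overline{M_+}$, orientation reversal swaps the positive and negative spinor bundles, so $\slashed{D}^{+}_{A_0}$ on $\overline{M_+}$ is identified with $\slashed{D}^{-}_{A_0}$ on $M_+$. Combined with $\operatorname{sign}(\bar{M}) = -\operatorname{sign}(M)$ this yields
\[
w(-X, g_X, 0) = \operatorname{ind}_{\mathds{C}} \slashed{D}^{-}_{A_0}(M_+) - \tfrac{\operatorname{sign}(M)}{8}.
\]
Matching this against the formula defining $w(X, g_X, 0)$, the lemma reduces to the single identity $\operatorname{ind}_{\mathds{C}} \slashed{D}^{-}_{A_0}(M_+) = -\operatorname{ind}_{\mathds{C}} \slashed{D}^{+}_{A_0}(M_+)$.

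To prove this identity I would fix a weight $\delta \in (0, \delta_2)$ with $\delta_2$ supplied by Proposition \ref{Dirac operator is Fredholm}, so that both operators are Fredholm. The bilinear $L^2$-pairing identifies the topological dual of $L^{2}_{j,\delta}(M_+)$ with $L^{2}_{-j,-\delta}(M_+)$ (the weight $e^{\delta\tau_2}$ pairs with $e^{-\delta\tau_2}$ to give an ordinary $L^2$ pairing), and integration by parts on $M_+$ produces no boundary contribution at infinity because of the exponential weights. Hence the formal adjoint of $\slashed{D}^{+}_{A_0}: L^{2}_{1,\delta} \to L^{2}_{\delta}$ is $\slashed{D}^{-}_{A_0}: L^{2}_{1,-\delta} \to L^{2}_{-\delta}$, giving $\operatorname{ind}(\slashed{D}^{+}_{A_0};\delta) = -\operatorname{ind}(\slashed{D}^{-}_{A_0};-\delta)$. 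Because the family $\{\slashed{D}^{-}_{A_0};\delta'\}_{\delta' \in (-\delta_2, \delta_2)}$ is a continuous family of Fredholm operators, its index is locally constant in $\delta'$, so I may replace $-\delta$ by $\delta$ and conclude.

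The main obstacle I anticipate is justifying the duality cleanly in the weighted setting: one must verify that the $L^2$-pairing really realizes $L^{2}_{-\delta}$ as the Banach-space dual of $L^{2}_{\delta}$, and that every element contributing to the $L^{2}_{\delta}$-cokernel of $\slashed{D}^{+}_{A_0}$ comes from an honest $L^{2}_{-\delta}$-kernel element of $\slashed{D}^{-}_{A_0}$ (so that no regularity-at-infinity issue inflates the cokernel). This is where the positive scalar curvature hypothesis is genuinely used: Weitzenb\"ock rules out harmonic spinors on $\tilde{X}$ at weight $0$, which both ensures constancy of the index throughout $(-\delta_2, \delta_2)$ and rules out spurious contributions at the crossover weight $\delta = 0$ in the Fourier--Laplace picture underlying Proposition \ref{Dirac operator is Fredholm}.
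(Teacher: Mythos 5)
Your proposal is correct, but it takes a genuinely different route from the paper's proof of Lemma \ref{orientation reversal 2}. Both arguments start identically: apply Lemma \ref{casson for psc} to $X$ and to $-X$ (legitimate, since scalar curvature is orientation-insensitive), so the lemma reduces to the cancellation $w(X,g_{X},0)+w(-X,g_{X},0)=0$. The paper proves this cancellation by an excision argument, comparing the Dirac operators on $M_{+}\cup\bar{M}_{+}$ with those on the closed manifold $M\cup_{Y}\bar{M}$ and on the full cyclic cover $\tilde{X}$, and then quoting the MRS computation that the unweighted $L^{2}$ index of $\slashed{D}^{+}_{A_{0}}(\tilde{X},g_{\tilde{X}})$ vanishes. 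You instead obtain it by chiral duality on the single end-periodic manifold $M_{+}$: orientation reversal identifies $\slashed{D}^{+}_{A_{0}}(\bar{M}_{+})$ with $\slashed{D}^{-}_{A_{0}}(M_{+})$, formal adjointness in the weighted spaces gives $\operatorname{ind}(\slashed{D}^{-};-\delta)=-\operatorname{ind}(\slashed{D}^{+};\delta)$, and constancy of the index in the weight across $\delta'=0$ (available because the admissible metric makes the operator Fredholm on all of $(-\delta_{2},\delta_{2})$, by Proposition \ref{Dirac operator is Fredholm} applied to both orientations) converts $-\delta$ into $\delta$. What your route buys: it avoids the excision and the $\tilde{X}$ index computation entirely and makes explicit that the only analytic input is Fredholmness at weight zero, i.e.\ admissibility; in fact you could work directly at weight $0$, where the $L^{2}$ spaces are self-dual, and skip the weight-shifting step altogether. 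What it costs, as you correctly anticipate, is the justification of the duality in the weighted setting: the cokernel of $\slashed{D}^{+}$ at weight $\delta$ must be identified with genuine solutions of $\slashed{D}^{-}v=0$ in the dual weighted space via elliptic estimates that are uniform over the periodic end (periodicity supplies these), and the Fredholm theorem must be invoked for $-X$ as well, which is fine since the PSC metric is admissible for either orientation. Two minor remarks: your signs are the ones consistent with Lemma \ref{casson for psc} ($\lambda_{\textnormal{SW}}=-w$), whereas the paper's proof writes $\lambda_{\textnormal{SW}}(X)=w(X,g_{X},0)$; either convention yields the lemma because only the cancellation of the sum is used. And your closing claim that positive scalar curvature is genuinely used in the duality step slightly overstates matters: the index argument only needs admissibility (Fredholmness at weight $0$), which PSC guarantees, while PSC proper is what Lemma \ref{casson for psc} requires.
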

\begin{proof}
 By Lemma \ref{casson for psc}, we have $\lambda_{\textnormal{SW}}(X)=w(X,g_{X},0)$. Similarly, $\lambda_{\textnormal{SW}}(-X)=w(-X,g_{X},0)$. Notice that $$\text{sign}(M)+\text{sign}(\bar{M})=\text{sign}(M\cup_{Y}\bar{M})=0.$$ By an excision argument relating indices of the Dirac operator on  $M_{+}\cup \bar{M}_{+}$ (where $\bar{M}_{+}$ denotes the orientation reversal of $M_{+}$) and the Dirac operator on $(M\cup_{Y}\bar{M})\cup \tilde{X}$, we get
\begin{equation}
\begin{split}
w(X,g_{X},0)+w(-X,g_{X},0)=&\operatorname{ind}_{\mathds{C}}\slashed{D}^{+}_{A_{0}}(M_{+},g_{M_{+}})+\operatorname{ind}_{\mathds{C}}\slashed{D}^{+}_{A_{0}}(\bar{M}_{+},g_{M_{+}})\\=&\operatorname{ind}_{\mathds{C}}\slashed{D}^{+}_{A_{0}}(\tilde{X},g_{\tilde{X}}),
\end{split}
\end{equation}
where $$\slashed{D}^{+}_{A_{0}}(\tilde{X},g_{\tilde{X}}):L^{2}_{1}(\tilde{X};S^{+})\rightarrow L^{2}(\tilde{X};S^{-})$$
is the (unperturbed) Dirac operator on $\tilde{X}$. As in the proof of \cite[Proposition 5.4]{MRS}, this operator has index $0$. Therefore, we have proved the lemma.
\end{proof}
\begin{rmk}
It was conjectured in \cite{MRS} that the relation $
\lambda_{\textnormal{SW}}(X)=-\lambda_{\textnormal{SW}}(-X)$ holds for a general homology $S^{3}\times S^{1}$ (without any assumption on the metric). This conjecture is still open. \end{rmk}

\section{Gauge theory on end-periodic manifolds}
In this section, we study the gauge theory on the end-periodic manifolds. First, we will carefully set up the (blown up) configuration space, the gauge group and the moduli spaces. Once this was done correctly, the arguments in Section 24 and 25 of the book can be repeated without essential difficulty. For this reason, some proofs in this section will only be sketched and we refer to the book for complete details.

 Let $\delta$ be a positive number smaller than $\min(\delta_{1},\delta_{2})$, where $\delta_{1},\delta_{2}$ are constants provided by Lemma \ref{half De rham complex} and Proposition \ref{Dirac operator is Fredholm} respectively. We let
 $$
 \mathcal{A}_{k,\delta}(X_{+})=\{A_{0}+a| a\in L^{2}_{k,\delta}(X_{+};iT^{*}X_{+})\}
 $$
 be the space of spin$^{\text{c}}$ connections of class $L^{2}_{k,\delta}$. The configuration spaces are defined as
\begin{equation}\label{configuration space}
\begin{split}
\mathcal{C}_{k,\delta}(X_{+})=\mathcal{A}_{k,\delta}(X_{+})\times L^{2}_{k,\delta}(X_{+};S^{+});\\
\mathcal{C}_{k,\delta}^{\sigma}(X_{+})=\{(A,s,\phi)| A\in\mathcal{A}_{k,\delta}(X_{+}),\phi\in L_{k,\delta}^{2}(X_{+};S^{+}), \|\phi\|_{L^{2}}=1, s\in \mathds{R}_{\geq 0}\}.\end{split}
\end{equation}
It is easy to see that $\mathcal{C}_{k,\delta}(X_{+})$ is a Hilbert manifold without boundary, while $\mathcal{C}_{k,\delta}^{\sigma}(X_{+})$ is a Hilbert manifold with boundary. There is a map $\boldsymbol{\pi}:\mathcal{C}_{k,\delta}^{\sigma}(X_{+})\rightarrow \mathcal{C}_{k,\delta}(X_{+})$ given by
$$
\boldsymbol{\pi}(A,s,\phi)=(A,s\phi).
$$
Next, we define the gauge groups
$$
\mathcal{G}^{0}_{k+1,\delta}(X_{+})=\{u:X_{+}\rightarrow S^{1}| (1-u)\in L^{2}_{k+1,\delta} (X_{+};\mathds{C}) \};
$$
$$
\mathcal{G}_{k+1,\delta}(X_{+})=\mathcal{G}_{c}\times \mathcal{G}^{0}_{k+1,\delta}(X_{+}),
$$
where $\mathcal{G}_{c}\cong S^{1}$ denotes the group of constant gauge transformations. Note that we impose the $L^{2}_{k+1,\delta}$-topology on $\mathcal{G}^{0}_{k+1,\delta}(X_{+})$ and the product topology on $\mathcal{G}_{k+1,\delta}(X_{+})$. Using the equality $$1-uv=(1-u)+(1-v)-(1-u)(1-v)$$ together with the Sobolev multiplication theorem, one can prove that $\mathcal{G}^{0}_{k+1,\delta}$ (and hence $\mathcal{G}_{k+1,\delta}$) is a group. A standard argument (see \cite{Taubes} and \cite{Freed-Uhlenbeck} for the non-abelian case) shows that they are actually  Hilbert Lie groups. The Lie algebra of $\mathcal{G}_{k+1,\delta}$ is given by \begin{equation}\label{lie algebra}T_{e}\mathcal{G}_{k+1,\delta}(X_{+})=\mathds{R}\oplus L^{2}_{k+1,\delta}(X_{+};i\mathds{R}).\end{equation}
\begin{rmk}
Our main concern will be the group $\mathcal{G}_{k+1,\delta}(X_{+})$, while the group $\mathcal{G}^{0}_{k+1,\delta}(X_{+})$ is introduced to smooth the arguments.
\end{rmk}
The actions of $\mathcal{G}_{k+1,\delta}(X_{+})$  on  $\mathcal{C}_{k,\delta}(X_{+})$ and $\mathcal{C}^{\sigma}_{k,\delta}(X_{+})$ are respectively given by
$$
u\cdot (A,\Phi)=(A-u^{-1}du,u\Phi)$$ and $$u\cdot (A,s,\phi)=(A-u^{-1}du,s,u\phi).
$$
Note that the latter action is free. We denote the quotient spaces by $\mathcal{B}_{k,\delta}(X_{+})$ and $\mathcal{B}^{\sigma}_{k,\delta}(X_{+})$ respectively.

\begin{lem}
$\mathcal{B}^{\sigma}_{k,\delta}(X_{+})$ is Hausdorff.
\end{lem}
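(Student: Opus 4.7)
The plan is to show that the graph of the gauge equivalence relation is closed in $\mathcal{C}^{\sigma}_{k,\delta}(X_{+}) \times \mathcal{C}^{\sigma}_{k,\delta}(X_{+})$, which is equivalent to Hausdorffness of the quotient. Concretely, suppose $\mathfrak{a}_{n} = (A_{n},s_{n},\phi_{n}) \to \mathfrak{a} = (A,s,\phi)$ and $u_{n}\cdot \mathfrak{a}_{n} \to \mathfrak{a}' = (A',s',\phi')$ in $\mathcal{C}^{\sigma}_{k,\delta}(X_{+})$ for some sequence $u_{n} \in \mathcal{G}_{k+1,\delta}(X_{+})$; I will produce a single $u \in \mathcal{G}_{k+1,\delta}(X_{+})$ with $u \cdot \mathfrak{a} = \mathfrak{a}'$. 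This is the weighted-Sobolev counterpart of the standard Hausdorff argument for compact and cylindrical-end configuration spaces.

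First decompose $u_{n} = c_{n} v_{n}$ using $\mathcal{G}_{k+1,\delta}(X_{+}) = \mathcal{G}_{c} \times \mathcal{G}^{0}_{k+1,\delta}(X_{+})$, and use compactness of $\mathcal{G}_{c} \cong S^{1}$ to pass to a subsequence with $c_{n} \to c_{\infty}$. Comparing the connection components of $\mathfrak{a}_{n}$ and $u_{n}\cdot\mathfrak{a}_{n}$, which differ by $v_{n}^{-1}dv_{n}$, yields
\[
v_{n}^{-1}dv_{n} \longrightarrow A - A' \quad \text{in } L^{2}_{k,\delta}(X_{+};iT^{*}X_{+}).
\]
Since $1-v_{n} \in L^{2}_{k+1,\delta}$ with $k \geq 3$ and $\delta > 0$, Proposition \ref{Sobolev embedding} gives $v_{n} \to 1$ uniformly at the periodic end. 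Outside a large compact set, $v_{n}$ therefore avoids $-1$, so one has a principal-branch lift $v_{n} = e^{if_{n}}$ with $f_{n} \to 0$ at infinity; the asymptotic triviality extends the lift globally, giving $f_{n} \in L^{2}_{k+1,\delta}(X_{+};\mathds{R})$.

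Now $df_{n} = -i v_{n}^{-1}dv_{n}$ is Cauchy in $L^{2}_{k,\delta}$. Applying Lemma \ref{Taubes's lemma}(1) to the differences $f_{n}-f_{m}$ (whose normalizing constant $\bar{u}$ in the lemma is forced to vanish because each $f_{n}$ already lies in the weighted space) produces the Poincar\'e-type bound
\[
\|f_{n} - f_{m}\|_{L^{2}_{k+1,\delta}} \leq C \|df_{n} - df_{m}\|_{L^{2}_{k,\delta}},
\]
so $f_{n} \to f$ in $L^{2}_{k+1,\delta}$. Setting $v_{\infty} := e^{if}$ and invoking the Sobolev multiplication theorem (Proposition \ref{Sobolev multiplication}), one finds $1 - v_{\infty} \in L^{2}_{k+1,\delta}$ and $v_{n} \to v_{\infty}$ in $\mathcal{G}^{0}_{k+1,\delta}$. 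Hence $u_{n} \to u := c_{\infty} v_{\infty}$ in $\mathcal{G}_{k+1,\delta}$, and taking the limit in $u_{n}\cdot\mathfrak{a}_{n}$ gives $u\cdot\mathfrak{a} = \mathfrak{a}'$.

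The chief obstacle is the lifting step: producing a \emph{global} real-valued $f_{n}$ with the correct weighted Sobolev regularity. The positive weight $\delta$ is essential, because it forces $v_{n}$ to approach $1$ at the periodic end, killing any potential winding obstruction at infinity and placing the lift in a space where the Taubes-type weighted Poincar\'e inequality of Lemma \ref{Taubes's lemma} can be applied. Once the lift is in hand, the remaining steps are routine adaptations of the arguments in the book to the weighted Sobolev setting.
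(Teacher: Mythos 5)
Your proposal is correct and is essentially the paper's own argument: both reduce Hausdorffness of the quotient to (sequential) closedness of the gauge-orbit relation, write the gauge transformations as constants times exponentials of decaying imaginary-valued functions, and then combine the Cauchy property of $u_n^{-1}du_n$ with the weighted Poincar\'e-type inequality of Lemma~\ref{Taubes's lemma}, compactness of the constant circle $\mathcal{G}_{c}$, and continuity of the exponential map and of the group action to extract a convergent subsequence of gauge transformations (the paper merely performs the splitting into constant and decaying parts at the end, by normalizing $\bar{\xi}_n$ modulo $2\pi i$, rather than at the start via $\mathcal{G}_{c}\times\mathcal{G}^{0}_{k+1,\delta}$). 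One small caveat: the existence of a \emph{global} lift $v_n=e^{if_n}$ is not a consequence of $v_n\to 1$ at the end alone (decay only kills the winding class restricted to the end); it follows because $H^{1}(X_{+};\mathds{Z})=0$, the same fact the paper uses implicitly when it writes $g_n=e^{\xi_n}$.
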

\begin{proof}
By standard argumet, the proof is reduced to the following claim:

\begin{claim}: Suppose we have sequences $\{u_{n}\}\subset \mathcal{C}^{\sigma}_{k,\delta}(X_{+}),\ \{g_{n}\}\subset \mathcal{G}_{k+1,\delta}(X_{+})$ such that $u_{n}\rightarrow u_{\infty}\text{ and } g_{n}u_{n}\rightarrow v_{\infty}$
for some $u_{\infty},v_{\infty}\in \mathcal{C}^{\sigma}_{k,\delta}(X_{+})$. Then we can find $g_{\infty}\in \mathcal{G}_{k+1,\delta}(X_{+})$ such that $g_{\infty}u_{\infty}=v_{\infty}$.\end{claim}
To prove the claim, we let $u_{n}=(A_{n},s_{n},\phi_{n})$. Then both $A_{n}$ and $A_{n}-g_{n}^{-1}dg_{n}$ converges in $L^{2}_{k+1,\delta}$ norm, which implies that the sequence  $\{g_{n}^{-1}dg_{n}\}$ is Cauchy in $L^{2}_{k,\delta}(X_{+};i\mathds{R})$. Let $g_{n}=e^{\xi_{n}}$. Then $\{d\xi_{n}\}$ is Cauchy in $L^{2}_{k,\delta}(X_{+};i\mathds{R})$. By Lemma \ref{Taubes's lemma}, we can find numbers $\bar{\xi}_{n}\in i\mathds{R}$ such that $\{\xi_{n}-\bar{\xi}_{n}\}$ is a Cauchy sequence in $L^{2}_{k+1,\delta}(X_{+};i\mathds{R})$. Using the fact that the exponential map
$$
 L^{2}_{k+1,\delta}(X_{+};iT^{*}X_{+})\rightarrow \mathcal{G}^{0}_{k+1,\delta}(X_{+}):\ \xi\mapsto e^{\xi}
$$
is well defined and continuous (which is a consequence of the Sobolev multiplication theorem). We see that $\{e^{\xi_{n}-\bar{\xi}_{n}}\}$ is a Cauchy sequence in $\mathcal{G}^{0}_{k+1,\delta}(X_{+})$.

On the other hand, by replacing $\xi_{n}$ with $\xi_{n}-2m_{n}\pi i$ for $m_{n}\in \mathds{Z}$. We can assume $\bar{\xi}_{n}\in [0,2\pi)$. After passing to a subsequence, we may assume $\bar{\xi}_{n}$ converges to some number $\bar{\xi}_{\infty}$, which implies $e^{\bar{\xi}_{n}}$ converges to $e^{\bar{\xi}_{\infty}}$ as elements of $\mathcal{G}_{c}$.

Now we see that $g_{n}=e^{\bar{\xi}_{n}}\cdot e^{\xi_{n}-\bar{\xi}_{n}}$ has a subsequencial limit $g_{\infty}$ in $\mathcal{G}_{k+1,\delta}(X_{+})$. Since the action of $\mathcal{G}_{k+1,\delta}(X_{+})$ is continuous, we get $g_{\infty}\cdot u_{\infty}=v_{\infty}$.\end{proof}

Next, we define the local slice $S^{\sigma}_{k,\delta,\gamma}$ of the gauge action at $\gamma=(A_{0},s_{0},\phi_{0})\in \mathcal{C}^{\sigma}_{k,\delta}(X_{+})$. By taking derivative on gauge group action, we get a map
$$d^{\sigma}_{\gamma}:T_{e}\mathcal{G}_{k+1,\delta}(X_{+})\rightarrow T_{\gamma}\mathcal{C}^{\sigma}_{k,\delta}(X_{+})$$
$$
\xi\mapsto(-d\xi,0,\xi\phi_{0}).
$$
We denote the image of $d^{\sigma}_{\gamma}$ by $\mathcal{J}^{\sigma}_{k,\delta,\gamma}$, which is the tangent space of the gauge orbit. To define its complement, we consider the subspace $\mathcal{K}^{\sigma}_{k,\delta,\gamma}\subset T_{\gamma}C^{\sigma}_{k,\delta}(X_{+})$ as the kernel of the operator (c.f. formula (9.12) of the book)
\begin{equation}\label{d-sigma-flat}\begin{split}
d^{\sigma,\flat}_{\gamma}:L^{2}_{k,\delta}(X_{+};i\mathds{R})\oplus \mathds{R}\oplus L^{2}_{k,\delta}(X_{+};S^{+})\rightarrow L^{2}_{k-1/2}(Y;i\mathds{R})\oplus L^{2}_{k-1,\delta}(X_{+};i\mathds{R})\\
(a,s,\phi)\mapsto (\langle a, \vec{v} \rangle, -d^{*}a+is_{0}^{2}\text{Re}\langle i\phi_{0},\phi\rangle +i|\phi_{0}|^{2}\cdot \int_{X_{+}} \text{Re}\langle i\phi_{0},\phi\rangle\,d\text{vol})
\end{split}\end{equation}
\begin{rmk}To motivate this construction, we note that when $s_{0}>0$,  $\mathcal{K}^{\sigma}_{k,\delta,\gamma}$ is obtained by lifting the $L^{2}$-orthogonal complement of the tangent space of the gauge orbit (through $\boldsymbol{\pi}(\gamma)$) in $\mathcal{C}_{k,\delta}(X_{+})$.
\end{rmk}
\begin{rmk}
We also note that in the book, the integral in the formula corresponding to (\ref{d-sigma-flat}) is divided by the total volume of the $4$-manifold. However, this difference is not essential because the kernel is not affected.
\end{rmk}
\begin{lem}\label{decomposition of the tangent space}
For any $\gamma$, we have a decomposition $T_{\gamma}\mathcal{C}^{\sigma}_{k,\delta}(X_{+})=\mathcal{J}^{\sigma}_{k,\delta,\gamma}\oplus \mathcal{K}^{\sigma}_{k,\delta,\gamma}$.
\end{lem}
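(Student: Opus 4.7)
The plan is to reduce the decomposition to showing that the composition
$$\mathcal{L}_\gamma := d^{\sigma,\flat}_\gamma \circ d^\sigma_\gamma : T_e \mathcal{G}_{k+1,\delta}(X_+) \longrightarrow L^2_{k-1/2}(Y;i\mathds{R}) \oplus L^2_{k-1,\delta}(X_+;i\mathds{R})$$
is a Banach-space isomorphism. Once this is granted, for any $v \in T_\gamma \mathcal{C}^\sigma_{k,\delta}(X_+)$ I set $\xi := \mathcal{L}_\gamma^{-1}(d^{\sigma,\flat}_\gamma v)$; then $d^\sigma_\gamma \xi \in \mathcal{J}^\sigma_{k,\delta,\gamma}$ and $v - d^\sigma_\gamma \xi \in \ker d^{\sigma,\flat}_\gamma = \mathcal{K}^\sigma_{k,\delta,\gamma}$, and uniqueness of such a decomposition follows from injectivity of $\mathcal{L}_\gamma$ together with injectivity of $d^\sigma_\gamma$.

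Unwinding the definitions gives
$$\mathcal{L}_\gamma(c,\xi_0) = \Bigl(-\partial_{\vec v}\xi_0|_Y,\ \Delta \xi_0 + is_0^2\,\text{Re}\langle i\phi_0,(c+\xi_0)\phi_0\rangle + i|\phi_0|^2 \int_{X_+}\text{Re}\langle i\phi_0,(c+\xi_0)\phi_0\rangle\,d\text{vol}\Bigr).$$
For the Fredholm property, the principal part $(c,\xi_0)\mapsto (-\partial_{\vec v}\xi_0|_Y, \Delta\xi_0)$ has index zero: its $\xi_0$-component is precisely the Neumann Laplacian of Proposition~\ref{laplace equation}(iii), which has trivial kernel and one-dimensional cokernel, while the $c$-factor contributes a one-dimensional kernel. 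Since $\phi_0 \in L^2_{k,\delta}$ with $k\geq 3$, Propositions~\ref{Sobolev multiplication} and~\ref{Sobolev embedding} show that the remaining terms (which factor through multiplication by $|\phi_0|^2$) form a compact perturbation into $L^2_{k-1,\delta}$, so $\mathcal{L}_\gamma$ is Fredholm of index zero.

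For injectivity, suppose $\mathcal{L}_\gamma(c,\xi_0)=0$. Writing $c = i\alpha$ with $\alpha \in \mathds{R}$ and $\xi_0 = i\beta$ with $\beta \in L^2_{k+1,\delta}(X_+;\mathds{R})$, the identity $\text{Re}\langle i\phi_0,(c+\xi_0)\phi_0\rangle = (\alpha+\beta)|\phi_0|^2$ reduces the equations to $\partial_{\vec v}\beta|_Y = 0$ and
$$\Delta \beta + s_0^2(\alpha+\beta)|\phi_0|^2 + |\phi_0|^2 \int_{X_+}(\alpha+\beta)|\phi_0|^2\,d\text{vol} = 0.$$
Pairing with $\alpha+\beta$ and integrating by parts on $X_+$, the boundary term on $Y$ vanishes by the Neumann condition while exponential decay kills the contribution at the periodic end, yielding
$$\int_{X_+}|d\beta|^2\,d\text{vol} + s_0^2 \int_{X_+}(\alpha+\beta)^2|\phi_0|^2\,d\text{vol} + \Bigl(\int_{X_+}(\alpha+\beta)|\phi_0|^2\,d\text{vol}\Bigr)^2 = 0.$$
All three summands are nonnegative, so each must vanish: the first forces $\beta$ to be constant and then $\beta \equiv 0$ by the $L^2_{k+1,\delta}$ decay; using $\|\phi_0\|_{L^2} = 1$ the remaining identity collapses to $(s_0^2+1)\alpha^2 = 0$, hence $\alpha = 0$. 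Combined with the index-zero Fredholm property, $\mathcal{L}_\gamma$ is an isomorphism.

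The main obstacle is the injectivity step, and especially recognizing why the global term $i|\phi_0|^2 \int \text{Re}\langle i\phi_0,\phi\rangle\,d\text{vol}$ in the definition of $d^{\sigma,\flat}_\gamma$ cannot be dropped: without it, $\mathcal{L}_\gamma$ would annihilate the constant-gauge direction whenever $s_0 = 0$, so injectivity would fail and the Coulomb-type slice would degenerate. The remaining ingredients are standard adaptations of the slice construction in Section~9 of the book to the weighted end-periodic setting, with the Fredholm input provided by the linear analysis of Subsection~2.3.
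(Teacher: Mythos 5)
Your proof is correct and follows essentially the same route as the paper: reduce the splitting to showing that $d^{\sigma,\flat}_{\gamma}\circ d^{\sigma}_{\gamma}$ on $T_{e}\mathcal{G}_{k+1,\delta}(X_{+})$ is an isomorphism, establish index zero by viewing it as a compact perturbation (via the weight gain $L^{2}_{k,2\delta}\hookrightarrow L^{2}_{k-1,\delta}$) of the Neumann Laplacian of Proposition \ref{laplace equation}(iii) together with the extra constant $\mathds{R}$-factor, and then prove injectivity by integration by parts. The only cosmetic difference is that you write out the integration-by-parts argument (which the paper delegates to Proposition 9.3.5 of the book), and your computation there, including the role of the global term $i|\phi_{0}|^{2}\int\operatorname{Re}\langle i\phi_{0},\phi\rangle$, is accurate.
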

\begin{proof}
We want to show that for any $(a,s,\phi)\in T_{\gamma}\mathcal{C}^{\sigma}_{k,\delta}(X_{+})$, there exists a unique $\xi\in T_{e}\mathcal{G}_{k+1,\delta}(X_{+})$ such that
$$(a,s,\phi)-d^{\sigma}_{\gamma}\xi\in \mathcal{K}^{\sigma}_{k,\delta,\gamma}.$$
This is equivalent to the condition
\begin{equation}\label{perturbed laplace}
D\xi=(\langle a, \vec{v}\rangle,-is_{0}^{2}\operatorname{Re}\langle i\phi_{0},\phi\rangle -i|\phi_{0}|^{2}\int_{X_{+}} \operatorname{Re}\langle i\phi_{0},\phi\rangle d\text{vol} +d^{*}a)
\end{equation}
where the operator  $$D:T_{e}\mathcal{G}_{k+1,\delta}(X_{+})\rightarrow L^{2}_{k-1/2}(Y;i\mathds{R})\oplus L^{2}_{k-1,\delta}(X_{+};i\mathds{R})$$
is given by
$$
\xi\mapsto (\langle d\xi, \vec{v}\rangle, \Delta \xi +s_{0}^{2}|\phi_{0}|^{2}\xi +i |\phi_{0}|^{2} \int_{X_{+}} (-i\xi)|\phi_{0}|^{2}d\text{vol})
$$
Notice that the map
$$
\xi \mapsto s_{0}^{2}|\phi_{0}|^{2}\xi +i |\phi_{0}|^{2} \int_{X_{+}} (-i\xi)|\phi_{0}|^{2}d\text{vol}
$$
actually factors through the space $L^{2}_{k,2\delta}(X;i\mathds{R})$. Therefore, the operator $D$ is a compact perturbation of the operator $D'$ given by
$$
\xi\mapsto   (\langle d\xi, \vec{v}\rangle, \Delta \xi).
$$
The index of $D'$ (hence $D$) equals $0$ by Proposition \ref{laplace equation} (iii). Here the index is increased by $1$ because we have an additional summand $\mathds{R}$ in the domain (see (\ref{lie algebra})). As in the proof of Proposition 9.3.5 of the book, we can show that $D$ has trivial kernel using integration by part.  Therefore, $D$ is an isomorphism and (\ref{perturbed laplace}) has a unique solution. \end{proof}
\begin{rmk}
The integration by part argument over the noncompact manifold $X_{+}$ is justified by the following fact (which can be proved using bump function): For any $\delta>0$ and $\theta\in L^{2}_{k,\delta}(X_{+};\wedge^{3}T^{*}X_{+})$, we have
$$
\int_{X^{+}}d\theta=\int_ {\partial{X_{+}}}\theta.
$$
\end{rmk}
We define the local slice $\mathcal{S}^{\sigma}_{k,\delta,\gamma}\subset \mathcal{C}^{\sigma}_{k,\delta}(X_{+})$ (at $\gamma$) as the set of points $(A,s,\phi)$ satisfying
$$
d^{\sigma,\flat}_{\gamma}(A-A_{0},s,\phi)=0
$$
By Lemma 9.3.2 of the book, Lemma \ref{decomposition of the tangent space} has the following corollary.
\begin{cor}
$\mathcal{B}^{\sigma}_{k,\delta}(X_{+})$ is a Hilbert manifold with boundary. For any $\gamma\in C^{\sigma}_{k,\delta}(X_{+})$, there is an open neighborhood of $\gamma$ in the slice
$$
U\subset \mathcal{S}^{\sigma}_{k,\delta,\gamma}
$$
such that $U$ is a diffeomorphism onto its image under the natural projection from  $\mathcal{C}^{\sigma}_{k,\delta}(X_{+})$ to  $\mathcal{B}^{\sigma}_{k,\delta}(X_{+})$, which is an open neighborhood of $[\gamma]$ in $\mathcal{B}^{\sigma}_{k,\delta}(X_{+})$.
\end{cor}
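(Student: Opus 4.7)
The plan is to apply the general Hilbert-manifold slice theorem (Lemma 9.3.2 of \cite{KM}) with Lemma \ref{decomposition of the tangent space} as the main algebraic input. The freeness of the action of $\mathcal{G}_{k+1,\delta}(X_{+})$ on the blown-up configuration space was noted earlier, and the Hausdorffness of the quotient was just verified, so the remaining task is to produce, around each $\gamma$, a local slice that meets each nearby gauge orbit transversally and exactly once.

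Concretely I would consider the smooth map
\[
\mu:\mathcal{G}_{k+1,\delta}(X_{+})\times \mathcal{S}^{\sigma}_{k,\delta,\gamma}\longrightarrow \mathcal{C}^{\sigma}_{k,\delta}(X_{+}),\qquad (u,\eta)\mapsto u\cdot\eta.
\]
Its differential at $(e,\gamma)$ is the map $(\xi,v)\mapsto d^{\sigma}_{\gamma}\xi + v$ from $T_{e}\mathcal{G}_{k+1,\delta}(X_{+})\oplus \mathcal{K}^{\sigma}_{k,\delta,\gamma}$ to $T_{\gamma}\mathcal{C}^{\sigma}_{k,\delta}(X_{+})$. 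By Lemma \ref{decomposition of the tangent space} this is a bounded linear isomorphism (injectivity on the first factor follows from freeness, since $\|\phi_{0}\|_{L^{2}}=1$ combined with $u^{-1}du=0$ forces $u\equiv 1$ on the connected manifold $X_{+}$). The Hilbert inverse function theorem then produces open neighborhoods $V\ni e$ in $\mathcal{G}_{k+1,\delta}(X_{+})$ and $U\ni \gamma$ in $\mathcal{S}^{\sigma}_{k,\delta,\gamma}$ such that $\mu|_{V\times U}$ is a diffeomorphism onto an open subset of $\mathcal{C}^{\sigma}_{k,\delta}(X_{+})$.

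After possibly shrinking $U$, I would then show that the natural composition $U\hookrightarrow \mathcal{C}^{\sigma}_{k,\delta}(X_{+})\to \mathcal{B}^{\sigma}_{k,\delta}(X_{+})$ is injective: if $\eta_{1},\eta_{2}\in U$ satisfy $u\cdot\eta_{1}=\eta_{2}$ for some $u\in \mathcal{G}_{k+1,\delta}(X_{+})$, then the Hausdorff property of $\mathcal{B}^{\sigma}_{k,\delta}(X_{+})$ proved above, together with a standard shrinking argument on $U$, forces $u\in V$, whence the local injectivity of $\mu|_{V\times U}$ yields $u=e$ and $\eta_{1}=\eta_{2}$. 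This exhibits $U$ as an open chart around $[\gamma]$ modeled on the Hilbert half-space $\mathcal{K}^{\sigma}_{k,\delta,\gamma}$ (with boundary coming from the locus $\{s=0\}$), and equivariance of $\mu$ guarantees smooth compatibility of overlapping slice charts, so that $\mathcal{B}^{\sigma}_{k,\delta}(X_{+})$ inherits the structure of a Hilbert manifold with boundary.

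The main obstacle I anticipate is the injectivity step: ruling out ``large'' gauge transformations, not close to the identity, that might identify two slice points. In this weighted-Sobolev setting this is slightly delicate because $\mathcal{G}_{k+1,\delta}(X_{+})$ factors as $\mathcal{G}_{c}\times \mathcal{G}^{0}_{k+1,\delta}(X_{+})$ with the disconnected $S^{1}$-factor of constant gauge transformations. I would handle this by first using the $S^{1}$-action to normalize (mirroring the argument in the Hausdorffness lemma, where the constants $\bar{\xi}_{n}$ were reduced modulo $2\pi$) and then applying the proximity-to-identity argument from the compact case of \cite{KM} inside the connected component $\mathcal{G}^{0}_{k+1,\delta}(X_{+})$, where Lemma \ref{Taubes's lemma} controls the logarithm of a gauge transformation by its derivative.
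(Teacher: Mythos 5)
Your proposal is correct and follows essentially the same route as the paper: the paper simply invokes Lemma 9.3.2 of Kronheimer--Mrowka with Lemma \ref{decomposition of the tangent space} as input, and what you write out (the map $(u,\eta)\mapsto u\cdot\eta$, the inverse function theorem via the decomposition $T_{\gamma}\mathcal{C}^{\sigma}_{k,\delta}=\mathcal{J}^{\sigma}_{k,\delta,\gamma}\oplus\mathcal{K}^{\sigma}_{k,\delta,\gamma}$ together with freeness, and the exclusion of large gauge transformations via the properness claim from the Hausdorffness lemma) is precisely the content of that cited slice theorem adapted to the weighted setting. No gap to report.
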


Now we study the Seiberg-Witten equations on the manifold $X_{+}$. Let $\mathcal{V}^{\sigma}_{k,\delta}(X_{+})$ be the trivial bundle $\mathcal{C}^{\sigma}_{k,\delta}(X_{+})$ with fiber $L^{2}_{k-1,\delta}(i\mathfrak{su}(S^{+})\oplus S^{-})$, where $\mathfrak{su}(S^{+})$ denotes the bundle of skew-hermitian, trace-$0$ automorphisms on $S^{+}$. We consider a smooth section $$\mathfrak{F}^{\sigma}:C^{\sigma}_{k,\delta}(X_{+})\rightarrow \mathcal{V}^{\sigma}_{k,\delta}(X_{+})$$ given by
$$
\mathfrak{F}^{\sigma}(A,s,\phi)=(\frac{1}{2}\rho(F^{+}_{A^{t}})-s^{2}(\phi\phi^{*})_{0},\slashed{D}_{A}^{+}\phi)
$$
The zero locus of $\mathfrak{F}^{\sigma}$ describes the solution of the (blown-up) Seiberg-Witten equations.

To obtain the transversality condition, we introduce a perturbation on $\mathfrak{F}^{\sigma}$. This was done in the same way as the book: Recall that the boundary $\partial X_{+}$ has a neighborhood $N$ which is isomorphic to $[0,3]\times Y$ (with $\{0\}\times Y$ identified with $\partial X_{+}$). Pick two $3$-dimensional tame perturbations $\mathfrak{q}$ and $\mathfrak{p}_{0}$. We impose the following assumption on $\mathfrak{q}$:
\begin{assum}\label{3 dimensional perturbation}
$\mathfrak{q}$ is a nice perturbation with $\operatorname{ht}(\mathfrak{q})=-2w(X,g_{X},0)$. Such perturbation exists by Proposition \ref{height of nice perturbation}.
\end{assum}
These two perturbations induce, in a canonical way, 4-dimensional perturbations $\hat{\mathfrak{q}}^{\sigma},\hat{\mathfrak{p}}_{0}^{\sigma}$ on $N$ (see Page 153 and 155 of the book). Pick a cut-off function  $\beta$ that equals $1$ near $\{0\}\times Y$ and equals $0$ near $\{3\}\times Y$ and a bump function $\beta_{0}$ supported in $(0,-3)\times Y$. Then the sum
\begin{equation}\label{mixed perturbation}
\hat{\mathfrak{p}}^{\sigma}=\beta\cdot \hat{\mathfrak{q}}^{\sigma}+\beta_{0}\cdot \hat{\mathfrak{p}}_{0}^{\sigma}
\end{equation} is a section of $\mathcal{V}^{\sigma}_{k,\delta}(X_{+})$ with the property that: $\hat{\mathfrak{p}}^{\sigma}(A,s,\phi)\in L^{2}_{k-1,\delta}(i\mathfrak{su}(S^{+})\oplus S^{-})$ is supported in $N$ and only depends on $(A|_{N},s,\phi|_{N})$.

We denote by $\mathfrak{p}$ the $4$-dimensional perturbation given by the section $\hat{\mathfrak{p}}^{\sigma}$. Let $\mathfrak{F}^{\sigma}_{\mathfrak{p}}=\mathfrak{F}^{\sigma}+\hat{\mathfrak{p}}^{\sigma}$.
We can define the moduli spaces $$\mathcal{M}(X_{+})=\{(A,s,\phi)|\mathfrak{F}^{\sigma}_{\mathfrak{p}}(A,s,\phi)=0\}/\mathcal{G}_{k+1,\delta}(X_{+})\subset \mathcal{B}_{k,\delta}^{\sigma}(X_{+})$$
$$\mathcal{M}^{\text{red}}(X_{+})=\{[(A,s,\phi)]\in \mathcal{M}(X_{+})|\ s=0 \}$$
as the set of gauge equivalent classes of the solutions of the perturbed Seiberg-Witten equations. ( For simplicity, we do not include $\mathfrak{p}$ in our notations of moduli spaces.)
\begin{lem}
For any choice of perturbations $\mathfrak{q},\mathfrak{p}_{0}$, the moduli space $\mathcal{M}(X_{+})$ is always a Hilbert manifold with boundary $\mathcal{M}^{\operatorname{red}}(X_{+})$.
\end{lem}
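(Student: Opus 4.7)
The plan is to realize $\mathcal{M}(X_+)$ locally as the zero set of a Fredholm section in a Hilbert slice, and then apply the implicit function theorem. Fix a solution $\gamma=(A,s,\phi)\in (\mathfrak{F}^\sigma_{\mathfrak{p}})^{-1}(0)$. By the slice corollary following Lemma~\ref{decomposition of the tangent space}, a neighborhood of $[\gamma]$ in $\mathcal{B}^\sigma_{k,\delta}(X_+)$ is diffeomorphic, as a Hilbert manifold with boundary, to a neighborhood of $\gamma$ in $\mathcal{S}^\sigma_{k,\delta,\gamma}$. Following Chapter~24 of the book, I would augment $\mathfrak{F}^\sigma_{\mathfrak{p}}$ by the slice equation and work with the single operator
\[
Q_\gamma := d_{\gamma}\mathfrak{F}^\sigma_{\mathfrak{p}}\oplus d^{\sigma,\flat}_\gamma : T_\gamma\mathcal{C}^\sigma_{k,\delta}(X_+) \longrightarrow L^2_{k-1,\delta}(X_+;i\mathfrak{su}(S^+)\oplus S^-)\oplus L^2_{k-1/2}(Y;i\mathds{R})\oplus L^2_{k-1,\delta}(X_+;i\mathds{R}).
\]
By Lemma~\ref{decomposition of the tangent space}, the Hilbert manifold structure on $\mathcal{M}(X_+)$ reduces to surjectivity of $Q_\gamma$ (with the kernel automatically splitting as a complemented subspace once $Q_\gamma$ is shown Fredholm).

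First I would verify that $Q_\gamma$ is Fredholm. Its principal part is $(d^*,d^+,\slashed{D}^+_{A_0})$ together with the boundary restriction $\langle \cdot,\vec v\rangle$, so up to lower-order terms it coincides with the operator $D_0\oplus((1-\Pi)\circ r)$ of Proposition~\ref{APS} for an appropriate APS-type projection $\Pi$. The remaining terms are algebraic multiplications by $s$, $\phi$ and $a=A-A_0$, together with the perturbation $\hat{\mathfrak{p}}^\sigma$, which by (\ref{mixed perturbation}) is supported in the compact collar $N$. By the Sobolev multiplication corollary preceding Proposition~\ref{laplace equation}, all of these define operators from $L^2_{k,\delta}$ into $L^2_{k-1,2\delta}$, hence are relatively compact into $L^2_{k-1,\delta}$. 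Thus $Q_\gamma$ is a compact perturbation of an operator covered by Proposition~\ref{APS}, and so is Fredholm.

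The key step is the surjectivity of $Q_\gamma$. This is an integration-by-parts / unique-continuation argument of the type carried out in Proposition~24.4.7 of the book, and the content of the proof will be checking that it goes through on the end-periodic manifold $X_+$ with the weighted norms $L^2_{j,\delta}$. Starting from a putative element of $\operatorname{coker}Q_\gamma$, one pairs it with arbitrary test sections, uses the boundary identity $\int_{X_+}d\theta=\int_{\partial X_+}\theta$ recorded in the remark after Lemma~\ref{decomposition of the tangent space}, and extracts a section of the dual bundle that weakly solves a first-order elliptic system on $X_+$ and whose boundary trace on $Y$ is forced by the $3$-dimensional slice/gauge analysis to be $L^2$-orthogonal to everything; Aronszajn unique continuation then forces it to vanish identically. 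The essential point that makes this go through on the periodic end is that no asymptotic boundary condition has been imposed there -- only exponential decay controlled by $e^{\delta\tau_1}$ -- so no extra cokernel contribution arises from that end. Once surjectivity is in hand, the boundary description $\partial\mathcal{M}(X_+)=\mathcal{M}^{\operatorname{red}}(X_+)$ is immediate: $\mathcal{C}^\sigma_{k,\delta}(X_+)$ is a Hilbert manifold with boundary $\{s=0\}$, both the gauge action and the slice respect this boundary, and the $S^-$-component of $\mathfrak{F}^\sigma_{\mathfrak{p}}$ is linear in $\phi$ (with $\phi$ remaining a unit vector), so the equations restrict consistently to $\{s=0\}$. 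I expect the main obstacle to be the surjectivity step, where the non-compactness of the periodic end, the APS boundary projection on $Y$, and the weighted Sobolev framework interact most subtly; once this is verified, the remainder is a routine application of the implicit function theorem.
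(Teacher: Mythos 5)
Your overall strategy (local slice, implicit function theorem, surjectivity of the linearization via integration by parts and unique continuation, and the $\{s=0\}$ discussion for the boundary) is the same as the paper's, which simply runs Proposition 24.3.1 of the book in the weighted Sobolev setting. However, one step as written is false: the claim that $Q_\gamma$ is Fredholm because it is "a compact perturbation of $D_0\oplus((1-\Pi)\circ r)$" from Proposition \ref{APS}. Your $Q_\gamma$ imposes no spectral boundary condition along $Y$ on $(a,\phi)$ — the only boundary term present is the normal component $\langle a,\vec{v}\rangle$ coming from $d^{\sigma,\flat}_\gamma$ — so it differs from the operator of Proposition \ref{APS} by the entire projection term $(1-\Pi)\circ r$, a bounded surjection onto the infinite-dimensional space $H^{+}\cap L^{2}_{k-1/2}(Y;F_{1})$, which is in no sense a compact perturbation (the two operators do not even have the same target). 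In fact $Q_\gamma$ has infinite-dimensional kernel and $\mathcal{M}(X_{+})$ is an infinite-dimensional Hilbert manifold; this is forced by the later structure of the paper, e.g.\ Lemma \ref{APS for moduli space}(ii), where $(1-\pi)\circ\mathcal{D}R_{-}:T_{[\gamma_2]}\mathcal{M}(X_{+})\rightarrow\mathcal{K}^{+}_{\mathfrak{c}}$ is Fredholm onto an infinite-dimensional target, and by the fiber-product description of Lemma \ref{fiber sum}. If $Q_\gamma$ were Fredholm, $\mathcal{M}(X_{+})$ would be finite-dimensional and the transversality setup for $\mathcal{M}([\mathfrak{b}],Z_{+})$ would collapse.

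Fortunately the error is inessential to your argument: you only used Fredholmness to complement the kernel, and in a Hilbert space the kernel of any bounded surjective operator is closed and hence automatically complemented, so the implicit function theorem needs nothing more than surjectivity. That surjectivity is exactly the content of Proposition 24.3.1 of the book (not 24.4.7, which is the transversality-in-perturbations statement): a putative cokernel element pairs to zero against variations with arbitrary boundary values, so it satisfies the formal adjoint equation with vanishing Cauchy data on $Y$ and vanishes on the connected manifold $X_{+}$ by unique continuation; the weighted norms on the periodic end enter only through the duality pairing and produce no extra cokernel, and the perturbation $\hat{\mathfrak{p}}^{\sigma}$, being supported in the collar, does not disturb this. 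With the Fredholm claim deleted and this surjectivity argument spelled out (together with the corresponding statement on the reducible boundary $\{s=0\}$, which gives $\partial\mathcal{M}(X_{+})=\mathcal{M}^{\operatorname{red}}(X_{+})$), your proof coincides with the paper's.
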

\begin{proof}
The proof is essentially identical with Proposition 24.3.1 in the book. Just replace the manifold $X$ there with $X_{+}$ and use weighted Sobolev space through out the argument.
\end{proof}
Because of the unique continuation theorem (see Proposition 10,8.1 of the book), we have $\phi|_{\partial X_{+}}\neq 0$ for any $[(A,s,\phi)]\in \mathcal{M}(X_{+})$. Therefore, we have a well defined map
\begin{equation}\label{restriction from the right}
R_{-}:\mathcal{M}(X_{+})\rightarrow \mathcal{B}_{k-1/2}^{\sigma}(Y)
\end{equation}
given by
$$
(A,s,\phi)\mapsto (A|_{\partial X_{+}},s\|\phi|_{\partial X_{+}}\|_{L^{2}},\frac{\phi|_{\partial X_{+}}}{\|\phi|_{\partial X_{+}}\|_{L^{2}}}).
$$

Now we attach the cylindrical end $(-\infty,0]\times Y$ on $X_{+}$ and consider the Seiberg-Witten equations on the manifold $Z_{+}$. We define the configuration space  as
$$\mathcal{C}_{k;\text{loc},\delta}(Z_{+})=\{(A_{0}+a,\Phi)| (a,\Phi)\in L^{2}_{k,\text{loc}}(Z_{+};iT^{*}Z_{+}\oplus S^{+}),\ \|(a,\Phi)|_{X_{+}}\|_{L^{2}_{k,\delta}}<\infty\}$$
and gauge group as
$$
\mathcal{G}_{k+1;\text{loc},\delta}(Z_{+})=\{u:Z_{+}\rightarrow S^{1}|\ u\in L^{2}_{k+1,\text{loc}}(Z_{+};\mathds{C}),\ u|_{X_{+}}\in \mathcal{G}_{k+1,\delta}(X_{+})\}.
$$
Note that in the above definitons, we only impose the exponential decay condition over the periodic end. As before, the action of $\mathcal{G}_{k+1;\text{loc},\delta}(Z_{+})$ on $\mathcal{C}_{k;\text{loc},\delta}(Z_{+})$ is not free. Therefore, we need to blow up the configuration space. Since  $\mathcal{C}_{k;\text{loc},\delta}(Z_{+})$ is not a Banach manifold now, the blown-up configuration space should be defined in the following manner: Let $\mathds{S}$ be the topological quotient of the space
$$
\{\Phi\in L^{2}_{k,\text{loc}}(Z_{+};S^{+})|\|\Phi|_{X_{+}}\|_{L^{2}_{k,\delta}}<\infty\}\setminus \{0\}
$$
by the action of $\mathds{R}_{>0}$. The blown-up configuration configuration space is defined as
$$
\mathcal{C}^{\sigma}_{k;\text{loc},\delta}(Z_{+})=\{(A,\Phi,\phi)|(A,\Phi)\in\mathcal{C}_{k;\text{loc},\delta}(Z_{+}),\ \phi \in \mathds{S},\ \Phi\in \mathds{R}_{\geq 0}\phi\}.
$$
Now we define the blown-up quotient configuration space as $$\mathcal{B}^{\sigma}_{k;\text{loc},\delta}(Z_{+})=\mathcal{C}^{\sigma}_{k;\text{loc},\delta}(Z_{+})/\mathcal{G}_{k+1;\text{loc},\delta}(Z_{+}).$$

The bundle $\mathcal{V}^{\sigma}_{k;\text{loc},\delta}(Z_{+})$ and its section $\mathfrak{F}^{\sigma}_{\mathfrak{p}}(Z_{+})$ are defined similarly as the book. The section $\mathfrak{F}^{\sigma}_{\mathfrak{p}}(Z_{+})$ is invariant under the action of $\mathcal{G}_{k+1;\text{loc},\delta}(Z_{+})$. We omit the detail here because the specific definition is not important for us. Just keep in mind that the perturbation equals $\hat{\mathfrak{q}}^{\sigma}$ over the cylindrical end $Z$, equals $\hat{\mathfrak{p}}$ over $[0,3]\times Y$ and equals 0 on $Z_{+}\setminus (-\infty,3]\times Y$. We call $(A,\phi,\Phi)$ a ``$Z_{+}$-trajectory'' if $
 \mathfrak{F}^{\sigma}_{\mathfrak{p}}(Z_{+})(A,\phi,\Phi)=0$. This is equivalent to the condition that $(A,\Phi,\phi)$ satisfies the blown-up perturbed Seiberg-Witten equations
$$
\left\{\begin{array} {cc}
 F_{A}^{+}-(\Phi\Phi^{*})_{0}=\hat{\mathfrak{p}}^{0,\sigma}_{Z_{+}}(A,\Phi)  \\
 \slashed{D}_{A}^{+}\phi=\hat{\mathfrak{p}}^{1,\sigma}_{Z_{+}}(A,\phi)
\end{array}\right.
$$
where $\hat{\mathfrak{p}}^{0,\sigma}_{Z_{+}}(A,\Phi)$ and $\hat{\mathfrak{p}}^{1,\sigma}_{Z_{+}}(A,\phi)$ are certain perturbation terms supported on $(-\infty,3]\times Y$. The second equation should be thought as a homogeneous equation in $\phi$, i.e., both sides of the equation will be rescaled by the same factor as we change the representative of $\phi$. By the unique continuation theorem, we have $\phi|_{\{t\}\times Y}\neq 0$ for any $t\leq 0$. As a result, the triple
$
(A|_{\{t\}\times Y},\|\Phi_{t\times Y }\|,\tfrac{\phi}{\|\phi|_{\{t\}\times Y }\|_{L^{2}}})
$
gives a point of $\mathcal{C}^{\sigma}_{k-1/2}(Y)$, which we define to be the restriction $(A,\Phi,\phi)|_{\{t\}\times Y}$. By restricting to $(-\infty,0]\times Y$, a gauge equivalent class $[(A,\Phi,\phi)]\in \mathcal{B}^{\sigma}_{k;\text{loc},\delta}(Z_{+})$ of $Z_{+}$-trajectory gives a path  $(-\infty,0]\rightarrow B^{\sigma}_{k-1/2}(Y)$.

Let $[\mathfrak{b}]\in \mathcal{B}^{\sigma}_{k-1/2}(Y)$ be a critical point of $\mathfrak{F}^{\sigma}_{\mathfrak{q}}(Y)$. We consider the moduli space $$\mathcal{M}([\mathfrak{b}],Z_{+})=\{[\gamma]\in \mathcal{B}^{\sigma}_{k;\text{loc},\delta}(Z_{+})|\  \mathfrak{F}^{\sigma}_{\mathfrak{p}}(Z_{+})(\gamma)=0,\ \mathop{\lim}\limits_{t\rightarrow -\infty}[\gamma|_{\{t\}\times Y}]=[\mathfrak{b}]\}.$$
It consists of $Z_{+}$-trajectories that are asymptotic to $[\mathfrak{b}]$ over the cylindrical end. By restricting to the submanifolds $Z$ and $X_{+}$, we get a map
\begin{equation}\label{restriction}
\rho:\mathcal{M}([\mathfrak{b}],Z_{+})\rightarrow \mathcal{M}([\mathfrak{b}],Z)\times \mathcal{M}(X_{+}).
\end{equation}
Here $\mathcal{M}([\mathfrak{b}],Z)$ denotes moduli space of Seiberg-Witten half-trajectories with limit $[\mathfrak{b}]$. In other words, $\mathcal{M}([\mathfrak{b}],Z)$ consists of gauge equivalent classes of paths
$$
\gamma:(-\infty,0]\rightarrow \mathcal{C}^{\sigma}_{k-1/2}(Y)\text{ with }\frac{d}{dt}\gamma(t)+\mathfrak{F}^{\sigma}_{\mathfrak{q}}(Y)(\gamma(t))=0,\ \mathop{\lim}_{t\rightarrow -\infty}\gamma(t)=\mathfrak{b}.
$$
Just like $\mathcal{M}(X_{+})$, the moduli space $\mathcal{M}([\mathfrak{b}],Z)$ is always a Hilbert manifold with boundary $\mathcal{M}^{\text{red}}([\mathfrak{b}],Z)$ (the moduli space of reducible half-trajectories) for arbitary perturbation. Note that we have a well defined restriction map\begin{equation}\label{restriction from the left} R_{+}: \mathcal{M}([\mathfrak{b}],Z)\rightarrow \mathcal{B}^{\sigma}_{k-1/2}(Y) \text{ given by }[\gamma]\mapsto [\gamma(0)].\end{equation}

\begin{lem}\label{fiber sum}
The map $\rho$ is a homeomorphism from $\mathcal{M}(Z_{+},[\mathfrak{b}])$ to its image, which equals the fiber product $\operatorname{Fib}(R_{-},R_{+})$. (The maps $R_{\pm}$ are defined in (\ref{restriction from the right}) and (\ref{restriction from the left}) respectively.)
\end{lem}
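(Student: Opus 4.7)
The plan is to mirror the proof of Lemma 24.2.2 in the book essentially verbatim, with the cylindrical end replaced on one side by the periodic end $X_{+}$ and the weighted Sobolev framework of Section 2.3 used throughout on that side. The argument splits into three parts: well-definedness of $\rho$ into the fiber product, bijectivity onto the fiber product, and continuity in both directions.

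For well-definedness, a $Z_{+}$-trajectory $(A,\Phi,\phi)$ restricts on $Z$ to a solution of the downward gradient flow of $\mathcal{L}+\mathfrak{q}$ (because $\hat{\mathfrak{p}}^{\sigma}|_{Z}=\hat{\mathfrak{q}}^{\sigma}$) with asymptotic limit $[\mathfrak{b}]$, and on $X_{+}$ to a solution of the perturbed Seiberg-Witten equations in the weighted space. By the unique continuation theorem $\phi|_{\{0\}\times Y}\neq 0$, so both restrictions define genuine elements in their respective moduli spaces whose images under $R_{+}$ and $R_{-}$ tautologically coincide. For injectivity, if two $Z_{+}$-trajectories share the same image under $\rho$, then gauge transformations $u^{-}$ on $Z$ and $u^{+}$ on $X_{+}$ relate them on each side; since the blown-up gauge action is free, the two boundary values on $Y$ must coincide, so $u^{-}$ and $u^{+}$ glue to a single element of $\mathcal{G}_{k+1;\operatorname{loc},\delta}(Z_{+})$.

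For surjectivity onto the fiber product, given $([\gamma_{1}],[\gamma_{2}])\in \mathcal{M}([\mathfrak{b}],Z)\times \mathcal{M}(X_{+})$ with $R_{+}[\gamma_{1}]=R_{-}[\gamma_{2}]$, choose representatives and a boundary gauge transformation $v\in \mathcal{G}_{k+1/2}(Y)$ relating their values on $Y$. Since $H^{1}(Y;\mathds{R})=0$, write $v=c\cdot e^{i\theta}$ with $c\in S^{1}$ and $\theta\in L^{2}_{k+3/2}(Y;\mathds{R})$, extend $\theta$ into $X_{+}$ using the collar and a cutoff so that the extension decays exponentially on the periodic end, and let $\tilde{v}\in \mathcal{G}_{k+1,\delta}(X_{+})$ be the resulting gauge transformation. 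After applying $\tilde{v}$ to $\gamma_{2}$, the $Y$-boundary values of $\gamma_{1}$ and $\tilde{v}\cdot\gamma_{2}$ agree, and the two configurations glue into an $L^{2}_{k,\operatorname{loc}}$ weak solution of the perturbed Seiberg-Witten equations on $Z_{+}$; elliptic regularity across $Y$ then upgrades this to a genuine $Z_{+}$-trajectory. Continuity of $\rho$ and of $\rho^{-1}$ follow from the continuity of restriction and of the gluing procedure on the relevant Hilbert manifolds.

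The main technical obstacle is the surjectivity step, specifically the extension of a boundary gauge transformation from $\mathcal{G}_{k+1/2}(Y)$ into $\mathcal{G}_{k+1,\delta}(X_{+})$ in the correct weighted space, and the verification that the glued configuration has the regularity required to be a $Z_{+}$-trajectory. Both points are handled as in the book, but they require appealing to the weighted Sobolev multiplication and embedding theorems (Propositions \ref{Sobolev embedding} and \ref{Sobolev multiplication}) in place of their standard counterparts, and to Lemma \ref{Taubes's lemma} to control the constant terms arising from the logarithms of gauge transformations on the periodic end.
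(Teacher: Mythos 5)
Your proposal is correct and follows essentially the same route as the paper, which simply defers to the proof of Lemma 24.2.2 of Kronheimer--Mrowka: well-definedness via unique continuation, injectivity by matching and gluing the two gauge transformations using freeness of the blown-up boundary action, and surjectivity by extending the boundary gauge transformation (legitimate since $H^{1}(Y;\mathds{Z})=0$ for a rational homology sphere, so it has a global logarithm) and then gluing and invoking elliptic regularity across the seam, all transplanted to the weighted Sobolev setting of Section 2.3 on the periodic end. The only slips are cosmetic: the gauge transformation relating the two boundary restrictions lies in $\mathcal{G}_{k+1/2}(Y)$, so its logarithm is of class $L^{2}_{k+1/2}(Y;\mathds{R})$ rather than $L^{2}_{k+3/2}$, and its extension into the collar should be produced by the standard trace-extension operator (a bare pullback-and-cutoff only gives $L^{2}_{k+1/2}$ regularity in the collar) so that it lands in $\mathcal{G}_{k+1,\delta}(X_{+})$.
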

\begin{proof}This lemma is essentially a result on gluing two monopoles (or gauge transformations) on $Z$ and $X_{+}$, along the common boundary $Y$. In particular, all the analysis are carried out in a collar neighborhood of $Y$. With this in mind, the proof of this lemma is identical with Lemma 24.2.2 in the book, which treats the case when $X_{+}$ is a compact manifold with boundary.
\end{proof}

Now we start discussing the regularity of the moduli spaces. Recall that for any point $[\mathfrak{c}]\in \mathcal{B}^{\sigma}_{k-1/2}(Y)$, we have a decomposition
$$
T_{[\mathfrak{c}]}\mathcal{B}^{\sigma}_{k-1/2}(Y)\cong \mathcal{K}^{+}_{\mathfrak{c}}\oplus \mathcal{K}^{-}_{\mathfrak{c}}
$$
given by the spectral decomposition of the Hessian operator $\text{Hess}^{\sigma}_{\mathfrak{q}}(\mathfrak{c})$ (see Page 313 of the book).
\begin{lem}\label{APS for moduli space}
For any $([\gamma_{1}],[\gamma_{2}])\in \operatorname{Fib}(R_{+},R_{-})$. Let $[\mathfrak{c}]$ be the common restriction of $[\gamma_{j}]\ (j=1,2)$ on the boundary $Y$.  Denote by $\pi$ the projection from $T_{[c]}\mathcal{B}^{\sigma}_{k-1/2}(Y)$ to $\mathcal{K}^{-}_{\mathfrak{c}}$ with kernel $\mathcal{K}^{+}_{\mathfrak{c}}$. Then we have the following results.
\begin{enumerate}[(i)]
\item The linear operators $$(1-\pi)\circ\mathcal{D}R_{+}:T_{[\gamma_{1}]}\mathcal{M}([\mathfrak{b}],Z)\rightarrow \mathcal{K}_{\mathfrak{c}}^{+} \text{ and }\pi\circ\mathcal{D}R_{+}:T_{[\gamma_{1}]}\mathcal{M}([\mathfrak{b}],Z)\rightarrow \mathcal{K}_{\mathfrak{c}}^{-}$$
are respectively compact and Fredholm.
\item The linear operators $$(1-\pi)\circ\mathcal{D}R_{-}:T_{[\gamma_{2}]}\mathcal{M}(X_{+})\rightarrow \mathcal{K}_{\mathfrak{c}}^{+} \text{ and }\pi\circ\mathcal{D}R_{-}:T_{[\gamma_{2}]}\mathcal{M}(X_{+})\rightarrow \mathcal{K}_{\mathfrak{c}}^{-}$$
are respectively Fredholm and compact.
\item The linear operator $$\mathcal{D}R_{+}+\mathcal{D}R_{-}:T_{[\gamma_{1}]}\mathcal{M}([\mathfrak{b}],Z)\oplus T_{[\gamma_{2}]}\mathcal{M}(X_{+})\rightarrow T_{[\mathfrak{c}]}\mathcal{B}^{\sigma}_{k-1/2}(Y)$$
is Fredholm.
\end{enumerate}
\end{lem}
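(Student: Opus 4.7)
My plan is to identify the tangent spaces $T_{[\gamma_1]}\mathcal{M}([\mathfrak{b}], Z)$ and $T_{[\gamma_2]}\mathcal{M}(X_+)$ with kernels of suitable linearized, gauge-fixed Seiberg-Witten operators, and then invoke APS-type Fredholm theory. Part (i) will follow from Proposition 17.2.5 of the book applied to the cylindrical end; part (ii) will follow from Proposition \ref{APS} applied to the periodic end; and part (iii) will be a formal block-matrix consequence of the first two.

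For part (i), I represent tangent vectors to $\mathcal{M}([\mathfrak{b}], Z)$ at $[\gamma_1]$ by solutions of the linearized Seiberg-Witten equations on $Z$ that lie in the local slice and decay exponentially at $-\infty$. Near the boundary $\{0\}\times Y$ this linearization has the form $\tfrac{d}{dt} + \mathrm{Hess}^{\sigma}_{\mathfrak{q}}(\mathfrak{c}) + K_1$, where $K_1$ is a compact zeroth-order correction. Solutions with the required decay at $-\infty$ must have boundary value concentrated on the negative spectral part of the Hessian, i.e.\ in $\mathcal{K}^-_{\mathfrak{c}}$. The standard APS argument (Proposition 17.2.5 of the book) now shows that $\pi \circ \mathcal{D}R_+$ is Fredholm, while $(1-\pi) \circ \mathcal{D}R_+$ picks up only the contribution of $K_1$ and is compact.

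For part (ii), I identify $T_{[\gamma_2]}\mathcal{M}(X_+)$ with the kernel of the operator $D = D_0 + K_2$ on the slice inside $\mathcal{C}^{\sigma}_{k,\delta}(X_+)$, where $D_0 = (d^*, d^+, \slashed{D}_{A_0})$ and $K_2$ collects the zeroth-order contributions from the linearization of the Seiberg-Witten map and of the perturbation $\hat{\mathfrak{p}}^\sigma$. The nonlinear terms being quadratic in $L^2_{k,\delta}$-sections, the Sobolev multiplication theorem (Proposition \ref{Sobolev multiplication}) gives $K_2 : L^2_{j,\delta} \to L^2_{j, 2\delta}$, which is exactly the decay-improving behavior required by Proposition \ref{APS}. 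Applying that proposition with the spectral projection $\Pi := 1 - \pi$ (so that $H^+ = \mathcal{K}^+_{\mathfrak{c}}$), I conclude that on $\ker D$ the projection to $\mathcal{K}^+_{\mathfrak{c}}$ is Fredholm and the projection to $\mathcal{K}^-_{\mathfrak{c}}$ is compact. The technical point that needs care is the $k$-commensurateness of the Hessian's spectral projection (which defines $\mathcal{K}^{\pm}_{\mathfrak{c}}$) with the projection $\Pi_0$ built from $L_0$ in Proposition \ref{APS}; since these self-adjoint operators differ by a compact zeroth-order term (coming from the perturbation $\mathfrak{q}$ and from linearizing at the non-zero spinor part of $\mathfrak{c}$), a standard spectral-perturbation argument shows that the difference of their spectral projections is compact on every $L^2_{j-1/2}$.

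For part (iii), with respect to the decomposition $T_{[\mathfrak{c}]}\mathcal{B}^{\sigma}_{k-1/2}(Y) = \mathcal{K}^+_{\mathfrak{c}} \oplus \mathcal{K}^-_{\mathfrak{c}}$, the sum takes block form
\[
\mathcal{D}R_+ + \mathcal{D}R_- = \begin{pmatrix} (1-\pi)\mathcal{D}R_+ & (1-\pi)\mathcal{D}R_- \\ \pi\,\mathcal{D}R_+ & \pi\,\mathcal{D}R_- \end{pmatrix}.
\]
By (i) and (ii), the anti-diagonal entries $(1-\pi)\mathcal{D}R_-$ and $\pi\,\mathcal{D}R_+$ are Fredholm, while the diagonal entries are compact. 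The anti-diagonal alone is (up to reordering the target factors) a direct sum of two Fredholm maps, hence Fredholm; adding the compact diagonal preserves Fredholmness. The main obstacle in the whole scheme is the analytic verification underlying (ii) — namely, checking that $K_2$ really improves decay by $\delta$ and that the two spectral projections on $Y$ are $k$-commensurate; once these are in place, part (iii) is purely formal.
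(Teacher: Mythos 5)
Your proposal follows essentially the same route as the paper: part (i) is the book's cylinder result (the paper cites Theorem 17.3.2, whose analytic core is the Proposition 17.2.5 you invoke), part (ii) is obtained by feeding the gauge-fixed linearization $D=D_0+K_2$ (with the decay-improving $K_2:L^2_{j,\delta}\to L^2_{j,2\delta}$) into Proposition \ref{APS}, and part (iii) is the formal block-matrix consequence, exactly as the paper asserts. One slip to correct in (ii): you set $\Pi:=1-\pi$ ``so that $H^{+}=\mathcal{K}^{+}_{\mathfrak{c}}$'', but since $\pi$ has image $\mathcal{K}^{-}_{\mathfrak{c}}$ and Proposition \ref{APS} defines $H^{-}=\operatorname{im}\Pi$, your substitution gives $H^{-}=\mathcal{K}^{+}_{\mathfrak{c}}$; that choice both fails the $k$-commensurateness hypothesis (it is $\pi$, not $1-\pi$, that is commensurate with $\Pi_{0}$, whose image is the non-positive spectral subspace of $L_{0}$, because the extended Hessian differs from $L_{0}$ by lower-order terms) and, taken literally, would output the reversed statement (projection to $\mathcal{K}^{+}_{\mathfrak{c}}$ compact, to $\mathcal{K}^{-}_{\mathfrak{c}}$ Fredholm). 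Taking $\Pi:=\pi$ instead, Proposition \ref{APS} yields precisely what you then state and what the lemma claims: $\pi\circ\mathcal{D}R_{-}$ compact and $(1-\pi)\circ\mathcal{D}R_{-}$ Fredholm; with that relabeling your argument, including the commensurability check and the block decomposition in (iii), is correct and matches the paper's proof.
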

\begin{proof}
(i) was proved in Theorem 17.3.2 of the book. We sketch it here: Proposition 17.2.5 of the book states the following: Let $D,r,\Pi$ be as defined in Subsection 2.3, but over the cylindrical-end manifold $Z$ instead of the end-periodic manifold $X_{+}$ (see (\ref{equation: defition of D})). Then $(1-\Pi)\circ r$ and $\Pi\circ r$, restricted to the kernel of $D$, are respectively compact and Fredholm. Note that $\ker D$ and $\Pi$ are ``extended version'' of $T_{[\gamma_{1}]}\mathcal{M}([\mathfrak{b}],Z)$ and $\pi$, respectively. Therefore, (i) is a directly consequence of this proposition.

To prove (ii), we use Proposition \ref{APS} we proved, in place of Proposition 17.2.5 in the book. Then all the arguments there can be repeated with no essential change. (iii) is directly implied by (i) and (ii).
\end{proof}

The following definition is parallel to Definition 24.4.2 of the book.

\begin{defi}
Let $[\gamma]\in \mathcal{M}([\mathfrak{b}],Z_{+})$. If $[\gamma]$ is irreducible, we say the moduli space $\mathcal{M}([\mathfrak{b}],Z_{+})$ is regular at $[\gamma]$ if the maps of Hilbert manifolds
$$ R_{+}:\mathcal{M}([\mathfrak{b}],Z)\rightarrow  \mathcal{B}^{\sigma}_{k-1/2}(Y)\text{ and } R_{-}:\mathcal{M}(X_{+})\rightarrow  \mathcal{B}^{\sigma}_{k-1/2}(Y)$$are transverse at $[\gamma]$. If $[\gamma]$ is reducible, we say the moduli space $\mathcal{M}(Z_{:};[c])$ is regular at $[\gamma]$ if the maps of Hilbert manifolds
$$ R_{+}:\mathcal{M}^{\operatorname{red}}([\mathfrak{b}],Z)\rightarrow  \partial\mathcal{B}^{\sigma}_{k-1/2}(Y)\text{ and } R_{-}:\mathcal{M}^{\operatorname{red}}(X_{+})\rightarrow  \partial\mathcal{B}^{\sigma}_{k-1/2}(Y)$$are transverse at $\rho([\gamma])$ (see (\ref{restriction})). We say the moduli space is regular if it is regular at all points.
\end{defi}
Recall that the perturbation $\mathfrak{p}$ on $Z_{+}$ is determined a pair of $3$-dimensional perturbations $(\mathfrak{q},\mathfrak{p}_{0})$ (see (\ref{mixed perturbation})), where $\mathfrak{q}$ is a nice perturbation that is fixed throughout our argument (see Assumption \ref{3 dimensional perturbation}). We want to obtain the transversality condition by varying the second perturbation $\mathfrak{p}_{0}$. To do this, let $\mathcal{P}(Y)$ be the large Banach space of $3$-dimensional tame perturbations provided by Theorem 11.6.1 of the book. We have the following result.
\begin{pro}\label{transversality}
There exists a residual subset $U_{1}$ of  $\mathcal{P}(Y)$ such that for any $\mathfrak{p}_{0}\in U_{1}$, the moduli space $\mathcal{M}([\mathfrak{b}],Z_{+})$ corresponding to  $(\mathfrak{q},\mathfrak{p}_{0})$ is regular for any critical point $[\mathfrak{b}]\in\mathfrak{C}$.
\end{pro}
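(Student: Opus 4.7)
The strategy is the standard Sard--Smale argument applied via the fiber product description supplied by Lemma \ref{fiber sum}. I would first form the parametrized moduli space
$$
\widetilde{\mathcal{M}}([\mathfrak{b}],Z_{+}) = \{([\gamma],\mathfrak{p}_{0})\in \mathcal{B}^{\sigma}_{k;\text{loc},\delta}(Z_{+})\times \mathcal{P}(Y) \mid \mathfrak{F}^{\sigma}_{(\mathfrak{q},\mathfrak{p}_{0})}(\gamma)=0,\ \lim_{t\to -\infty}[\gamma|_{\{t\}\times Y}]=[\mathfrak{b}]\}.
$$
By Lemma \ref{fiber sum}, a point $([\gamma],\mathfrak{p}_{0})$ corresponds to a pair $([\gamma_{1}],[\gamma_{2}])$ in the fiber product $\mathrm{Fib}(R_{-},R_{+})$, where only $R_{-}:\mathcal{M}(X_{+};\mathfrak{p}_{0})\to \mathcal{B}^{\sigma}_{k-1/2}(Y)$ depends on $\mathfrak{p}_{0}$ (since $\mathfrak{p}_{0}$ is supported away from the cylindrical end by the choice of bump function $\beta_{0}$). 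The goal is to show $\widetilde{\mathcal{M}}([\mathfrak{b}],Z_{+})$ is a Banach manifold (with boundary the reducible locus) and that the projection $\pi$ to $\mathcal{P}(Y)$ is Fredholm; then Sard--Smale delivers a residual set of regular values, and intersecting over the countable set $\mathfrak{C}$ yields $U_{1}$.

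The main step is to verify that at every solution $([\gamma],\mathfrak{p}_{0})$ the extended linearization
$$
L:\ T_{[\gamma_{1}]}\mathcal{M}([\mathfrak{b}],Z)\oplus T_{[\gamma_{2}]}\mathcal{M}(X_{+})\oplus T_{\mathfrak{p}_{0}}\mathcal{P}(Y)\longrightarrow T_{[\mathfrak{c}]}\mathcal{B}^{\sigma}_{k-1/2}(Y),\qquad L=\mathcal{D}R_{+}-\mathcal{D}R_{-}-\mathcal{D}_{\mathfrak{p}_{0}}R_{-},
$$
is surjective, where $[\mathfrak{c}]$ denotes the common boundary value and $\mathcal{D}_{\mathfrak{p}_{0}}R_{-}$ records the dependence of $R_{-}$ on the perturbation. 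By Lemma \ref{APS for moduli space}(iii) the unperturbed piece is Fredholm, so the Fredholm property of $L$ is automatic and only surjectivity is at stake.

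The hard part is this surjectivity statement, which proceeds by the usual duality: assume a nonzero element $\eta$ in the $L^{2}$-cokernel pairs trivially with the image of every variation of $\mathfrak{p}_{0}$ supported in the collar $[0,3]\times Y$. Since the cylindrical tame perturbations in $\mathcal{P}(Y)$ constructed in Theorem 11.6.1 of the book separate configuration fields in a dense sense (via cylinder functions whose gradients span an $L^{2}$-dense subspace of tangent vectors along any trajectory lying off a thin set), this forces $\eta$ to vanish on an open subset of the collar. One then propagates the vanishing of $\eta$ to all of $Z_{+}$ by the unique continuation theorem for the formal adjoint of the linearized Seiberg--Witten operator (Proposition 10.8.1 of the book applied to extended solutions), obtaining the desired contradiction. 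The irreducible and reducible cases are treated separately, exactly as in Proposition 24.4.7 of the book: in the reducible case one restricts to the boundary stratum and uses the same argument there, which is why the resulting moduli space will be regular at both interior and boundary points.

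Once surjectivity is established, $\widetilde{\mathcal{M}}([\mathfrak{b}],Z_{+})$ is a smooth Banach manifold (with boundary), the projection $\pi:\widetilde{\mathcal{M}}([\mathfrak{b}],Z_{+})\to \mathcal{P}(Y)$ is Fredholm of the expected index, and Sard--Smale produces, for each $[\mathfrak{b}]\in \mathfrak{C}$, a residual subset $U_{[\mathfrak{b}]}\subset \mathcal{P}(Y)$ of regular values. The set $\mathfrak{C}$ of critical points is discrete, hence countable; the intersection $U_{1}=\bigcap_{[\mathfrak{b}]\in \mathfrak{C}} U_{[\mathfrak{b}]}$ remains residual in $\mathcal{P}(Y)$, and any $\mathfrak{p}_{0}\in U_{1}$ has the required regularity property for all $[\mathfrak{b}]$ simultaneously.
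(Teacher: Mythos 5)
Your proposal is correct and follows essentially the same route as the paper: a parametrized moduli space over $\mathcal{P}(Y)$ (the perturbation only entering through the $X_{+}$ side), transversality of the two restriction maps in the fiber-product picture of Lemma \ref{fiber sum} established by the density-of-perturbations plus unique continuation argument from Proposition 24.4.7 of the book, and Sard--Smale applied to the projection, whose Fredholm property rests on Lemma \ref{APS for moduli space} (iii). The only difference is cosmetic: you parametrize the glued $Z_{+}$-moduli space while the paper parametrizes only $\mathfrak{M}(X_{+})$ and phrases transversality as $R_{+}\times\mathfrak{R}_{-}$ meeting the diagonal, and you spell out the surjectivity/duality step that the paper delegates to the book.
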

\begin{proof}
The proof follows the standard argument as in the proof of Proposition 24.4.7 of the book: We consider parametrized moduli space
$$\mathfrak{M}(X_{+})\subset \mathcal{B}^{\sigma}_{k,\delta}(X_{+})\times \mathcal{P}(Y)$$
$$
\mathfrak{M}(X_{+})=\{(A,s,\phi,\mathfrak{p}_{0})|\,\mathfrak{F}^{\sigma}_{\mathfrak{p}}=0\}/\mathcal{G}_{k+1,\delta}(X_{+}).
$$
For any $[\mathfrak{b}]\in \mathfrak{C}$, we can prove that the map
$$
R_{+}\times \mathfrak{R}_{-}: \mathcal{M}([\mathfrak{b}],Z)\times  \mathfrak{M}(X_{+})\rightarrow \mathcal{B}_{k-1/2}^{\sigma}(Y)\times  \mathcal{B}_{k-1/2}^{\sigma}(Y)
$$
is transverse to the diagonal by the same argument as the book. Here the map $\mathfrak{R}_{-}$ is defined similarly with $R_{-}$ (but with larger domain). Now we apply the Sard-Smale lemma (Lemma 12.5.1 of the book) to finish the proof. We note that Lemma \ref{APS for moduli space} (iii) is used essentially in this last step.
\end{proof}

The proof of the following proposition is by standard transversility argument and we omit it. (Compare Proposition 24.4.3 of the book.)
\begin{pro}\label{moduli space is a manifold}
Suppose the moduli space $\mathcal{M}([\mathfrak{b}],Z_{+})$ is regular and non-empty. Then the moduli space is
\begin{enumerate}[(i)]
\item a smooth manifold consisting only of irreducibles, if $[\mathfrak{b}]$ is irreducible;
\item a smooth manifold consisting only of reducibles, if $[\mathfrak{b}]$ is reducible and boundary-stable;
\item a smooth manifold with (possibly empty) boundary, if $[\mathfrak{b}]$ is reducible and boundary-unstable.
\end{enumerate}
In the last case, the boundary consists of the reducible elements of the moduli space (i.e., we have $\partial\mathcal{M}([\mathfrak{b}],Z_{+})=\mathcal{M}^{\text{red}}([\mathfrak{b}],Z_{+})$).
\end{pro}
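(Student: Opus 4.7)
The plan is to reduce the proposition to the structural results already established for the two constituent moduli spaces by invoking Lemma \ref{fiber sum}, which identifies $\mathcal{M}([\mathfrak{b}],Z_+)$ with the fiber product $\operatorname{Fib}(R_-,R_+)$ inside the Hilbert manifold $\mathcal{B}^\sigma_{k-1/2}(Y)$. Combining this with Lemma \ref{APS for moduli space}(iii), which tells us that the combined linearized restriction map $\mathcal{D}R_+ + \mathcal{D}R_-$ is Fredholm, together with the regularity hypothesis (transversality of $R_\pm$), we obtain that near any $[\gamma]\in \mathcal{M}([\mathfrak{b}],Z_+)$ the fiber product is locally modelled on the kernel of a surjective Fredholm operator. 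Hence the moduli space is a finite-dimensional smooth manifold (possibly with boundary, depending on whether the point sits in the reducible locus). The three cases are then distinguished by analyzing how the boundary $\partial\mathcal{B}^\sigma_{k-1/2}(Y)$ — the locus of reducibles — meets the images of $R_\pm$.

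In case (i), when $[\mathfrak{b}]$ is irreducible, the first observation is that $\mathcal{M}([\mathfrak{b}],Z)$ contains only irreducible half-trajectories: a reducible half-trajectory has identically vanishing spinor and so its past limit cannot be an irreducible critical point. The image of $R_+$ therefore lies in the interior of $\mathcal{B}^\sigma_{k-1/2}(Y)$, and by unique continuation (Proposition 10.8.1 of the book) the elements of $\mathcal{M}(X_+)$ matched in the fiber product are also irreducible. Both factors are then smooth manifolds without boundary near the relevant configurations, and the transverse fiber product yields a smooth manifold without boundary consisting only of irreducibles.

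For cases (ii) and (iii) we use the standard dichotomy at a reducible critical point $[\mathfrak{a}_j]$. In the boundary-stable case ($j\geq 0$) every half-trajectory on $Z$ asymptotic to $[\mathfrak{a}_j]$ is forced to remain on $\partial\mathcal{B}^\sigma_{k-1/2}(Y)$, because the linearized flow at $[\mathfrak{a}_j]$ has no unstable direction transverse to the boundary into which a genuinely irreducible perturbation could be asymptotic; hence $\mathcal{M}([\mathfrak{b}],Z)=\mathcal{M}^{\mathrm{red}}([\mathfrak{b}],Z)$ and the whole fiber product sits in $\mathcal{M}^{\mathrm{red}}(X_+)$. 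In the boundary-unstable case ($j<0$) there is a genuine unstable transverse direction, and $\mathcal{M}([\mathfrak{b}],Z)$ becomes a smooth manifold with boundary $\mathcal{M}^{\mathrm{red}}([\mathfrak{b}],Z)$. Since $\mathcal{M}(X_+)$ itself is a manifold with boundary $\mathcal{M}^{\mathrm{red}}(X_+)$, the transverse fiber product in $\mathcal{B}^\sigma_{k-1/2}(Y)$ (whose boundary is a codimension-one stratum) produces a manifold with boundary; its boundary is exactly the set where both factors land in $\partial\mathcal{B}^\sigma$, namely $\mathcal{M}^{\mathrm{red}}([\mathfrak{b}],Z_+)$.

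I expect the main technical nuisance to be verifying that no corner singularities appear in case (iii): since both factors of the fiber product carry boundary, one must rule out the simultaneous-boundary-meeting-transversally-to-itself phenomenon. This is handled by noting that the regularity hypothesis imposes transversality of the reducible restrictions of $R_\pm$ separately as maps into the codimension-one stratum $\partial\mathcal{B}^\sigma_{k-1/2}(Y)$, so that the reducible fiber product sits as a codimension-one submanifold of the ambient fiber product with the correct normal structure. Apart from this point, the entire argument is a word-for-word adaptation of the book's Proposition 24.4.3 to the weighted-Sobolev end-periodic setting, with Lemma \ref{APS for moduli space} supplying the Fredholm input in place of Theorem 17.3.2 of the book.
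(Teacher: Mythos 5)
Your proposal is correct and is essentially the argument the paper intends: the paper omits the proof entirely, citing "standard transversality" and Proposition 24.4.3 of the book, and your route — the fiber-product description of Lemma \ref{fiber sum}, the Fredholm input of Lemma \ref{APS for moduli space}, and the case analysis according to whether the unstable set of $[\mathfrak{b}]$ (the limit being taken at $-\infty$) leaves the reducible locus — is exactly that standard argument adapted to the weighted end-periodic setting. The only step deserving to be made explicit is the corner point you flag in case (iii): since the matching condition in $\mathcal{B}^{\sigma}_{k-1/2}(Y)$ identifies the two normal coordinates (the $s$-components of $R_{+}$ and $R_{-}$, each a boundary-defining function), the corner $[0,\epsilon)\times[0,\epsilon)$ is cut down to a diagonal half-line, which together with the assumed transversality of the reducible restrictions yields the manifold-with-boundary structure with $\partial\mathcal{M}([\mathfrak{b}],Z_{+})=\mathcal{M}^{\text{red}}([\mathfrak{b}],Z_{+})$.
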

Recall that we associated a rational number  $\operatorname{gr}^{\mathds{Q}}([\mathfrak{b}])$ to each critical point $[\mathfrak{b}]$. We have the following result.
\begin{pro}\label{expected dimension}
Suppose the moduli space $\mathcal{M}([\mathfrak{b}],Z_{+})$ is regular. Then the moduli space is
\begin{enumerate}[(i)]
\item the empty set, if $\operatorname{gr}^{\mathds{Q}}([\mathfrak{b}])+2w(X,g_{X},0)<0$;
\item a manifold with dimension  $\operatorname{gr}^{\mathds{Q}}([\mathfrak{b}])+2w(X,g_{X},0)$, if  $\operatorname{gr}^{\mathds{Q}}([\mathfrak{b}])+2w(X,g_{X},0)\geq 0$.
\end{enumerate}
\end{pro}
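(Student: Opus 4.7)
The plan is to identify the dimension of $\mathcal{M}([\mathfrak{b}],Z_{+})$ at a regular point with the index of a single Fredholm operator on $Z_{+}$, then compute that index by an excision argument comparing $Z_{+}$ with the compact cobordism $M$ and the end-periodic manifold $M_{+}$.

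First, by Lemma \ref{fiber sum} the moduli space is homeomorphic to the fiber product $\operatorname{Fib}(R_{-},R_{+})$. At a regular point $[\gamma]$ with $\rho([\gamma])=([\gamma_{1}],[\gamma_{2}])$ and common boundary restriction $[\mathfrak{c}]\in \mathcal{B}^{\sigma}_{k-1/2}(Y)$, the regularity hypothesis guarantees that the Fredholm operator
\[
\mathcal{D}R_{+}-\mathcal{D}R_{-}\colon T_{[\gamma_{1}]}\mathcal{M}([\mathfrak{b}],Z)\oplus T_{[\gamma_{2}]}\mathcal{M}(X_{+})\longrightarrow T_{[\mathfrak{c}]}\mathcal{B}^{\sigma}_{k-1/2}(Y)
\]
from Lemma \ref{APS for moduli space}(iii) is surjective; its kernel is the tangent space of the fiber product, so the moduli space has dimension equal to the index of this operator. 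The same calculation applies at boundary-stable or boundary-unstable reducible $[\mathfrak{b}]$ in view of Proposition \ref{moduli space is a manifold}, working within the appropriate reducible loci.

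Second, the standard parametrix-patching argument (as on Page 245 of the book), combined with the identifications of $T\mathcal{M}([\mathfrak{b}],Z)$ and $T\mathcal{M}(X_{+})$ with the kernels of APS-type operators (Chapter 17 of the book in the cylindrical case, and Proposition \ref{APS} in the end-periodic case), identifies this index with the index of the full linearized Seiberg--Witten plus Coulomb gauge-fixing operator
\[
Q_{Z_{+},[\mathfrak{b}]}\colon L^{2}_{k;-\delta,\delta}(Z_{+};iT^{*}Z_{+}\oplus S^{+})\longrightarrow L^{2}_{k-1;-\delta,\delta}(Z_{+};i\mathds{R}\oplus i\wedge^{2}_{+}T^{*}Z_{+}\oplus S^{-})
\]
with spectral boundary projection at the cylindrical end determined by the Hessian of $\mathcal{L}+\mathfrak{q}$ at $[\mathfrak{b}]$. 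The nonlinear and perturbation contributions produce lower-order compact operators of the type $K$ in Proposition \ref{APS}, which do not affect the index.

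Third, $\operatorname{ind}(Q_{Z_{+},[\mathfrak{b}]})$ splits as the sum of a Dirac contribution and a de Rham contribution, and I would compute each by excision across the neck $Y$, comparing $Z_{+}=Z\cup_{Y}X_{+}$ with the capped pieces $M_{+}=M\cup_{Y}X_{+}$ and $M^{*}=M\cup_{Y}Z^{+}$ together with the compact spin cobordism $M$ equipped with matching APS boundary projection on $\partial M=Y$. By Proposition \ref{Dirac operator is Fredholm} the end-periodic Dirac piece contributes $2\operatorname{ind}_{\mathds{C}}(\slashed{D}^{+}_{A_{0}},M_{+})=2w(X,g_{X},0)-\operatorname{sign}(M)/4$; the cylindrical Dirac piece on $M^{*}$ together with the de Rham index $-(b_{2}^{+}(M)+1)$ from Lemma \ref{half De rham complex}(ii) reassembles, via the bookkeeping already used in Lemma \ref{height as index}, into $\operatorname{gr}(M,[\mathfrak{b}])$. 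Using (\ref{absolute grading}) to substitute $\operatorname{gr}(M,[\mathfrak{b}])=-\operatorname{gr}^{\mathds{Q}}([\mathfrak{b}])-b_{2}^{+}(M)-\operatorname{sign}(M)/4-1$, the topological corrections cancel and we obtain $\operatorname{ind}(Q_{Z_{+},[\mathfrak{b}]})=\operatorname{gr}^{\mathds{Q}}([\mathfrak{b}])+2w(X,g_{X},0)$, which proves (ii). Statement (i) is immediate: a regular nonempty moduli space must locally be a manifold of nonnegative dimension, so a negative expected dimension forces emptiness. The main obstacle is the excision bookkeeping in the third step: every operator in the calculation carries its own spectral or weighted boundary condition (APS at the cylindrical end adapted to $[\mathfrak{b}]$, an $L^{2}_{\pm\delta}$-weight at the periodic end, and an APS projection at the capping boundary $\partial M$), and these must be chosen compatibly so that the gluing formula for Fredholm indices produces the clean cancellations above. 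Once these conventions are fixed, the remaining calculation is a routine combination of Atiyah--Patodi--Singer and Taubes's end-periodic index theory.
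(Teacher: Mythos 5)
Your proposal is correct and follows essentially the same route as the paper: you identify the dimension with a Fredholm index, use excision/additivity across $Y$ to compare $Z_{+}$ with $M^{*}$ and $M_{+}$, compute the $M_{+}$-index from Lemma \ref{half De rham complex}(ii) and Proposition \ref{Dirac operator is Fredholm}, and then substitute via (\ref{absolute grading}) exactly as the paper does. The only cosmetic difference is that you phrase the gluing as parametrix patching with explicit boundary projections while the paper phrases it as deforming the linearized equations on $M^{*}\cup Z_{+}$ to $M_{+}$, which is the same index-theoretic content.
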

\begin{proof}
We just need to show that the expected dimension of $\mathcal{M}([\mathfrak{b}],Z_{+})$ (which we denote by $\operatorname{gr}(Z_{+};[\mathfrak{b}])$) can be expressed as
 $$\operatorname{gr}(Z_{+};[\mathfrak{b}])=\operatorname{gr}^{\mathds{Q}}([\mathfrak{b}])+2w(X,g_{X},0).$$ This can be done by direct computation. But we follow an alternative argument here. Recall that $M$ is a spin manifold with bounded by $(Y,\mathfrak{s})$ with $b_{1}(M)=0$. We let $M^{*}=M\cup _{Y}([0,+\infty)\times Y)$.  As discussed before, the $M$-grading of $[\mathfrak{b}]$ (which we denoted by $\operatorname{gr}(M;[\mathfrak{b}])$) equals the expected dimension of the moduli space consisting of solutions on $M^{*}$ that are asymptotic to $[\mathfrak{b}]$. Since one can deform the linearized Seiberg-Witten equations over the manifold $M^{*}\cup Z_{+}$ first to the corresponding equations over the manifold $$M\cup_{Y}([-T,T]\times Y)\cup_{Y}X_{+}\text{ with }T\gg 0$$
 and then to the manifold $M_{+}$. We see that the grading is additive in the sense that the sum $\operatorname{gr}(M;[\mathfrak{b}])+\operatorname{gr}(Z_{+};[\mathfrak{b}])$ equals the expected dimension $\mathcal{M}(M_{+})$, the moduli space consisting of gauge equivalent classes of solutions over $M_{+}$ that decay exponentially on the periodic end. The linear operator that determines the local structure of $\mathcal{M}(M_{+})$ is a compact perturbation of the operator
 $$
 L^{2}_{k,\delta}(M_{+};iT^{*}M_{+}\oplus S^{+})\rightarrow L^{2}_{k-1,\delta}(M_{+};i\mathds{R}\oplus i\wedge^{2}_{+}T^{*}M_{+}\oplus S^{-})
 $$
 $$
 (a,\Phi)\mapsto (d^{*}a,d^{+}a,\slashed{D}_{A_{0}}\Phi).
 $$
 By Lemma \ref{half De rham complex} and Proposition \ref{Dirac operator is Fredholm}, the (real) index of this operator equals
 $$
 -\frac{\text{sign}(M)}{4}+2w(X,g_{X},0)+b^{+}_{2}(M)-1.
 $$
 By (\ref{absolute grading}), this implies
 \begin{equation}\begin{split}\operatorname{gr}(Z_{+};[\mathfrak{b}])&=-\frac{\text{sign}(M)}{4}+2w(X,g_{X},0)+b^{+}_{2}(M)-1-\operatorname{gr}(M;[\mathfrak{b}])\\&=\operatorname{gr}^{\mathds{Q}}([\mathfrak{b}])+2w(X,g_{X},0).\end{split}\end{equation}
 \end{proof}

Recall that we denote the lowest boundary stable reducible critical point by $[\mathfrak{a}_{0}]$. Recall that the absolute grading  $[\mathfrak{a}_{0}]$ equals the height of the nice perturbation $\mathfrak{q}$, which has been chosen to be $-2w(X,g_{X},0)$ (see Assumption \ref{3 dimensional perturbation}). By Proposition \ref{moduli space is a manifold} and Proposition \ref{expected dimension}, for any $\mathfrak{p}\in U_{1}$ (the residue set provided by Lemma \ref{transversality}), the moduli space $\mathcal{M}([\mathfrak{a}_{0}],[Z_{+}])$ consists of discrete elements, all of which are reducible because $[\mathfrak{a}_{0}]$ is boundary stable. The moduli spaces $\mathcal{M}([\mathfrak{a}_{i}],[Z_{+}])\ (i< 0)$ are all empty.
\begin{pro}\label{only one reducible}
There exists an open neighborhood $U_{2}\subset \mathcal{P}(Y)$ of $0$ such that for any $\mathfrak{p}_{0}\in U_{2}$, the moduli space $\mathcal{M}([\mathfrak{a}_{0}],[Z_{+}])$ corresponding to  $(\mathfrak{q},\mathfrak{p}_{0})$ contains a single point.
\end{pro}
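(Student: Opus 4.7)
The plan is to first show that $\mathcal{M}([\mathfrak{a}_0], Z_+)$ at $\mathfrak{p}_0 = 0$ consists of a single regular point, and then propagate this to a neighborhood $U_2$ of $0$ via the implicit function theorem together with a local compactness argument.

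At $\mathfrak{p}_0 = 0$, every element of $\mathcal{M}([\mathfrak{a}_0], Z_+)$ must be reducible by Proposition \ref{moduli space is a manifold}(ii), so it takes the form $[(A, 0, \phi)]$. For the connection part I would use that niceness of $\mathfrak{q}$ makes $\mathfrak{q}^0$ vanish on the reducible locus, so the curvature equation reduces to $F^+_{A^t} = 0$. Writing $A = A_0 + a$ with $a \in L^2_{k;-\delta,\delta}(Z_+; iT^*Z_+)$ and imposing the Coulomb gauge condition $d^* a = 0$, Lemma \ref{half De rham complex}(iii) then forces $a = 0$, so $A = A_0$ up to gauge.

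With $A = A_0$ fixed, the remaining content is to identify the projective ray $[\phi] \in \mathds{S}$ in the kernel of the perturbed Dirac operator $\slashed{D}^+_{A_0, \hat{\mathfrak{q}}}$ on $Z_+$ whose asymptotic direction on the cylindrical end is $\psi_0$ and which decays exponentially on the periodic end. I would set this up as a mixed APS/periodic-end Fredholm problem: on the cylindrical side, impose the spectral projection of $\slashed{D}_{\mathfrak{q}, B_0}$ onto eigenvectors of eigenvalue strictly greater than $\lambda_0$ (ruling out all asymptotic directions except $\psi_0$), and on the periodic side use the weighted Sobolev decay condition. An excision argument relating $Z_+$ to $M_+$ glued with a cylinder, combined with the index formula of Proposition \ref{Dirac operator is Fredholm} and Lemma \ref{height as index}, should show that the choice $\operatorname{ht}(\mathfrak{q}) = -2 w(X, g_X, 0)$ from Assumption \ref{3 dimensional perturbation} forces the complex index of this operator to be exactly $1$. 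The admissibility of $g_X$ will then force the cokernel to vanish (otherwise one could extend a cokernel element by the cylindrical-end theory into a nontrivial harmonic spinor on $\tilde X$, contradicting admissibility through the Fourier--Laplace decomposition underlying Proposition \ref{Dirac operator is Fredholm}), so the complex kernel is one-dimensional. Dividing by the $\mathds{C}^*$-action (the combined constant $S^1$ phase and $\mathds{R}_{>0}$ rescaling built into the blown-up gauge quotient) leaves a single projective class, and the linearization at it is automatically surjective.

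Given regularity of this single solution, the implicit function theorem applied to the parameterized moduli space $\mathfrak{M}([\mathfrak{a}_0], Z_+) \subset \mathcal{B}^\sigma_{k;\text{loc},\delta}(Z_+) \times \mathcal{P}(Y)$ will produce a unique smooth continuation for $\mathfrak{p}_0$ in some neighborhood $V$ of $0$. I would close the argument by ruling out additional solutions in $V$ far from this continuation via a local compactness argument along the lines of Chapter 24--25 of the book: a hypothetical sequence of extra solutions $[\gamma_n]$ with $\mathfrak{p}_0^{(n)} \to 0$ would have a subsequential Gromov-type limit, which by the first step must coincide with the unique $\mathfrak{p}_0 = 0$ solution, contradicting its distance from the continuation. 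I expect the main obstacle to be the cokernel-vanishing step: extending the end-periodic Fredholm theory of the Dirac operator from $M_+$ to the mixed-end manifold $Z_+$, and tying the kernel dimension to the admissibility of $g_X$ with the required precision.
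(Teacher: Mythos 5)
Your treatment of the connection part (Coulomb gauge plus Lemma \ref{half De rham complex}(iii) forcing $a=0$ at $\mathfrak{p}_{0}=0$) and your plan to propagate by the implicit function theorem plus a compactness argument are close in spirit to the paper, which however never leaves the downstairs abelian equation: it gauge-fixes using exponential decay on $Z$ and Lemma \ref{half De rham complex}(i), applies the implicit function theorem to $(\mathfrak{p}_{0},a)\mapsto(d^{*}a,\,d^{+}a-\beta_{0}\rho^{-1}(\hat{\mathfrak{p}}^{0}_{0}(A_{0}+a,0)))$ with Lemma \ref{half De rham complex}(iii) as the linear isomorphism, and then confines all solutions to the IFT neighborhood by a neck-compactness argument (the adapted Theorem 10.7.1 of the book, which is available here precisely because the equation is abelian; no PSC is used in this section).

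The genuine gap in your proposal is the spinor step. You need the perturbed Dirac problem on $Z_{+}$ (weighted decay on the periodic end, spectral condition at the cylindrical end) to have one-dimensional kernel, and you propose to get vanishing of the cokernel from admissibility of $g_{X}$. Admissibility only says that the twisted Dirac operators on the closed manifold $X$ have no kernel, which via the Fourier--Laplace theory yields Fredholmness of the end-periodic operator at small weights (Proposition \ref{Dirac operator is Fredholm}); it controls neither the kernel nor the cokernel of the mixed problem on $Z_{+}$ — indeed the index $w(X,g_{X},0)$ is generally nonzero, so the corresponding $M_{+}$ problem already has nontrivial kernel or cokernel for admissible metrics. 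The extension mechanism you sketch also fails concretely: a cokernel element lives on $Z_{+}$, whose cylindrical end $(-\infty,0]\times Y$ carries an arbitrary product metric $dt^{2}+g_{Y}$ and the perturbation $\hat{\mathfrak{q}}$, and is not a periodic end of $\tilde X$, so there is no way to promote it to an $L^{2}$ harmonic spinor on $\tilde X$, and even such a spinor would not by itself contradict admissibility in the weighted setting. Relatedly, you cannot assert that "the linearization is automatically surjective" at $\mathfrak{p}_{0}=0$: exact kernel dimension one (equivalently the $\mathds{CP}^{0}$ fiber, including that its generator is genuinely asymptotic to $[\mathfrak{a}_{0}]$ rather than to a boundary-unstable $[\mathfrak{a}_{i}]$, $i<0$) is in the paper a consequence of the genericity supplied by Proposition \ref{transversality} (the choice $\mathfrak{p}_{0}\in U_{1}$, combined with the expected-dimension count and Assumption \ref{3 dimensional perturbation}), not something provable at $\mathfrak{p}_{0}=0$ from the metric hypothesis alone; Proposition \ref{only one reducible} itself only carries the uniqueness of the reducible connection, which is why the paper's proof never needs to discuss the Dirac cokernel. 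Finally, your closing compactness step should be run for the reducible (abelian) equations as in the paper, since compactness for the full Seiberg--Witten problem on $Z_{+}$ without positive scalar curvature is exactly what is unavailable before Section 4.
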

\begin{proof}
Since the moduli space only consists of reducibles, we do not need to consider the nice perturbation $\mathfrak{q}$ since it vanishes on the reducibles. Moreover, we can describe the moduli space explicitly: each gauge equivalent class of solutions of the downstairs equation
\begin{equation}\label{downstairs reducible equation}
d^{+}a-\beta_{0}\cdot \rho^{-1}(\mathfrak{\hat{p}}^{0}_{0}(A_{0}+a,0))=0,\ a\in L^{2}_{k+1;\text{loc},\delta}(Z_{+};i\mathds{R})
\end{equation}
contributes a copy of $\mathds{CP}^{n-1}\setminus \mathds{CP}^{n-2}$ in $\mathcal{M}([\mathfrak{a}_{0}],[Z_{+}])$, with $n$ being the index of the Dirac operator $\slashed{D}_{A_{0}+a}$. See page 567 of the book. (Here $\beta_{0}$ is the bump function in (\ref{mixed perturbation}) and $\hat{\mathfrak{p}}^{0}_{0}$ is a component of the $4$-dimensional, downstairs perturbation $\hat{\mathfrak{p}}_{0}$ induced by the $3$-dimensional perturbation $\mathfrak{p}_{0}$.) In our situation, since the dimension of $\mathcal{M}([\mathfrak{a}_{0}],[Z_{+}])$ equals zero, we have $n=1$.
We want to show that when $\mathfrak{p}_{0}$ (hence $\hat{\mathfrak{p}}_{0}^{0}$) is small enough, (\ref{downstairs reducible equation}) has exactly one solution up to gauge equivalence. By the exponential decay result Theorem 13.3.5 of the book (applied to $a|_{Z}$) and Lemma \ref{half De rham complex} (i), we see that each equivalent class contains a unique representative satisfying
$$
\|a\|_{L^{2}_{k;-\delta,\delta}}<\infty,\ d^{*}a=0.
$$
In other words, we just need to prove (\ref{downstairs reducible equation}) has a unique solution satisfying the above gauge fixing condition when the perturbation is small. To do this, we consider the map
$$
\mathfrak{P}:\mathcal{P}(Y)\times L^{2}_{k;-\delta,\delta}(Z_{+};iT^{*}Z_{+})\rightarrow V\oplus L^{2}_{k;-\delta,\delta}(Z_{+}; i\wedge^{2}_{+}T^{*}Z_{+}),
$$
where $V=\{\xi\in L^{2}_{k,-\delta,\delta}(Z_{+};i\mathds{R})|\int_{Z_{+}}\xi d\text{vol}=0\}$, given by
$$
(\mathfrak{p}_{0},a)\mapsto (d^{*}a,d^{+}a-\beta_{0}\cdot \rho^{-1}(\mathfrak{\hat{p}}_{0}^{0}(A_{0}+a,0))).
$$
By Lemma \ref{half De rham complex} (iii), the restriction of $\mathfrak{P}$ to $\{0\}\times L^{2}_{k;-\delta,\delta}(Z_{+};iT^{*}Z_{+})$ is a (linear) isomorphism.  Therefore, by the implicit function theorem, there exists a neighborhood $U$ of $0\in  L^{2}_{k;-\delta,\delta}(Z_{+};iT^{*}Z_{+})$ and a neighborhood $U'$ of $0\in \mathcal{P}(Y)$ with the property that: for any $\mathfrak{p}_{0}\in U'$, there exists a unique solution of the equation $\mathfrak{P}(\mathfrak{p}_{0},a)=0$ with $a\in U$. Now we claim that we can find another neighborhood $U''$ of $0\in \mathcal{P}(Y)$
such that for any $\mathfrak{p}_{0}\in U''$,  $\mathfrak{P}(\mathfrak{p}_{0},a)=0$ implies $a\in U$. This will finish the proof because we can set $U_{2}=U'\cap U''$. Now we prove our claim by contradiction. Suppose there exist $\mathfrak{p}_{0,n}\rightarrow 0$ and $a_{n}\notin U$ such that $\mathfrak{P}(\mathfrak{p}_{0,n},a_{n})=0$ for each $n$. Integrating by part on $(-\infty,-0]\times Y$ and $X_{+}\setminus [3,+\infty)$ respectively, we see that
$$
\operatorname{CSD}((A_{0}+a_{n})|_{Y\times\{0\}},0)<0,\ \operatorname{CSD}((A_{0}+a_{n})|_{Y\times\{3\}},0)>0.
$$
Using these energy estimates, one can easily adapt the proof of Theorem 10.7.1 of the book (from the single perturbation case to the case of a convergent sequence of perturbations) and prove that: after passing to a subsequence and applying suitable gauge transformations $u_{n}$, the sequence $
u_{n}\cdot((A_{0}+a_{n})|_{Y\times[1,2]},0)
$ converges smoothly. Notice that the gauge invariant term $\beta_{0}\cdot \rho^{-1}(\mathfrak{\hat{p}}_{0,n}^{0}(A_{0}+a_{n},0))$ is supported on $Y\times [1,2]$ and only depends on  $(A_{0}+a_{n})|_{Y\times[1,2]}$ (because the bump function $\beta_{0}$ is supported on $[1,2]\times Y$). We see that $$\|(d^{*}a_{n},d^{+}a)\|_{L^{2}_{k-1;-\delta,\delta}} =\|\beta_{0}\cdot \rho^{-1}(\mathfrak{\hat{p}}_{0,n}^{0}(A_{0}+a_{n},0))\|_{L^{2}_{k-1;-\delta,\delta}}\rightarrow 0\text{ as }n\rightarrow\infty$$ since $\mathfrak{p}_{0,n}\rightarrow 0$. By Lemma \ref{half De rham complex} (iii) again, we get $\|a_{n}\|_{L^{2}_{k;-\delta,\delta}}\rightarrow 0$. This contradicts with our assumption $a_{n}\notin U$ and completes our proof.\end{proof}
\begin{assum}\label{4 dim perturbation}
From now on, we fix a choice of perturbation $\mathfrak{p}_{0}\in U_{1}\cap U_{2}$, where $U_{1}$, $U_{2}$ are subsets of $\mathcal{P}(Y)$ provided by Proposition \ref{transversality} and Proposition \ref{only one reducible} respectively.
\end{assum}
As in the cylindrical case, a sequence of $Z_{+}$-trajectories (even with unifomly bounded energy) can converge to a broken trajectory. For this reason, we have to introduce the moduli space of broken trajectories before discussing the compactness property. Although our construction can be generalized to moduli space of higher dimension without essential difficulty, we focus on $1$-dimensional moduli spaces for simplicity. This will be enough for our application.

We start with recalling the ``$\tau$-module'' for blow up. (See Section 6.3 of the book for details.) Let $I\subset \mathds{R}$ be an interval. Denote the product manifold $I\times Y$ by $Z_{I}$. There are two cases:
\begin{itemize}
\item Suppose $I$ is compact, we define the configuration space \begin{equation}\label{tau-module}\begin{split}
\mathcal{C}^{\tau}_{k}(Z_{I})=&\{(A_{0}+a,s,\phi)|(a,\phi)\in L^{2}_{k}(Z_{I};iT^{*}Z_{I}\oplus S^{+}),\ s\in L^{2}_{k}(I;\mathds{R})\\& \text{satisfies }s(t)\geq 0,\ \|\phi|_{Y\times \{t\}}\|_{L^{2}(Y)}=1\text{ for any }t\in I \}\end{split}\end{equation}
The gauge group $
\mathcal{G}_{k+1}(Z_{I})$ acts on $\mathcal{C}^{\tau}_{k}(Z_{I})$ as
$$
u\cdot(A_{0}+a,s,\phi)= (A_{0}+a-u^{-1}du,s, u\phi).
$$ We denote the quotient space by $\mathcal{B}^{\tau}_{k}(Z_{I})$.
\item Suppose $I$ is non-compact, we define $\mathcal{C}^{\tau}_{k,\text{loc}}(Z_{I})$ by replacing  $L^{2}_{k}$ with $L^{2}_{k,\text{loc}}$ in (\ref{tau-module}). We let $\mathcal{B}^{\tau}_{k,\text{loc}}(Z_{I})=\mathcal{C}^{\tau}_{k,\text{loc}}(Z_{I})/\mathcal{G}_{k+1,\text{loc}}(Z_{I})$.
\end{itemize}
In both cases, we impose the quotient topology on the quotient configuration space. For any $[\mathfrak{b}],[b']\in \mathfrak{C}$, the moduli space $\mathcal{M}([\mathfrak{b}],[b'])$ is a subset of $\mathcal{B}^{\tau}_{k,\text{loc}}(Z_{(-\infty,+\infty)})$ and consists of the non-constant Seiberg-Witten trajectories going from $[\mathfrak{b}]$ to $[b']$. We let $\breve{\mathcal{M}}([\mathfrak{b}],[b'])=\mathcal{M}([\mathfrak{b}],[b'])/\mathds{R}$, where $\mathds{R}$ acts as translation (reparametrization).

Now we define the moduli space of broken trajectories. Let $[\mathfrak{b}_{0}]$ be a critical point with $\operatorname{gr}^{\mathds{Q}}([b_{0}])=-2w(X,g_{X},0)+1$. By our assumption about $\operatorname{ht}(\mathfrak{q})$, $[\mathfrak{b}_{0}]$ must be irreducible. We consider the set
$$
\mathcal{M}^{+}([\mathfrak{b}_{0}],Z_{+})=\mathcal{M}([\mathfrak{b}_{0}],Z_{+})\cup(\mathop{\cup}\limits_{[\mathfrak{b}]\in \mathfrak{C}}\breve{\mathcal{M}}([\mathfrak{b}_{0}],[\mathfrak{b}])\times \mathcal{M}([\mathfrak{b}],Z_{+})).
$$
By our regularity assumption,  $\mathcal{M}([\mathfrak{b}_{0}],Z_{+})$ is a $1$-dimensional manifold (without boundary). The set $\breve{\mathcal{M}}([\mathfrak{b}_{0}],[\mathfrak{b}])\times \mathcal{M}([\mathfrak{b}],Z_{+})$ is nonempty only if $\operatorname{gr}^{\mathds{Q}}([\mathfrak{b}])=2w(X,g_{X},0)$, in which case it is a discrete set.

To define the topology on $\mathcal{M}^{+}([\mathfrak{b}_{0}],Z_{+})$, we need to specify a neighborhood base for each point. For those points in $\mathcal{M}([\mathfrak{b}_{0}],Z_{+})$, we just use their neighborhood basis inside $\mathcal{M}([\mathfrak{b}_{0}],Z_{+})$. For a broken trajectory $([\gamma_{-1}],[\gamma_{0}])\in \breve{\mathcal{M}}([\mathfrak{b}_{0}],[\mathfrak{b}])\times \mathcal{M}([\mathfrak{b}],Z_{+})$, we let $[\gamma_{-1}]$ be represented by a parametrized trajectory
$$
\gamma_{-1}\in \mathcal{M}([\mathfrak{b}_{0}],[\mathfrak{b}]).
$$
Let $U_{0}$ be a neighborhood of $[\gamma_{0}]$ in $\mathcal{B}^{\sigma}_{k,\operatorname{loc},\delta}(Z_{+})$ and let $I\subset\mathds{R}$ be a compact interval and $U_{-1}\subset  \mathcal{B}^{\tau}_{k}(Z_{I})$ be a neighborhood of $[\gamma_{-1}|_{I}]$. For any $T\in \mathds{R}_{>0}$ with the property that $I-T$ (the translation of $I$ by $-T$) is contained in $\mathds{R}_{\leq 0}$, we define $\Omega(U_{-1},U_{0},T)$ to be the subset of  $\mathcal{M}^{+}([\mathfrak{b}_{0}],Z_{+})$ consisting of the broken $Z_{+}$-trajectory $([\gamma_{-1}],[\gamma_{0}])$ and (unbroken) $Z_{+}$-trajectories $[\gamma] \in \mathcal{M}([\mathfrak{b}_{0}],Z_{+})$ satisfying the following conditions:
\begin{itemize}
\item $[\gamma]\in U_{0};$
\item There exists  $T_{-1}>T$ such that $[\tau_{T_{-1}}(\gamma|_{I-T_{-1}})]\in U_{-1}$, where $\tau_{T_{-1}}(\gamma|_{I-T_{-1}})$ denotes the translation of $\gamma|_{I-T_{-1}}$ by $T_{-1}$ (in the positive direction).
\end{itemize}
We put the sets of the form $\Omega(U_{-1},U_{0},T)$ form a neighborhood basis for $([\gamma_{-1}],[\gamma_{0}])$.
With the topology on $\mathcal{M}^{+}([\mathfrak{b}_{0}],Z_{+})$ defined, we have the following gluing theorem, whose proof is a word by word translation from the proof of Theorem 24.7.2 in the book and we omit.

\begin{thm}\label{gluing}
For each broken $Z_{+}$-trajectory $([\gamma_{-1}],[\gamma_{0}])\in \mathcal{M}^{+}([\mathfrak{b}_{0}],Z_{+})$, we can find its open neighborhood $U$ with $U\setminus ([\gamma_{-1}],[\gamma_{0}])\subset\mathcal{M}([\mathfrak{b}_{0}],Z_{+})$ and a homeomorphism $f: (0,1]\times ([\gamma_{-1}],[\gamma_{0}]) \rightarrow U$ that sends $\{1\}\times ([\gamma_{-1}],[\gamma_{0}]) $ to $([\gamma_{-1}],[\gamma_{0}])\in U$.
\begin{rmk}
Theorem 24.7.2 in the book actually contains the two parts: the boundary obstructed case and the boundary unobstructed case. The second case is much easier than the first case.  Theorem \ref{gluing} here corresponds to the second case with the additional assumption that the moduli space is $1$-dimensional and the boundary of the $4$-manifold is connected. This further simplifies the statement of the result.
\end{rmk}
\end{thm}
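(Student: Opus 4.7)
The plan is to adapt the standard pre-gluing/post-gluing scheme from Floer-type gauge theory, as in Section 24.7 of the book, to the end-periodic setting. Fix the broken trajectory $([\gamma_{-1}],[\gamma_{0}])$ with intermediate critical point $[\mathfrak{b}]$. For each sufficiently large $T>0$, I would build an approximate solution $\gamma_{T}^{\#}$ on $Z_{+}$ by translating $\gamma_{-1}$ along the cylinder by $-T$ and splicing it to $\gamma_{0}$ across a compact neck via cut-off functions interpolating the two asymptotic copies of $[\mathfrak{b}]$. Since both $\gamma_{-1}$ and $\gamma_{0}$ converge exponentially to $[\mathfrak{b}]$, the error $\mathfrak{F}^{\sigma}_{\mathfrak{p}}(Z_{+})(\gamma_{T}^{\#})$ is exponentially small in $T$, measured in the weighted $L^{2}_{k-1,\delta}$-norm.

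Next I would analyze the linearized Seiberg-Witten operator $\mathcal{Q}_{T}$ at $\gamma_{T}^{\#}$, coupled with the slice condition from Lemma \ref{decomposition of the tangent space}. Up to exponentially small errors, $\mathcal{Q}_{T}$ decomposes as the direct sum of the linearized operators of $\gamma_{-1}$ on the infinite cylinder and of $\gamma_{0}$ on $Z_{+}$, with matching at the neck governed by the spectral splitting $T_{[\mathfrak{b}]}\mathcal{B}^{\sigma}_{k-1/2}(Y)\cong \mathcal{K}^{+}_{\mathfrak{b}}\oplus \mathcal{K}^{-}_{\mathfrak{b}}$. Combining the regularity hypothesis with Lemma \ref{APS for moduli space} and Proposition \ref{moduli space is a manifold}, these pieces assemble into a surjective Fredholm operator on the broken configuration whose kernel is one-dimensional and generated by reparametrization of $\gamma_{-1}$. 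A uniformly bounded right inverse for $\mathcal{Q}_{T}$ is then produced by patching the right inverses on the two pieces via the cut-offs, exactly as in Proposition 18.4.2 and Theorem 24.7.2 of the book.

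With a uniform right inverse in hand, a standard contraction-mapping argument produces, for each large $T$, a unique honest solution $\gamma_{T}$ in the slice through $\gamma_{T}^{\#}$. Reparametrizing $T\in[T_{0},\infty)$ as $(0,1)$ via $T\mapsto T_{0}/T$ and sending $1$ to $([\gamma_{-1}],[\gamma_{0}])$ gives a continuous injection $f:(0,1]\to \mathcal{M}^{+}([\mathfrak{b}_{0}],Z_{+})$ whose restriction to $(0,1)$ lies in $\mathcal{M}([\mathfrak{b}_{0}],Z_{+})$. Surjectivity onto an open neighborhood then follows from the usual broken/unbroken dichotomy: any sequence $[\gamma^{(n)}]\in \mathcal{M}([\mathfrak{b}_{0}],Z_{+})$ entering every basis neighborhood $\Omega(U_{-1},U_{0},T)$ must, after passing to a subsequence, coincide with $\gamma_{T^{(n)}}$ for some $T^{(n)}\to\infty$, by the local uniqueness in the contraction mapping together with the compactness arguments underlying the definition of $\mathcal{M}^{+}$.

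The main obstacle is establishing a uniform bound on the right inverse of $\mathcal{Q}_{T}$ as $T\to\infty$; this forces $\delta$ to be smaller than the smallest absolute value of a nonzero eigenvalue of $\operatorname{Hess}^{\sigma}_{\mathfrak{q}}([\mathfrak{b}])$, so that the exponential end estimates on both pieces can be patched without the patching error swamping the inverse. This is exactly why the setup of Section 3 enforces $\delta<\min(\delta_{1},\delta_{2})$; once that is in force, the neck-stretching estimates of the book translate essentially verbatim to the end-periodic case, and the remaining bookkeeping simplifies further thanks to the one-dimensionality of the moduli space, the connectedness of $\partial X_{+}$, and our reduction to the boundary-unobstructed situation, which eliminates the additional gluing parameter that appears in the general case of Theorem 24.7.2 of the book.
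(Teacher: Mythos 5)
Your outline is the classical Taubes--Floer gluing scheme (pre-gluing, uniformly bounded right inverse, contraction mapping, then surjectivity), and while it could be carried through, it is not what the paper does: the paper gives no independent argument at all, asserting that the proof is a word-by-word translation of Theorem 24.7.2 of the book, and that proof does not splice approximate solutions. It instead uses the fiber-product description of $\mathcal{M}([\mathfrak{b}_{0}],Z_{+})$ from Lemma \ref{fiber sum}, the Fredholm/transversality properties of the restriction maps $R_{\pm}$ (Proposition \ref{APS}, Lemma \ref{APS for moduli space}), and the book's general analysis of trajectories that linger near the intermediate critical point $[\mathfrak{b}]$, so that a neighborhood of the broken solution is parametrized by intersecting the image of $R_{-}$ with the flow-out of the relevant stable/unstable data; local uniqueness and surjectivity onto a neighborhood come packaged with that machinery. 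What your route buys is independence from that machinery; what it costs is that the two genuinely delicate points -- the uniform bound on the right inverse of $\mathcal{Q}_{T}$ in the end-periodic weighted setup, and the surjectivity statement that every unbroken solution entering a basic neighborhood $\Omega(U_{-1},U_{0},T)$ coincides with one of your glued solutions $\gamma_{T}$ -- must be re-proved, and you only gesture at the second. The paper's choice of quoting the book sidesteps both, since Proposition \ref{APS} shows the hypotheses of the book's argument hold verbatim and the periodic end is irrelevant to the gluing, the breaking occurring only on the cylindrical end.

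One correction of attribution: the weight constraint needed for your neck estimates is governed by the spectral gap of $\operatorname{Hess}^{\sigma}_{\mathfrak{q}}$ at $[\mathfrak{b}]$ (this is the exponential-decay input already built into $\mathcal{M}([\mathfrak{b}],Z)$ and $\breve{\mathcal{M}}([\mathfrak{b}_{0}],[\mathfrak{b}])$), not by the constants $\delta_{1},\delta_{2}$ of Section 3. Those constants come from the end-periodic Fredholm theory (Lemma \ref{half De rham complex} and Proposition \ref{Dirac operator is Fredholm}) and have nothing to do with the Hessian at $[\mathfrak{b}]$, so the requirement $\delta<\min(\delta_{1},\delta_{2})$ is not ``exactly why'' the patched inverse can be controlled; if you run the splicing argument you must impose the Hessian-gap condition on the cylindrical-end weight separately.
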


Now we consider the orientation of the moduli spaces. As mentioned in Subsection 2.2, a choice of $\chi([\mathfrak{b}])$ in the orientation set $\Lambda([\mathfrak{b}])$ for each $[\mathfrak{b}]$ canonically induces an orientation of the moduli space $\breve{\mathcal{M}}([\mathfrak{b}],[\mathfrak{b}'])$ for any critical points $[\mathfrak{b}],[\mathfrak{b}']$. It was also proved in Threorem 24.8.3 of the book that a choice of $\chi([\mathfrak{b}])$ and a homology orientation of $M$ determines an orientation of $\mathcal{M}(M^{*},[\mathfrak{b}])$ (the moduli space of gauge equivalent classes consisting of solutions on $M^{*}=M\cup_{Y}[0,+\infty)\times Y$ that are asymptotic to $[\mathfrak{b}]$). By replacing the compact manifold $M$ with the non-compact manifold $X_{+}$ and working with the weighted Sobolev spaces instead of the unweighted ones, one can repeat the argument there and prove the following similar result. Note that we do not need any homology orientation of $X_{+}$. This is essentially because of Lemma \ref{half De rham complex} (iv) (compare Lemma 24.8.1 of the book). An alternative viewpoint is that $H^{1}(X_{+};\mathds{R})=H^{2}(X_{+};\mathds{R})=0$.
\begin{thm}\label{orientation}
A choice of $\{\chi([\mathfrak{b}])|\,[\mathfrak{b}]\in\mathfrak{C}\}$ canonically induces an orientation on the moduli space $\mathcal{M}([\mathfrak{b}],Z_{+})$ for any critical point $[\mathfrak{b}]$. These orientations are compatible with the gluing map in the following sense: the map $f$ provided by Theorem \ref{gluing} is orientation preserving when restricted to $(0,1)\times ([\gamma_{-1}],[\gamma_{0}])$, if we orient the moduli spaces $\breve{\mathcal{M}}([\mathfrak{b}_{0}],[\mathfrak{b}])$, $\mathcal{M}([\mathfrak{b}],Z_{+}))$ and $\mathcal{M}([\mathfrak{b}_{0}],Z_{+}))$ by the same choice $\{\chi([\mathfrak{b}])|\,[\mathfrak{b}]\in\mathfrak{C}\}$ and use the positive orientation on the interval $(0,1)$.
\end{thm}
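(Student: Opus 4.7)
The plan is to adapt the determinant-line construction from Section 24.8 of the book to the end-periodic setting. Recall that an orientation of $\mathcal{M}([\mathfrak{b}], Z_{+})$ at a point $[\gamma]$ is equivalent to an orientation of the determinant line $\det Q_{[\gamma]}$, where $Q_{[\gamma]}$ is the Fredholm operator governing the local deformation theory of $[\gamma]$ (the linearized perturbed Seiberg-Witten equations on $Z_{+}$ together with the gauge-fixing operator and the asymptotic condition at $-\infty$). Since the determinant line depends only on the stable homotopy class of the symbol, it suffices to orient a convenient model operator obtained by deforming $Q_{[\gamma]}$ along a family of Fredholm operators with the same symbol.

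First, I would use the standard linear homotopy that zeroes out the Clifford multiplication by the spinor and decouples the connection/form and spinor parts of the linearization. Applying an excision argument along the neck $\{0\}\times Y \subset Z_+$, this homotopy identifies $\det Q_{[\gamma]}$ with $\det Q_Z([\mathfrak{b}]) \otimes \det Q_{X_+}$, where $Q_Z([\mathfrak{b}])$ is the half-cylinder operator on $Z$ with asymptotic condition $[\mathfrak{b}]$ at $-\infty$ and APS-type boundary condition at $\{0\} \times Y$, and $Q_{X_+}$ is the corresponding operator on $X_+$ with exponential decay on the periodic end and the complementary boundary projection on $\partial X_+$. The orientation of $\det Q_Z([\mathfrak{b}])$ is precisely the datum encoded by $\chi([\mathfrak{b}]) \in \Lambda([\mathfrak{b}])$ as in Section 20.3 of the book. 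For $\det Q_{X_+}$ I would further deform to the direct sum of the end-periodic Dirac operator $\slashed{D}^{+}_{A_0}$ of Proposition \ref{Dirac operator is Fredholm}, which is complex-linear and carries a canonical complex orientation on its determinant line, and the operator $D(X_+)$ of Lemma \ref{half De rham complex} (iv). The latter has trivial kernel and one-dimensional cokernel \emph{canonically identified with} $\mathds{R}$, so $\det D(X_+)$ is canonically oriented without any auxiliary choice. Tensoring these two canonical orientations produces a canonical orientation of $\det Q_{X_+}$, and together with $\chi([\mathfrak{b}])$ this orients $\mathcal{M}([\mathfrak{b}], Z_+)$.

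For compatibility with the gluing map of Theorem \ref{gluing}, I would invoke the standard excision principle for determinant lines under neck-stretching. Given a broken trajectory $([\gamma_{-1}], [\gamma_0]) \in \breve{\mathcal{M}}([\mathfrak{b}_0],[\mathfrak{b}]) \times \mathcal{M}([\mathfrak{b}], Z_+)$ and its glued approximation $[\gamma_T]$ for large $T$, the neck-stretching homotopy produces a canonical isomorphism
$$
\det Q_{[\gamma_T]} \;\cong\; \det Q_{\mathds{R}}([\mathfrak{b}_0],[\mathfrak{b}]) \otimes \det Q_Z([\mathfrak{b}]) \otimes \det Q_{X_+}.
$$
The choice $\chi([\mathfrak{b}])$ appears both in the orientation of $\breve{\mathcal{M}}([\mathfrak{b}_0],[\mathfrak{b}])$ as the boundary datum at $+\infty$ and in that of $\mathcal{M}([\mathfrak{b}], Z_+)$ as the boundary datum at $-\infty$. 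By the same sign-matching computation as in Theorem 24.7.2 of the book, these two appearances of $\chi([\mathfrak{b}])$ cancel in the tensor product, leaving precisely the orientation on $\mathcal{M}([\mathfrak{b}_0], Z_+)$ induced by $\chi([\mathfrak{b}_0])$ alone, which is the required compatibility.

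The main technical obstacle is checking that the neck-stretching excision for determinant lines goes through in the end-periodic setting; this amounts to verifying that the periodic factor $\det Q_{X_+}$, together with its canonical orientation, is invariant under the relevant one-parameter families of Fredholm operators. This ultimately rests on two properties already established in the excerpt: the canonical isomorphism $\operatorname{coker} D(X_+) \cong \mathds{R}$ from Lemma \ref{half De rham complex} (iv), which replaces the role played by the homology orientation of $M$ in the compact case, and the complex-linearity of the Dirac operator combined with its Fredholm property from Proposition \ref{Dirac operator is Fredholm}. Once these are in hand, the remainder of the argument is a literal translation of the book's proof.
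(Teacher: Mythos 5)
Your proposal is correct and follows essentially the same route as the paper: the paper's proof is precisely to repeat the determinant-line argument of Theorem 24.8.3 of the book with $M$ replaced by $X_{+}$ and weighted Sobolev spaces, the homology orientation of $M$ being rendered unnecessary by the canonical identification $\operatorname{coker}D(X_{+})\cong\mathds{R}$ of Lemma \ref{half De rham complex} (iv) together with the complex orientation of the end-periodic Dirac operator. Your filled-in details (the excision splitting $\det Q_{Z}([\mathfrak{b}])\otimes\det Q_{X_{+}}$ and the sign-matching under neck-stretching) are exactly the steps the paper delegates to the book.
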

\section{Compactness}
In the current and the next section, we impose the following assumption:
\begin{assum}
The scalar curvature $\operatorname{scal}$ of $g_{X}$ to be everywhere positive. In other words, we have
$$
s_{0}=\mathop{\operatorname{inf}}\limits_{x\in X}\operatorname{scal}(x)>0.
$$
\end{assum}
This assumption implies that the restriction of $g_{Z_{+}}$ on $\mathop{\cup}\limits_{n\geq 1}W_{n}$, which is a lift of $g_{X}$, has uniformly positive scalar curvature. Under this assumption, we will prove the following compactness theorem:
\begin{thm}\label{compactness}
For any $[\mathfrak{b}_{0}]\in \mathfrak{C}$ with $\operatorname{gr}^{\mathds{Q}}([\mathfrak{b}_{0}])=-2w(X,g_{X},0)+1$, the moduli space $\mathcal{M}^{+}([\mathfrak{b}_{0}],Z_{+})$ is compact.
\end{thm}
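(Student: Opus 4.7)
The plan is to prove compactness by combining two essentially independent ingredients: standard Kronheimer--Mrowka broken-trajectory compactness on the cylindrical end $Z$, and a new periodic-end compactness statement whose engine is the positive scalar curvature hypothesis. Fix a sequence $\{[\gamma_n]\} \subset \mathcal{M}^{+}([\mathfrak{b}_0], Z_{+})$ with representatives $\gamma_n$, and split each into its restriction $\gamma_n|_Z$ on the half-cylinder and $\gamma_n|_{X_{+}}$ on the periodic end. The goal is to extract a subsequence converging (in the topology defined before Theorem \ref{gluing}) either to an element of $\mathcal{M}([\mathfrak{b}_0], Z_{+})$, or to a broken trajectory $([\gamma_{-1}], [\gamma_0]) \in \breve{\mathcal{M}}([\mathfrak{b}_0],[\mathfrak{b}]) \times \mathcal{M}([\mathfrak{b}], Z_{+})$ with $\operatorname{gr}^{\mathds{Q}}([\mathfrak{b}]) = -2w(X, g_X, 0)$, which is precisely the other stratum of $\mathcal{M}^{+}$ allowed by the dimension count of Proposition \ref{expected dimension}.

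The first step is the Weitzenb\"ock setup on the periodic end. For any $n$ and any slab $W_m$ with $m \geq 1$, the solution $\gamma_n$ lives on a Riemannian manifold whose metric is a lift of $g_X$, so the scalar curvature is bounded below by $s_0 > 0$. On the region where the perturbation vanishes (i.e.\ outside $[0,3]\times Y$) the equations read $\slashed{D}_A^{+}\phi = 0$ and $\tfrac{1}{2}\rho(F_{A^t}^{+}) = s^2(\phi\phi^*)_0$. Pairing the Weitzenb\"ock identity $\nabla^{*}\nabla \phi + \tfrac{\mathrm{scal}}{4}\phi + \tfrac{1}{2}\rho(F_{A^t}^{+})\phi = 0$ with $\phi$ and using the curvature equation gives a pointwise inequality of the form $\tfrac{1}{2}\Delta|\phi|^2 + \tfrac{s_0}{4}|\phi|^2 \leq 0$ on each slab with $m \geq 1$. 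This already yields a uniform $C^{0}$ bound for $|\phi_n|$ on compact subsets of the periodic end in terms of its values on interface slices of $Y$.

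The main step is to upgrade this pointwise estimate to a uniform bound in the weighted Sobolev space $L^{2}_{k,\delta}(X_{+})$ for some $\delta > 0$ independent of $n$; this is the content of the separate key lemma (Lemma \ref{exp decay}, following an idea of Taubes). The strategy is to define a slab-energy
\[
E_n(m) = \int_{\bigcup_{i\geq m} W_i} \bigl(|\phi_n|^2 + |F_{A_n}^{+}|^2 + |a_n|^2\bigr)\, d\mathrm{vol}
\]
in a Coulomb gauge relative to $A_0$, compare $E_n(m+1)$ to $E_n(m)$ using the PSC Weitzenb\"ock estimate together with the $d^{+}$-part of the equations, and extract geometric decay $E_n(m+1) \leq e^{-2\nu} E_n(m)$ for a constant $\nu > 0$ depending only on $s_0$ and the period. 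Combined with uniform bounds on a single fundamental domain (provided by the bound on the Chern--Simons--Dirac functional at $\partial X_{+}$, itself controlled because $[\gamma_n]$ has $[\mathfrak{b}_0]$ as its limit), this yields $\|\gamma_n|_{X_{+}}\|_{L^{2}_{k,\delta}} \leq C$ uniformly for every $\delta < \nu$.

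With uniform weighted-Sobolev bounds in place, extracting a convergent subsequence is routine. On the periodic end, weak compactness gives a limit, and the Fredholm package of Proposition \ref{APS} together with the Coulomb slice condition upgrades this to strong convergence in $L^{2}_{k-1,\delta}$ after possibly applying a constant gauge transformation from $\mathcal{G}_c$. On the cylindrical end, the uniform bound on $\mathcal{L}(\gamma_n|_{\{0\}\times Y})$ (derived from the periodic-end energy estimate) lets the classical broken-trajectory compactness from Chapter 16 of the book apply verbatim, producing either a single half-trajectory or a broken one whose jumps are detected by reparametrization. Lemma \ref{fiber sum} identifies the fiber-product limit with a point of $\mathcal{M}^{+}([\mathfrak{b}_0], Z_{+})$, and the matching dimension count forces the intermediate critical point to sit at grading $-2w(X,g_X,0)$. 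The main obstacle is unambiguously the slab-decay estimate of Step 3: on a closed PSC $4$-manifold Weitzenb\"ock kills irreducibles immediately, but here the periodic end is non-compact and non-trivial behavior can be fed in at the interface, so genuine exponential decay must be coaxed out period by period from a well-chosen energy whose dynamics is dominated by the PSC term rather than destabilized by the cross-terms from the curvature equation and the gauge fixing.
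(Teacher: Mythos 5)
Your overall architecture is the same as the paper's: split a sequence in $\mathcal{M}^{+}([\mathfrak{b}_0],Z_+)$ into its cylindrical and periodic-end pieces, get the energy bound on $X_+$ from $\mathcal{L}_{\mathfrak{q}}([\mathfrak{b}_0])$ via Stokes, use PSC and the Weitzenb\"ock formula to control the periodic end, and feed the cylinder piece into the standard broken-trajectory compactness. The genuine gap is in the step you yourself flag as the main obstacle, the slab-decay estimate, and it is a gap of substance, not of detail. First, the one-step contraction $E_n(m+1)\leq e^{-2\nu}E_n(m)$ does not follow from the Weitzenb\"ock inequality: cutting off with bump functions produces a \emph{second-order} difference inequality (in the paper, $5a_m\leq 2(a_{m-1}+a_{m+1})$ for $a_m=\|\tau_m\phi\|^2_{L^2}$), which by itself is perfectly consistent with exponential \emph{growth}. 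The decay is extracted only because of an a priori global bound on the sequence, namely $a_m\leq\|\phi\|^2_{L^2(X_+)}=1$, which is available precisely because the blown-up spinor is $L^2$-normalized. This is why Lemma \ref{exp decay} is stated for the spinor alone. Your slab energy also contains $|a_n|^2$ and $|F^+_{A_n}|^2$, for which no such a priori uniform global bound is available (each $a_n$ lies in $L^2_{k,\delta}$, but not with uniform norm), and there is no positivity mechanism by which PSC acts on the connection; so the claimed geometric decay of your $E_n(m)$ is unsupported. In the paper the decay of the connection with uniform constants is recovered \emph{a posteriori} from the spinor decay: slab-wise Coulomb--Neumann gauges on overlapping pieces $W_{m,m+6}$, elliptic bootstrapping using the curvature equation (quadratic in $\phi$) and $b_1(W)=0$ (Lemma \ref{bounded results}), and then a patching of these local gauges by cutoffs into a single $u_n\in\mathcal{G}_{k+1,\delta}(X_+)$ (Lemma \ref{regularity for solutions}); your ``possibly a constant gauge transformation from $\mathcal{G}_c$'' cannot substitute for this.

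A second, smaller but real omission concerns the blow-up renormalization. Restricting to $X'_+$ requires dividing by $\|\phi_n\|_{L^2(X'_+)}$, and one must show this is bounded away from zero uniformly in $n$; the paper gets this from a uniform upper bound on $\Lambda_{\mathfrak{q}}(\gamma_n|_{\{\epsilon\}\times Y})$, which is the second hypothesis of Theorem \ref{local compactness} and is produced (together with the lower bound needed for the cylinder-side blown-up compactness) from the energy bounds via Lemma 16.3.1 of the book. Your sketch controls only the Chern--Simons--Dirac value at $\{0\}\times Y$, which is not enough in the blown-up setting. Finally, compactness of $\mathcal{M}^{+}$ also requires finiteness of the broken stratum $\breve{\mathcal{M}}([\mathfrak{b}_0],[\mathfrak{b}])\times\mathcal{M}([\mathfrak{b}],Z_+)$, which needs a separate (if similar) argument that you do not address.
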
(Again, the result can be generalized to arbitrary $[\mathfrak{b}_{0}]$. But we focus on the current case because that is all we need.)
\subsection{The topological energy $\mathcal{E}^{\text{top}}$ and the quantity $\Lambda_{\mathfrak{q}}$} We start with some standard definitions in the book, which will be useful in our proof of compactness theorem. Let $\hat{X}$ be a general spin$^{c}$ $4$-manifold and $(A,\Phi)$ be a point of the configuration space (i.e., $A$ is a spin$^{c}$ connection and $\Phi$ is a positive spinor over $\hat{X}$). Its topological energy is defined as
\begin{equation}\label{topolocial energy}
\mathcal{E}^{\text{top}}(A,\Phi)=\frac{1}{4}\int_{\hat{X}}F_{A^{t}}\wedge F_{A^{t}}-\int_{\partial\hat{X}}\langle \Phi|_{\partial\hat{X}},\slashed{D}_{B}(\Phi|_{\partial\hat{X}})\rangle d\text{vol}+\int_{\partial\hat{X}}(H/2)|\Phi|^{2}d\text{vol}
\end{equation}
where $B=A|_{\partial\hat{X}}$ and $H$ denotes the mean curvature of the boundary, which will be vanishing if we use the product metric near the boundary.
Note that in our situation, the integrals in (\ref{topolocial energy}) are always convergent (even if $\hat{X}$ is not compact) because $F_{A^{t}}$ decays exponentially over the end of $\hat{X}$.
 We also talk about the topological energy of a point in the blown-up configuration space (i.e., a triple $(A,e,\phi)$ with $e\geq 0$ and $|\phi|_{L^{2}}=1$). In this case, we define $\mathcal{E}^{\text{top}}(A,s,\phi)$ to be  $\mathcal{E}^{\text{top}}(\boldsymbol\pi(A,s,\phi))$ where  $$\boldsymbol{\pi}(A,s,\phi)=(A,s\phi)$$
as before. Since the topological energy is invariant under gauge transformation, it also makes sense to talk about the topological energy of a gauge equivalent class.

Now we return to our end-periodic manifold $X_{+}$. Recall that $\mathfrak{q}$ is a nice perturbation (of height $-2w(X,g_{X},0)$). After choosing a gauge invariant function
\begin{equation}\label{perturbation}
v:\mathcal{C}_{k-1/2}(Y)\rightarrow \mathds{R}.
\end{equation} whose formal gradient equals $\mathfrak{q}$. We can define the perturbed topological energy of a point $\gamma\in \mathcal{C}^{\sigma}_{k,\text{loc}}(X_{+})$  as
$$
\mathcal{E}^{\text{top}}_{\mathfrak{q}}(\gamma)=\mathcal{E}^{\text{top}}(\gamma)-2v(\boldsymbol\pi(\gamma)|_{Y}).$$

Let $\epsilon$ be a number lying in $(0,\frac{1}{2})$. We consider two other manifolds:
$$
X^{'}_{+}=X_{+}\setminus ([0,2\epsilon)\times Y),\
X^{''}_{+}=X_{+}\setminus ([0,\epsilon)\times Y)
$$ We can define the blown-up configuration space $\mathcal{C}^{\sigma}_{k,\delta}(X_{+}^{'})$ similarly as $\mathcal{C}^{\sigma}_{k,\delta}(X_{+})$. There is a partially defined restriction map $$\mathcal{C}^{\sigma}_{k,\delta}(X_{+})\dashrightarrow \mathcal{C}^{\sigma}_{k,\delta}(X^{'}_{+})$$
$$
(A,s,\phi)\rightarrow (A|_{X^{'}_{+}},s\|\phi|_{X^{'}_{+}}\|_{L^{2}},\frac{\phi|_{X^{'}_{+}}}{\|\phi|_{X^{'}_{+}}\|_{L^{2}}})
$$
whose domain contains triples $(A,s,\phi)$ with $\phi|_{X^{'}_{+}}\neq 0.$ We denote by $(A,s,\phi)|_{X'_{+}}$ the image of $(A,s,\phi)$ under this map. Under the assumption $\phi|_{Y\times \{\epsilon\}}\neq 0$, we can define $(A,s,\phi)|_{Y\times \{\epsilon\}}\in \mathcal{C}^{\sigma}_{k-1/2}(Y)$ in a similar vein. Note that since we are considering the solution of the perturbed Seiberg-Witten equations, these conditions are always satisfied by the unique continuation theorem.

Other than the (perturbed) topological energy, there is another quantity that will be useful when dealing with the blown-up configuration space. Let $(B,r,\psi)$ be a point of $C^{\sigma}_{k-1/2}(Y)$. We define the quantity  $$\Lambda_{\mathfrak{q}}(B,r,\psi)=\operatorname{Re}\langle \psi,\slashed{D}_{B}\psi+\tilde{\mathfrak{q}}^{1}(B,r,\psi)\rangle_{L^{2}}$$
where $\tilde{\mathfrak{q}}^{1}(B,r,\psi)$ is defined as (see Remark \ref{component of perturbation})
$$
\tilde{\mathfrak{q}}^{1}(B,r,\psi)=\int_{0}^{1}\mathcal{D}_{(B,sr\psi)}\mathfrak{q}^{1}(0,\psi)ds.
$$
(Recall that $\mathfrak{q}^{1}$ denotes the spinor component of the perturbation $\mathfrak{q}$.)

\subsection{Exponential decay}
In this subsection, we prove exponential decay results for solutions on the manifold $X_{+}=W_{0}\cup_{Y}W_{1}\cup_{Y}...$. To simplify the notation, we denote by $W_{n,n'}$ the manifold
$$
W_{n}\cup_{Y}W_{n+1}\cup_{Y}...\cup_{Y}W_{n'}\subset X_{+}
,$$
and write $\|\cdot\|_{L^{2}_{j}(W_{n,n'})}$ for the $L^{2}_{j}$ norm of the restriction to $W_{n,n'}$. We will use similar notation for other manifolds.

Let us start with the following lemma, which was communicated to the author by Clifford Taubes.

\begin{lem}\label{exp decay}
There exists uniform constants $C,\delta_{3}>0$ with the following significance: for any $\delta\in (0,\delta_{3})$ and any solution $\gamma=(A,s,\phi)\in \mathcal{C}^{\sigma}_{k,\delta}(X_{+})$ of the equation  $\mathfrak{F}^{\sigma}_{\mathfrak{p}}(\gamma)=0$, we have
$$
\|\phi\|_{L^{2}(W_{n})}\leq Ce^{-\delta_{3}n},\ \ \forall n\geq 0.
$$
\end{lem}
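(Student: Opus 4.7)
The plan is to derive a pointwise differential inequality for $|\phi|^{2}$ from the Weitzenb\"ock formula and the positive scalar curvature hypothesis, and then convert it into exponential $L^{2}$-decay via a weighted-energy argument that exploits the harmonicity of $\tau_{1}$ on the periodic end. First I would observe that for some fixed $N_{0}\geq 1$ and all $n\geq N_{0}$ the $4$-dimensional perturbation $\hat{\mathfrak{p}}^{\sigma}$ vanishes on $W_{n}$, the metric there coincides with the lift of $g_{X}$ (so $\operatorname{scal}\geq s_{0}>0$), and $\tau_{1}$ agrees with the lift of the harmonic function $f\colon X\to S^{1}$ (so $\Delta\tau_{1}=0$). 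The genuine Seiberg-Witten equations $F_{A}^{+}=s^{2}(\phi\phi^{*})_{0}$ and $\slashed{D}_{A}^{+}\phi=0$ then hold on $W_{N_{0},\infty}$, and the standard Weitzenb\"ock-Kato computation yields
\begin{equation*}
\tfrac{1}{2}\Delta|\phi|^{2}+|\nabla_{A}\phi|^{2}+\tfrac{\operatorname{scal}}{4}|\phi|^{2}+\tfrac{s^{2}|\phi|^{4}}{2}=0,
\end{equation*}
where $\Delta=d^{*}d$ is the geometric Laplacian. Dropping the two nonnegative terms gives the master inequality $(\Delta+s_{0}/2)|\phi|^{2}\leq 0$.

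Next I would turn this into a uniform weighted $L^{2}$ bound. Since $\tau_{1}$ is harmonic on $W_{N_{0},\infty}$, one has $\Delta e^{2\alpha\tau_{1}}=-4\alpha^{2}|\nabla\tau_{1}|^{2}e^{2\alpha\tau_{1}}$. Fix $\alpha>0$ with $4\alpha^{2}\|\nabla\tau_{1}\|_{L^{\infty}}^{2}\leq s_{0}/4$, pick a $\tau_{1}$-dependent cut-off $\chi_{R}$ with compact support and $\chi_{R}\equiv 1$ on $W_{N_{0},R}$, multiply the master inequality by $\chi_{R}^{2}e^{2\alpha\tau_{1}}$, and apply Green's identity twice. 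After absorbing the negative gradient contribution coming from $\Delta e^{2\alpha\tau_{1}}$ into the $\tfrac{s_{0}}{2}|\phi|^{2}$ term, this produces an estimate of the shape
\begin{equation*}
\tfrac{s_{0}}{4}\int_{W_{N_{0},\infty}}\chi_{R}^{2}e^{2\alpha\tau_{1}}|\phi|^{2}\leq |B(Y_{N_{0}})|+\varepsilon_{R},
\end{equation*}
where $B(Y_{N_{0}})$ is the boundary integral at the fixed hypersurface $Y_{N_{0}}$ (involving $|\phi|^{2}$ and $\partial_{\nu}|\phi|^{2}$) and $\varepsilon_{R}$ is a transition-layer error supported on $\{R\leq\tau_{1}\leq R+1\}$.

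It remains to control both terms uniformly. For $B(Y_{N_{0}})$, the standard Seiberg-Witten elliptic bootstrap in a tubular neighborhood of $Y_{N_{0}}$ (Coulomb gauge together with $F_{A}^{+}=s^{2}(\phi\phi^{*})_{0}+\hat{\mathfrak{p}}^{0,\sigma}$ and the Dirac equation $\slashed{D}_{A}^{+}\phi=\hat{\mathfrak{p}}^{1,\sigma}$) converts the $L^{2}$-normalization $\|\phi\|_{L^{2}(X_{+})}=1$ into uniform $L^{2}_{k}$-bounds for $(A-A_{0},\phi)$ on that neighborhood, and hence a uniform bound on $B(Y_{N_{0}})$ independent of the solution. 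For $\varepsilon_{R}$, the a priori hypothesis $\phi\in L^{2}_{k,\delta}$ makes $e^{2\delta\tau_{1}}|\phi|^{2}$ integrable, so a pigeonhole argument produces a subsequence $R_{j}\to\infty$ along which $\varepsilon_{R_{j}}\to 0$ whenever $\alpha\leq\delta$; a short iteration (bumping $\alpha$ upward by small increments) then upgrades this to any $\alpha$ with $4\alpha^{2}\|\nabla\tau_{1}\|_{L^{\infty}}^{2}<s_{0}/2$. Passing to the limit and using $\tau_{1}(x)\gtrsim n$ on $W_{n}$ gives $\|\phi\|_{L^{2}(W_{n})}\leq Ce^{-\delta_{3}n}$ for any $\delta_{3}$ strictly less than $\tfrac{1}{2}\sqrt{s_{0}/(2\|\nabla\tau_{1}\|_{L^{\infty}}^{2})}$, and the lemma follows. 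The main obstacle is this final bootstrap: upgrading from the input decay rate $\delta$ to the uniform rate $\delta_{3}$ (which must be independent of $\delta$) requires careful control, at each incremental step, of the subsequence along which the transition-layer error $\varepsilon_{R_{j}}$ vanishes.
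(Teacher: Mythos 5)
Your overall idea---Weitzenb\"ock plus uniformly positive scalar curvature on the periodic end forces exponential decay of $\phi$---is the same mechanism the paper uses, but two steps in your execution have genuine gaps. First, the boundary term $B(Y_{N_{0}})$: you claim that the normalization $\|\phi\|_{L^{2}(X_{+})}=1$ can be bootstrapped into uniform $L^{2}_{k}$ bounds for $(A-A_{0},\phi)$ near $Y_{N_{0}}$, hence a bound on $\partial_{\nu}|\phi|^{2}$ there. But the lemma is stated for \emph{every} solution in $\mathcal{C}^{\sigma}_{k,\delta}(X_{+})$, with no bound on $s$ and no energy hypothesis; since $F_{A}^{+}=s^{2}(\phi\phi^{*})_{0}+\cdots$, the local norms of $A$ (and hence of $\nabla_{A}\phi$) are \emph{not} uniformly controlled by $\|\phi\|_{L^{2}}=1$ alone. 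In the paper such local bounds (Lemma \ref{bounded results}) are obtained only later, under the energy hypotheses of Theorem \ref{local compactness}, and their proof uses Lemma \ref{exp decay} itself---so your route risks circularity and, at best, produces a constant $C$ that is not uniform over solutions. The fix is to avoid a sharp boundary altogether: use an interior cutoff $\chi$ vanishing near $Y_{N_{0}}$ and integrate by parts \emph{twice}, so that $\Delta$ lands entirely on $\chi^{2}e^{2\alpha\tau_{1}}$ and the inner error only involves $\int|\phi|^{2}\leq 1$. Second, the final upgrade from the a priori rate $\delta$ to a $\delta$-independent rate by ``bumping $\alpha$ upward by small increments'' does not work as described: knowing $\int e^{2\alpha\tau_{1}}|\phi|^{2}<\infty$ only makes the annular masses $b_{R}=\int_{\{R\leq\tau_{1}\leq R+1\}}e^{2\alpha\tau_{1}}|\phi|^{2}$ summable, and summability does not produce a subsequence with $b_{R}=O(e^{-2(\alpha'-\alpha)R})$, which is what you need for the transition-layer error at the new weight $\alpha'$ to stay finite. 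A correct continuous version needs bounded Agmon weights (e.g.\ $e^{2\alpha\tau_{1}/(1+\epsilon\tau_{1})}$, letting $\epsilon\to 0$) or a tail-recursion argument, not an incremental pigeonhole.

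It is worth comparing with the paper's proof, which is a discretized form of the same Weitzenb\"ock idea engineered to avoid both issues. One cuts off the spinor itself, $\phi_{m}=\tau_{m}\phi$ on overlapping blocks of copies of $W$, notes that $\slashed{D}^{+}_{A}\phi_{m}=\rho(d\tau_{m})\phi$ is supported in the overlap regions, and that by the Weitzenb\"ock formula \emph{together with the curvature equation} the term $\langle\tfrac12\rho(F^{+}_{A^{t}})\phi_{m},\phi_{m}\rangle=2s^{2}\tau_{m}^{2}\langle(\phi\phi^{*})_{0}\phi,\phi\rangle\geq 0$, so $\|\slashed{D}_{A}\phi_{m}\|^{2}_{L^{2}}\geq\tfrac{s_{0}}{4}\|\phi_{m}\|^{2}_{L^{2}}$ with no control on $A$ or $s$ needed. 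Setting $a_{m}=\|\phi_{m}\|^{2}_{L^{2}}$ this gives the recursion $5a_{m}\leq 2(a_{m-1}+a_{m+1})$, and the single global input $a_{m}\leq\|\phi\|^{2}_{L^{2}(X_{+})}=1$ forces $a_{m}\leq 2^{-m}$, yielding uniform constants ($C=2^{1/2}$, $\delta_{3}=(\ln 2)/2N$) with no boundary estimate, no use of the a priori weight $\delta$, and no weight iteration. If you rework your argument with the interior cutoff and a bounded-weight (or discrete-recursion) scheme, it converges to essentially this proof.
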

\begin{proof}
We first consider $W_{n}$ for $n\geq 1$. Over these manifolds, the perturbation $\mathfrak{p}$ equals $0$ and hence we have
\begin{equation}\begin{split}
\rho(F^{+}_{A^{t}})-2s^{2}(\phi\phi^{*})_{0}=0\\\slashed{D}^{+}_{A}\phi=0.\end{split}
\end{equation}
We choose an integer $N$ large enough such that there exists a bump function
$$
\tau:W_{1,3N}\rightarrow[0,1]
$$
with the following properties: i) $\tau$ is supported on $W_{2,3N-1}$; ii) $\tau$ equals $1$ when restricted to $W_{N+1,2N}$; iii) $|d\tau(x)|^{2}<s_{0}/10$ for any $x\in W_{1,3N}$. By the covering tranformations, $\tau$ induces a bump function on
$$\tau_{m}:W_{mN+1,(m+3)N}\rightarrow[0,1]\ \ (m\geq 0).$$
Let $\phi_{m}=\tau_{m}\cdot \phi$. Then $
\slashed{D}^{+}_{A}\phi_{m}=\rho(d\tau_{m})\phi
$.
Notice that $d\tau_{m}$ is supported on $$W_{mN+1,mN+N}\cup W_{mN+2N+1,mN+3N}.$$
Therefore, for any $m\geq 1$, we have
\begin{equation}\label{31}
\begin{split}\|\slashed{D}^{+}_{A}\phi_{m}\|^{2}_{L^{2}(X_{+})}&\leq \|d\tau_{m}\|^{2}_{C^{0}}\cdot (\|\phi\|^{2}_{L^{2}(W_{mN+1,mN+N})}+\|\phi\|^{2}_{L^{2}(W_{mN+2N+1,mN+3N})})\\&\leq \frac{s_{0}}{10}(\|\phi_{m-1}\|^{2}_{L^{2}(X_{+})}+\|\phi_{m+1}\|^{2}_{L^{2}(X_{+})}).\end{split}\end{equation}
On the other hand, since $\phi_{m}$ is supported on $W_{mN+1,mN+3N-1}$, by the Weitzenb\"ock formula, we have
\begin{equation}\label{32}\begin{split}
\|\slashed{D}_{A}\phi_{m}\|^{2}_{L^{2}(X_{+})}&=\int_{X_{+}}\langle \slashed{D}_{A}^{-}\slashed{D}^{+}_{A}\phi_{m},\phi_{m}\rangle\\&=\int_{X_{+}}\langle (\nabla_{A}^{*}\nabla_{A}+\frac{1}{2}\rho(F_{A^{t}}^{+})+\frac{\operatorname{scal}(x)}{4})\phi_{m},\phi_{m}\rangle \\&= \int_{X_{+}}\langle \nabla_{A}^{*}\nabla_{A}\phi_{m},\phi_{m}\rangle +2s^{2}\int_{X_{+}}\tau_{m}^{2}\langle(\phi\phi^{*})_{0}\phi,\phi\rangle+\int_{X_{+}}\frac{\operatorname{scal(x)}}{4}\langle\phi_{m},\phi_{m}\rangle\\
&\geq\frac{s_{0}}{4}\|\phi_{m}\|_{L^{2}(X_{+})}^{2}
\end{split}\end{equation}
Let $a_{m}=\|\phi_{m}\|^{2}_{L^{2}(X_{+})}$. By (\ref{31}) and (\ref{32}), we have
$$
5a_{m}\leq 2(a_{m-1}+a_{m+1}),\ \forall m\geq 1,
$$
which is the same as
$$
2(2a_{m}-a_{m-1})\leq 2a_{m+1}-a_{m},\ \forall m\geq 1.
$$
Notice that $a_{m}\leq\|\phi\|^{2}_{L^{2}(X_{+})}=1$ for any $m$. We must have $2a_{m}-a_{m-1}\leq 0$ for any $m\geq 1$ because otherwise $2a_{m}-a_{m-1}$ (and hence $a_{m}$) will increase exponentially. Therefore, we get
$a_{m}\leq 2^{-m}a_{0}\leq 2^{-m}$ for any $m\geq 0$. For any $n\geq N$, we have
$$
\|\phi\|^{2}_{L^{2}(W_{n})}\leq a_{\lfloor n/N \rfloor}\leq 2\cdot (2^{1/N})^{-n}.
$$
Since $\|\phi\|^{2}_{L^{2}(W_{n})}\leq \|\phi\|^{2}_{L^{2}(X_{+})}=1$ for any $n\geq 0$. We can set $C=2^{1/2}$ and $\delta_{3}=(\ln 2)/2N$.\end{proof}

Next, we prove an exponential decay result for energy of solutions. We start with the following lemma. Recall that $A_{0}$ is the flat base connection on $X_{+}$.
\begin{lem}\label{energy controls norm}
There exists uniform constants $\epsilon_{0},D>0$ with the following significance: for any $m\geq 1$ and any solution $(A,\Phi)$ of the unperturbed Seiberg-Witten solution on $W_{m,m+6}$ satisfying the following conditions:
\begin{enumerate}
\item $d^{*}(A-A_{0})=0;$
\item $\langle A-A_{0},\vec{v}_{m}\rangle=0$, where $\vec{v}_{m}$ is the outward normal vector field on $\partial W_{m,m+6}$;
\item $\mathcal{E}^{\text{top}}(A,\Phi)\leq \epsilon_{0}$,
\end{enumerate}
One has the inequality $$\|(A-A_{0},\Phi)\|_{L^{2}_{k+1}(W_{m+2,m+4})}\leq D\cdot \sqrt{\mathcal{E}^{\text{top}}(A,\Phi)}.$$
\end{lem}  
\begin{proof}
This is a modification of an argument in the proof Theorem 5.1.1 of the book. For completeness, we give a detailed argument here. To simplify the notation, we use $E$ to denote $\mathcal{E}^{\text{top}}(A,\Phi)$ and use $a$ to denote $A-A_{0}$. In the proof, we will write $C_{j}$ for certain universal constants independent of $(A,\Phi)$ and $\epsilon_{0}$. Without loss of generality, we can set $m=1$. Since $(A,\Phi)$ is a solution, the topolocical energy $E$ equals the analytical energy:
$$
\mathcal{E}^{an}(A,\Phi)=\frac{1}{4}\int_{W_{1,7}}(4|da|^{2}+4|\nabla_{A}\Phi|^{2}+|\Phi|^{4}+\operatorname{scal}(x)|\Phi|^{2})
$$ 
(See formula (4.16) of the book.) 
Because  $\operatorname{scal}(x)\geq s_{0}>0$, we get
\begin{enumerate}[(i)]
\item $\|\Phi\|_{L^{4}(W_{1,7})}\leq C_{1} E^{\frac{1}{4}}$;
\item $\|\Phi\|_{L^{2}(W_{1,7})}\leq C_{1} E^{\frac{1}{2}}$;
\item $\|da\|_{L^{2}(W_{1,7})}\leq C_{1} E^{\frac{1}{2}}$;
\item $\|\nabla_{A_{0}}\Phi+\rho(a)\Phi\|_{L^{2}(W_{1,7})}\leq C_{1} E^{\frac{1}{2}}$.
\end{enumerate}
Note that $b_{1}(W)=0$, by condition (1), (2), (iii) and \cite[Lemma 2]{manolescu2003seiberg}, we get $$\|a\|_{L^{2}_{1}(W_{1,7})}\leq C_{2} E^{\frac{1}{2}}.$$ By the continuity of the embedding $L^{2}_{1}\hookrightarrow L^{4}$ and the multiplication $L^{4}\times L^{4}\rightarrow L^{2}$, we get $$
\|\rho(a)\Phi\|_{L^{2}(W_{1,7})}\leq C_{3}E^{\frac{3}{2}}\leq \epsilon_{0} C_{3}E^{\frac{1}{2}}
.$$
By condition (ii) and (iv), this implies $$\|\Phi\|_{L^{2}_{1}(W_{1,7})}\leq (2C_{1}+\epsilon_{0} C_{3}) E^{\frac{1}{2}}.$$
Set $\gamma=(a,\Phi)$. Then we have $$
\|\gamma\|_{L^{2}_{1}(W_{1,7})}\leq (2C_{1}+C_{2}+\epsilon_{0} C_{3})E^{\frac{1}{2}}\leq (2C_{1}+C_{2}+\epsilon_{0} C_{3})\epsilon_{0} ^{\frac{1}{2}}
$$ and $\gamma$ satisfies the equation
\begin{equation}\label{abstract equation}
P\gamma+\gamma\#\gamma=0
\end{equation}
where $$P(a,\Phi):=(da,d^{*}a,\slashed{D}^{+}_{A_{0}}\Phi)$$ is a first-order elliptic operator and $\#$ is bilinear operator involving only pointwise multiplication. Choose a cutoff function $$\beta:W_{1,7}\rightarrow [0,1]$$ that equals $1$ on $W_{2,6}$ and equals $0$ near $\partial W_{1,7}$. Then (\ref{abstract equation}) implies
$$
P(\beta\gamma)=(-\beta\gamma)\#\gamma+\sigma(P,d\beta)\gamma
.$$ Here $\sigma(P,d\beta)$ denote the symbol of $P$ evaluated at $d\beta$. One gets
\begin{equation}\label{rearrangement argument}
\begin{split}
\|\beta\gamma\|_{L^{3}_{1}(W_{1,7})}&\leq C_{4}(\|P(\beta\gamma)\|_{L^{3}(W_{1,7})}+\|\beta\gamma\|_{L^{3}(W_{1,7})})\\&=C_{4}(\|(-\beta\gamma)\#\gamma+\sigma(P,d\beta)\gamma\|_{L^{3}(W_{1,7})}+\|\beta\gamma\|_{L^{3}(W_{1,7})})\\&\leq C_{5}(\|\gamma\|_{L^{2}_{1}(W_{1,7})}\|\beta\gamma\|_{L^{3}_{1}(W_{1,7})}+\|\gamma\|_{L^{3}(W_{1,7})})\\&\leq C_{5}(2C_{1}+C_{2}+\epsilon_{0} C_{3})\epsilon_{0}^{\frac{1}{2}}\|\beta\gamma\|_{L^{3}_{1}(W_{1,7})}+C_{5}\|\gamma\|_{L^{3}(W_{1,7})}
\end{split}.
\end{equation}
Here the first inequality is by G\r{a}rding inequality for interior domain (see Theorem 5.1.4 of the book) and the third inequality uses the continuity of multiplication $L^{2}_{1}\times L^{3}_{1}\rightarrow L^{3}$. By setting $\epsilon_{0}$ small enough such that  $C_{5}(2C_{1}+C_{2}+\epsilon_{0} C_{3})\epsilon^{\frac{1}{2}}\leq \frac{1}{2}$, we obtain from (\ref{rearrangement argument})
$$
\|\gamma\|_{L^{3}_{1}(W_{2,6})}\leq \|\beta\gamma\|_{L^{3}_{1}(W_{1,7})}\leq 2C_{5}\|\gamma\|_{L^{3}(W_{1,7})}\leq C_{6}\|\gamma\|_{L^{2}_{1}(W_{1,7})}\leq C_{7}E^{\frac{1}{2}}.
$$
One can irritate this rearrangement argument twice more: first time use the continuity of $L^{2}_{2}\times L^{3}_{1}\rightarrow L^{2}_{1}$ in place of $L^{2}_{1}\times L^{3}_{1}\rightarrow L^{3}$ to bound $L^{2}_{2}$-norm of $\gamma$ and second time use the continuity of $L^{2}_{3}\times L^{2}_{2}\rightarrow L^{2}_{2}$ to bound $L^{2}_{3}$-norm of $\gamma$. Each time we pass to a slightly smaller subdomain. Note that the $L^{2}_{p}$-Sobolev space is closed under multiplication for any $p\geq 3$. As a result, once we control the  $L^{2}_{3}$-norm, the elliptic bootstraping method can be proceeded directly (without rearrangement argument) to finish the proof of the lemma.
\end{proof}
Consider the common boundary of $W_{m}$ and $W_{m+1}$ (in $X_{+}$) and denote it by $Y_{m}$. We define the \emph{modified Chern-Simons-Dirac functional} on $Y_{m}$   
$$
\tilde{\mathcal{L}}: \mathcal{C}_{k-1/2}(Y_{m})\rightarrow \mathbb{R}
$$
by the formula
$$
\tilde{\mathcal{L}}(B,\Psi):=\mathcal{L}(B,\Psi)+\int_{Y_{m}}\frac{H}{2}|\Psi|^{2}d\text{vol}
$$
where $\mathcal{L}$ is the usual Chern-Simons-Dirac functional as defined in (\ref{CSD})(with $B_{0}=A_{0}|_{Y_{m}}$) and $H$ is the mean curvature of of the psc metric on $W_{1,+\infty}$. Note that  just like $\mathcal{L}$, the functional $\tilde{\mathcal{L}}$ is also gauge invariant. Moreover, applying the Stocks formula to (\ref{topolocial energy}), one has
\begin{equation}\label{CSD decay}
\tilde{\mathcal{L}}((A,\Phi)|_{Y_{m+1}})-\tilde{\mathcal{L}}((A,\Phi)|_{Y_{m}})=\frac{1}{2}\mathcal{E}^{\text{top}}(A,\Phi)=\frac{1}{2} \mathcal{E}^{\text{an}}(A,\Phi)\geq 0
\end{equation}
for any solution $(A,\Phi)$ on $W_{m}$ ($m\geq 1$). 
\begin{lem}\label{energy controls CSD} 
Let $\epsilon_{0}$ be the constant in Lemma \ref{energy controls norm}. Then there exists another constant $D'$ such that for any solution $(A,\Phi)$ over $W_{m,m+6}$ ($m\geq 1$) satisfying $\mathcal{E}^{\text{top}}(A,\Phi)\leq \epsilon_{0}$, one has 
\begin{equation}\label{equation:engergy bound csd}
    |\tilde{\mathcal{L}}((A,\Psi)|_{Y_{m+3}})|\leq D' \mathcal{E}^{\text{top}}(A,\Phi).
\end{equation}
\end{lem}
\begin{proof}
Note that both side of (\ref{equation:engergy bound csd}) are gauge invariant. Therefore, we can apply a suitable gauge transformation $u:W_{m,m+6}\rightarrow S^{1}$ and assume that condition (1) and (2) in Lemma \ref{energy controls norm} are also satisfied. Then by that lemma, we get
$$
\|(A-A_{0},\Phi)|_{Y_{m+3}}\|_{L^{2}_{k+1/2}(Y_{m+3})}\leq C_{1}D \sqrt{\mathcal{E}^{\text{top}}(A,\Phi)}.
$$
Then the proof is finished by Cauchy-Schwarz inequality.
\end{proof}
Now we can prove the exponential decay result for energy:
\begin{pro}\label{exponential decay for energy}
For any constant $C>0$, there exists a constants $\delta(C)$ the following significance: for any $\delta\in (0,\delta(C)]$ and any solution $(A,\Phi)\in L_{k,\delta}(W_{1,\infty})$ of the unperturbed Seiberg-Witten equations with $\mathcal{E}^{top}(A,\Phi)<C$, one has
$$
\mathcal{E}^{\text{top}}((A,\Phi)|_{W_{n}})\leq 2C e^{-\delta(C)n},\ \forall n\geq 1.
$$ 
\end{pro}
\begin{proof}
For $m_{2}>m_{1}\geq 1$, we let $b_{m_{1},m_{2}}=\mathcal{E}^{\text{top}}((A,\Phi)|_{W_{m_{1},m_{2}}})$ and $a_{m_{1}}=\tilde{\mathcal{L}}((A,\Phi)|_{Y_{m_{1}}})$. Then $$0\leq b_{m_{1},m_{2}}=a_{m_{1}}-a_{m_{2}}.$$
Since $\lim_{m\rightarrow \infty}a_{m}=0$, we get $a_{m}\geq 0$ for any $m$.
We set $$N=\max\{\lceil\frac{C}{\epsilon_{0}}\rceil,2D'\},$$ where $\epsilon_{0}$ and $D'$ are the constants in Lemma \ref{energy controls norm} and Lemma \ref{energy controls CSD} respectively. For any $n\geq 0$, consider the set 
$$\{b_{6l+1,6l+7}\mid nN\leq l<(n+1)N\}.$$ 
We assume that the minimum is achieved at $b_{6l(n)+1,6l(n)+7}$. Then
$$b_{6l(n)+1,6l(n)+7}\leq \frac{b_{6nN+1,6(n+1)N+1}}{N}\leq \frac{C}{N}\leq \epsilon_{0}.$$
By Lemma \ref{energy controls CSD}, we have 
$$
a_{6(n+1)N+1}\leq a_{6l(n)+4}\leq D'b_{6l(n)+1,6l(n)+7}\leq \frac{D'}{N}b_{6nN+1,6(n+1)N+1}
$$
This implies 
$$b_{6(n+1)N+1,6(n+2)N+1}\leq a_{6(n+1)N+1}\leq \frac{D'}{N}b_{6nN+1,6(n+1)N+1}\leq \frac{1}{2}b_{6nN+1,6(n+1)N+1}.$$
Hence we have
$$
b_{6nN+1,6(n+1)N+1}\leq (\frac{1}{2})^{n}\cdot b_{1,6N+1}\leq \frac{C}{2^{n}}. 
$$
By setting $\delta(C)=\frac{\log2}{6N}$, we finish the proof.
\end{proof}

\subsection{Compactness: local result}
After preparations in last subsection, we are ready to prove compactness result for solutions on the manifold $X_{+}$. The main result of this subsection is the following theorem:

\begin{thm}\label{local compactness}
There is a constant $\delta_{4}$ such that for any $\delta\in (0,\delta_{4})$, the following compactness result holds: Let $\gamma_{n}\in\mathcal{C}^{\sigma}_{k,\delta}(X_{+})$ $(n\geq 1)$ be a sequence solutions of the perturbed equation $\mathfrak{F}^{\sigma}_{\mathfrak{p}}(\gamma)=0$. Suppose that there is a uniform bound on the perturbed topological energy:
$$
\mathcal{E}^{\textnormal{top}}_{\mathfrak{q}}(\gamma_{n})\leq C_{1},
$$
and a uniform upper bound
$$
\Lambda_{\mathfrak{q}}(\gamma_{n}|_{\{\epsilon\}\times Y})\leq C_{2}.
$$
Then there is a sequence of gauge transformations $u_{n}\in \mathcal{G}_{k+1,\delta}(X_{+})$ such that the sequence $u_{n}(\gamma_{n})|_{X'}$ converge in the topology of $\mathcal{C}^{\sigma}_{k,\delta}(X')$ to a solution $\gamma_{\infty} \in \mathcal{C}^{\sigma}_{k,\delta}(X')$.
\end{thm}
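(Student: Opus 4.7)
The plan follows a three-step strategy: local compactness on bounded truncations of $X_+$, exponential decay in weighted Sobolev norms on the periodic tail, and a diagonal patching to assemble a global limit.

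First, write $\gamma_n = (A_n,s_n,\phi_n)$ and fix $N\geq 1$. The restrictions of $\gamma_n$ to the compact piece $K_N\subset X_+$ bounded by $\{\epsilon\}\times Y$ and the far boundary of $W_N$ form a sequence of solutions on a compact spin$^c$ $4$-manifold with boundary. The standard analytic-topological energy identity of the book, applied on $K_N$, converts the bounds $\mathcal{E}^{\text{top}}_{\mathfrak{q}}(\gamma_n)\leq C_1$ and $\Lambda_{\mathfrak{q}}(\gamma_n|_{\{\epsilon\}\times Y})\leq C_2$ into a uniform $L^2$ bound on $F_{A_n}$ and on $\nabla_{A_n}(s_n\phi_n)$ over $K_N$; positive scalar curvature feeds into the Weitzenb\"ock formula to control $s_n^2|\phi_n|^2$ pointwise. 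The standard Seiberg-Witten compactness of the book (Theorem 10.7.1, suitably adapted to manifolds with boundary as in Chapter 24) then produces, after passing to a subsequence, gauge transformations $u_n^{(N)}\in\mathcal{G}_{k+1}(K_N)$ such that $u_n^{(N)}(\gamma_n)$ converges in $L^2_k$ on any slightly smaller truncation $K_{N-1}$.

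Second, on each $W_m$ with $m\geq 1$ the perturbation $\mathfrak{p}$ vanishes and the equations reduce to $\rho(F^+_{A_n^t})=2s_n^2(\phi_n\phi_n^*)_0$ with $\slashed{D}^+_{A_n}\phi_n=0$. Lemma \ref{exp decay} gives $\|\phi_n\|_{L^2(W_m)}\leq Ce^{-\delta_3 m}$ uniformly in $n$; bootstrapping the Dirac equation with the Weitzenb\"ock formula upgrades this to exponential decay in every higher Sobolev norm on each $W_m$, and the curvature equation transfers the same decay to $F^+_{A_n^t}$. After placing $a_n:=A_n-A_0$ in a Coulomb gauge on the tail via Lemma \ref{half De rham complex}, we obtain uniform weighted bounds on $(a_n,s_n\phi_n)$ in $L^2_{k,\delta}(X_+\setminus K_N)$ for any $\delta<\delta_3$, with the bound tending to zero as $N\to\infty$.

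Third, running the first step for $N=1,2,3,\ldots$ and taking a diagonal subsequence, on each overlap $K_N\subset K_{N+1}$ the two gauge transformations $u_n^{(N)}$ and $u_n^{(N+1)}|_{K_N}$ differ by an element fixing the common limit; absorbing this correction we may arrange them to agree globally. The patched $u_n$ lies in $\mathcal{G}_{k+1,\delta}(X_+)$: its deviation from the identity is controlled in $L^2_{k+1,\delta}$ by the weighted decay of $a_n$ from the second step, together with Lemma \ref{Taubes's lemma} applied to $\log u_n$ to pin down the constant factor (absorbed into $\mathcal{G}_c$). Because Lemma \ref{exp decay} forces $\|\phi_n|_{X_+'}\|_{L^2}\to 1$, the renormalized restriction $u_n(\gamma_n)|_{X_+'}$ converges in $\mathcal{C}^{\sigma}_{k,\delta}(X_+')$ to a limit $\gamma_\infty$ which satisfies the perturbed equations by passage to the limit. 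The main technical obstacle is precisely this patching: translating unweighted interior convergence into weighted convergence requires the exponential decay of the second step to dominate the weight $e^{\delta\tau_1}$, and a careful application of the Sobolev multiplication theorem (Proposition \ref{Sobolev multiplication}) together with the continuity of the exponential map to confirm that the assembled $u_n$ lies in the weighted gauge group $\mathcal{G}_{k+1,\delta}(X_+)$.
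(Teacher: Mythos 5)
Your overall architecture (compactness on compact truncations, exponential decay on the periodic tail, gauge patching) is the same as the paper's, but two steps as written contain genuine gaps. First, the claim that ``Lemma \ref{exp decay} forces $\|\phi_n|_{X_+'}\|_{L^2}\to 1$'' is unjustified and, as a mechanism, wrong: Lemma \ref{exp decay} only controls $\|\phi_n\|_{L^2(W_m)}$ for large $m$, and says nothing about the unit $L^2$-mass of $\phi_n$ concentrating in the removed collar $[0,2\epsilon)\times Y\subset W_0$. If that concentration occurred, the blow-up renormalization of $\gamma_n|_{X'_+}$ would degenerate. Ruling this out is exactly the role of the hypothesis $\Lambda_{\mathfrak{q}}(\gamma_n|_{\{\epsilon\}\times Y})\leq C_2$, which (as in Theorem 24.5.2 of the book) gives that $\|\phi_n|_{X'_+}\|_{L^2}$ is merely bounded away from $0$ -- not that it tends to $1$. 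In your proposal the $\Lambda_{\mathfrak{q}}$ bound is only vaguely folded into the energy estimates of the first step, where it is not what is needed (there the paper instead uses the decomposition of $\mathcal{E}^{\mathrm{top}}_{\mathfrak{q}}$ into three pieces, each bounded below thanks to positive scalar curvature on the tail, to bound the energy on each region).

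Second, the patching is asserted rather than carried out. ``The two gauge transformations differ by an element fixing the common limit; absorbing this correction we may arrange them to agree globally'' is not a construction: gauge representatives produced by compactness on different truncations (and your third, tail Coulomb gauge obtained from Lemma \ref{half De rham complex}) cannot in general be made to literally agree, and simply declaring them compatible skips the heart of the proof. One must glue them by cutoff functions and verify that the glued map is a single gauge transformation in $\mathcal{G}_{k+1,\delta}(X_+)$ producing uniformly bounded weighted data; this requires quantitative, exponentially small control of the transition functions between consecutive local gauges far out on the end. The paper achieves this by normalizing base-point values of local Coulomb gauges on overlapping pieces $W_{m,m+6}$, proving $\|\xi_{m,n}\|_{L^2_{k+2}}\leq Ce^{-\delta_3 m}$ for the transition functions, gluing with cutoffs via the Sobolev multiplication theorem, and only then invoking Lemma \ref{Taubes's lemma} to conclude $u_n\in\mathcal{G}_{k+1,\delta}(X_+)$; moreover it extracts the convergent subsequence not by a diagonal argument but from a uniform bound in the stronger norm $L^2_{k+1,\delta'}$ with $\delta'\in(\delta,\delta_3)$ together with the compact embedding $L^2_{k+1,\delta'}\hookrightarrow L^2_{k,\delta}$. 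Without an argument of this kind, your step three does not yield a gauge transformation in the weighted group, nor convergence in $\mathcal{C}^{\sigma}_{k,\delta}(X'_+)$.
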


The proof of this theorem uses Lemma \ref{exp decay} and the bootstraping argument. To make it easier to follow, we break it into several lemmas. First, we let $\gamma_{n}=(A_{n},s_{n},\phi_{n})$ and $\Phi_{n}=s_{n}\phi_{n}$. The topological energy of $\gamma_{n}$ can be broken into three parts (we treat the last two terms as one part)
\begin{equation}\label{energy decomposition}
\begin{split}\mathcal{E}^{\text{top}}_{\mathfrak{q}}(\gamma_{n})=&\mathcal{E}^{\text{top}}((A_{n},\Phi_{n})|_{X_{+}\setminus W_{0}})+\mathcal{E}^{\text{top}}((A_{n},\Phi_{n})|_{W_{0}\setminus ([0,3]\times Y)})\\&+(2\mathcal{L}_{\mathfrak{q}}((A_{n},\Phi_{n})|_{\{0\}\times Y})-2\mathcal{L}((A_{n},\Phi_{n})|_{\{3\}\times Y}),\end{split}\end{equation}
where $\mathcal{L}_{\mathfrak{q}}=\mathcal{L}-v$ (see (\ref{perturbation})). We denote the first, second and third part on the right hand side of (\ref{energy decomposition}) by $\mathcal{E}_{1,n}$, $\mathcal{E}_{2,n}$ and $\mathcal{E}_{3,n}$ respectively.

\begin{lem}\label{energy bound}
The energy terms $\mathcal{E}_{1,n}$, $\mathcal{E}_{2,n}$ and $\mathcal{E}_{3,n}$ are all uniformly bounded above by red some constant $E$.
\end{lem}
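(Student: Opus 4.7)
The strategy is to combine the assumed upper bound $\mathcal{E}^{\textnormal{top}}_\mathfrak{q}(\gamma_n) \le C_1$ on the sum with uniform \emph{lower} bounds on each of the three pieces $\mathcal{E}_{1,n}$, $\mathcal{E}_{2,n}$, $\mathcal{E}_{3,n}$; the lower bounds then convert into the desired upper bounds.

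For $\mathcal{E}_{1,n}$ and $\mathcal{E}_{2,n}$ the positive scalar curvature hypothesis is the key input. On both $U_1 = X_+ \setminus W_0$ and $U_2 = W_0 \setminus ([0,3]\times Y)$ the perturbation $\hat{\mathfrak{p}}^\sigma$ vanishes and $\operatorname{scal} \ge s_0 > 0$ (on $U_1$ because the metric is a lift of $g_X$; on $U_2$ because the cut-off leaves the metric equal to $g_X$ away from the collar). Applying the Weitzenb\"ock formula to $\slashed{D}_{A_n}^+ \Phi_n = 0$ and integrating against $\Phi_n$ over $U_i$, the boundary term produced by integration by parts cancels exactly the $-\int_{\partial U_i}\langle \Phi_n,\slashed{D}_{B_n}\Phi_n\rangle$ piece of (\ref{topolocial energy}) thanks to the relation $\nabla_\nu\Phi_n = -\rho(\nu)\slashed{D}_{B_n}\Phi_n$ valid near $\partial U_i$. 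Using $F^+_{A_n^t} = (\Phi_n\Phi_n^*)_0$ to rewrite $|F^+|^2$ as $\tfrac{1}{2}|\Phi_n|^4$, one obtains an identity of the form
\begin{equation*}
\mathcal{E}_{i,n} \;=\; \int_{U_i}\Bigl(|\nabla_{A_n}\Phi_n|^2 + \tfrac{\operatorname{scal}}{4}|\Phi_n|^2 + c\,|\Phi_n|^4\Bigr) \;-\; \tfrac{1}{4}\int_{U_i}|F^-_{A_n^t}|^2
\end{equation*}
with $c > 0$ absolute. The first integrand is pointwise non-negative by psc, and $\int_{U_i}|F^-_{A_n^t}|^2$ is bounded uniformly in $n$: on the compact $U_2$ by the elliptic estimate for $d\oplus d^*$ applied to $a_n = A_n - A_0$, and on the noncompact $U_1$ by combining the exponential decay of $\Phi_n$ from Lemma \ref{exp decay} (which, via the SW equation, forces $F^+_{A_n^t}$ to decay exponentially) with the weighted Fredholm theory for $d\oplus d^*$ given by Lemma \ref{half De rham complex}(i). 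This yields $\mathcal{E}_{i,n} \ge -C_3$ uniformly for $i = 1,2$.

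For the lower bound on $\mathcal{E}_{3,n}$ I use the hypothesis $\Lambda_\mathfrak{q}(\gamma_n|_{\{\epsilon\}\times Y}) \le C_2$. On each cylindrical piece $[0,\epsilon]\times Y$ and $[\epsilon,3]\times Y$ the SW trajectory is a (partially perturbed) gradient flow for the appropriate CSD functional, and the standard energy identity expresses the difference of CSD values at the two ends as a non-negative action integral. A direct computation writes $\mathcal{L}_\mathfrak{q}(\gamma_n|_{\{\epsilon\}\times Y})$ as a multiple of $\Lambda_\mathfrak{q}(\gamma_n|_{\{\epsilon\}\times Y})$ plus a connection-only Chern-Simons term, the latter controlled by the $L^2$-bound on $a_n$ produced in the previous step. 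Propagating this bound through the two cylinders gives two-sided control on both $\mathcal{L}_\mathfrak{q}(\gamma_n|_{\{0\}\times Y})$ and $\mathcal{L}(\gamma_n|_{\{3\}\times Y})$, and hence $\mathcal{E}_{3,n} \ge -C_4$.

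The principal obstacle is the uniform bound on $\int_{U_1}|F^-_{A_n^t}|^2$: unlike $F^+$, the anti-self-dual part of the curvature is not directly controlled by the Seiberg-Witten equations, so one must combine the exponential decay of $\Phi_n$ from Lemma \ref{exp decay} with the end-periodic weighted Fredholm theory of Lemma \ref{half De rham complex} to convert decay of $F^+$ into an $L^2$-bound for $F^-$ over the noncompact periodic end of $X_+$.
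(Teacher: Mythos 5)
Your overall strategy---uniform lower bounds on each of $\mathcal{E}_{1,n},\mathcal{E}_{2,n},\mathcal{E}_{3,n}$ combined with the assumed upper bound on their sum---is exactly the paper's. The gap is in how you obtain the lower bounds. The paper invokes the identity between topological and analytic energy for solutions (page 96 and Lemma 24.5.1 of the book), which gives for instance
$\mathcal{E}_{1,n}=\tfrac14\int_{X_+\setminus W_0}\bigl(|F_{A_n^t}|^2+4|\nabla_{A_n}\Phi_n|^2+|\Phi_n|^4+\operatorname{scal}(x)|\Phi_n|^2\bigr)$,
in which the \emph{full} curvature $|F_{A_n^t}|^2$ (self-dual and anti-self-dual parts alike) enters with a positive sign; with $\operatorname{scal}>0$ on $X_+\setminus W_0$ this is manifestly nonnegative, on the compact piece $W_0\setminus([0,3]\times Y)$ one only loses the fixed constant $\int\operatorname{scal}^2/16$, and on the collar the perturbed version of the same identity gives a lower bound with uniform constants $D_1,D_2$ because the perturbation is tame. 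No control of $F^-$ and no use of the $\Lambda_{\mathfrak{q}}$ hypothesis is needed for this lemma. Your rewriting instead leaves a term $-\tfrac14\int_{U_i}|F^-_{A_n^t}|^2$; this is inconsistent with the identity the paper uses (equating your expression with the paper's would force $\int|F^-_{A_n^t}|^2$ to be a universal multiple of $\int|\Phi_n|^4$ for all solutions, which is false), so there is almost certainly a sign or bookkeeping error in your Weitzenb\"ock manipulation, and in any case your version is strictly weaker.

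More importantly, even granting your identity, the step you yourself flag as the principal obstacle---a uniform bound on $\int_{U_1}|F^-_{A_n^t}|^2$---is circular as proposed. Lemma \ref{exp decay} bounds the unit-sphere spinor $\phi_n$ in $L^2(W_m)$, not $\Phi_n=s_n\phi_n$; converting it into exponential decay of $F^+_{A_n^t}$ requires a uniform bound on $s_n$ together with $L^4$-type control of $\phi_n$, and the uniform bound on $s_n$ is exactly what is deduced in Lemma \ref{bounded results} (i) \emph{from} the energy bounds you are trying to prove. Moreover, Lemma \ref{half De rham complex} (i) concerns the Laplace operator on $Z_+$, not an a priori estimate for $(d^*,d^+)$ on $X_+$; the relevant statement, part (iv), would additionally require control of the boundary data $\langle a_n,\vec v\rangle$ and $\pi^+(a_n|_Y)$, which is likewise unavailable at this stage. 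The same circularity appears in your treatment of $\mathcal{E}_{3,n}$, where the Chern--Simons term is said to be ``controlled by the $L^2$-bound on $a_n$ produced in the previous step.'' The repair is simply to use the book's energy identity as the paper does, after which all three lower bounds are immediate and the upper bounds follow from the bound on the sum.
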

\begin{proof}
Since the restriction of $(A_{n},\Phi_{n})$ on $X_{+}\setminus ([0,3]\times Y)$ is a solution of the unperturbed Seiberg-Witten equations. By the relation between the topological energy and analytical energy (see Page 96 of the book) and Lemma 24.5.1 of the book, we have the following estimates
 $$\mathcal{E}_{1,n}=\frac{1}{4}\int_{X_{+}\setminus W_{0}}(|F_{A^{t}_{n}}|^{2}+4|\nabla_{A_{n}}\Phi_{n}|^{2}+|\Phi|^{4}+\operatorname{scal}(x)|\Phi|^{2})$$
$$\mathcal{E}_{2,n}=\frac{1}{4}\int_{W_{0}\setminus ([0,3]\times Y)}(|F_{A^{t}_{n}}|^{2}+4|\nabla_{A_{n}}\Phi_{n}|^{2}+(|\Phi|^{2}+\operatorname{scal}(x)/2)^{2})-\int_{W_{0}\setminus ([0,3]\times Y)}\frac{\operatorname{scal}(x)^{2}}{16}$$
$$
\mathcal{E}_{3,n}\geq
\frac{1}{2}\int_{[0,3]\times Y}(|F_{A^{t}_{n}}|^{2}+4|\nabla_{A_{n}}\Phi_{n}|^{2}+(|\Phi_{n}|^{2}-D_{1})^{2})-D_{2}
$$
where $D_{1},D_{2}$ are certain uniform constants. Note that $\operatorname{scal}(x)$ is positive on $X_{+}\setminus W_{0}$. It is easy to see that $\mathcal{E}_{1,n}$, $\mathcal{E}_{2,n}$ and $\mathcal{E}_{3,n}$ are all uniformly bounded below. Since the sum of these three terms is bounded above, each of them should also be bounded above.
\end{proof}

For each $m\geq 0,n\geq 1$, we let $u_{m,n}:W_{m,m+6}\rightarrow S^{1}$ be a gauge transformation with the following properties (recall that $A_{0}$ denotes the spin connection):
\begin{enumerate}[(i)]
\item $d^{*}(A_{n}-u^{-1}_{m,n}du_{m,n}-A_{0})=0;$
\item $\langle A_{n}-u^{-1}_{m,n}du_{m,n}-A_{0},\vec{v}_{m}\rangle=0;$
\item $u_{1,n}(o_{3})=u_{0,n}(o_{3}),\ u_{m+2}(o_{m+4})=u_{m}(o_{m+4}),\ \forall m\geq1;$
\end{enumerate}
where $\vec{v}_{m}$ is the normal vector on $\partial W_{m,m+6}$ and $o_{m}\in W_{m}$ corresponds to a fixed base point $o\in \operatorname{int}(W)$. Such $u_{m,n}$ can always be found by solving the Laplace equation (with Neumann boundary condition) on $W_{m,m+6}$ (see Page 101 of the book). We let $$A_{m,n}=A_{n}-u^{-1}_{m,n}du_{m,n},\ \Phi_{m,n}=u_{m,n}\Phi_{n},\ \phi_{m,n}=u_{m,n}\phi_{n}.$$
 We set the constant 
\begin{equation}\label{equation: delta4}
 \delta_{4}=\min\{\delta_{3},\frac{1}{2}\delta(E)\},   
\end{equation}
where the constants $\delta_{3},\ E$ and the fucntion $\delta(-)$ are defined in Lemma \ref{exp decay}, Lemma \ref{energy bound} and Proposition \ref{exponential decay for energy} respectively.

\begin{lem}\label{bounded results}
There exist a constant $C$ such that the following bounded results hold:
\begin{enumerate}[(i)]
\item $s_{n}\leq C,\ \forall n\geq 1$;
\item $\|(A_{m,n}-A_{0},\Phi_{m,n})\|_{L^{2}_{k+1}(W_{m+1,m+5})}\leq C,\ \forall n\geq 1, m\geq 1;$
\item $\|(A_{0,n}-A_{0},\Phi_{0,n})\|_{L^{2}_{k+1}(W_{0,5}\setminus([0,\epsilon]\times Y))}\leq C,\ \forall n\geq 1;$
\item $\|(A_{m,n}-A_{0},\phi_{m,n})\|_{L^{2}_{k+1}(W_{m+2,m+4})}\leq Ce^{-\delta_{4}m},\ \forall n\geq 1,m\geq 1;$
\item $\|(A_{0,n}-A_{0},\phi_{0,n})\|_{L^{2}_{k+1}(W_{0,4}\setminus([0,2\epsilon]\times Y))}\leq C,\ \forall n\geq 1.$
\end{enumerate}
\end{lem}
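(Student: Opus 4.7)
The proof combines an exponential-decay/energy argument for the scalar $s_n$ with a Coulomb-gauge bootstrap on each periodic tile, refined via Lemma~\ref{exp decay} to produce the exponentially decaying bounds.

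For part (i), I would use the normalization $\|\phi_n\|_{L^2(X_+)}=1$ together with the geometric-series bound $\sum_{m\geq N+1}\|\phi_n\|_{L^2(W_m)}^2 \leq C^2 e^{-2\delta_3 N}/(1-e^{-2\delta_3})$ coming from Lemma~\ref{exp decay} to obtain $\|\phi_n\|_{L^2(W_{0,N})}\geq 1/\sqrt{2}$ for some fixed $N$ independent of $n$. The formulae for $\mathcal{E}_{1,n}, \mathcal{E}_{2,n}, \mathcal{E}_{3,n}$ appearing in the proof of Lemma~\ref{energy bound} give a uniform $L^4$-bound on $\Phi_n$ over $W_{0,N}$. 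H\"older's inequality then yields $s_n \leq \sqrt{2}\,\|\Phi_n\|_{L^2(W_{0,N})} \leq C$.

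For (ii) and (iii) I would run the standard Seiberg--Witten bootstrap in Coulomb gauge on each tile $W_{m,m+6}$. The three conditions defining $u_{m,n}$ put $A_{m,n}-A_0$ into Coulomb gauge with Neumann boundary condition, and the base-point normalization kills the residual constant-gauge ambiguity. Standard elliptic estimates for $(d^{*},d^{+},\langle\cdot,\vec{v}\rangle)$ on the compact tile then control $\|A_{m,n}-A_0\|_{L^2_1}$ by $\|F_{A_n^t}\|_{L^2}$, while Lemma~\ref{energy bound} supplies the needed $L^2$-bounds on $F_{A_n^t}$ and $\nabla_{A_n}\Phi_n$. Iterating interior elliptic regularity on the nested pair $W_{m,m+6}\supset W_{m+1,m+5}$, using the perturbed Seiberg--Witten equations together with the Sobolev multiplication theorem (the quadratic term $(\Phi\Phi^{*})_0$ being controlled because part (i) bounds $\|\Phi_{m,n}\|_{L^{\infty}}$ after one bootstrap step), yields the $L^2_{k+1}$-bound. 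For $m\geq 1$ the covering translation identifies all $W_{m,m+6}$ with one fixed tile, so the constants are $m$-uniform; this proves (ii). For $m=0$ the perturbation $\hat{\mathfrak{p}}^{\sigma}$ lives on $[0,3]\times Y\subset W_0$ but is tame and therefore lower-order, so the bootstrap proceeds after excising the collar $[0,\epsilon]\times Y$ to keep the argument interior, giving (iii).

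The exponentially decaying parts (iv) and (v) are obtained by re-running the elliptic bootstrap with Lemma~\ref{exp decay} as the initial input. On $W_m$ with $m\geq 1$ the perturbation vanishes, so $\phi_{m,n}$ satisfies the linear elliptic equation $\slashed{D}^{+}_{A_{m,n}}\phi_{m,n}=0$, whose coefficient is already $L^2_{k+1}$-controlled by (ii). Iterated interior elliptic regularity on the triple $W_{m,m+6}\supset W_{m+1,m+5}\supset W_{m+2,m+4}$ then yields $\|\phi_{m,n}\|_{L^2_{k+1}(W_{m+2,m+4})}\leq C\|\phi_n\|_{L^2(W_{m,m+6})}\leq C' e^{-\delta_3 m}$; combined with (ii) this proves (iv). The boundary case (v) is analogous, with the tame perturbation absorbed as an extra bounded source. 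The main technical obstacle throughout is the uniformity of the elliptic-regularity constants in $m$, which rests on the translational symmetry of the periodic end (for $m\geq 1$) and on the point-evaluation normalization in condition (iii) of the definition of $u_{m,n}$, without which the Coulomb gauge would leave a residual constant-gauge ambiguity that would destroy any uniform bound on $A_{m,n}-A_0$.
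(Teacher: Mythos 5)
Your part (i) is essentially fine, and is a legitimate variant of the paper's argument (the paper simply adds up the $L^{2}$ and $L^{4}$ bounds on $\Phi_{n}$ furnished by $\mathcal{E}_{1,n},\mathcal{E}_{2,n},\mathcal{E}_{3,n}$ to bound $\|\Phi_{n}\|_{L^{2}(X_{+})}=s_{n}$ directly, while you localize via the mass-concentration consequence of Lemma \ref{exp decay}). The genuine gap is in (ii)--(iii): the parenthetical claim that ``part (i) bounds $\|\Phi_{m,n}\|_{L^{\infty}}$ after one bootstrap step'' is false, and without it your bootstrap never starts. Part (i) is only an $L^{2}(X_{+})$ bound; together with Lemma \ref{energy bound} it gives $\Phi_{n}$ bounded in $L^{2}_{1}$ on each window, hence in $L^{4}$, which is exactly the critical exponent for the quadratic terms $\rho(a)\phi$ and $(\Phi\Phi^{*})_{0}$ in dimension four: one application of elliptic regularity returns you to $L^{2}_{1}$ with no gain. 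Breaking this criticality is the real analytic content here, and it requires either the pointwise Weitzenb\"ock/maximum-principle (or Moser-iteration) bound on $|\Phi|$ in its interior form for manifolds with boundary, or a rescaling/small-energy argument; this is precisely what Theorem 5.1.1 of the book packages. The paper's proof of (ii)--(iii) accordingly runs no quantitative bootstrap at all: it argues by contradiction, observing that $\mathcal{E}_{1,n}$ (resp.\ $\mathcal{E}_{2,n}+\mathcal{E}_{3,n}$) bounds the topological energy of the restrictions to $W_{m,m+6}$ (resp.\ to $W_{0}$) uniformly, regards these restrictions as solutions on one fixed compact manifold $W\cup_{Y}\cdots\cup_{Y}W$ (uniformity in $m$ coming from periodicity, as you note), and invokes Theorem 5.1.1 to extract a $C^{\infty}$-convergent subsequence, contradicting a supposed blow-up of the $L^{2}_{k+1}$ norms in the gauges $u_{m,n}$. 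Either cite that theorem as the paper does, or supply the $C^{0}$ bound on $\Phi$ honestly; as written, your chain of estimates does not close.

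A second, smaller gap: the bound in (iv) is on the pair, so the connection part $A_{m,n}-A_{0}$ must itself decay like $e^{-\delta_{3}m}$ in $L^{2}_{k+1}(W_{m+2,m+4})$, and this is exactly what Lemma \ref{regularity for solutions} later consumes (the transition functions $\xi_{m,n}$ are small only because of it). Your argument produces exponential decay only for $\phi_{m,n}$ and then says ``combined with (ii) this proves (iv)'', but (ii) is an $O(1)$ bound, not a decaying one. The decay of the connection has to come from feeding the spinor decay back into the gauge-fixed curvature equation: in the gauge $u_{m,n}$ one has $d^{*}(A_{m,n}-A_{0})=0$, the Neumann condition, and $d^{+}(A_{m,n}-A_{0})=s_{n}\rho^{-1}(\Phi_{m,n}\phi^{*}_{m,n})_{0}$ whose right-hand side decays exponentially by (i) and the spinor estimate, and one must then argue that these data force $A_{m,n}-A_{0}$ to be exponentially small on the smaller window. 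The paper is admittedly terse here (``bootstrapping involving (i), (ii) and Lemma \ref{exp decay}''), but that step is part of (iv) and your proposal omits it entirely.
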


\begin{proof}
(i) By Lemma \ref{energy bound}, the energy terms $\mathcal{E}_{1,n},\mathcal{E}_{2,n},\mathcal{E}_{3,n}$ are all bounded above. They provide upper bounds on $\|\Phi_{n}\|_{L^{2}(X_{+}\setminus W_{0})},\  \|\Phi_{n}\|_{L^{4}(W_{0}\setminus ([0,3]\times Y))}$ and $\|\Phi_{n}\|_{L^{4}([0,3]\times Y)}$ respectively. Hence we get an upper bound $\|\Phi_{n}\|_{L^{2}(X_{+})}$, which is exactly $s_{n}$.

(ii) Suppose we do not have a uniform bound. Then there exists sequences of positive numbers  $\{m_{l}\},\{n_{l}\}$ such that
$$\|(A_{m_{l},n_{l}}-A_{0},\Phi_{m_{l},n_{l}})\|_{L^{2}_{k+1}(W_{m_{l}+1,m_{l}+5})}\rightarrow +\infty.$$
Notice that the upper bound on $\mathcal{E}_{1,n}$ gives a uniform upper bound on $$\mathcal{E}^{\operatorname{top}}((A_{n},\Phi_{n})|_{W_{m,m+6}})$$ for all $m,n\geq 1$. Suppose we treat the restriction of $A_{m_{l},n_{l}}$ (resp. $\Phi_{m_{l},n_{l}})$) on $W_{m_{l}+1,m_{l}+5}$ as a connection (resp. spinor) on a fixed manifold $W\cup_{Y}W\cup_{Y}...\cup_{Y}W$ for each $l$. Then by Theorem 5.1.1 of the book,
they converge in $C^{\infty}$ topology after passing to a subsequence, which is a contradiction.

(iii) The proof is similar with (ii): just use  $\mathcal{E}_{2,n}+\mathcal{E}_{3,n}$ to control the topological energy on $W_{0}$ then apply Theorem 5.1.1 of the book.

(iv) By Lemma \ref{exp decay}, the norm $\|\phi_{m,n}\|_{L^{2}(W_{m+1,m+5})}$ is bounded above by $e^{-\delta_{3}m}$ (up to a constant). Then we apply the elliptic bootstraping argument on the equation
$$
\slashed{D}^{+}_{A_{0}}\phi_{m,n}+\rho(A_{m,n}-A_{0})\phi_{m,n}=0
$$
to obtain the desired bound on $\|\phi_{m,n}\|_{L_{k+1}^{2}(W_{m+2,m+4})}$. Note that (ii) is used here to give a uniform bound on $\|\rho(A_{m,n}-A_{0})\|_{L^{2}_{k+1}(W_{m+1,m+5})}$.

Next, we control the norm $\|(A_{m,n}-A_{0})\|_{L^{2}_{k+1}(W_{m+2,m+4})}$: Using Proposition 4.6,  we get the estimate $$\mathcal{E}^{top}((A_{m,n},\Phi)|_{W_{m,m+6}})\leq 10Ee^{-\delta(E)m}.$$
There exists a constant $m_{0}$ such that $10Ee^{-\delta(E)m_{0}}<\epsilon_{0}$ (the constant in Lemme \ref{energy controls norm}). When $m\geq m_{0}$, we use can Lemma \ref{energy controls norm} to bound  $\|(A_{m,n}-A_{0})\|_{L^{2}_{k+1}(W_{m+2,m+4})}$ by $e^{-\frac{\delta(E)}{2}m}$ (up to a constant). For $m<m_{0}$, we simply use the uniform bound (ii).  

(v) The uniform bound on $\|A_{0,n}-A_{0}\|_{L^{2}_{k+1}(W_{0,4}\setminus([0,2\epsilon]\times Y))}$ is trivial from (iii). To get the uniform bound on $\|\phi_{0,n}\|_{L^{2}_{k+1}(W_{0,4}\setminus([0,2\epsilon]\times Y))}$, we use the fact $$\|\phi_{0,n}\|_{L^{2}(W_{0,4}\setminus([0,2\epsilon]\times Y))}\leq 1$$ (which follows from the definition) and apply the elliptic bootstraping argument on the Dirac equation
$$
\slashed{D}^{+}_{A_{0}}\phi_{0,n}+\rho(A_{0,n}-A_{0})\phi_{0,n}=\hat{\mathfrak{p}}^{1,\sigma}(A_{0,n},\phi_{0,n})
$$
Note that the perturbation term $\hat{\mathfrak{p}}^{1,\sigma}(A_{0,n},\phi_{0,n})$ does not affect the bootstraping argument because it is tame. (See the proof Theorem 10.9.2 in the book for a similar but more detailed argument.)\end{proof}

\begin{lem}\label{regularity for solutions} Let $\delta_{4}$ be the constant in (\ref{equation: delta4}) and let $\gamma_{n}\in \mathcal{C}^{\sigma}_{k,\delta}(X_{+})$ be the solutions in Theorem \ref{local compactness}. For any $\delta'\in (\delta,\delta_{4})$, we can find gauge transformations $u_{n}\in \mathcal{G}_{k+1,\delta}$ such that the restrictions $u_{n}\cdot\gamma_{n}|_{X'}=(A'_{n},s'_{n},\phi'_{n})$ is a bounded sequence in $\mathcal{C}^{\sigma}_{k+1,\delta'}(X')$. In other words, we have $$\mathop{\sup}\limits_{n} \|(A'_{n}-A_{0},\phi'_{n})\|_{L^{2}_{k+1,\delta'}}<\infty\text{ and } \mathop{\sup}\limits_{n} s'_{n}<\infty.$$
\end{lem}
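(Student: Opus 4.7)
The plan is to construct the global gauge transformations $u_n$ by patching together the local Coulomb gauge transformations $u_{m,n}$ supplied by Lemma \ref{bounded results}, using the exponential decay in (iv) to control the patching. Once $u_n$ is built, the desired weighted bound on $u_n\cdot\gamma_n|_{X'}$ follows by summing the local $L^2_{k+1}$ bounds of Lemma \ref{bounded results} against the exponential weight $e^{\delta'\tau_1}$ and checking that the geometric series converges whenever $\delta'<\delta_3$.

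First I would study the transition functions $v_{m,n}:=u_{m+2,n}u_{m,n}^{-1}\colon W_{m+2,m+4}\to S^1$. From the defining identity $A_{m+2,n}-A_{m,n}=-v_{m,n}^{-1}dv_{m,n}$ and Lemma \ref{bounded results}(iv) we get $\|v_{m,n}^{-1}dv_{m,n}\|_{L^2_k(W_{m+2,m+4})}\le Ce^{-\delta_3 m}$ for all $m\ge 1$. Writing $v_{m,n}=e^{i\xi_{m,n}}$ with the basepoint normalization $\xi_{m,n}(o_{m+4})=0$ (which is legitimate thanks to the compatibility condition (iii) stated just before Lemma \ref{bounded results}), a Poincar\'e inequality on the fixed compact piece $W_{m+2,m+4}$ then upgrades this to $\|\xi_{m,n}\|_{L^2_{k+1}(W_{m+2,m+4})}\le C'e^{-\delta_3 m}$.

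Next I would choose a smooth partition-of-unity type cover and define $u_n$ inductively: set $u_n=u_{0,n}$ on a neighborhood of $\partial X'$, and on each successive transition region $W_{m+3}\cup W_{m+4}$ multiply by $e^{i\chi_m\xi_{m,n}}$, where $\chi_m$ is a cutoff going from $0$ to $1$ across the region, so that $u_n$ coincides with $u_{m+2,n}$ on the next core piece. By construction, and by the Sobolev multiplication theorem together with the bound $\|\xi_{m,n}\|_{L^2_{k+1}}\le C'e^{-\delta_3 m}$, the product $u_n-e^{i\theta_{\infty,n}}$ lies in $L^2_{k+1,\delta}$ (for any $\delta<\delta_3$) after extending $u_n$ smoothly by any element of $\mathcal{G}_{k+1,\delta}(X_+)$ outside $X'$; hence $u_n\in\mathcal{G}_{k+1,\delta}(X_+)$ as required.

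Finally, on each $W_m$ with $m\ge 2$ the transformed pair $u_n\cdot\gamma_n|_{W_m}$ differs from $(A_{m-2,n},\phi_{m-2,n})$ by an $S^1$-change of gauge whose logarithm is of size $e^{-\delta_3 m}$ in $L^2_{k+1}$, so Lemma \ref{bounded results}(iv)(v) yields $\|u_n\cdot\gamma_n-(A_0,0,0)\|_{L^2_{k+1}(W_m)}\le Ce^{-\delta_3 m}$ uniformly in $n$; summing $e^{2\delta' m}\cdot e^{-2\delta_3 m}$ gives a convergent geometric series, establishing the weighted bound. The uniform bound on $s_n'=s_n$ is immediate from Lemma \ref{bounded results}(i). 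The main obstacle I anticipate is the bookkeeping in the patching step, in particular verifying that the cumulative effect of the transitions leaves $u_n$ asymptotic to a single constant phase at the periodic infinity of $X_+$ (so that $u_n$ really lies in $\mathcal{G}_{k+1,\delta}(X_+)$), and that the Sobolev multiplication estimates used in iterating the $e^{i\chi_m\xi_{m,n}}$ corrections are compatible with the exponential weight.
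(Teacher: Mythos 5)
Your patching scheme is essentially the paper's own argument: take the local Coulomb gauges $u_{m,n}$ of Lemma \ref{bounded results}, observe that the transition functions have logarithms of size $O(e^{-\delta_3 m})$, interpolate them with cutoffs to build a global $u_n$, check $u_n\in\mathcal{G}_{k+1,\delta}(X_+)$ via Lemma \ref{Taubes's lemma}, and sum the local estimates against $e^{\delta'\tau_1}$ using $\delta'<\delta_3$. Up to that point the proposal is sound (one small bookkeeping slip: from Lemma \ref{bounded results}(iv) the difference $A_{m+2,n}-A_{m,n}$ is bounded in $L^2_{k+1}$, not just $L^2_k$, and you need this extra derivative --- i.e.\ $\xi_{m,n}$ controlled in $L^2_{k+2}$ --- so that the gauge-corrected connection, which picks up $d(\chi_m\xi_{m,n})$, is bounded in $L^2_{k+1}$ as the statement requires; the lemma supplies it, you just under-quoted it).

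The genuine gap is at the very last step, where you pass from the unnormalized pair $(\tilde A_n,\tilde\phi_n)=u_n\cdot(A_n,\phi_n)$ to the restricted blown-up configuration. Restriction to $X'_+$ in $\mathcal{C}^{\sigma}_{k+1,\delta'}(X')$ is not just cutting off: one must renormalize, $s'_n=s_n\|\tilde\phi_n\|_{L^2(X'_+)}$ and $\phi'_n=\tilde\phi_n|_{X'_+}/\|\tilde\phi_n\|_{L^2(X'_+)}$, since the blow-up requires $\|\phi'_n\|_{L^2}=1$. Your summed estimate controls $\|\tilde\phi_n\|_{L^2_{k+1,\delta'}(X'_+)}$, but the quantity in the statement is this norm divided by $\|\tilde\phi_n\|_{L^2(X'_+)}$, which blows up if the $L^2$ mass of $\phi_n$ concentrates in the collar $[0,2\epsilon]\times Y$ that is being discarded. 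Ruling this out is exactly where the second hypothesis of Theorem \ref{local compactness}, the upper bound $\Lambda_{\mathfrak{q}}(\gamma_n|_{\{\epsilon\}\times Y})\le C_2$, enters: as in the proof of Theorem 24.5.2 of the book, it forces $\|\tilde\phi_n\|_{L^2(X'_+)}$ to be bounded away from $0$ uniformly in $n$. Your proposal never invokes this hypothesis, so as written the claimed bound on $\|(A'_n-A_0,\phi'_n)\|_{L^2_{k+1,\delta'}}$ does not follow; adding this lower-bound step (and noting $s'_n\le s_n\le C$ from Lemma \ref{bounded results}(i)) completes the proof along the same lines as the paper.
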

\begin{proof}
The idea is to obtain $u_{n}$ by gluing $u_{0,n}|_{W_{0,3}}$ and $u_{m,n}|_{W_{m+2,m+4}}\ (m=1,3,5...)$ together using cutoff functions. Recall that $W$ is a manifold with boundary $(-Y)\cup Y$. We choose a cutoff function $\tau:W\rightarrow [0,1]$ that equals $0$ near the left boundary and equals $1$ near the right boundary. We also use $\tau$ for the induced cutoff function on $W_{m}$. For $m,n\geq 1$, we consider the function $\xi_{m,n}:W_{m+4}\rightarrow [0,1]$ with the property that $$\xi_{m,n}(o_{m+4})=0,\ e^{i\xi_{m,n}}\cdot(u_{m,n}|_{W_{m+4}})=u_{m+2,n}|_{W_{m+4}}.$$
We also define $\xi^{0}_{n}:W_{0,3}\rightarrow [0,1]$ by the condition
$$
\xi_{0,n}(o_{3})=0,\ e^{i\xi_{0,n}}(u_{0,n}|_{W_{3}})=u_{1,n}|_{W_{3}}.
$$
Then we have $d\xi_{0,n}=A_{0,n}|_{W_{3}}-A_{1,n}|_{W_{3}}$ and $d\xi_{m,n}=A_{m+2,n}|_{W_{m+4}}-A_{m,n}|_{W_{m+4}}$ for $m\geq 1$. By Lemma \ref{bounded results} (iv) and (v), there exists a uniform constant $C$ such that $$\|\xi_{m,n}\|_{L^{2}_{k+2}(W_{m+2})}\leq Ce^{-\delta_{4}m},\ \forall m\geq 0,n\geq1,$$ which implies similar bounds for $\tau \xi_{m,n}$ and $(1-\tau)\xi_{m,n}$. We consider the gauge transformations $$ \tilde{u}_{0,n}(x)=\left\{\begin{array} {ll}

 1 & x\in W_{0,2}\\
 e^{i\tau(x)\xi_{0,n}(x)}& x\in W_{3}
\end{array}\right.$$
and
$$\tilde{u}_{m,n}(x)=\left\{\begin{array} {ll}
 e^{i(\tau(x)-1)\xi_{m-1,n}(x)}& x\in W_{m+2}  \\
 1&x\in W_{m+3}\\
 e^{i\tau(x)\xi_{m,n}(x)}& x\in W_{m+4}
\end{array}\right.\ (m\geq1)$$
By the Sobolev multiplication theorem, there exists a uniform constant $C'$ such that
\begin{itemize}
\item $
\|\tilde{u}_{0,n}^{-1}d\tilde{u}_{0,n}\|_{L^{2}_{k+1}(W_{0,3})}\leq C'$;
\item $\|1-\tilde{u}_{0,n}\|_{L^{2}_{k+1}(W_{0,3})}\leq C'
$;
\item $\|\tilde{u}_{m,n}^{-1}d\tilde{u}_{m,n}\|_{L^{2}_{k+1}(W_{m+2,m+4})}\leq C'e^{-\delta_{4}m}$ for any $m\geq 1$;
\item $\|1-\tilde{u}_{m,n}\|_{L^{2}_{k+1}(W_{m+2,m+4})}\leq C'e^{-\delta_{4}m}$ for any $m\geq 1$.
\end{itemize}
Since it is easy to check that $$u_{0,n}\cdot \tilde{u}_{0,n}=u_{1,n}\cdot \tilde{u}_{1,n} \text{ on }W_{3};$$ $$u_{m,n}\cdot \tilde{u}_{m,n}=u_{m+2,n}\cdot \tilde{u}_{m+2,n}\text{ on } W_{m+4} \text{ for } m=1,3,5...,$$ we can glue $$\{(u_{0,n}\cdot \tilde{u}_{0,n})|_{W_{0,3}}\}\cup \{(u_{m,n}\cdot\tilde{u}_{m,n})|_{W_{m+2,m+4}}|\ m=1,3,5...\}$$ together to get a guage transformation $u_{n}:X_{+}\rightarrow S^{1}$. Denote $u_{n}(A_{n},\phi_{n})$ by $(\tilde{A}_{n},\tilde{\phi}_{n})$. Then we have
$$(\tilde{A}_{n},\tilde{\phi}_{n})=\left\{\begin{array} {cc}
 \tilde{u}_{0,n}(A_{0,n},\phi_{0,n})\ \text{ on } W_{0,3}\\
 \tilde{u}_{m,n}(A_{m,n},\phi_{m,n})\ \text{ on } W_{m+2,m+4} \text{ for }m=1,3,5,...
\end{array}\right.$$ By Lemma \ref{bounded results} (iv) and (v) and the above estimates on $\tilde{u}_{m,n}$, we can prove that there exists a uniform constant $C''$ such that
$$
\|(\tilde{A}_{n}-A_{0},\tilde{\phi}_{n})\|_{L^{2}_{k+1}(W_{0,4}\setminus ([0,2\epsilon]\times Y))}\leq C''
$$
and
$$
\|(\tilde{A}_{n}-A_{0},\tilde{\phi}_{n})\|_{L^{2}_{k+1}(W_{m+2,m+4})}\leq C''e^{-\delta_{4}m}\text{ for } m=1,3,5,...
$$
Since $\delta'<\delta_{4}$, we get $$\|(\tilde{A}_{n}-A_{0},\tilde{\phi}_{n})\|_{L^{2}_{k+1,\delta'}(X'_{+})}<C'''$$
for some constant $C'''$. The relation between $(A_{n}',s'_{n},\phi_{n}')$ and $(\tilde{A}_{n},s_{n},\tilde{\phi}_{n}')$ is given by
$$
A_{n}'=\tilde{A}_{n}|_{X'_{+}},\ s_{n}'=s_{n}\cdot \|\tilde{\phi}_{n}\|_{L^{2}(X'_{+})}\text{ and }\phi_{n}'=\frac{\tilde{\phi}_{n}|_{X'_{+}}}{\|\tilde{\phi}_{n}\|_{L^{2}(X'_{+})}}.
$$
As in the proof of Theorem 24.5.2 in the book, the condition $
\Lambda_{\mathfrak{q}}(\gamma_{n}|_{\{\epsilon\}\times Y})\leq C_{2}$ ensures that the norm $\|\tilde{\phi}_{n}\|_{L^{2}_{X_{+}'}}$ (which is always less than $1$) is bounded away from $0$. Therefore, we have proved the estimate in the lemma.

We are left to check that $u_{n}\in\mathcal{G}_{k+1,\delta}(X_{+})$. We write $u_{n}$ as $e^{i\xi_{n}}$. Then $d\xi_{n}=\tilde{A}_{n}-A_{n}\in L^{2}_{k,\delta}(X_{+};i\mathds{R})$. By Lemma \ref{Taubes's lemma}, we can find $\bar{\xi}_{n}\in i\mathds{R}$ such that $\xi_{n}-\bar{\xi}_{n}\in L^{2}_{k+1,\delta}(X_{+},i\mathds{R})$. Then we have
$$
u_{n}=e^{i\bar{\xi}_{n}}\cdot e^{i(\xi_{n}-\bar{\xi}_{n})}\in \mathcal{G}_{k+1,\delta}(X_{+}).$$
\end{proof}
\begin{proof}[Proof of Proposition \ref{local compactness}]
By Lemma \ref{regularity for solutions}, we can find  $u_{n}\in \mathcal{G}_{k+1,\delta}(X_{+})$ such that $u_{n}(\gamma_{n})|_{X'_{+}}$ is a bounded sequence in $C^{\sigma}_{k+1,\delta'}(X')$. Since $\delta'>\delta$, the natural inclusion $C^{\sigma}_{k+1,\delta'}(X')\rightarrow C^{\sigma}_{k,\delta}(X')$ maps a bounded closed set to a compact set. Therefore, we can find a subsequence that converges in $C^{\sigma}_{k,\delta}(X')$.\end{proof}
\subsection{Compactness: broken trajectories} With  Theorem \ref{local compactness} (compare Theorem 24.5.2 in the book) proved, the proof of Theorem \ref{compactness} is essentially the same with the proof of Theorem 24.6.4 in the book. For completeness, we sketch it as follows:
\begin{proof}[Proof of Theorem \ref{compactness}](Sketch)
We first consider a sequence $[\gamma_{n}]\in \breve{\mathcal{M}}([\mathfrak{b}_{0}],Z_{+})$ $(n\geq 1)$ represented by unbroken $Z_{+}$-trajectories $\gamma_{n}$. Using integration by part, it is easy to see that  $\mathcal{E}_{\mathfrak{p}}^{\text{top}}(\gamma_{n}|_{X_{+}})=\mathcal{L}_{\mathfrak{q}}(\gamma_{n}|_{\{0\}\times Y})$ for any $n$, which implies $\mathcal{E}_{\mathfrak{p}}^{\text{top}}(\gamma_{n}|_{X_{+}})<\mathcal{L}_{\mathfrak{q}}([\mathfrak{b}_{0}])$ (because $\gamma|_{Z}$ is a flow line with limit $[\mathfrak{b}_{0}]$). By similar decomposition as in the proof of Lemma \ref{energy bound}, we can prove that
$$
\mathcal{L}_{\mathfrak{q}}(\gamma_{n}|_{\{\epsilon\}\times Y})=\mathcal{E}_{\mathfrak{p}}^{\text{top}}(\gamma_{n}|_{X''_{+}})>C,\ \mathcal{L}_{\mathfrak{q}}(\gamma_{n}|_{\{2\epsilon\}\times Y})=\mathcal{E}_{\mathfrak{p}}^{\text{top}}(\gamma_{n}|_{X'_{+}})>C.
$$
for some uniform constant $C$. This implies both \begin{equation}\label{energy control1}\mathcal{E}_{\mathfrak{q}}^{\text{top}}(\gamma_{n}|_{(-\infty,\epsilon]\times Y})<\mathcal{L}_{\mathfrak{q}}([\mathfrak{b}_{0}])-C\end{equation} and \begin{equation}\label{energy control2}\mathcal{E}_{\mathfrak{q}}^{\text{top}}(\gamma_{n}|_{(-\infty,2\epsilon]\times Y})<\mathcal{L}_{\mathfrak{q}}([\mathfrak{b}_{0}])-C.\end{equation}
By the same argument as proof of Lemma 16.3.1 in the book, condition (\ref{energy control1}) actually implies $$\Lambda_{\mathfrak{q}}(\gamma_{n}|_{\{t\}\times Y})\leq C',\ \forall t\in (-\infty,\epsilon]$$ for some constant $C'$. Now we apply Theorem \ref{local compactness} to show that after applying suitable gauge transformations $u_{n}:X_{+}\rightarrow S^{1}$ and passing to a subsequence, the restriction $u_{n}(\gamma_{n}|_{X_{+}})|_{X'_{+}}$ has a limit $C^{\sigma}_{k,\delta}(X')$. Since $\Lambda_{\mathfrak{q}}(\cdot)$ is gauge invariant, we get $$\Lambda_{\mathfrak{q}}(\gamma_{n}|_{\{2\epsilon\}\times Y})=\Lambda_{\mathfrak{q}}(u_{n}(\gamma_{n})|_{\{2\epsilon\}\times Y})\geq C''$$ for some uniform constant $C''$. Another application of Lemma 16.3.1 in the book provides a uniform lower bound
$$
\Lambda_{\mathfrak{q}}(\gamma_{n}|_{\{t\}\times Y})\geq C''',\ \forall t\in (-\infty,2\epsilon].
$$
Now the proof proceed exactly as in the book: We can show that after passing to a further subsequence, $\gamma_{n}|_{(-\infty,2\epsilon]}$ converges to a (possibly broken) half trajectory. Putting the two pieces $\gamma_{n}|_{(-\infty,2\epsilon]\times Y}$ and $\gamma_{n}|_{X'_{+}}$ together, we see that after passing to a subsequence and composing with suitable gauge transformations, $\gamma_{n}$ converges to a (possibly broken) $Z_{+}$-trajectory $\gamma_{\infty}$. By our regularity assumption, $\gamma_{\infty}$ can have a most one breaking point, whose absolute grading must be $2w(X,g_{X},0)$. In other words, the limit $\gamma_{\infty}$ represents a point of $\mathcal{M}^{+}([\mathfrak{b}_{0}],Z_{+})$.

We have shown that any sequence $[\gamma_{n}]\in \breve{\mathcal{M}}([\mathfrak{b}_{0}],Z_{+})$ contains convergent subsequence in  $\mathcal{M}^{+}([\mathfrak{b}_{0}],Z_{+})$. By a similar argument, we see that $\breve{\mathcal{M}}([\mathfrak{b}],Z_{+})$ contains at most finitely many elements for any $[\mathfrak{b}]$ with $\operatorname{gr}^{\mathds{Q}}([\mathfrak{b}])=-2w(X,g_{X},0)$. Since there are only finitely many critical points $[\mathfrak{b}]$ with $\operatorname{gr}^{\mathds{Q}}([\mathfrak{b}])=-2w(X,g_{X},0)$ and $\breve{\mathcal{M}}([b_{0}],[\mathfrak{b}])$ is a finite set for each of them, we see that $$\mathcal{M}^{+}([\mathfrak{b}_{0}],Z_{+})\setminus\breve{\mathcal{M}}([\mathfrak{b}_{0}],Z_{+}) =(\mathop{\cup}\limits_{ \operatorname{gr}^{\mathds{Q}}([\mathfrak{b}])=-2w(X,g_{X},0)}\breve{\mathcal{M}}([\mathfrak{b}_{0}],[\mathfrak{b}])\times \mathcal{M}([\mathfrak{b}],Z_{+}))$$
is a finite set. This finishes the proof of the theorem.
\end{proof}

\section{Proof of the theorem \ref{new obstruction}}
 Suppose $g_{X}$ has positive scalar curvature everywhere. We first prove that $-2\operatorname{h}(Y,\mathfrak{s})\leq 2\lambda_{SW}(X)$. Suppose this is not the case. Recall that $\lambda_{SW}(X)=-\omega(X,g_{X},0)$ by Lemma \ref{casson for psc}. By Assumption \ref{3 dimensional perturbation}, the perturbation $\mathfrak{q}$ is chosen so that the condition of Lemma \ref{alternative defi of Froyshov} is satisfied. As a result, we can find nonzero integers
$ n,m_{1},...,m_{l}$ and irreducible critical points $[\mathfrak{b}_{1}],...,[\mathfrak{b}_{l}]\in \mathfrak{C}^{o}$ with $\operatorname{gr}^{\mathds{Q}}([\mathfrak{b}_{l}])=-2w(X,g_{X},0)+1$ such that
\begin{equation}\label{b0 killed}
\partial^{o}_{o}(m_{1}[\mathfrak{b}_{1}]+...+m_{j}[\mathfrak{b}_{l}])=0 \text{ and } \partial^{o}_{s}(m_{1}[\mathfrak{b}_{1}]+...+m_{j}[\mathfrak{b}_{l}])=n[\mathfrak{a}_{0}].\end{equation}
Now consider the manifold $$\mathcal{M}=(\mathop{\cup}\limits_{\{l|m_{l}>0\}}m_{l}\cdot\mathcal{M}^{+}([\mathfrak{b}_{l}],Z_{+}))\cup( \mathop{\cup}\limits_{\{l|m_{l}<0\}}m_{l}\cdot \bar{\mathcal{M}}^{+}([\mathfrak{b}_{l}],Z_{+})$$
where $m_{l}\cdot*$ means the disjoint union $m_{l}$ copies and $\bar{\mathcal{M}}^{+}([\mathfrak{b}_{l}],Z_{+}))$ denotes the orientation reversal of $\mathcal{M}^{+}([\mathfrak{b}_{l}],Z_{+})$. By Theorem \ref{gluing}, Theorem \ref{orientation}, Theorem \ref{compactness} and condition (\ref{b0 killed}), $\mathcal{M}$ is an oriented, compact $1$-dimensional manifold with $$\#\partial\mathcal{M}=n\cdot\#\breve{\mathcal{M}}([\mathfrak{a}_{0}],Z_{+}),$$
where as before, $\#*$ denotes the number of points, counted with sign, in an oriented $0$-dimensional manifold. By Assumption \ref{4 dim perturbation} and Proposition \ref{only one reducible}, we get
$$\#\partial\mathcal{M}=n\cdot \pm 1=\pm n\neq 0,$$
which is impossible because we know that the number, counted with sign, of boundary points in any compact $1$-manifold should be $0$.
This contradiction finishes the proof of the inequality $\operatorname{h}(Y,\mathfrak{s})\leq 2\lambda_{SW}(X)$.

By applying the same argument to the manifold $-X$, we also get $-2\operatorname{h}(-Y,\mathfrak{s})\leq 2\lambda_{\textnormal{SW}}(-X)$, which implies $-2\operatorname{h}(Y,\mathfrak{s})\geq 2\lambda_{\textnormal{SW}}(X)$ by Lemma \ref{orientation reversal} and Lemma \ref{orientation reversal 2}. Therefore,  we have $-2\operatorname{h}(Y,\mathfrak{s})= 2\lambda_{\textnormal{SW}}(X)$  and the theorem is proved.

\appendix
\section{Laplace equation on end-periodic manifolds}
This appendix is devoted to proving Proposition  \ref{laplace equation} using Fourier-Laplace transform defined in \cite{Taubes}. Our argument closely follows with \cite{MRS} (where the corresponding problem for the Dirac operator was studied).

To begin with, let us review the definition of Fourier-Laplace tranform. Let $T:\tilde{X}\rightarrow \tilde{X}$ be the covering transformation sending $W_{m}$ to $W_{m+1}$. For $x\in \tilde{X}$ and $n\in \mathds{Z}$, we denote $T^{n}(x)$ by $x+n$. Given a function $u\in C^{\infty}_{0}(\tilde{X};\mathds{C})$ and a complex number $\mu\in \mathds{C}$, the Fourier-Laplace transform of $u$ is defined as
\begin{equation}\label{Fourier transform}
\hat{u}^{*}_{\mu}(x)=e^{\mu\tilde{f}(x)}\mathop{\Sigma}\limits_{n=-\infty}^{\infty}e^{\mu n}u(x+n).\end{equation}
(Recall that $\tilde{f}$ is the harmonic function on $\tilde{X}$ satisfying $\tilde{f}(x+1)=\tilde{f}(x)+1$.)
 It is easy to check that $\hat{u}^{*}_{\mu}(x)=\hat{u}^{*}_{\mu}(x+1)$ for any $x\in \tilde{X}$. Therefore, $\hat{u}^{*}_{\mu}$ descends to a function on $X$, which we denote by $\hat{u}_{\mu}$. A simple observation is that
 \begin{equation}\label{periodicity of Fourier-Laplace transform}
 \hat{u}_{\mu+2\pi i}(x)=e^{2\pi i f(x)}\hat{u}_{\mu}(x).
 \end{equation}
(Note that $e^{2\pi if(x)}$ is a well defined function on $X$.)

In order to recover $u$, it suffices to know $\{\hat{u}^{*}_{\mu}|\mu \in I(\nu)\}$ for any complex number $\nu$, where $I(\nu)=[\nu-\pi i,\nu+\pi i]\subset \mathds{C}$. The formula is
\begin{equation}\label{inverse Fourier transform}
u(x)=\frac{1}{2\pi i}\int_{I(\nu)}e^{-\mu \tilde{f}(x)}\hat{u}^{*}_{\mu}(x)d\mu \text{ for any }x\in \tilde{X}.
\end{equation}
For $\delta\in \mathds{R},j\in \mathds{Z}$, we denote by $L^{2}_{j;\delta,\delta}(\tilde{X};\mathds{C})$ the Hilbert space obtained from completing $C_{0}^{\infty}(\tilde{X};\mathds{C})$ with respect to the norm
$$
\|u\|_{L^{2}_{j;\delta,\delta}}:=\|e^{\delta \tilde{f}}u\|_{L^{2}_{j}}.
$$
Note that this is different with the space $L^{2}_{j;-\delta,\delta}(\tilde{X};\mathds{C})$ we considered before. We have the following lemma, which was essentially proved in \cite[Lemma 4.3]{MRS}.
\begin{lem}\label{holomorphic family}
Let $u\in L^{2}_{j;\delta,\delta}(\tilde{X};\mathds{C})$ be a smooth function. Then we have the following results:
\begin{itemize}
\item Suppose $u$ is supported in $\mathop{\cup}\limits_{n\geq m}W_{n}$ for some integer $m$.
Then we can extend the definition of the Fourier-Laplace transform and define
$$
\hat{u}_{\mu}\in L^{2}_{j}(X;\mathds{C}),\ \forall \mu \text{ with } \operatorname{Re}\mu<\delta.
$$
The family $\hat{u}_{\mu}$ is harmonic with respect to $\mu$ in the half plane $\operatorname{Re}\mu<\delta$.
\item Suppose $u$ is supported in $\mathop{\cup}\limits_{n\leq m}W_{n}$ for some integer $m$.
Then we can extend the definition of the Fourier-Laplace transform and define
$$
\hat{u}_{\mu}\in L^{2}_{j}(X;\mathds{C}),\ \forall \mu \text{ with } \operatorname{Re}\mu>\delta.
$$
The family $\hat{u}_{\mu}$ is harmonic with respect to $\mu$ in the half plane $\operatorname{Re}\mu>\delta$.
\end{itemize}
(Here harmonic means that locally we can write $\hat{u}_{\mu}$ as a power series in $\mu$, with coefficients in $L^{2}_{j}(X;\mathds{C})$, that converges in $L^{2}_{j}$.)
\end{lem}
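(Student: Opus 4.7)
The two cases are symmetric under $n \mapsto -n$ (equivalently, replacing $\tilde{f}$ by $-\tilde{f}$), so I focus on the first, where $u$ is supported in $\bigcup_{n \geq m} W_n$. My strategy is to bound each partial sum of the Fourier-Laplace series (\ref{Fourier transform}) in weighted $L^2_j$-norm uniformly on compact subsets of the half-plane $\{\operatorname{Re}\mu < \delta\}$, and then conclude holomorphy from the fact that a locally uniform limit of Banach-space-valued holomorphic functions is holomorphic.

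For the weighted bound, fix $x$ in a lift of the fundamental domain (say $W_0 \subset \tilde{X}$) and use $\tilde{f}(x+n) = \tilde{f}(x) + n$ to rewrite each summand as
$$e^{\mu n} u(x+n) = e^{(\mu - \delta) n}\, e^{-\delta \tilde{f}(x)}\, v_n(x), \qquad v_n(x) := u(x+n) \, e^{\delta \tilde{f}(x+n)}.$$
The change of variable $y = x+n$ gives $\sum_{n \geq m} \int_{W_0} |v_n(x)|^2\, d\operatorname{vol}(x) = \|u\|_{L^2_{0;\delta,\delta}(\tilde{X})}^2$, and Cauchy--Schwarz in $n$ yields
$$\Bigl|\sum_{n \geq m} e^{(\mu-\delta)n} v_n(x)\Bigr|^2 \leq \Bigl(\sum_{n \geq m} e^{2(\operatorname{Re}\mu - \delta) n}\Bigr) \cdot \sum_{n \geq m} |v_n(x)|^2,$$
in which the first factor converges precisely when $\operatorname{Re}\mu < \delta$ and is locally bounded there. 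Integrating over $W_0$ (using that $e^{2(\operatorname{Re}\mu - \delta)\tilde{f}(x)}$ is bounded on the compact set $W_0$) gives
$$\|\hat{u}_\mu\|_{L^2(X)} \leq C(\mu,\delta) \,\|u\|_{L^2_{0;\delta,\delta}(\tilde{X})},$$
with $C$ locally bounded on $\{\operatorname{Re}\mu < \delta\}$. For $j \geq 1$, the same argument applies after differentiating the series termwise: since the metric and connection on the end-periodic bundle are end-periodic and $\tilde{f}$ together with its derivatives is bounded on $W_0$, derivatives of $e^{\mu \tilde{f}}$ introduce only polynomial-in-$\mu$ factors, which are absorbed into a locally bounded constant $C_j(\mu, \delta)$.

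For holomorphy, each truncated sum
$$S_N(\mu)(x) := e^{\mu \tilde{f}(x)} \sum_{n=m}^{N} e^{\mu n} u(x + n),$$
regarded as a map $\mu \mapsto S_N(\mu) \in L^2_j(X; \mathds{C})$, is entire: it is a finite linear combination of fixed $L^2_j$-sections with coefficients that are holomorphic (indeed entire) in $\mu$. Applying the weighted estimate from the previous step to the restriction of $u$ to $\bigcup_{n > N} W_n$ shows $S_N(\mu) \to \hat{u}_\mu$ in $L^2_j(X)$, uniformly on every compact $K \subset \{\operatorname{Re}\mu < \delta\}$. A locally uniform limit of Banach-space-valued holomorphic maps is itself holomorphic (for instance by Morera's theorem applied to continuous linear functionals, or directly from the Cauchy integral formula), which yields both the holomorphy of $\hat{u}_\mu$ and its local representation as a power series in $\mu$ with $L^2_j(X; \mathds{C})$-valued coefficients.

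The only substantive point is the weighted bound in the $j \geq 1$ case, where one must keep careful track of the polynomial-in-$\mu$ factors produced by differentiating $e^{\mu \tilde{f}}$ and verify they remain locally uniformly bounded on compact subsets of the half-plane. Once this is secured, everything else reduces to Cauchy--Schwarz and standard facts about Banach-valued holomorphic functions.
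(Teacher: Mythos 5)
Your overall strategy (a weighted Cauchy--Schwarz estimate giving a bound $\|\hat u_\mu\|_{L^2_j(X)}\le C(\mu)\|u\|_{L^2_{j;\delta,\delta}}$ locally uniform in $\operatorname{Re}\mu<\delta$, followed by locally uniform approximation by entire Banach-valued families) is the right one; the paper itself offers no proof and simply cites \cite[Lemma 4.3]{MRS}, and your estimate step, including the change of variables $\tilde f(x+n)=\tilde f(x)+n$ and the treatment of derivatives for $j\ge 1$, is essentially the standard argument and is fine.

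The gap is in the holomorphy step. The truncated sum $S_N(\mu)(x)=e^{\mu\tilde f(x)}\sum_{n=m}^{N}e^{\mu n}u(x+n)$ is \emph{not} invariant under the deck transformation: one computes $S_N(\mu)(x+1)-S_N(\mu)(x)=e^{\mu\tilde f(x)}\bigl(e^{\mu(N+1)}u(x+N+1)-e^{\mu m}u(x+m)\bigr)$, which is nonzero in general. Hence $S_N(\mu)$ does not descend to $X$; its pushforward from the fundamental domain $W_0$ has a jump across $Y\subset X$ and therefore does not lie in $L^2_j(X;\mathds{C})$ for any $j\ge 1$, so the claim that it is ``a finite linear combination of fixed $L^2_j$-sections with entire coefficients'' fails (it also fails literally because the factor $e^{\mu\tilde f(x)}$ depends on $x$, though that is harmless: expanding $e^{\mu\tilde f}$ as a power series of multiplication operators with $C^j$-bounds on $\tilde f^k$ shows $\mu\mapsto e^{\mu\tilde f}w$ is entire). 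The same issue touches the conclusion that the limit $\hat u_\mu$ lies in $L^2_j(X)$: integrating your pointwise bound over $W_0$ controls the norm only once one knows there is no distributional contribution along $Y$, which is exactly what periodicity of the \emph{full} series provides but the partial sums destroy. The standard repair is to truncate $u$ rather than the series: set $u_N=\chi_N u$ with $\chi_N$ smooth cutoffs that are translates of a single cutoff in the periodic direction (so their derivative bounds are uniform). Each $\widehat{(u_N)}_\mu$ is genuinely deck-invariant, smooth, and entire as an $L^2_j(X)$-valued function (only finitely many nonzero terms at each point of the fundamental domain), and your own uniform estimate applied to the tails $u-u_N$, which are supported in $\bigcup_{n\ge N}W_n$ and tend to $0$ in $L^2_{j;\delta,\delta}$, gives $\widehat{(u_N)}_\mu\to\hat u_\mu$ in $L^2_j(X)$ locally uniformly on $\{\operatorname{Re}\mu<\delta\}$; the Banach-valued Weierstrass/Morera argument then yields both membership in $L^2_j(X)$ and the local power-series expansion. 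With that substitution your proof is complete.
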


Now we discuss the Fourier-Laplace transform of the Laplace operator. Consider the following operator
\begin{equation}\label{Fourier-Laplace operator upstairs}
C^{\infty}(\tilde{X};\mathds{C})\rightarrow C^{\infty}(\tilde{X};\mathds{C}): u\mapsto e^{\mu \tilde{f}}\Delta(e^{-\mu \tilde{f}}u)
\end{equation}
which is invariant under the covering transformation $T$. We denote by $\Delta_{\mu}$ the induced operator on $X$. It is not hard to prove that
\begin{equation}\label{periodicity of Laplace}
\Delta_{\mu+2\pi i}u=e^{2\pi i f}\Delta_{\mu}(e^{-2\pi if }\cdot u)
\end{equation}
We call $\Delta_{\mu}$ the Fourier-Laplace transform of $\Delta$ because $$
\widehat{(\Delta u)}_{\mu}=\Delta_{\mu}(\hat{u}_{\mu})\text{ for any }u\in C^{\infty}_{0}(\tilde{X};\mathds{C}).
$$
Since $\tilde{f}$ is harmonic, we have a simple formula for $\Delta_{\mu}$:
\begin{equation}\label{twisted Laplace}
\Delta_{\mu}u=\Delta u-2\mu \langle du,f^{*}(d\theta)\rangle +\mu^{2} |f^{*}(d\theta)|^{2}\cdot u,
\end{equation}
where we use the metric $g_{X}$ to define the inner product $\langle\cdot, \cdot\rangle $ and the norm $|\cdot |$ on $T^{*}X$. We can extend $\Delta_{\mu}$ to a Fredholm operator
$$
\Delta_{\mu}:L^{2}_{j+2}(X;\mathds{C})\rightarrow L^{2}_{j}(X;\mathds{C}),
$$
for any non-negative integer $j$, which we fix from now on.
Just like $\hat{u}_{\mu}$, the operator $\Delta_{\mu}$ is holomorpic in $\mu$.
\begin{lem}\label{invertible on the image axis}
$\Delta_{\mu}$ is invertible for $\mu\in i\mathds{R}\setminus 2\pi i\mathds{Z}$.
\end{lem}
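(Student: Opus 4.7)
The plan is to combine an index computation with a kernel-triviality argument. Since $\Delta_\mu$ differs from $\Delta$ only by lower-order terms (visible in formula (\ref{twisted Laplace})), the principal symbol is unchanged and $\Delta_\mu$ is a holomorphic family of Fredholm operators $L^2_{j+2}(X;\mathds{C})\to L^2_j(X;\mathds{C})$. By homotopy invariance of the index under lower-order (hence compact) perturbations, $\operatorname{ind}\Delta_\mu = \operatorname{ind}\Delta_0 = \operatorname{ind}\Delta = 0$. Hence invertibility is equivalent to the triviality of $\ker\Delta_\mu$.

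To prove $\ker\Delta_\mu=0$ for $\mu\in i\mathds{R}\setminus 2\pi i\mathds{Z}$, suppose $u\in L^2_{j+2}(X;\mathds{C})$ satisfies $\Delta_\mu u=0$. By elliptic regularity, $u$ is smooth. Pull $u$ back to $\tilde X$ and set $\tilde u(x) = e^{-\mu\tilde f(x)}u(x)$. The defining relation (\ref{Fourier-Laplace operator upstairs}) shows $\Delta\tilde u=0$ on $\tilde X$, i.e., $\tilde u$ is harmonic. Since $\tilde f(x+1)=\tilde f(x)+1$, the covering law reads $\tilde u(x+1)=e^{-\mu}\tilde u(x)$, and because $\mu\in i\mathds{R}$ we have $|e^{-\mu}|=1$, so $|\tilde u|^2$ is $T$-invariant and descends to a smooth nonnegative function on the closed manifold $X$.

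The key computation is the standard Bochner identity: for a $\mathds{C}$-valued harmonic function $\tilde u$ one has $\Delta|\tilde u|^2 = -2|\nabla\tilde u|^2 \leq 0$ (in the analysts' sign convention $\Delta=\nabla^*\nabla$). Thus $|\tilde u|^2$ is superharmonic on the closed manifold $X$, and by the strong maximum principle it is constant; in particular $\nabla\tilde u\equiv 0$, so $\tilde u$ is a constant $c$ on $\tilde X$. The transformation law $c = e^{-\mu}c$ forces either $c=0$ or $\mu\in 2\pi i\mathds{Z}$; the latter is excluded, so $\tilde u\equiv 0$ and $u\equiv 0$.

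The argument is genuinely routine once one has the two ingredients; I do not foresee a hard step. The only item that needs care is ensuring the reduction of $|\tilde u|^2$ to a globally defined function on the compact $X$ (which uses $\mu\in i\mathds{R}$ in an essential way) and the sign in the Bochner identity. Note that the argument pinpoints why the exceptional set is exactly $2\pi i\mathds{Z}$: these are precisely the values of $\mu$ for which $e^{-\mu\tilde f}$ is itself the pullback of a function on $X$, in which case $\Delta_\mu$ is conjugate to $\Delta$ and so has one-dimensional kernel spanned by the constants.
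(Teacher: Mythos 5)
Your proposal is correct and follows essentially the same route as the paper: reduce to kernel triviality via the vanishing of $\operatorname{ind}\Delta_{\mu}$, lift to $\tilde{X}$ where $w=e^{-\mu\tilde f}\tilde u$ is harmonic with $|w|$ invariant under the covering transformation (since $\operatorname{Re}\mu=0$), and conclude constancy by a maximum-principle argument, so that $c=e^{-\mu}c$ forces $c=0$. Your descent of $|\tilde u|^{2}$ to the closed manifold $X$ together with the Bochner identity is just a careful implementation of the paper's appeal to the maximum principle (note only that with $\Delta=\nabla^{*}\nabla$ the inequality $\Delta|\tilde u|^{2}\le 0$ makes $|\tilde u|^{2}$ subharmonic in the classical sense, not superharmonic, though the conclusion is unaffected).
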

\begin{proof}
Suppose $\Delta_{\mu}u=0$ for some $\mu\in i\mathds{R}\setminus 2\pi i\mathds{Z}$ and $u\in L^{2}_{j+2}(X;\mathds{C})$. Then $w=e^{-\mu \tilde{f}}\tilde{u}$ is a harmonic function on $\tilde{X}$, where $\tilde{u}$ is a lift of $u$ to $\tilde{X}$. Notice that $w(x+1)=e^{\mu}w(x)$ for any $x$, which implies $|w(x+1)=|w(x)|$ because $\operatorname{Re}\mu=0$. By maximal principle, $w$ equals a constant $C$ satisfying $C=e^{\mu}C$. Since $e^{\mu}\neq 1$, we see that $w$ (and hence $u$) must be $0$. We have proved that $\Delta_{\mu}$ has trivial kernel for any  $\mu\in i\mathds{R}\setminus 2\pi i\mathds{Z}$, which implies the lemma because the index of $\Delta_{\mu}$ is always $0$.
\end{proof}
By \cite[Lemma 4.5]{Taubes}, we have the following corollary.
\begin{cor}\label{invertible near y-axis}
$\Delta_{\mu}$ is invertible for all $\mu \in \mathds{C}\setminus S$ where $S$ is a discrete set invariant under the translation $\mu\mapsto \mu+2\pi i$. $S$ does not intersect $i\mathds{R}\setminus 2\pi i\mathds{Z}$ and has no accumulation point (in $\mathds{C}$). In particular, there exists a constant $\delta_{4}>0$ such that $\Delta_{\mu}$ is invertible for any $\mu$ satisfying
$
-\delta_{4}\leq \operatorname{Re}\,\mu\leq \delta_{4},\  \mu\notin
2\pi i\mathds{Z}.
$
\end{cor}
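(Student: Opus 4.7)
The plan is to invoke the general analytic Fredholm theory, in the form of \cite[Lemma 4.5]{Taubes}, applied to the family $\{\Delta_{\mu}\}_{\mu\in\mathds{C}}$, and then combine it with the periodicity relation \eqref{periodicity of Laplace} and Lemma \ref{invertible on the image axis}. The key input needed is that $\{\Delta_\mu\}$ is a holomorphic family of Fredholm operators of index $0$, which is already manifest from the explicit formula \eqref{twisted Laplace}: the map $\mu \mapsto \Delta_\mu$ is a polynomial of degree $2$ in $\mu$ with coefficients that are bounded operators $L^{2}_{j+2}(X;\mathds{C})\to L^{2}_{j}(X;\mathds{C})$, and since $\Delta_{\mu}$ differs from the Fredholm, index-$0$ operator $\Delta_{0}=\Delta$ by a first-order operator, it remains Fredholm of index $0$ for every $\mu$.

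First I would let $S\subset \mathds{C}$ denote the set of $\mu$ for which $\Delta_{\mu}$ fails to be invertible (equivalently, has nontrivial kernel, since the index is $0$). Analytic Fredholm theory for a holomorphic family on the connected open set $\mathds{C}$ gives the following dichotomy: either $S=\mathds{C}$, or $S$ is a discrete subset of $\mathds{C}$ with no accumulation point in $\mathds{C}$. Since Lemma \ref{invertible on the image axis} produces an entire ray $i\mathds{R}\setminus 2\pi i\mathds{Z}$ of invertible points, the first alternative is impossible, so $S$ is discrete and $S\cap (i\mathds{R}\setminus 2\pi i\mathds{Z})=\emptyset$.

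Next I would check translation invariance of $S$. The relation \eqref{periodicity of Laplace} asserts
\[
\Delta_{\mu+2\pi i}\,u \;=\; e^{2\pi i f}\,\Delta_{\mu}\bigl(e^{-2\pi i f}\,u\bigr),
\]
so conjugation by the (bounded, invertible) multiplication operator $u\mapsto e^{-2\pi i f}u$ intertwines $\Delta_{\mu}$ and $\Delta_{\mu+2\pi i}$. Thus $\Delta_{\mu+2\pi i}$ is invertible iff $\Delta_{\mu}$ is, which shows that $S$ is invariant under $\mu\mapsto\mu+2\pi i$, as claimed.

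Finally I would extract $\delta_{4}>0$ from a compactness argument. A fundamental domain for the $2\pi i\mathds{Z}$-action that covers the vertical strip $\{|\mathrm{Re}\,\mu|\leq 1\}$ is the rectangle $R=\{\mu:-1\leq\mathrm{Re}\,\mu\leq 1,\ -\pi\leq\mathrm{Im}\,\mu\leq\pi\}$. Since $S\cap R$ is discrete in the compact set $R$, it is finite, and since $S\cap R$ meets the imaginary axis only possibly at $\mu=0$, one can choose $\delta_{4}\in(0,1)$ small enough that the only point of $S$ in the smaller strip $\{-\delta_{4}\leq\mathrm{Re}\,\mu\leq\delta_{4}\}\cap R$ lying on or near the imaginary axis is $0$. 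Translating by $2\pi i\mathds{Z}$ then gives the statement of the corollary. I do not anticipate a genuine obstacle here; the only subtlety to be careful about is checking that the hypotheses of Taubes's Lemma 4.5 (holomorphic dependence, Fredholm property on a connected domain, invertibility at one point) are met in the generality of operators between weighted Sobolev spaces, which is clear from \eqref{twisted Laplace}.
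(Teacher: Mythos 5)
Your proposal is correct and follows essentially the same route as the paper, which deduces the corollary directly from Lemma \ref{invertible on the image axis} by citing \cite[Lemma 4.5]{Taubes} (analytic Fredholm theory for the holomorphic, index-zero family $\Delta_{\mu}$), with the periodicity \eqref{periodicity of Laplace} and a fundamental-domain/compactness argument giving the $2\pi i$-invariance of $S$ and the constant $\delta_{4}$. You have simply spelled out the details that the paper leaves to the citation.
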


Denote by $R_{\mu}$ the inverse of the holomorphic family $\Delta_{\mu}$. Then $R_{\mu}$ is holomorphic in $\mu$ for $\mu$ in the region $\mathds{C}\setminus S$. By (\ref{Fourier-Laplace operator upstairs}), we have
\begin{equation}\label{periodicity of inverse Laplace}
R_{\nu+2\pi i}(e^{2\pi if}u)=e^{2\pi i f}\cdot R_{\nu}(u).
\end{equation}

\begin{lem}\label{0 is second order}
$0$ is a pole of $R_{\mu}$ with order $2$.
\end{lem}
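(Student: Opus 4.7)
The plan is to study the pole of $R_\mu$ at $\mu = 0$ by a block decomposition adapted to $\ker\Delta_0 = \ker\Delta$. Since $X$ is closed and connected, $\Delta : L^2_{j+2}(X;\mathds{C}) \to L^2_j(X;\mathds{C})$ is a self-adjoint Fredholm operator of index zero whose kernel and cokernel are both the line of constant functions. I will let $P : L^2_j(X;\mathds{C}) \to \mathds{C}$ denote the $L^2$-projection onto constants, $Pu = \operatorname{vol}(X)^{-1}\int_X u\,d\text{vol}$, split both $L^2_{j+2}$ and $L^2_j$ as $\mathds{C}\cdot 1 \oplus \ker P$, and invert $\Delta_\mu$ in this decomposition via the Schur complement. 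Using the explicit formula \eqref{twisted Laplace}, I will write $\Delta_\mu = \Delta + \mu A + \mu^2 B$, where $Au = -2\langle du,\, f^{*}(d\theta)\rangle$ and $Bu = |f^{*}(d\theta)|^2\, u$.

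The crucial observation is that the first order term has vanishing ``constant'' blocks: since $d(1) = 0$ one has $A(1) = 0$, and for any $v$ the harmonicity of $f$ yields
\[
\int_X Av\,d\text{vol} \;=\; -2\int_X \langle dv, f^{*}(d\theta)\rangle\,d\text{vol} \;=\; -2\int_X v\,\Delta f\,d\text{vol} \;=\; 0,
\]
so $PA \equiv 0$ on $L^2_{j+2}$. Consequently the matrix of $\Delta_\mu$ in the decomposition $\mathds{C}\cdot 1 \oplus \ker P$ has the form
\[
\Delta_\mu \;=\; \begin{pmatrix} \mu^2 \alpha & \mu^2 b_0(\mu) \\ \mu^2 c_0 & \Delta' + \mu D_1 + \mu^2 D_2 \end{pmatrix},
\]
where $\alpha := \operatorname{vol}(X)^{-1}\int_X |f^{*}(d\theta)|^2\,d\text{vol} > 0$ (strictly positive because $f^{*}(d\theta)$ represents the nonzero class $[1] \in H^1(X;\mathds{R})$), $c_0$ is multiplication by the function $|f^{*}(d\theta)|^2 - \alpha \in \ker P$, and $b_0(\mu), D_1, D_2$ are bounded operators holomorphic in $\mu$. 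The block $\Delta' : \ker P \cap L^2_{j+2} \to \ker P \cap L^2_j$ is the restriction of $\Delta$, which is an isomorphism.

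Because $\Delta'$ is invertible, $d(\mu) := \Delta' + \mu D_1 + \mu^2 D_2$ is invertible for all sufficiently small $\mu$, and the Schur complement formula gives
\[
R_\mu\bigl|_{(1,1)} \;=\; \bigl(\mu^2\alpha \;-\; \mu^4\, b_0(\mu)\,d(\mu)^{-1}c_0\bigr)^{-1} \;=\; \mu^{-2}\alpha^{-1}\bigl(1 + O(\mu^2)\bigr),
\]
which is a pole of order exactly $2$. The remaining three blocks of $R_\mu$ each acquire a compensating factor of $\mu^2$ from $b_0$ or $c_0$, so they stay bounded as $\mu \to 0$; hence $R_\mu$ itself has a pole of order exactly $2$ at $\mu = 0$. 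The main subtlety -- and the reason the pole has order $2$ rather than $1$ -- is the vanishing $PA = 0$, which relies crucially on the harmonicity of $f$; without it the leading block $a(\mu)$ would be $O(\mu)$ and the same argument would give only a simple pole.
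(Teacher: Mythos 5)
Your proof is correct and follows essentially the same route as the paper: the same splitting $L^{2}=\mathds{C}\cdot 1\oplus\ker P$, the same key vanishings ($\Delta 1=0$, $d1=0$, and harmonicity of $f$ killing the constant component of the first-order term) and the same positivity of $\alpha=C$, with the invertibility of $\Delta$ on mean-zero functions doing the rest. The only cosmetic difference is that you invert the block matrix via the Schur complement, whereas the paper factors $\Delta_{\mu}$ as an invertible holomorphic family times $\operatorname{diag}(1,\mu^{2})$; the conclusion and the mechanism are identical.
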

\begin{proof}
We can write down the explicit formula of $R_{\mu}$ for small $\mu$. To do this, we consider the Hilbert space
$$
V_{l}:=\{u\in L^{2}_{l}(X;\mathds{C})| \int u\,d\text{vol}=0\}
$$
for $l=j,j+2$ and fix the identification
$$
L^{2}_{l}(X;\mathds{C})\cong V_{l}\oplus \mathds{C}: u\leftrightarrow (u-\frac{\int u\,d\text{vol}}{\text{vol}(X)}, \frac{\int u\,d\text{vol}}{\text{vol}(X)}).
$$
By elementary calculation involving (\ref{twisted Laplace}), we see that under this identification, the operator $\Delta_{\mu}:V_{j+2}\oplus \mathds{C}\rightarrow V_{j}\oplus \mathds{C}$ can be represented by the matrix
$$\left(\left(\begin{array} {cc}
 \Delta|_{V_{j+2}} & D_{3} \\
 0 & C
\end{array}\right)+ \left(\begin{array} {cc}
 -2\mu D_{1}+ \mu^{2}D_{2} & 0 \\
 \mu^{2}D_{4} & 0
\end{array}\right)\right)\cdot \left(\begin{array} {cc}
 1 & 0 \\
 0 & \mu^{2}
\end{array}\right) $$
where $C= \tfrac{\int|f^{*}(d\theta)|^{2}\,d\text{vol}}{\text{vol}(X)}\in  \mathds{R}$ and $D_{i}$ $(i=1,2,3,4)$ are certain bounded operators (independent of $\mu$) whose specific forms are not important for us. Since $C>0$ and $\Delta|_{V_{j+2}}:V_{j+2}\rightarrow V_{j}$ is an isomorphism, we see that when $|\mu|$ is small, the operator
$$
\left(\begin{array} {cc}
 \Delta|_{V_{j+2}} & D_{3} \\
 0 & C
\end{array}\right)+ \left(\begin{array} {cc}
 -2\mu D_{1}+ \mu^{2}D_{2} & 0 \\
 \mu^{2}D_{4} & 0
\end{array}\right)
$$
is invertible and $R_{\mu}$ equals
$$\left(\begin{array} {cc}
 1 & 0 \\
 0 & \mu^{-2}
\end{array}\right)\cdot \left(\left(\begin{array} {cc}
 \Delta|_{V_{j+2}} & D_{3} \\
 0 & C
\end{array}\right)+ \left(\begin{array} {cc}
 -2\mu D_{1}+ \mu^{2}D_{2} & 0 \\
 \mu^{2}D_{4} & 0
\end{array}\right)\right)^{-1}.$$
This finishes the proof.
\end{proof}
Now we come to the key lemma in our argument.
\begin{lem}\label{Solving laplace equation on covering space}
There exists a (small) constant $\delta_{5}>0$ with the following significance: any function $u\in L^{2}_{\operatorname{loc}}(\tilde{X};\mathds{C})$ satisfying
\begin{equation}\label{exp bounded}\int e^{-\delta_{5}|\tilde{f}|}|u|^{2}d\operatorname{vol}<+\infty,\  \Delta u=0\end{equation} should equal  $a+b\tilde{f}$ for some $a,b\in \mathds{C}$.
\end{lem}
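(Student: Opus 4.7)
The plan is to prove the lemma using the Fourier-Laplace machinery developed in the appendix, following the general strategy of \cite{MRS}.

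First, I would split $u$ using a smooth cutoff $\chi : \tilde{X} \to [0,1]$ depending only on $\tilde{f}$, with $\chi = 0$ for $\tilde{f} \le -1$ and $\chi = 1$ for $\tilde{f} \ge 1$. Setting $u_+ = \chi u$ and $u_- = (1-\chi)u$, we get $u = u_+ + u_-$, and since $\Delta u = 0$, the function $g := \Delta u_+ = -\Delta u_-= [\Delta,\chi]u$ is compactly supported (supported where $d\chi\neq 0$). The hypothesis $\int e^{-\delta_5|\tilde{f}|}|u|^2\,d\operatorname{vol} < \infty$ means $u_+ \in L^2_{0;-\delta_5/2,-\delta_5/2}(\tilde{X};\mathds{C})$ and $u_- \in L^2_{0;\delta_5/2,\delta_5/2}(\tilde{X};\mathds{C})$, so by Lemma \ref{holomorphic family}, the Fourier-Laplace transforms $\widehat{u_+}_\mu$ and $\widehat{u_-}_\mu$ are holomorphic families of $L^2_j(X;\mathds{C})$ functions on the half-planes $\operatorname{Re}\mu < -\delta_5/2$ and $\operatorname{Re}\mu > \delta_5/2$ respectively (using elliptic regularity to bootstrap smoothness). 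Since $g$ is compactly supported, $\hat{g}_\mu$ is entire.

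Next, the equation $\Delta u_\pm = \pm g$ transforms to $\Delta_\mu \widehat{u_\pm}_\mu = \pm \hat{g}_\mu$. Choosing $\delta_5 > 0$ small enough so that, by Corollary \ref{invertible near y-axis}, the only point of $S$ in $\{|\operatorname{Re}\mu|\le \delta_5\}\cap\{-\pi < \operatorname{Im}\mu \le \pi\}$ is $\mu = 0$, we obtain meromorphic extensions $\widehat{u_+}_\mu = R_\mu \hat{g}_\mu$ and $\widehat{u_-}_\mu = -R_\mu \hat{g}_\mu$ to all of $\mathds{C}\setminus S$. Applying the inverse transform (\ref{inverse Fourier transform}) on lines $I(\nu_-)$ with $\nu_- > \delta_5/2$ and $I(\nu_+)$ with $\nu_+ < -\delta_5/2$, and then shifting $\nu_-$ to $\nu_+$, the horizontal segments at $\operatorname{Im}\mu = \pm \pi$ cancel by the periodicity (\ref{periodicity of Fourier-Laplace transform}), and the residue theorem yields
\[
u_-(x) = \frac{1}{2\pi i}\int_{I(\nu_+)} e^{-\mu \tilde{f}(x)}\widehat{u_-}^*_\mu(x)\,d\mu + \operatorname{Res}_{\mu=0}\!\left[e^{-\mu \tilde{f}(x)}\,\widehat{u_-}^*_\mu(x)\right].
\]
Since the meromorphic extensions satisfy $\widehat{u_-}_\mu = -\widehat{u_+}_\mu$, summing with $u_+$ shows that the line integrals cancel, and $u(x) = \operatorname{Res}_{\mu=0}\bigl[e^{-\mu \tilde{f}(x)}(-R_\mu\hat{g}_\mu)^*(x)\bigr]$.

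Finally, I would compute this residue using Lemma \ref{0 is second order}. From the matrix description there, in the decomposition $L^2_j(X;\mathds{C})\cong V_j\oplus\mathds{C}$, the inverse $R_\mu \hat{g}_\mu$ has the form $\alpha(\mu) + \mu^{-2}\beta(\mu)$, where $\alpha(\mu)\in V_{j+2}$ and $\beta(\mu)\in\mathds{C}$ are holomorphic in $\mu$ near $0$. Hence $R_\mu\hat{g}_\mu = \beta(0)/\mu^2 + \beta'(0)/\mu + (\text{holomorphic})$, with the pole coefficients being \emph{constants} on $X$. Expanding $e^{-\mu\tilde{f}(x)} = 1 - \mu\tilde{f}(x) + O(\mu^2)$, the residue evaluates to $\beta'(0) - \tilde{f}(x)\beta(0)$, which is of the form $a + b\tilde{f}(x)$ for constants $a,b\in\mathds{C}$, completing the proof.

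The main obstacle I expect is the technical verification that the inversion formula (\ref{inverse Fourier transform}), originally stated for $C_0^\infty$ functions, applies to $u_\pm$, which have only one-sided exponential control; this is a Paley-Wiener type argument using that $\widehat{u_\pm}_\mu$ is an $L^2_j(X)$-valued holomorphic family on an appropriate half-plane with sufficient decay as $|\operatorname{Im}\mu|\to \infty$. The second subtlety is justifying the contour shift near $\mu=0$, which requires the smallness of $\delta_5$ via Corollary \ref{invertible near y-axis} to rule out any pole other than $\mu = 0$ in a single fundamental domain of the $2\pi i$-periodicity.
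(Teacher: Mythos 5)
Your proposal is correct and follows essentially the same route as the paper's own proof: split $u$ by a cutoff into pieces with one-sided exponential control, Fourier--Laplace transform each piece, extend via $R_\mu$, shift contours using the $2\pi i$-periodicity so that only the residue at $\mu=0$ survives, and identify that residue as $a+b\tilde{f}$ using Lemma \ref{0 is second order}. The only (harmless) variation is in the last step: you read off that the polar coefficients are constant functions directly from the matrix form of $R_\mu$ in the proof of Lemma \ref{0 is second order}, whereas the paper deduces constancy of the Laurent coefficients $h_{-2},h_{-1}$ from the equations $\Delta h_{-2}=0$ and $\Delta h_{-1}=2\langle dh_{-2},f^{*}(d\theta)\rangle$.
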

\begin{proof}
Let $\delta_{5}$ be any positive constant smaller than the constant $\delta_{4}$ in Corollary \ref{invertible near y-axis}. Suppose we have a harmonic function $u$ satisfying (\ref{exp bounded}). By standard elliptic bootstrapping argument, $u$ is smooth and belongs to $L^{2}_{j+2;\,\delta_{5},-\delta_{5}}(\tilde{X};\mathds{C})$. We choose a smooth cut-off function $\zeta: \tilde{X}\rightarrow [0,1]$ with the property that
$$\zeta\equiv1 \text{ on }\mathop{\cup}\limits_{n\leq -1}W_{n},\  \zeta\equiv 0\text{ on }\mathop{\cup}\limits_{n\geq 1}W_{n}.$$ Consider the functions $$v=(1-\zeta)u\in L^{2}_{j;-\delta_{5},-\delta_{5}}(\tilde{X};\mathds{C}),\ w=\zeta\cdot u\in L^{2}_{j;\,\delta_{5},\delta_{5}}(\tilde{X};\mathds{C}).$$
Then we have $$\Delta v=\kappa =-\Delta w$$ where $\kappa$ is a smooth function supported on $W_{0}$. Applying the Fourier-Laplace transform, by Lemma \ref{holomorphic family}, we get
$$
\Delta_{\mu}\hat{v}_{\mu}=\hat{\kappa}_{\mu} \text{ for }\text{Re}\,\mu< -\delta_{5}\text{ and } \Delta_{\mu}\hat{w}_{\mu}=-\hat{\kappa}_{\mu} \text{ for }\text{Re}\,\mu> \delta_{5}.
$$
By Lemma \ref{invertible near y-axis}, this implies
$$
\hat{v}_{\mu}=R_{\mu}\hat{\kappa}_{\mu} \text{ for }\mu\in I(-\delta_{4})\text{ and } \hat{w}_{\mu}=-R_{\mu}\hat{\kappa}_{\mu} \text{ for }\mu\in I(\delta_{4}).
$$
(Recall that $I(\nu)=[\nu-\pi i,\nu+\pi i]$.)
Now we use the Fourier inversion formula (\ref{inverse Fourier transform}) to get
\begin{equation}
u(x)=v(x)+w(x)=\frac{1}{2\pi i}(\int_{I(-\delta_{4})}e^{-\mu\tilde{f}(x)}(R_{\mu}\hat{\kappa}_{\mu})^{*}(x)d\mu-\int_{I(\delta_{4})}e^{-\mu\tilde{f}(x)}(R_{\mu}\hat{\kappa}_{\mu})^{*}(x)d\mu) \end{equation}
where $(R_{\mu}\hat{\kappa}_{\mu})^{*}$ denotes the lift of $R_{\mu}\hat{\kappa}_{\mu}$ to $\tilde{X}$.
Notice that the function $e^{-\mu\tilde{f}}(R_{\mu}\hat{\kappa}_{\mu})^{*}$ can actually be defined for any $\mu \in \mathds{C}\setminus S$ and is invariant under the translation $\mu\mapsto \mu+2\pi i$ by (\ref{periodicity of Fourier-Laplace transform}) and (\ref{periodicity of inverse Laplace}). As a result, we have
\begin{equation}\label{residue}
u(x)=\frac{1}{2\pi i}\int_{\partial \Gamma}e^{-\mu\tilde{f}(x)}(R_{\mu}\hat{\kappa}_{\mu})^{*}(x)d\mu=\text{Res}_{0}(e^{-\mu\tilde{f}(x)}(R_{\mu}\hat{\kappa}_{\mu})^{*}(x)),
\end{equation}
where $\partial\Gamma$ denotes the boundary of the domain $$\Gamma:=\{x+yi|(x,y)\in [-\delta_{4},\delta_{4}]\times [-\pi,\pi]\}$$ and $\text{Res}_{0}(e^{-\mu\tilde{f}(x)}(R_{\mu}\hat{\kappa}_{\mu})^{*}(x))$ denotes the residue at $\mu=0$ of $e^{-\mu\tilde{f}(x)}(R_{\mu}\hat{\kappa}_{\mu})^{*}(x)$ as a harmonic function on $\mu$ (with $x$ fixed). Here we use the fact that $0$ is the only pole in $\Gamma$ (see Corollary \ref{invertible near y-axis}).

To compute the residue, we consider the Laurent series of $R_{\mu}\hat{\kappa}_{\mu}$ near $\mu=0$. Since $\kappa$ is compactly supported and smooth, $\hat{\kappa}_{\mu}$ is holomorphic over the whole complex plane.  By Lemma \ref{0 is second order}, we can write
the Laurent series as
$$
R_{\mu}\hat{\kappa}_{\mu}=\mathop{\Sigma}^{\infty}_{l=-2}h_{l}\cdot \mu^{l}
$$
with $h_{l}\in L^{2}_{j+2}(X;\mathds{C})$. By (\ref{twisted Laplace}), we have
$$
\kappa_{\mu}=\Delta_{\mu}R_{\mu}\hat{\kappa}_{\mu}=\Delta h_{-2}\cdot \mu^{-2}+(\Delta h_{-1}-2\langle dh_{-2},f^{*}(d\theta)\rangle)\cdot \mu^{-1}+\mathop{\Sigma}^{\infty}_{l=0}c_{l}\cdot \mu^{l}.
$$
Since $\hat{\kappa}_{\mu}$ is holomorphic at $0$. We get
$$
\Delta h_{-2}=\Delta h_{-1}-2\langle dh_{-2},f^{*}(d\theta)\rangle=0.
$$
This implies that $h_{-1},h_{-2}$ are both constant functions. Since
$$
\text{Res}_{0}(e^{-\mu\tilde{f}(x)}(R_{\mu}\hat{\kappa}_{\mu})^{*}(x))=h_{-1}^{*}(x)-h^{*}_{-2}(x)\tilde{f}(x)
$$
where $h_{-1}^{*},h_{-2}^{*}$ are lifts of $h_{-1}$ and $h_{-2}$. The lemma is proved.
\end{proof}
\begin{proof}[Proof of Proposition \ref{laplace equation}]Let $\delta_{0}$ a positive number less than $\min(\delta_{4},\delta_{5},\delta_{6})$, where $\delta_{4},\delta_{5}$ are constants in Lemma \ref{invertible near y-axis} and Lemma \ref{Solving laplace equation on covering space} respectively and $\delta_{6}$ equals the smallest positive eigenvalue of $\Delta(Y)$ (the Laplace operator over $Y$). For any $\delta\in (0,\delta_{0})$, let us check the Fredholm properties of the operators one by one.

First consider $\Delta(\tilde{X};-\delta,\delta)$, since $\Delta_{\mu}$ is invertible for any $\mu$ with $\text{Re}\,\mu=\pm \delta$, by \cite[Lemma 4.3]{Taubes}, the operator $\Delta(\tilde{X};-\delta,\delta)$ is Fredholm. By the maximum principle,  the operator $\Delta(\tilde{X};-\delta,\delta)$ has trivial kernel. The cokernel of $\Delta(\tilde{X};-\delta,\delta)$ is isomorphic to the kernel of its adjoint operator, which is the Laplace operator from $L^{2}_{-j-2;\,\delta,-\delta}(\tilde{X},\mathds{R})$ to $L^{2}_{-j;\,\delta,-\delta}(\tilde{X},\mathds{R})$. By elliptic bootstrapping and Proposition \ref{Solving laplace equation on covering space}, we get
$$\text{dim}(\operatorname{coker}\,\Delta(\tilde{X};-\delta,\delta))=2.$$

Now consider the operator $\Delta(M_{+};\delta)$. By \cite[Lemma 4.3]{Taubes} again, this operator is Fredholm. To compute its index, we consider its adjoint operator $\Delta(M_{+};-\delta)$. We have
$$
2\operatorname{ind}\,\Delta(M_{+};\delta)=\operatorname{ind}\,\Delta(M_{+};\delta)-\operatorname{ind}\,\Delta(M_{+};-\delta)=\operatorname{ind}\,\Delta(\tilde{X};-\delta,\delta)=2.
$$
The second equality above uses the excision principle of index (see \cite[Proposition 6.1]{MRS}). By the maximum principle, the kernel of $\Delta(M_{+};\delta)$
is trivial. Therefore, the operator $\Delta(M_{+};\delta)$ has $1$-dimensional cokernel.

Now we consider the operator $\Delta(X_{+};\delta)$. By classical results on the Laplace equation with Neumann boundary condition, both the operator
$$
\Delta(\bar{M}):L^{2}_{j+2}(\bar{M};\mathds{R})\rightarrow L^{2}_{j}(\bar{M};\mathds{R})\oplus L^{2}_{j+1/2}(Y,\mathds{R})$$ $$u\mapsto (\Delta u,\langle du,\vec{v}\rangle)$$
and the operator
$$
\Delta(M\cup_{Y}\bar{M}):L^{2}_{j+2}(M\cup_{Y}\bar{M};\mathds{R})\rightarrow L^{2}_{j}(M\cup_{Y}\bar{M};\mathds{R})$$ $$u\mapsto \Delta u$$
are Fredholm with index $0$. By the excision principle relating
$\Delta(X_{+};\delta)\oplus \Delta(M\cup_{Y}\bar{M})$ with $\Delta(\bar{M})\oplus\Delta(M_{+};\delta)$, we see that $\Delta(X_{+};\delta)$ is Fredholm and
$$
\operatorname{ind}\Delta(X_{+};\mathds{R})=\operatorname{ind}\Delta(M\cup_{Y} \bar{M})+\operatorname{ind}\Delta(M_{+};\delta)-\operatorname{ind}\Delta(M\cup_{Y}\bar{M})=-1.
$$
Suppose $u\in \operatorname{ker}\Delta(X_{+};\mathds{R})$. Then $d*du=0$ and $i^{*}(*du)=0$, where $i:Y\rightarrow X_{+}$ is inclusion of the boundary. We have
$$
\int_{X_{+}} \langle du,du\rangle d\text{vol}=-\int_{X_{+}}  du\wedge *du=-\int _{X_{+}}u\wedge (d*du)+\int_{Y}u\cdot i_{*}(*du)=0,
$$
In other words, $u$ is a constant function. Because $u\in L^{2}_{j+2;\delta}(X_{+};\mathds{R})$, we have $u=0$. Thus $\Delta(X_{+};\mathds{R})$ has trivial kernel and $1$-dimensional cokernel.

We are left with the manifold $Z_{+}$. The argument is similar: Notice that we set $\delta_{0}$ to be less than the first positive eigenvalue of $\Delta(Y)$, which also equals the first positive eigenvalue of $\Delta(S^{1}\times Y)$. By \cite[Lemma 4.3]{Taubes} again,  the Laplace operator $\Delta(Z_{+};-\delta,\delta)$ is Fredholm. To conclude that its index equals $-2$, we apply the excision principle relating the manifold $Z_{+}\cup (M\cup_{Y} \bar{M})$ with the manifold $M_{+}\cup M_{-}$ (recall that $M_{-}=((-\infty,0]\times Y)\cup_{Y}\bar{M}$). Then we use the maximum principle (or integration by part) to prove that the kernel is trivial.
\end{proof}

\bibliographystyle{plain}
\bibliography{B-bib}
\end{document}